\newtheorem{theorem}{Theorem}
\newtheorem{maintheorem}[theorem]{Main Theorem}
\newtheorem{corollary}[theorem]{Corollary}
\newtheorem{sublemma}{Lemma}[theorem]
\newtheorem{lemma}[theorem]{Lemma}
\newtheorem{question}[theorem]{Question}
\newtheorem{observation}[theorem]{Observation}
\newtheorem{conjecture}[theorem]{Conjecture}
\newtheorem{fact}[theorem]{Fact}
\newtheorem{definition}[theorem]{Definition}
\newtheorem{remark}[theorem]{Remark}
\newcommand{\QED}{\end{proof}}
\def\proclaim[#1]{{\bf #1}}
\def\BF#1.{{\bf #1.}}
\newcommand{\Vopenka}{Vop\v{e}nka}
\newcommand{\Godel}{G\"odel}
\newcommand{\Levy}{L\'{e}vy}
\newcommand{\B}{{\mathbb B}}
\newcommand{\D}{{\mathbb C}}
\renewcommand{\P}{{\mathbb P}}
\newcommand{\Q}{{\mathbb Q}}
\newcommand{\R}{{\mathbb R}}
\newcommand{\Vbar}{{\overline{V}}}
\newcommand{\one}{\mathbbm{1}} 
\newcommand{\of}{\subseteq}
\newcommand{\fo}{\supseteq}
\newcommand{\set}[1]{\{\,{#1}\,\}}
\newcommand{\singleton}[1]{\left\{{#1}\right\}}
\newcommand{\dom}{\mathop{\rm dom}}
\newcommand{\cof}{\mathop{\rm cof}}
\newcommand{\Add}{\mathop{\rm Add}}
\newcommand{\Coll}{\mathop{\rm Coll}}
\newcommand{\plus}{{+}}
\newcommand{\plusplus}{{{+}{+}}}
\newcommand{\restrict}{\upharpoonright} 
\newcommand{\satisfies}{\models}
\newcommand{\forces}{\Vdash}
\newcommand{\possible}{\mathop{\raisebox{-1pt}{$\Diamond$}}}
\newcommand{\Mantle}{{\mathord{\rm M}}}
\newcommand{\gMantle}{{\mathord{\rm gM}}}  
\newcommand{\gHOD}{\ensuremath{\mathord{{\rm g}\HOD}}} 
\newcommand{\cross}{\times}
\newcommand{\union}{\cup}
\newcommand{\Union}{\bigcup}
\newcommand{\intersect}{\cap}
\newcommand{\Intersect}{\bigcap}
\newcommand{\smalllt}{\mathrel{\mathchoice{\raise2pt\hbox{$\scriptstyle<$}}{\raise1pt\hbox{$\scriptstyle<$}}{\raise0pt\hbox{$\scriptscriptstyle<$}}{\scriptscriptstyle<}}}
\newcommand{\smallleq}{\mathrel{\mathchoice{\raise2pt\hbox{$\scriptstyle\leq$}}{\raise1pt\hbox{$\scriptstyle\leq$}}{\raise1pt\hbox{$\scriptscriptstyle\leq$}}{\scriptscriptstyle\leq}}}
\newcommand{\lt}{\smalllt}
\newcommand{\ltkappa}{{{\smalllt}\kappa}}
\newcommand{\leqlambda}{{{\smallleq}\lambda}}
\newcommand{\ltlambda}{{{\smalllt}\lambda}}
\newcommand{\ltgamma}{{{\smalllt}\gamma}}
\newcommand{\leqbeta}{{{\smallleq}\beta}}
\newcommand{\leqdelta}{{{\smallleq}\delta}}
\newcommand{\ltdelta}{{{\smalllt}\delta}}
\newcommand{\Card}[1]{{\left|#1\right|}}
\newcommand{\boolval}[1]{\mathopen{\lbrack\!\lbrack}\,#1\,\mathclose{\rbrack\!\rbrack}}
\def\[#1]{\boolval{#1}}
\newbox\gnBoxA
\newdimen\gnCornerHgt
\newdimen\gnArgHgt
\def\gcode #1{%
\setbox\gnBoxA=\hbox{$#1$}%
\gnArgHgt=\ht\gnBoxA%
\ifnum     \gnArgHgt<\gnCornerHgt \gnArgHgt=0pt%
\else \advance \gnArgHgt by -\gnCornerHgt%
\fi \raise\gnArgHgt\hbox{\tiny$\ulcorner$} \box\gnBoxA %
\raise\gnArgHgt\hbox{\tiny$\urcorner$}}
\newcommand{\UnderTilde}[1]{{\setbox1=\hbox{$#1$}\baselineskip=0pt\vtop{\hbox{$#1$}\hbox to\wd1{\hfil$\sim$\hfil}}}{}}
\newcommand{\Undertilde}[1]{{\setbox1=\hbox{$#1$}\baselineskip=0pt\vtop{\hbox{$#1$}\hbox to\wd1{\hfil$\scriptstyle\sim$\hfil}}}{}}
\newcommand{\undertilde}[1]{{\setbox1=\hbox{$#1$}\baselineskip=0pt\vtop{\hbox{$#1$}\hbox to\wd1{\hfil$\scriptscriptstyle\sim$\hfil}}}{}}
\newcommand{\UnderdTilde}[1]{{\setbox1=\hbox{$#1$}\baselineskip=0pt\vtop{\hbox{$#1$}\hbox to\wd1{\hfil$\approx$\hfil}}}{}}
\newcommand{\Underdtilde}[1]{{\setbox1=\hbox{$#1$}\baselineskip=0pt\vtop{\hbox{$#1$}\hbox to\wd1{\hfil\scriptsize$\approx$\hfil}}}{}}
\newcommand{\st}{\mid}
\renewcommand{\implies}{\mathrel{\rightarrow}}
\renewcommand{\iff}{\mathrel{\leftrightarrow}}
\def\<#1>{\langle#1\rangle}
\newcommand{\RO}{\mathop{{\rm RO}}}
\newcommand{\ORD}{\mathop{{\rm Ord}}}
\newcommand{\Ord}{\mathop{{\rm Ord}}}
\newcommand{\ZFC}{{\rm ZFC}}
\newcommand{\ZF}{{\rm ZF}}
\newcommand{\KM}{{\rm KM}}
\newcommand{\GB}{{\rm GB}}
\newcommand{\GBC}{{\rm GBC}}
\newcommand{\GCH}{{\rm GCH}}
\newcommand{\DC}{{\rm DC}}
\newcommand{\GA}{{\rm GA}}
\newcommand{\DDG}{{\rm DDG}}
\newcommand{\PFA}{{\rm PFA}}
\newcommand{\HOD}{{\rm HOD}}
\newcommand{\IMH}{{\rm IMH}}
\newcommand{\card}{{{\rm card}}}
\newcommand{\cell}[1]{\boxit{\hbox to 17pt{\strut\hfil$#1$\hfil}}}
\newcommand{\head}[2]{\lower2pt\vbox{\hbox{\strut\footnotesize\it\hskip3pt#2}\boxit{\cell#1}}}
\newcommand{\boxit}[1]{\setbox4=\hbox{\kern2pt#1\kern2pt}\hbox{\vrule\vbox{\hrule\kern2pt\box4\kern2pt\hrule}\vrule}}
\newcommand{\Col}[3]{\hbox{\vbox{\baselineskip=0pt\parskip=0pt\cell#1\cell#2\cell#3}}}
\newcommand{\tapenames}{\raise 5pt\vbox to .7in{\hbox to .8in{\it\hfill input: \strut}\vfill\hbox to
.8in{\it\hfill scratch: \strut}\vfill\hbox to .8in{\it\hfill output: \strut}}}
\newcommand{\Head}[4]{\lower2pt\vbox{\hbox to25pt{\strut\footnotesize\it\hfill#4\hfill}\boxit{\Col#1#2#3}}}
\newcommand{\Dots}{\raise 5pt\vbox to .7in{\hbox{\ $\cdots$\strut}\vfill\hbox{\ $\cdots$\strut}\vfill\hbox{\
$\cdots$\strut}}}
\newcommand{\df}{\it} 
\newcommand{\ro}[1]{{\rm ro}{\left(#1\right)}}
\newcommand{\CCA}{{\rm CCA}}
\newcommand{\V}{V}
\newcommand{\sub}{\of}
\newcommand{\reals}{\mathord{\mathbb{R}}}
\begin{document}
\author{Gunter Fuchs}
\address{G. Fuchs, Department of Mathematics, The College of Staten Island of CUNY, 2800 Victory Boulevard,
Staten Island, NY 10314} \email{Gunter.Fuchs@csi.cuny.edu}
\author{Joel David Hamkins}
\address{J. D. Hamkins, Mathematics, The Graduate Center of The City University of New York, 365 Fifth Avenue, New York, NY 10016
 \& Mathematics, The College of Staten Island of CUNY}
\email{jhamkins@gc.cuny.edu, http://jdh.hamkins.org}
\author{Jonas Reitz}
\address{J. Reitz, New York City College of Technology of The City University of New York, Mathematics, 300 Jay Street, Brooklyn, NY 11201}
\email{jonasreitz@gmail.com}
\today  
\thanks{The research of the second author has been supported in part by NSF grant DMS-0800762, PSC-CUNY grant 64732-00-42, CUNY Collaborative Incentive Award 80209-06 20, Simons Foundation grant 209252, and
Netherlands Organization for Scientific Research (NWO) grant B 62-619, and
he is grateful to the Institute of Logic, Language and
Computation at Universiteit van Amsterdam for the support
of a Visiting Professorship during his sabbatical there in
2007. The main part of this research was largely completed
in 2007-8, but the preparation and subsequent publication of the paper was
unfortunately delayed. The paper was submitted to this journal August 5, 2011, and the referee report was received September 25, 2014.}

\subjclass[2000]{03E55, 03E40} \hyphenation{multi-verse}

\begin{abstract}
A {\df ground} of the universe $V$ is a transitive proper
class $W\of V$, such that $W\satisfies\ZFC$ and $V$ is
obtained by set forcing over $W$, so that $V=W[G]$ for some
$W$-generic filter $G\of\P\in W$. The model $V$ satisfies
the ground axiom \GA\ if there are no such $W$ properly
contained in $V$. The model $W$ is a {\df bedrock} of $V$
if $W$ is a ground of $V$ and satisfies the ground axiom.
The {\df mantle} of $V$ is the intersection of all grounds
of $V$. The {\df generic mantle} of $V$ is the intersection
of all grounds of all set-forcing extensions of $V$. The
generic \HOD, written \gHOD, is the intersection of all
{\HOD}s of all set-forcing extensions. The generic \HOD\ is
always a model of \ZFC, and the generic mantle is always a
model of \ZF. Every model of \ZFC\ is the mantle and
generic mantle of another model of \ZFC. We prove this
theorem while also controlling the \HOD\ of the final
model, as well as the generic \HOD. Iteratively taking the
mantle penetrates down through the {\df inner} mantles to
what we call the {\df outer core}, what remains when all
outer layers of forcing have been stripped away. Many
fundamental questions remain open.
\end{abstract}

\title{Set-theoretic geology}\maketitle

The technique of forcing in set theory is customarily
thought of as a method for constructing {\it outer} as
opposed to {\it inner} models of set theory. A set theorist
typically has a model of set theory $V$ and constructs a
larger model $V[G]$, the forcing extension, by adjoining a
$V$-generic filter $G$ over some partial order $\P\in V$. A
switch in perspective, however, allows us to view forcing
as a method of describing inner models as well. The idea is
simply to search inwardly for how the model $V$
might itself have arisen by forcing. Given a set-theoretic
universe $V$, we consider the classes $W$ over which $V$
can be realized as a forcing extension $V=W[G]$ by some
$W$-generic filter $G\of\P\in W$. This change in viewpoint
is the basis for a collection of questions leading to the
topic we refer to as set-theoretic geology. In this
article, we present some of the most interesting initial
results in the topic, along with an abundance of open
questions, many of which concern fundamental issues.

\section{The mantle}

We assume that the reader is familiar with the technique of
forcing in set theory. Working in \ZFC\ set theory and
sometimes in \GBC\ set theory, we suppose $V$ is the
universe of all sets. A class $W$ is a {\df ground} of $V$,
if $W$ is a transitive class model of \ZFC\ and $V$ is
obtained by set forcing over $W$, that is, if there is some
forcing notion $\P\in W$ and a $W$-generic filter $G\of\P$
such that $V=W[G]$. Laver
\cite{Laver2007:CertainVeryLargeCardinalsNotCreated} and independently Woodin
\cite{Woodin2004:RecentDevelopmentsOnCH}
\cite{Woodin2004:CHMultiverseOmegaConjecture} proved in
this case that $W$ is a definable class in $V$, using
parameters in $W$, a result that was generalized by Hamkins to include many natural instances of class forcing (see theorems~\ref{Theorem.UniformDefinabilityOfGrounds} and~\ref{Theorem.DefinabilityOfPseudoGrounds}). Building on these ideas, Hamkins and Reitz~\cite{Hamkins2005:TheGroundAxiom, Reitz2006:Dissertation,Reitz2007:TheGroundAxiom} introduced
the following axiom.

\begin{definition}\rm
The {\df ground axiom} \GA\ is the assertion that the
universe $V$ is not obtained by set forcing over any
strictly smaller ground model.
\end{definition}

Because of the quantification over classes, the ground
axiom assertion appears at first to be fundamentally
second order in nature, but Reitz
\cite{Reitz2007:TheGroundAxiom, Reitz2006:Dissertation}
proved that it is actually first-order expressible (an
equivalent claim is implicit, independently, in
\cite{Woodin2004:CHMultiverseOmegaConjecture}).

\begin{definition}\rm
A class $W$ is a {\df bedrock} for $V$ if it is a ground of
$V$ and minimal with respect to the forcing-extension
relation.
\end{definition}

Since a ground of a ground is a ground, we may equivalently
define that $W$ is a bedrock of $V$ if it is a ground of
$V$ and satisfies the ground axiom. Also, since by fact
\ref{Fact.IntermediateModelsAreGrounds} any inner model $U$
of \ZFC\ with $W\of U\of V$ for some ground $W$ of $V$ is
both a forcing extension of $W$ and a ground of $V$, we may
equivalently define that $W$ is a bedrock for $V$ if it is
a ground of $V$ that is minimal with respect to inclusion
among all grounds of $V$. It remains an open question
whether there can be a model $V$ having more than one
bedrock model.

In this article, we attempt to carry the investigation
deeper underground, bringing to light the structure of the
grounds of the set-theoretic universe $V$ and how they
relate to the grounds of the forcing extensions of $V$.
Continuing the geological metaphor, the principal new
concept is:

\begin{definition}\rm
The {\df mantle} $\Mantle$ of a model of set theory is the
intersection of all of its grounds.
\end{definition}

The ground axiom can be reformulated as the assertion
$V=\Mantle$, that is, as the assertion that $V$ is its own
mantle. The mantle was briefly mentioned, unnamed, at the
conclusion of~\cite{Reitz2007:TheGroundAxiom}, where the
question was raised whether it necessarily models \ZFC. Our
main theorem in this article is a converse of sorts:

\begin{maintheorem}
Every model of \ZFC\ is the mantle of another model of
\ZFC.
\end{maintheorem}

This theorem is a consequence of the more specific claims
of theorems~\ref{Theorem.V=M=gM=gHOD=HOD} and
\ref{Theorem.V=M=gM=gHOD,V[G]=HOD}, in which we are able
not only to control the mantle of the target model, but
also what we call the generic mantle, as well as the \HOD\
and generic \HOD. We begin by proving that the mantle,
although initially defined with second-order reference to
the meta-class of all ground models, actually admits a
first-order definition. This is a consequence of the
following fact, formulated in second-order set theory.

\begin{theorem}[The ground-model definability theorem]\label{Theorem.UniformDefinabilityOfGrounds}%
Every ground model $W$ of $V$ is definable in $V$ using a parameter from $W$. That is, there is a specific formula
$\phi(y,x)$ such that if $V=W[G]$ is a forcing extension of
a ground $W$ by $W$-generic filter $G\of\P\in W$, then
there is $r\in W$ such that
$$W=\{x\st\phi(r,x)\}.$$
\end{theorem}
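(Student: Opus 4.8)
The plan is to exhibit a single formula $\phi(y,x)$ such that each ground $W$ of $V$ is recovered in $V$ as $\{x\st\phi(r,x)\}$ for the parameter $r=\langle\delta,P(\delta)^{W}\rangle\in W$, where $\delta:=\left((2^{|\P|})^{+}\right)^{W}$. The two ingredients are the \emph{approximation and cover properties} of $W$ in $V$, and a \emph{uniqueness} principle singling out $W$ among all inner models exhibiting them with the prescribed power set of $\delta$.

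I begin by establishing the approximation and cover properties. Since $|\P|^{+}\le\delta$, the forcing $\P$ is $\delta$-c.c.\ in $W$, so $\delta$ remains a regular cardinal in $V$. From $\delta$-c.c.\ and the regularity of $\delta$ one gets the \emph{$\delta$-cover property}: every set of ordinals in $V$ of size $<\delta$ is contained in one lying in $W$ of $W$-cardinality $<\delta$, since the possible values of the members of a name for such a set form a covering set of size $<\delta$ in $W$. For the \emph{$\delta$-approximation property}---if $A\in V$ is a set of ordinals, say $A\of\lambda$, and $A\cap a\in W$ for every $a\in W$ with $|a|^{W}<\delta$, then $A\in W$---I would pass to the Boolean algebra $\mathbb B=\ro{\P}^{W}$, which has $W$-size at most $2^{|\P|}<\delta$. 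Setting $b_{\xi}=\bval{\check\xi\in\dot A}^{\mathbb B}$ for a $\mathbb B$-name $\dot A$ of $A$, the map $\xi\mapsto b_{\xi}$ lies in $W$; its range $D$ has $W$-size $<\delta$, and picking in $W$ an ordinal $\xi_{b}<\lambda$ with $b_{\xi_{b}}=b$ for each $b\in D$ yields an approximating set $a=\{\xi_{b}\st b\in D\}\in W$ of size $<\delta$. Inside $W$ one then computes $E=\{b\in D\st\xi_{b}\in A\cap a\}$; a short check shows $E=D\cap H$, where $H$ is the generic ultrafilter on $\mathbb B$ determined by $G$, whence $A=\{\xi<\lambda\st b_{\xi}\in E\}$ is visibly in $W$.

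Next I would prove the uniqueness principle: if $W_{0},W_{1}$ are transitive class models of $\ZFC$ containing all ordinals, each with the $\delta$-approximation and cover properties in $V$, and $P(\delta)^{W_{0}}=P(\delta)^{W_{1}}$, then $W_{0}=W_{1}$. The argument is a transfinite induction showing that $W_{0}$ and $W_{1}$ agree on larger and larger $H_{\theta}$'s. At the bottom, agreement on $P(\delta)$ already forces agreement on $H_{\delta^{+}}$ (hence on $\delta^{+}$), since every element of $H_{\delta^{+}}^{W_{i}}$ is the transitive collapse of a well-founded extensional relation coded by a subset of $\delta$, and well-foundedness and collapsing are absolute. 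The inductive step uses the cover property to reduce approximating a set of ordinals $A\in W_{0}$ over small sets to the case of small sets lying in both models (handled by the inductive hypothesis), so that the relevant intersections $A\cap a$ land in $H_{\delta^{+}}^{W_{0}}=H_{\delta^{+}}^{W_{1}}$; the approximation property of $W_{1}$ then delivers $A\in W_{1}$, and a symmetric argument gives equality. Since $\ZFC$ codes every set by a set of ordinals, $W_{0}=W_{1}$.

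It remains to turn this into a first-order definition. Fixing a finite fragment $\Phi_{0}$ of $\ZFC$ large enough that the preceding arguments localize to its set models, let $\phi(r,x)$ assert that $x$ belongs to some transitive set $M\satisfies\Phi_{0}$ with $\delta\in M$, with $P(\delta)^{M}$ equal to the relevant component of $r$, and with $M$ having the $\delta$-approximation property relative to $V$, i.e.\ every set of ordinals $A\in V$ such that $A\cap a\in M$ for all $a\in M$ with $|a|^{M}<\delta$ already lies in $M$. For sufficiently large cardinals $\theta$, regular enough in $W$ and in $V$, the set $M=H_{\theta}^{W}$ witnesses this for each $x\in H_{\theta}^{W}$, so $\{x\st\phi(r,x)\}\supseteq W$; conversely, given any witness $M$ for some $x$, comparing $M$ with $H_{\theta}^{W}$ for a $\theta$ dominating the ordinals of $M$ and running the localized uniqueness induction forces $M\of W$, so $x\in W$. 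Hence $W=\{x\st\phi(r,x)\}$ with $r\in W$ and $\phi$ independent of $\P$ and $G$. The main obstacle is precisely this last step together with the uniqueness induction of the previous paragraph: choosing $\Phi_{0}$ and the club of admissible $\theta$, and---the genuinely delicate point---keeping track in the induction of the transfer bijections relating $P(\gamma)^{W_{i}}$ for various $\gamma$, so that the cover and approximation properties can actually be applied; the forcing combinatorics of the first two paragraphs are by comparison routine.
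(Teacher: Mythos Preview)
The paper does not prove this theorem; it states it with attribution, remarks that Laver's parameter is $r=P(\delta)^W$ where $W\subseteq V$ has the $\delta$-approximation and cover properties for $\delta=|\ro{\P}|^{+}$, and defers to \cite{Reitz2006:Dissertation,Reitz2007:TheGroundAxiom} for the explicit formula. The ingredients you invoke---approximation and cover from small forcing, the Hamkins--Laver uniqueness principle, and a first-order formula quantifying over transitive set witnesses---are exactly those the paper points to (the latter two appear later as Lemmas~\ref{Lemma.ClosurePointForcing} and~\ref{Lemma.HamkinsLaverUniqueness}), so your route is the intended one.

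Two remarks. Your Boolean-value argument for the approximation property is a clean direct proof, different from the closure-point lemma the paper cites; it is correct but needs $|\ro{\P}|<\delta$, which your choice $\delta=((2^{|\P|})^{+})^{W}$ guarantees, while the paper's $\delta=|\ro{\P}|^{+}$ is the sharp value. More importantly, your formula $\phi$ as written does not work: requiring that the set $M$ have the unrestricted $\delta$-approximation property in $V$ is too strong, and $M=H_\theta^{W}$ fails it---take $A=\{\theta\}$, whose intersection with every $a\in H_\theta^{W}$ is empty (since $\theta\notin\mathrm{TC}(a)$ for such $a$), yet $A\notin H_\theta^{W}$. You must restrict the approximation clause to sets $A$ of ordinals bounded in $M\cap\ORD$ (equivalently $A\subseteq M$); with that adjustment, which you rightly flag as the delicate localization step, both directions of $W=\{x\st\phi(r,x)\}$ go through. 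You should also record the $\delta$-cover property for $M$ (or use the $H_{\delta^{+}}$-agreement trick you sketch) to run the comparison with $H_\theta^{W}$ in the direction $M\subseteq W$.
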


The ground-model definability theorem was proved independently by Laver and Woodin, and proved by Hamkins in the strengthened form of theorem~\ref{Theorem.DefinabilityOfPseudoGrounds}, below, which extends the result beyond set-forcing to include also many natural instances of class forcing and non-forcing extensions. The history is that after Laver proved the ground-model definability theorem, he had noticed a similarity in his original proof (never published) with the main argument of Hamkins and Woodin in~\cite{HamkinsWoodin2000:SmallForcing}, and had contacted Hamkins about it, who provided a proof based on the approximation and cover properties, establishing theorem~\ref{Theorem.DefinabilityOfPseudoGrounds}, and it is this proof that Laver adopted in~\cite{Laver2007:CertainVeryLargeCardinalsNotCreated}.

\begin{theorem}\label{Theorem.DefinabilityOfPseudoGrounds}
Every pseudo-ground model $W$ of $V$ is definable in $V$ using a parameter from $W$. That is, if an inner model $W\of V$ of \ZFC\ satisfies the $\delta$-approximation and cover properties for some regular cardinal $\delta$ of $W$ for which also $(\delta^+)^W=(\delta^+)^V$, then $W$ is definable in $V$ using parameter $r=({}^\ltdelta 2)^W$.
\end{theorem}

A {\df pseudo ground} is simply an inner model $W\of V$ having the properties stated in the second part of the theorem, and by lemma~\ref{Lemma.ClosurePointForcing} this case includes every set-forcing extension, as well as the extensions arising with many instances of class forcing, such as the canonical forcing of the \GCH, the global Laver preparation and other progressively closed Easton-support iterations. In light of this, we should like briefly to mention and then leave for the future the idea of undertaking the entire set-theoretic geology project of this article in the more general context of pseudo-grounds, rather than only the set-forcing grounds, for this context would include these other natural extensions that are not a part of the current investigation. Because the basic definability theorems apply in this more general case, however, one may formalize the pseudo-ground analogues of the ground axiom, bedrocks, the mantle and the generic mantle.

\begin{definition}[\cite{Hamkins2003:ExtensionsWithApproximationAndCoverProperties}]
\label{Definition.deltacoverandapproximation} \rm Suppose
that $W\subseteq V$ are both transitive models of (a
suitable small fragment of) \ZFC\ and $\delta$ is a
cardinal in $V$.
\begin{enumerate}
 \item The extension $W\of V$ exhibits the {\df
     $\delta$-approximation property} if whenever $A\in
     V$ with $A\subseteq W$ and $A\cap B\in W$ for all
     $B\in W$ with $|B|^{W} < \delta$, then $A\in W$.
 \item The extension $W\of V$ exhibits the {\df
     $\delta$-cover property} if for every $A\in V$
     with $A\subseteq W$ and $|A|^{V} < \delta$, there
     is $B\in W$ such that $A\of B$ and $|B|^{W} <
     \delta$.
\end{enumerate}
\end{definition}

Such extensions are pervasive in set theory, in light of the following key lemma.

\begin{lemma}[{\cite[lemma~13]{Hamkins2003:ExtensionsWithApproximationAndCoverProperties}}]\label{Lemma.ClosurePointForcing}
If\/ $V\subset V[G]$ is a forcing extension by forcing of
the form $\Q_1*\dot\Q_2$, where $\Q_1$ is nontrivial and
$\forces_{\Q_1}$``\hspace{2 pt}$\dot\Q_2$ is ${\leq}|\Q_1|$
strategically closed'', then $V\of V[G]$ satisfies the
$\delta$-approximation and $\delta$-cover properties for
$\delta=|\Q_1|^{+}$.
\end{lemma}

In particular, since the second factor $\Q_2$ can be trivial, it follows that every forcing extension $V\of V[G]$, with $V$-generic filter $G\of\Q\in V$, exhibits the $\delta$-approximation and $\delta$-covering property for $\delta=|\Q|^+$, and this is why theorem~\ref{Theorem.DefinabilityOfPseudoGrounds} generalizes theorem~\ref{Theorem.UniformDefinabilityOfGrounds}.
Lemma~\ref{Lemma.ClosurePointForcing} has an improved proof in~\cite[lemma~12]{HamkinsJohnstone2010:IndestructibleStrongUnfoldability}, following Mitchell's treatment~\cite{Mitchell2003:ANoteOnHamkinsApproximationLemma}, which really goes all the way back to~\cite{Mitchell72:AronszajnTreesAndIndependenceProperty}, and see also the generalization in~\cite[\S1]{Unger:FragilityAndIndestructibilityII}.


The parameter in Laver's original proof of the ground-model definability theorem was a large initial segment $r=V_\theta^W$ of the ground model universe, well above the size of the forcing. This was reduced to parameter $r=P(\delta)^W$ in~\cite{Laver2007:CertainVeryLargeCardinalsNotCreated} by the Hamkins argument, where the forcing $\P$ has the $\delta$-approximation and cover property, and for this any cardinal $\delta\geq|\P|^+$ suffices by lemma~\ref{Lemma.ClosurePointForcing}. Hamkins and Johnstone subsequently noted that $P(\delta)^W$ is actually determined via the $\delta$-approximation property by $r=({}^\ltdelta 2)^W$, and so this smaller parameter suffices, as we have stated in the results here. If $\delta=\gamma^+$, then this parameter is equidefinable from $P(\gamma)^W$, and so in the case of a set forcing extension $V=W[G]$ with $G\of\P\in W$, we may define $W$ in $V$ with parameter $r=P(\P)^W$, which seems extremely natural. Nevertheless, for many instances of set forcing, theorem~\ref{Theorem.DefinabilityOfPseudoGrounds} shows that a much smaller parameter than $P(\P)^W$ suffices, since by lemma~\ref{Lemma.ClosurePointForcing} there are many natural forcing notions $\P$ that exhibit the $\delta$-approximation and cover properties even when $|\P|$ is far larger than $\delta$. For example, if one adds a Cohen real and then performs any countably strategically closed forcing, the original ground model $W$ will be definable in the final extension from parameter $P(\omega)^W$, even when the forcing is enormous.

The proof of theorem~\ref{Theorem.DefinabilityOfPseudoGrounds} is based fundamentally on the following lemma, of which we shall make use subsequently. It is convenient to express the lemma using the theory $\ZFC_\delta$, isolated by Reitz in~\cite{Reitz2006:Dissertation,Reitz2007:TheGroundAxiom}, which has the axioms of Zermelo set theory, the foundation axiom, the axiom of choice, the $\leqdelta$-replacement axioms (asserting all instances of replacement for functions with domain $\delta$, a fixed regular cardinal), together with the assertion that every set is coded by a set of ordinals. This theory is formalized in the language with a constant symbol for $\delta$, and it is an easy exercise to see that $V_\theta\satisfies\ZFC_\delta$ for any $\beth$-fixed point $\theta=\beth_\theta$ of cofinality larger than $\delta$, a regular cardinal.

\begin{lemma}[{Hamkins, see~\cite[lemma~7.2]{Reitz2007:TheGroundAxiom}}]\label{Lemma.ApproximationCoverImpliesW=W'}
Suppose that $W$, $W'$ and $U$ are transitive models of $\ZFC_\delta$, where $\delta$ is a fixed regular cardinal, that $W\of U$ and $W'\of U$ have the $\delta$-approximation and $\delta$-cover properties, that $({}^\ltdelta 2)^W=({}^\ltdelta 2)^{W'}$ and that $(\delta^+)^W=(\delta^+)^{W'}=(\delta^+)^U$. Then~$W=W'$.
\end{lemma}

In the context of theorem~\ref{Theorem.DefinabilityOfPseudoGrounds}, the precise definition of the ground model $W$ in its forcing extension $V=W[G]$, using parameter $r=({}^\ltdelta 2)^W$, is that $x\in W$ just in case, in $V$, there is a cardinal $\theta=\beth_\theta$ of cofinality larger than $\delta$ and a subset $M\of V_\theta$ exhibiting the $\delta$-approximation and $\delta$-cover properties, with $x\in M\satisfies\ZFC_\delta$ and $r=({}^\ltdelta 2)^M$ and $(\delta^+)^M=\delta^+$. Lemma~\ref{Lemma.ApproximationCoverImpliesW=W'} shows that any such $M$ will have to be $V_\theta^W$, since $V_\theta^W$ does have these properties. The results of~\cite{BagariaHamkinsTsaprounisUsuba:SuperstrongAndOtherLargeCardinalsAreNeverLaverIndestructible} rely in part on an analysis of the precise logical complexity of this definition.

For the subsequent theorems, we will make use of the
following folklore facts. An {\df inner model} is a
transitive class containing all the ordinals and satisfying
\ZF. We shall also frequently speak of inner models of
\ZFC\ as well.

\begin{fact}
If\/ $W=\set{\<r,x>\mid \phi(r,x)}$ is a definable class,
then the class of all $r$ for which the section
$W_r=\set{x\mid \<r,x>\in W}$ is an inner model of \ZFC\ is
definable.\label{Fact.IM}
\end{fact}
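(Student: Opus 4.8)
The statement to prove is Fact \ref{Fact.IM}: if $W = \{\langle r,x\rangle \mid \phi(r,x)\}$ is a definable class, then the class of $r$ for which the section $W_r$ is an inner model of $\ZFC$ is itself definable. The plan is to unwind what "$W_r$ is an inner model of $\ZFC$" means and verify that each conjunct is first-order expressible (with $r$ as a parameter, quantifying only over sets), so that the collection of good $r$'s is a definable class. The conditions on $W_r$ are: (1) $W_r$ is transitive; (2) $W_r$ contains all the ordinals; (3) $W_r$ satisfies $\ZFC$. Condition (1) is expressed by $\forall x\, \forall y\, (\phi(r,x) \wedge y \in x \rightarrow \phi(r,y))$. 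Condition (2) is expressed by $\forall \alpha\, (\Ord(\alpha) \rightarrow \phi(r,\alpha))$. These are each a single first-order formula in $r$.

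**The main point: expressing $W_r \models \ZFC$.** The subtle step is condition (3), since $\ZFC$ is an infinite scheme, so we cannot simply write down $W_r \models \ZFC$ as one formula and then say "the $r$ making it true form a class" — a priori that would be a scheme of class definitions, not a single one. The standard device is relativization: for each axiom $\sigma$ of $\ZFC$ (or more precisely each axiom among the finitely many non-scheme axioms, together with each instance of Separation and Replacement), the relativization $\sigma^{W_r}$, obtained by restricting all quantifiers to $\{x \mid \phi(r,x)\}$, is a first-order formula with free variable $r$. So "$W_r \models \ZFC$" unpacks to an infinite conjunction of first-order formulas $\sigma^{W_r}$. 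The key observation rescuing definability is that $W_r$ is transitive and contains all ordinals, hence is a \emph{supertransitive}-type class over which the Levy reflection / absoluteness machinery applies: one shows that for transitive $W_r \ni \Ord$, satisfaction of the $\ZFC$ axioms is equivalent to satisfaction of a single sentence, or one appeals to the Levy–Montague reflection theorem to replace the scheme by finitely many of its instances on any given transitive class containing the ordinals. Concretely, I would invoke the fact that a transitive class $M$ with $\Ord \subseteq M$ models $\ZFC$ if and only if $M$ models the finitely many axioms together with, for each natural number $n$, the $\Sigma_n$-Separation and $\Sigma_n$-Replacement instances — but this still looks like a scheme. The actual trick, which is folklore, is that one uses the satisfaction predicate: since $W_r$ is transitive and contains $\Ord$, for any set $x \in W_r$ we have $\Sigb(x) = $ the set of subsets of $x$ that are in $W_r$ is again in $W_r$ (using that $W_r$ is closed appropriately), and one formalizes "$W_r \models \ZFC$" via an internal satisfaction relation on $W_r$, which exists as a class because $W_r$ is definable from $r$; then $W_r \models \ZFC$ becomes: this satisfaction class verifies all of $\ZFC$, a $\Pi_1$ assertion over the definable satisfaction class, hence first-order in $r$.

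**Anticipated obstacle and how to handle it.** The genuine obstacle is exactly this scheme-vs-single-formula issue, and the honest resolution is the observation — due to Levy, and standard in this context — that for a \emph{transitive class containing all ordinals}, the assertion "$M \models \ZFC$" can be expressed by a single first-order formula (with $M$'s defining parameter free), because one can define the full first-order truth predicate for such an $M$ inside $V$ (the truth predicate for a definable class with parameters is itself a definable class, by the usual Tarski recursion carried out along the well-founded membership relation of $M$, which is set-like since $M$ is transitive), and then "$M \models \ZFC$" says that this truth predicate contains the codes of all $\ZFC$-axioms — and "being a code of a $\ZFC$-axiom" is a single arithmetic (hence first-order) formula. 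So the plan concretely is: first write the transitivity and $\Ord$-containment conditions as displayed above; then observe $W_r$ is set-like with transitive well-founded membership; then invoke the definability of the Tarskian satisfaction class $\Sat_r$ for the structure $(W_r, \in)$, uniformly in $r$; then define the good $r$'s as those for which $W_r$ is transitive, contains $\Ord$, and $\Sat_r \models$ "$\ulcorner\sigma\urcorner$" for every $\sigma$ in the (first-order definable) set of $\ZFC$-axioms. Each clause is first-order in $r$, so the resulting class is definable. I expect the write-up of the satisfaction-class construction to be the only place requiring care; everything else is bookkeeping.
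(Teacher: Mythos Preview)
Your approach has a genuine gap at the satisfaction-predicate step. You claim that for a definable transitive class $W_r$ containing the ordinals, one can define in $V$ the full Tarskian satisfaction relation $\Sat_r$ for $(W_r,\in)$, uniformly in $r$. This is false: when $W_r=V$ (which is certainly among the sections the fact must handle, and in the paper's application $V$ is always one of the grounds), a uniform $\Sat_r$ would be a first-order truth definition for $V$ itself, contradicting Tarski's undefinability theorem. The recursion you invoke is a recursion on \emph{formula complexity}, not on the well-founded $\in$-relation of $W_r$; set-likeness of $\in$ is irrelevant here. For each fixed metatheoretic $n$ one can define $\Sigma_n$-satisfaction in $W_r$, but there is no single formula capturing full satisfaction, so ``$\Sat_r$ contains the code of every \ZFC\ axiom'' is not a first-order condition on $r$.

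The paper's route avoids this entirely by appealing to the classical characterization (Jech, Theorem 13.9): a transitive class containing all ordinals is an inner model of \ZF\ if and only if it is closed under the (finitely many) \Godel\ operations and is almost universal (every subclass that is a set is covered by a member). Each of these is a single first-order condition on $r$: closure under each \Godel\ operation is one formula, almost universality is $\forall A\,(A\subseteq W_r\to\exists B\in W_r\ A\subseteq B)$, and the axiom of choice relativized to $W_r$ is one more formula. This sidesteps the scheme problem without any truth predicate. You correctly identified the obstacle but chose the wrong device to overcome it; the \Godel-operations/almost-universal criterion is the standard fix.
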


\begin{proof}This amounts to a standard result in Jech
\cite[Theorem 13.9]{Jech:SetTheory3rdEdition}, asserting
that the question of whether a transitive proper class
containing all the ordinals satisfies full \ZFC\ is
expressible by a single first-order formula. What needs to
be said is that $W_r$ is transitive, contains all the
ordinals, is closed under the \Godel\ operations, is almost
universal, meaning that every $A\of W_r$ is covered by some
$B\in W_r$, and satisfies the axiom of choice.
\end{proof}

\begin{fact}[{\cite[Corollary
15.43]{Jech:SetTheory3rdEdition}}] Suppose that $V\of V[G]$
is the forcing extension arising from a $V$-generic filter
$G\of\P\in V$. If $U$ is a transitive model of \ZFC\ with
$V\of U\of V[G]$, then $U$ is a forcing extension of $V$
and a ground of $V[G]$. In fact, $U$ is a forcing extension
of $V$ by a filter $G_0$ on a complete subalgebra $\B_0$ of
the complete Boolean algebra
$\B=\RO(\P)$.\label{Fact.IntermediateModelsAreGrounds}
\end{fact}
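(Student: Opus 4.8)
The plan is to realize $U$ in the form $V[G\cap\B_0]$ for a suitably chosen complete subalgebra $\B_0\in V$ of $\B=\RO(\P)$. Replacing $\P$ by $\B$ and $G$ by the $V$-generic ultrafilter it induces on $\B$, one may assume at the outset that $\P=\B$ is a complete Boolean algebra and $G$ a $V$-generic ultrafilter. I would first isolate the reduction to sets of ordinals: it suffices to produce a complete subalgebra $\B_0\in V$ of $\B$ such that $G\cap\B_0\in U$ and every set of ordinals lying in $U$ also lies in $V[G\cap\B_0]$. For then $G\cap\B_0$ is $V$-generic on $\B_0$, since a maximal antichain of $\B_0$ belonging to $V$ remains predense in $\B$ because a complete subalgebra computes the same suprema, so $V[G\cap\B_0]$ is a set-forcing extension of $V$; since $V\of U$ and $G\cap\B_0\in U$ we have $V[G\cap\B_0]\of U$; and since in $U$ every set is coded, via a well-founded extensional relation on an ordinal, by a set of ordinals, the displayed hypothesis gives $U\of V[G\cap\B_0]$. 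Thus $U=V[G\cap\B_0]$ is a set-forcing extension of $V$ by the filter $G_0=G\cap\B_0$ on the complete subalgebra $\B_0\of\B$, and by the standard analysis of factoring through a complete subalgebra, $V[G]$ is in turn a set-forcing extension of $V[G\cap\B_0]=U$, so $U$ is also a ground of $V[G]$.

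To build $\B_0$, I would proceed name-by-name. Given a set of ordinals $A\in U$, say $A\of\gamma$, choose in $V[G]$ a nice $\B$-name $\dot A\in V$ for $A$, that is, a sequence $\<b^A_\alpha:\alpha<\gamma>\in V$ of elements of $\B$ with $b^A_\alpha=\bval{\check\alpha\in\dot A}$, so that $A=\set{\alpha<\gamma\mid b^A_\alpha\in G}$. Let $\B_0$ be the complete subalgebra of $\B$ generated by all the Boolean values $b^A_\alpha$ occurring in these names, as $A$ ranges over the sets of ordinals in $U$. Since each such sequence belongs to $V$ and takes values in $\B_0$, and since $A=\set{\alpha\mid b^A_\alpha\in G\cap\B_0}$, each such $A$ is computable from $V$ and $G\cap\B_0$, so indeed every set of ordinals of $U$ lies in $V[G\cap\B_0]$. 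In the other direction one wants $G\cap\B_0\in U$: the generators $b^A_\alpha$ have their membership in $G$ decided within $U$, since $b^A_\alpha\in G$ holds precisely when $\alpha\in A$ and $A\in U$, and this should pin down $G$ on the algebra they generate.

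The delicate point, which I expect to be the main obstacle, is exactly this last step together with the claim that $\B_0\in V$: a priori the generated complete subalgebra is merely a subset of $\B$ defined in $V[G]$ from data involving $U$, so one must argue it is already an element of $V$; and a priori $U$ need not be closed under the infinitary suprema used to pass from the generators to $\B_0$, nor is a generic filter on a complete algebra determined by its restriction to a generating set, so the assertion $G\cap\B_0\in U$ needs genuine justification. Handling both points requires a more careful bookkeeping of which Boolean values actually arise, in effect choosing the names coherently and closing off, and this is the content of the classical intermediate model theorem of Solovay; I would invoke it in the form of \cite[Corollary~15.43]{Jech:SetTheory3rdEdition}.
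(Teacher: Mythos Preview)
Your framework is correct: reduce to sets of ordinals, realize $U$ as $V[G\cap\B_0]$ for a complete subalgebra $\B_0\in V$, and then factor $\B$ through $\B_0$ to see that $U$ is a ground of $V[G]$. You have also correctly identified the two obstacles: why should $\B_0$, generated from Boolean values indexed by \emph{all} sets of ordinals in $U$, lie in $V$; and why should $G\cap\B_0$ lie in $U$? But your resolution---to invoke \cite[Corollary~15.43]{Jech:SetTheory3rdEdition}---is circular, since that is precisely the fact under discussion.

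The missing idea, which the paper supplies, is to generate $\B_0$ not from all sets of ordinals in $U$ at once, but from a \emph{single} set of ordinals $A\in U$ coding $P(\B)^U$. For this one set, the generating family $X_A=\set{\bval{\check\alpha\in\dot A}\mid\alpha<\kappa}$ is a set in $V$, so the complete subalgebra $\B_0$ it generates is in $V$, dissolving your first obstacle. For the second, $G\cap\B_0$ is determined by $X_A\cap G$ (since $X_A$ generates $\B_0$), and $X_A\cap G$ is determined by $A$ itself via $b^A_\alpha\in G\iff\alpha\in A$; hence $G\cap\B_0\in V[A]\of U$. Finally, any other set of ordinals $B\in U$ is recovered in $V[A]$ because its associated subalgebra $\D_B$ and the filter $\D_B\cap G$ both lie in $P(\B)^U$, which is coded into $A$. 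Thus the single coding set $A$ does all the work that your proper-class family of generators could not.
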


\begin{proof}  We first show a general fact about forcing
extensions:  If $A \of \kappa$ is a set of ordinals in
$V[G]$, then there is a complete subalgebra $\D \subset \B$
such that $V[A] = V[\D \cap G]$. To see this, fix a name
$\dot{A}$ for $A$ and consider the set $X_A$ of boolean
values $\boolval{\alpha \in \dot{A}}$ for $\alpha < \kappa$
determining membership in $A$.  Taking $\D \subset \B$ to
be the complete subalgebra generated by $X_A$ in $V$, it is
straightforward to verify that $V[A] = V[\D \cap G]$ (here
we use the fact that $\D \cap G$ is determined by $X_A \cap
G$, which follows from the fact that $X_A$ generates $\D$).

Now consider our models $V \subset U \subset V[G]$.  Let $A
\subset \kappa$ be a set of ordinals in $U$ coding
$P(\B)^U$.  Let $\B_0$ be the complete subalgebra obtained
by applying the general fact given above to the set $A$,
and let $G_0 = \B_0 \cap G$ be the corresponding generic
filter.  It remains to argue that $V[G_0] = U$.  For the
forward inclusion, note that $A\in U$ and so $V[G_0] =
V[A]$ is contained in $U$.  For the reverse inclusion, it
suffices to show that every set of ordinals in $U$ is in
$V[G_0]$.  But this also follows from the general fact
above:  each such set of ordinals $B$ is determined (over
$V$) by $\D_B \cap G$ for an appropriate subalgebra $\D_B$.
However, $\D_B \cap G$ is in $P(\B)^U$, and therefore
appears in $A$. It follows that $B \in V[A]$, and so
$V[G_0] = U$.  The remaining claim, that $U$ is a ground of
$V$, relies on a general fact about complete embeddings of
partial orders:  If $\B_0 \subset \B$ is a complete
subalgebra, then $\B$ is forcing equivalent to the
iteration $\B_0 *\B/\dot{G_0}$, where the second factor is
(a name for) the quotient of $\B$ by the generic filter
$\dot{G_0}$ on $\B_0$.  The details of this relationship
are explored in~\cite[pp243--244, Ex.
D3--D4]{Kunen:Independence}.\end{proof}

The following theorem summarizes several senses in which we
have first-order definable access to the family of ground
models of the universe.

\begin{theorem}\label{Theorem.ParameterizedGroundsW_r}
There is a parameterized family $\set{W_r\mid r\in V}$ of
classes such that
\begin{enumerate}
 \item Every $W_r$ is a ground of $V$, and $r\in W_r$.
 \item Every ground of $V$ is $W_r$ for some $r$.
 \item The classes $W_r$ are uniformly definable in the
     sense that $\set{\<r,x>\mid x\in W_r}$ is first
     order definable without parameters.
 \item The relation ``$V=W_r[G]$, where $G\of\P\in W_r$
     is $W_r$-generic'' is first-order expressible in
     $V$ in the arguments $(r,G,\P)$.
 \item The definition relativizes down from $V$ in the
     sense that if $W_r\of U\of V$ for an inner model
     $U\satisfies\ZFC$, then $W_r^U=W_r$.
 \item The definition relativizes up from $V$ in the
     sense that for any $r$ and any forcing extension
     $V[G]$, there is $s$ with $W_r=W_s=W_s^{V[G]}$.
\end{enumerate}
\end{theorem}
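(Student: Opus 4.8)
The plan is to let $\phi(y,x)$ be the fixed formula supplied by Theorem~\ref{Theorem.UniformDefinabilityOfGrounds} and to define $W_r$ to be the class $\set{x\mid\phi(r,x)}$ in case that class is an inner model of \ZFC\ of which $V$ is a set-forcing extension and which has $r$ as an element, and to set $W_r=V$ otherwise. Write $\Theta(r)$ for the conjunction of these three conditions on $r$. The first conjunct is first-order in $r$ by Fact~\ref{Fact.IM}; the second is first-order since ``$V$ is a set-forcing extension of the class $\set{x\mid\phi(r,x)}$'' can be written out by asserting the existence of a forcing notion $\P$ satisfying $\phi(r,\P)$, together with a filter $G\of\P$ meeting every dense subset of $\P$ that satisfies $\phi(r,\cdot)$ and such that every set of $V$ is the $G$-evaluation of a $\P$-name satisfying $\phi(r,\cdot)$ (the reverse inclusion $W_r[G]\of V$ being automatic); and the third is simply $\phi(r,r)$. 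Thus ``$x\in W_r$'' is equivalent to the parameter-free formula $(\Theta(r)\wedge\phi(r,x))\vee\neg\Theta(r)$, which is clause~(3), and running the same unfolding in the free variables $(r,G,\P)$ yields clause~(4). Clause~(1) is then immediate, since when $\Theta(r)$ holds $W_r$ is by construction a ground of $V$ containing $r$, and when $\Theta(r)$ fails $W_r=V$, a trivial ground containing $r$. For clause~(2), given any ground $W$ of $V$, Theorem~\ref{Theorem.UniformDefinabilityOfGrounds} furnishes some $r\in W$ with $W=\set{x\mid\phi(r,x)}$, so $\Theta(r)$ holds and $W_r=W$.

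Clauses (5) and (6) are the substantive part, and both reduce to the following relativization property of the Laver--Woodin definition, which we extract from the explicit form of $\phi$ in terms of the $\delta$-approximation and cover properties as developed in Reitz~\cite{Reitz2006:Dissertation}: \emph{if $\set{x\mid\phi(r,x)}$ is a ground of $V$ and $\set{x\mid\phi(r,x)}\of M\of V$ for an inner model $M\satisfies\ZFC$, then $\set{x\mid\phi(r,x)}^M=\set{x\mid\phi(r,x)}^V$}. The mechanism is that a valid Laver parameter for a ground $W$ of $V$, where $V=W[G]$ with $G\of\P\in W$, may be taken to be $P(\delta)^W$ for \emph{every} $\delta\ge|\RO(\P)|^\plus$; by Fact~\ref{Fact.IntermediateModelsAreGrounds} such an intermediate $M$ is a forcing extension of $W$ by (a generic for) a complete subalgebra $\B_0$ of $\RO(\P)$, so $|\B_0|\le|\RO(\P)|$ and $W$ still enjoys the $\delta$-approximation and cover properties inside $M$ for that same $\delta$; and $P(\delta)^W$ does not depend on the ambient universe. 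Laver's argument, run inside $M$, therefore recovers $W$ from the same parameter.

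Granting this, clause~(5) is immediate: if $\Theta(r)$ fails then $W_r\of U\of V$ forces $U=V$ and there is nothing to show, while if $\Theta(r)$ holds then the relativization property gives $\set{x\mid\phi(r,x)}^U=W_r$, and Fact~\ref{Fact.IntermediateModelsAreGrounds} exhibits this class as a ground of $U$, so $\Theta^U(r)$ holds and $W_r^U=W_r$. For clause~(6), fix $r$ and a forcing extension $V[G]$, and put $W=W_r$ if $\Theta(r)$ holds and $W=V$ otherwise; in either case $W$ is a ground of $V$ and hence a ground of $V[G]$, so Theorem~\ref{Theorem.UniformDefinabilityOfGrounds} applied in $V[G]$ yields some $s\in W$ with $W=\set{x\mid\phi(s,x)}^{V[G]}$, whence $\Theta^{V[G]}(s)$ holds and $W_s^{V[G]}=W$. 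Since $W\of V\of V[G]$ and $V\satisfies\ZFC$, the relativization property applied to the ground $W$ of $V[G]$ and the intermediate model $V$ gives $\set{x\mid\phi(s,x)}^V=W$; since $s\in W$ this yields $\Theta^V(s)$ and $W_s^V=W$, so altogether $W_r=W=W_s=W_s^{V[G]}$.

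I expect the main obstacle to be the relativization property invoked above: verifying that the \emph{particular} parameterized formula $\phi$, with an \emph{unchanged} parameter, continues to define a given ground after descent to an intermediate \ZFC\ inner model. This is exactly the point at which the structure theory of the approximation and cover properties does the work; once it is secured, the rest of the theorem is bookkeeping resting on Theorem~\ref{Theorem.UniformDefinabilityOfGrounds}, Fact~\ref{Fact.IM}, and Fact~\ref{Fact.IntermediateModelsAreGrounds}.
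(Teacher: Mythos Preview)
Your proposal is correct and follows essentially the same route as the paper: define $W_r$ via the Laver--Woodin formula $\phi$ with the default $W_r=V$, read off (1)--(4) from Fact~\ref{Fact.IM} and the first-order expressibility of ``is a ground,'' and derive (5) and (6) from the fact that the canonical parameter $r=P(\delta)^W$ continues to work in any intermediate \ZFC\ model because such a model is, by Fact~\ref{Fact.IntermediateModelsAreGrounds}, a forcing extension of $W$ by a complete subalgebra of size at most $|\RO(\P)|<\delta$. The only cosmetic difference is that you insert the clause $\phi(r,r)$ explicitly into $\Theta(r)$ to guarantee $r\in W_r$, whereas the paper leaves this implicit in the structure of $\phi$; and you package the key step as a named ``relativization property,'' which the paper proves inline for (5) and then invokes to get (6) from (5).
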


\begin{proof}By theorem~\ref{Theorem.UniformDefinabilityOfGrounds},
there is a formula $\phi(r,x)$ such that every ground $W$
of $V$ has the form $W_r=\set{x\mid \phi(r,x)}$ for some
$r\in W$. Furthermore, by fact~\ref{Fact.IM}, the question
of whether for a given $r$ the class $U_r=\set{x\mid
\phi(r,x)}$ defines a transitive inner model of \ZFC\ is
first-order expressible. Thus, by quantifying over the
possible partial orders $\P$ in this class and the possible
$U_r$-generic filters $G\of\P$, we can express in a
first-order manner whether $U_r$ is in fact a ground of $V$
or not. Thus, we define $W_r=\set{x\mid \phi(r,x)}$, if
this is a ground of $V$, and otherwise $W_r=V$. It follows
that $W_r$ is defined for all $r$, that $r\in W_r$, that
the ground models of $V$ are exactly the classes $W_r$ and
that the relation ``$x\in W_r$'' is first-order definable.
So (1), (2) and (3) hold. Using this, we conclude that the
relation of (4) is also first-order definable, since we
merely express that $\P$ is a partial order in $W_r$, that
$G$ is a filter meeting every dense subset of $\P$ in $W_r$
and that every set is the interpretation of a $\P$-name in
$W_r$ by $G$. For (5), suppose that $W_r\of U\of V$ and
$U\satisfies\ZFC$. Since $W_r$ is a ground of $V$, we may
exhibit it as $V = W_r[G]$, where $G\of \P$ is
$W_r$-generic. Furthermore, by the details of the
definition $\phi$ provided by theorem~\ref{Theorem.DefinabilityOfPseudoGrounds} and fact~\ref{Fact.IM}, we may assume
that $r = ({}^\ltdelta 2)^{W_r}$, where $\delta>|\ro{\P}|$. Since
$W_r\of U\of W_r[G] = V$, it follows by fact
\ref{Fact.IntermediateModelsAreGrounds} that $U = W_r[G_0]$
is a forcing extension of $W_r$ by a complete subalgebra of
$\ro{\P}$. Since that complete subalgebra also has size
less than $\delta$, we may use the same parameter $r =
({}^\ltdelta 2)^{W_r}$ to define $W_r$ inside $U$. Thus, $W_r^V =
W_r^U$, as desired for (5). For (6), because $W_r$ is a
ground of $V$, it is also a ground of $V[G]$, and so $W_r =
W_s^{V[G]}$ for some $s$. By (5), since $V$ is intermediate
between $W_s^{V[G]}$ and $V[G]$, we have $W_s^V =
W_s^{V[G]}$, establishing (6).\end{proof}

One may also prove a pseudo-ground analogue of theorem~\ref{Theorem.ParameterizedGroundsW_r} with essentially the same arguments (see~\cite{BagariaHamkinsTsaprounisUsuba:SuperstrongAndOtherLargeCardinalsAreNeverLaverIndestructible}). We shall now fix the notation $W_r$ of theorem~\ref{Theorem.ParameterizedGroundsW_r} for the rest of this
article, so that $W_r$ now refers to the ground of $V$
defined by parameter $r$, and this definition is completely
uniform, so that the classes $W_r^U$ index the grounds of
$U$ for any model  $U$ of \ZFC, as $r$ ranges over $U$.

Theorem~\ref{Theorem.ParameterizedGroundsW_r} shows that we
are essentially able to treat the meta-class collection of
all ground models of $V$ in an entirely first-order manner.
Quantifying over the ground models becomes simply
quantifying over the parameters $r$ used to define them as
$W_r$, and the basic questions about the existence and
relations among various ground models become first-order
properties of these parameters. For example, the ground
axiom asserts $\forall r\, V=W_r$, and a ground model $W_r$
is a bedrock model if $\forall s\,(W_s\of W_r\to W_s=W_r)$.
Alternatively, one could also have defined that $W_r$ is a
bedrock if $(\GA)^{W_r}$, that is, if the ground axiom
holds when relativized to $W_r$.

\begin{corollary}\label{Corollary.MantleIsDefinable}
The mantle of any model of set theory is a parameter-free
uniformly first-order-definable transitive class in that
model, containing all ordinals.
\end{corollary}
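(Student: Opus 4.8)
The plan is to observe that essentially all of the work has already been done in Theorem~\ref{Theorem.ParameterizedGroundsW_r}, so that the corollary follows by a short argument. By parts (1) and (2) of that theorem, the grounds of $V$ are exactly the classes $W_r$ as $r$ ranges over $V$, and by part (3) there is a single formula $\psi(r,x)$, with no parameters, such that $x\in W_r$ if and only if $\psi(r,x)$ holds. Since the mantle is by definition $\Mantle=\bigcap_{r\in V}W_r$, we have $x\in\Mantle$ if and only if $\forall r\,\psi(r,x)$, and this is a parameter-free first-order definition of the mantle. The quantifier $\forall r$ ranges over all of $V$, so by part (2) no ground is missed, while by part (1) every $W_r$ is genuinely a ground, so nothing extraneous is intersected in; hence the displayed formula defines precisely the intersection of the grounds.

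For the uniformity claim, I would appeal to the remark following Theorem~\ref{Theorem.ParameterizedGroundsW_r}, namely that the formula $\psi$ is completely uniform, so that in any model $U\satisfies\ZFC$ the classes $W_r^U$ index exactly the grounds of $U$ as $r$ ranges over $U$. Consequently the very same formula $\forall r\,\psi(r,x)$ defines the mantle of $U$ for every model $U$ of \ZFC, which is the sense in which the definition is uniformly first-order.

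It remains to check the two structural assertions. Transitivity of $\Mantle$ is immediate, since an intersection of transitive classes is transitive and each ground $W_r$ is transitive. That $\Mantle$ contains all the ordinals follows because each $W_r$, being obtained from $V$ by stripping away a set forcing, has the same ordinals as $V$ (set forcing adds no ordinals), so every ordinal lies in every $W_r$ and hence in their intersection; in particular $\Mantle$ is a proper class. I do not anticipate any real obstacle: the only point requiring minor care is the interplay between "$\forall r$ over all of $V$" and "each $W_r$ is a ground," but this is exactly the content of parts (1) and (2) of Theorem~\ref{Theorem.ParameterizedGroundsW_r}, and the definability is just part (3).
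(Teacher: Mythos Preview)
Your proof is correct and follows essentially the same approach as the paper: both arguments write $\Mantle=\bigcap_r W_r$ using the parameterized family of grounds from Theorem~\ref{Theorem.ParameterizedGroundsW_r}, obtain the uniform parameter-free definition $x\in\Mantle\iff\forall r\,(x\in W_r)$, and then note that transitivity and containing all ordinals are inherited from the grounds. The paper's version is simply terser.
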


\begin{proof} We've already done the work above.
If $\Mantle$ is the mantle of $V$, then
$$\Mantle=\Intersect_r W_r,$$ or in other words,
$x\in\Mantle\iff \forall r\, x\in W_r$, which is a uniform
parameter-free definition. And being the intersection of
grounds, the mantle is clearly transitive and contains all
ordinals.\end{proof}

\begin{definition}\rm
The ground models of the universe $V$ are {\df downward
directed} under inclusion if the intersection of any two
contains a third, that is, if for every $r$ and $s$, there
is $t$ with $W_t\of W_r\intersect W_s$. The ground models
are {\df locally} downward directed if for every $r$ and
$s$ and every set $B$, there is $t$ with $W_t\intersect
B\of W_r\intersect W_s$. And similarly there are the upward-directed notions.
\end{definition}

Numerous fundamental open questions arise once one is
situated in the context of the collection of all ground
models.

\begin{question}[\cite{Reitz2006:Dissertation,Reitz2007:TheGroundAxiom}]
Is there a unique bedrock model when one
exists?\label{Question.UniqueBedrock?}
\end{question}

Reitz
\cite{Reitz2006:Dissertation,Reitz2007:TheGroundAxiom}
shows that there can be models of \ZFC\ having no bedrock
models at all. A more general question is:

\begin{question}[\cite{Reitz2006:Dissertation}] Are the ground
models of the universe downward
directed?\label{Question.GroundsDirected?}
\end{question}

If the ground models are downward directed, then of course
there cannot be distinct bedrock models, since the bedrock
models are exactly the minimal ground models with respect
to inclusion. So an affirmative answer to question
\ref{Question.GroundsDirected?} implies an affirmative
answer to question~\ref{Question.UniqueBedrock?}. Allowing
larger intersections, let us say that the ground models of
the universe are downward {\df set-directed} under
inclusion if the intersection of any set-indexed collection
of ground models contains a ground model, that is, if for
every set $A$ there is $t$ such that
$W_t\of\Intersect_{r\in A} W_r$. Relaxing this somewhat, we
say that the ground models are {\df locally} downward
set-directed if for any sets $A$ and $B$ there is $t$ such
that $W_t\intersect B\of \Intersect_{r\in A} W_r$.

\begin{question}
Are the ground models of the universe downward
set-directed? Are they locally downward set-directed? Are
they locally downward
directed?\label{Question.GroundsSetDirected?}
\end{question}

Affirmative answers to all of the questions we have asked
so far are consistent with \ZFC. The answers are trivially
affirmative under the ground axiom, for example, since in
this case the only ground model is the universe itself.
More generally, the questions all have affirmative answers
if there is a least member of the collection of grounds
under inclusion, or equivalently, in other words, if the
mantle itself is a ground. This last hypothesis is true in
any set-forcing extension of $L$.

\begin{definition}\rm
An inner model $W$ is a {\df solid bedrock} for $V$ if it
is a ground of $V$ and contained in all other grounds of
$V$. The {\df solid bedrock axiom} is the assertion that
there is a solid bedrock.
\end{definition}

The bedrock axiom of~\cite{Reitz2007:TheGroundAxiom}, in
contrast, asserts merely that there is a bedrock, but not
necessarily that this bedrock is solid. In other words, the
bedrock axiom asserts that there is a minimal ground, while
the solid bedrock axiom asserts that there is a smallest
ground.

\goodbreak
\begin{definition}\rm The
{\df downward-directed grounds} hypothesis \DDG\ is the
assertion that the ground models of the universe are
downward directed. The {\df strong} \DDG\ is the assertion
that the ground models of the universe are downward
set-directed. The {\df generic} \DDG\ is the assertion that
the \DDG\ holds in all forcing extensions. The {\df generic
strong} \DDG\ is the assertion that the strong \DDG\ holds
in all forcing extensions.
\end{definition}

If $W$ is a solid bedrock, it follows of course that $W$ is
precisely the mantle. Thus, the solid bedrock axiom is
equivalent to the assertion that the mantle is a ground,
and it immediately implies the strong \DDG\ and all of the
directedness properties that we have considered. Reitz
observed in~\cite{Reitz2006:Dissertation} that under the
continuum coding axiom \CCA, the assertion that every set
is coded into the \GCH\ pattern, it follows that the
universe is contained in every ground of every forcing
extension. Thus, \CCA\ implies that the universe is a solid
bedrock in all its forcing extensions, and so the strong
\DDG\ holds in $V$ and all forcing extensions. That is, the
\CCA\ implies the generic strong \DDG\ hypothesis. An
affirmative answer to (the first part of) question
\ref{Question.GroundsSetDirected?} implies an affirmative
answer to question~\ref{Question.GroundsDirected?}, which
we have said implies an affirmative answer to question
\ref{Question.UniqueBedrock?}.

Let us now prove that we can obtain affirmative answers to
all of the above questions in the case where the universe
is constructible from a set. Define that a {\df generic
ground} of $V$ is a ground of a forcing extension of $V$.

\begin{theorem}\label{Theorem.IfConstructibleFromASetThenSetDirectedGrounds}
If\/ $V=L[a]$ for a set $a$, then the generic strong \DDG\
holds. Indeed, all the models of the form
$$H^\alpha=\HOD^{\V^{\Coll(\omega,\alpha)}}$$
are grounds of $V$ and they are downward set-directed and
dense in the grounds and indeed in the generic grounds, in
the sense that every ground $W$ of $V$ or of any forcing
extension $V[G]$ has $H^\alpha\of W$ for all sufficiently
large $\alpha$. These grounds therefore exhibit the desired
downward set-directedness.
\end{theorem}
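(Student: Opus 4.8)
The plan is to exhibit the models $H^\alpha$ themselves as a downward-directed, cofinal-going-down family of grounds, getting everything by interlocking two classical facts: Vop\v{e}nka's theorem, which shows each $H^\alpha$ is a ground, and the weak homogeneity of the L\'evy collapse together with its ordinal-definability, which shows the $H^\alpha$ descend below every other ground. First I would replace $V=L[a]$ by $V=L[A]$ for a set of ordinals $A\of\lambda$, using that $a$ together with a bijection of $\mathrm{TC}(\{a\})$ with a cardinal is coded by a set of ordinals. Since $\Coll(\omega,\alpha)\in L$ has a dense subset of size $|\alpha|$, any $\Coll(\omega,\alpha)$-generic $G$ over $V$ is likewise coded by a set of ordinals, and so $V[G]=L[D]$ for a set of ordinals $D$ coding $A$ and $G$ together.

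Fix $\alpha$ and such a $G$. Because $\Coll(\omega,\alpha)$ is weakly homogeneous and ordinal-definable (indeed definable from the single parameter $\alpha$), the standard homogeneity argument for weakly homogeneous forcing shows that the class $\HOD^{V[G]}$ does not depend on $G$ and satisfies $\HOD^{V[G]}\of\HOD^V\of V$; thus $H^\alpha$ is a well-defined class of $V$ with $H^\alpha\of V$. Applying Vop\v{e}nka's theorem inside $V[G]=L[D]$, the set of ordinals $D$ is generic over $\HOD^{V[G]}=H^\alpha$ and $V[G]=H^\alpha[D]$ is a set-forcing extension of $H^\alpha$. Now $H^\alpha\of V\of V[G]$ with $V[G]$ a forcing extension of $H^\alpha$, so by fact \ref{Fact.IntermediateModelsAreGrounds} the model $V$ is itself a forcing extension of $H^\alpha$; that is, each $H^\alpha$ is a ground of $V$. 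For downward directedness, when $\omega\le\alpha\le\beta$ I would use the forcing equivalence $\Coll(\omega,\beta)\cong\Coll(\omega,\alpha)\times\Coll(\omega,\beta)$ to present a $\Coll(\omega,\beta)$-extension of $V$ as a $\Coll(\omega,\beta)$-extension of a $\Coll(\omega,\alpha)$-extension of $V$, and then the homogeneity argument over the intermediate model gives $H^\beta\of H^\alpha$. Hence for any set of ordinals $I$ there is $\gamma$ above $\sup I$ with $H^\gamma\of H^\alpha$ for all $\alpha\in I$, and $H^\gamma$ is a ground --- this is the asserted downward set-directedness of the family $\{H^\alpha\}$.

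The crux is density. Let $W$ be a ground of $V$, or more generally of a set-forcing extension $V[g]$, with $g$ being $\P$-generic over $V$ for $|\P|^V\le\nu$ and $V[g]=W[H]$ with $H$ generic over $W$ for a forcing $\Q$ with $|\Q|^W\le\mu$. The key point is the standard absorption of small forcing by the L\'evy collapse: for $\alpha$ large (past $\nu$ and $\mu$), $\P\times\Coll(\omega,\alpha)$ is forcing equivalent to $\Coll(\omega,\alpha)$ over $V$, and $\Q\times\Coll(\omega,\alpha)$ is forcing equivalent to $\Coll(\omega,\alpha)$ over $W$. Using the first equivalence, a suitable $\Coll(\omega,\alpha)$-generic $G$ over $V$ has $V[G]=V[g][G']$ for some $\Coll(\omega,\alpha)$-generic $G'$ over $V[g]$; using the second (and the product lemma), $V[g][G']=W[H\times G']=W[K]$ for a single $\Coll(\omega,\alpha)$-generic $K$ over $W$. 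Then $H^\alpha=\HOD^{V[G]}=\HOD^{W[K]}\of\HOD^W\of W$, again by the homogeneity argument, now carried out over $W$. Since the threshold on $\alpha$ depends only on a parameter defining $W$, any set of grounds admits a single $\gamma$ with $H^\gamma$ below all of them, and $H^\gamma$ is a ground of $V$, hence of $V[g]$. This gives the strong \DDG\ in $V$; and since the density clause covers the grounds of every set-forcing extension of $V$, the same argument yields the strong \DDG\ in each such extension, i.e.\ the generic strong \DDG.

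I expect the main difficulty to lie in the density clause: one must apply the absorption of small forcing on both sides --- over $V$ to rewrite the collapse extension as a collapse extension of $V[g]$, and over $W$ to rewrite it as a single collapse extension of $W$ --- and must check that the threshold on $\alpha$ can be chosen uniformly over an arbitrary set of grounds, so that the intersection of that set of grounds contains a single $H^\gamma$ which is itself a ground. A secondary point requiring care, and used repeatedly, is that weak homogeneity together with the definability of $\Coll(\omega,\alpha)$ from $\alpha$ really do make $\HOD$ of a collapse extension a definable class of the ground model, not merely a class of the extension.
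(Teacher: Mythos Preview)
Your proposal is correct and follows essentially the same approach as the paper: both arguments rest on the weak homogeneity and ordinal-definability of $\Coll(\omega,\alpha)$ to show $H^\alpha$ is independent of the generic and lands inside any target ground, on Vop\v{e}nka's theorem together with fact \ref{Fact.IntermediateModelsAreGrounds} to show each $H^\alpha$ is a ground of $V$, and on the absorption of small forcing into the collapse for density. The only organizational difference is that you handle the generic-ground case in a single pass via a two-sided absorption (over $V$ to introduce $g$, then over $W$ to rewrite as a single collapse extension of $W$), whereas the paper first treats grounds of $V$ and then reduces the generic-ground case to it by observing that $\HOD^{V[h]^{\Coll(\omega,\alpha)}}=H^\alpha$ once $\alpha$ absorbs the forcing adding $h$; both routes amount to the same computation.
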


\begin{proof}
For any ordinal $\alpha$, we let $H^\alpha$ denote the
class $\HOD^{\V^{\Coll(\omega,\alpha)}}$ consisting of all
$x$ forced by $\one$ to be in the $\HOD$ of the extension via $\Coll(\omega,\alpha)$.
The second part of the theorem can be formalized as the
statement that for any poset $\P$ and any $\P$-name
$\dot{r}$ we have that $\P$ forces $\check H^\alpha\of
W_{\dot{r}}^{V[\dot G]}$ for all sufficiently large
$\alpha$.

To begin the proof, note that if $G$ is
$\Coll(\omega,\alpha)$-generic over $V$ for some ordinal
$\alpha$, then since the forcing is weakly homogeneous and
ordinal-definable, it follows that $\HOD^{V[G]}\of\HOD^V$
and moreover $x\in\HOD^{V[G]}$ if and only if
$\one\forces\check x\in\HOD$, which implies that
$\HOD^{V[G]}$ does not depend on $G$ and is equal to
$H^\alpha$. In particular, $H^\alpha$ is an inner model of
$V$ that satisfies \ZFC. Similar homogeneity reasoning
shows that $H^\beta\of H^\alpha$ whenever $\alpha<\beta$,
and so the collection of $H^\alpha$ is downward
set-directed.

If $W$ is a ground of $\V$, so that $V=W[g]$ for some
$W$-generic $g\of\P\in W$, then let $\theta$ be any
cardinal at least as large as $\card({\P})$ and suppose
that $G$ is $V$-generic for $\Coll(\omega,\theta)$. We may
absorb the $\P$ forcing into the collapse, since
$\P\times\Coll(\omega,\theta)$ is forcing equivalent to
$\Coll(\omega,\theta)$ by~\cite[Lemma
26.7]{Jech:SetTheory3rdEdition}, and so
$$V[G]=W[g][G]=W[G']$$ for some $W$-generic filter $G'\of
\Coll(\omega,\theta)$. Furthermore, we may observe that
$$\HOD^{\V[G]}=\HOD^{W[G']}\subseteq W,$$
where the inclusion holds because $\Coll(\omega,\theta)$ is
weakly homogeneous and ordinal definable in $W$. Finally,
using our hypothesis that $V=L[a]$, we argue that
$\HOD^{V[G]}$ is a ground of $V$. This is a consequence of
\Vopenka's theorem~\cite[Theorem
15.46]{Jech:SetTheory3rdEdition}, which asserts that every
set is generic over $\HOD$. In our case, since
$V[G]=L[a,G]$, we have that $a$ and $G$ are generic over
$\HOD^{V[G]}$, and so $\HOD^{V[G]}\of V\of
\HOD^{V[G]}[a,G]=V[G]$, trapping $V$ between $\HOD^{V[G]}$
and its forcing extension $V[G]$. It therefore follows by
fact~\ref{Fact.IntermediateModelsAreGrounds} that
$\HOD^{V[G]}$ is a ground of $V$, as desired. The downward
set-directedness of the grounds is now an obvious
consequence.

Finally, to see that the models $H^\alpha$ are dense in the
generic grounds, let $W$ be a ground of $V[h]$, so that
$W[g]=V[h]$ for some $W$-generic filter $g$ and $V$-generic
$h\of\Q\in V$. By applying what we have proved so far, but
in $V[h]$, we know that
$\HOD^{V[h]^{\Coll(\omega,\alpha)}}\of W$ for sufficiently
large $\alpha$. But by insisting that $\alpha$ is also
larger than $\Q$, the forcing $\Q$ is absorbed into the
collapse via
$\Q*\Coll(\omega,\alpha)\equiv\Coll(\omega,\alpha)$, and
from this it follows that
$\HOD^{V[h]^{\Coll(\omega,\alpha)}}=
\HOD^{V^{\Coll(\omega,\alpha)}}=H^\alpha$, and so
$H^\alpha\of W$ as desired. Lastly, again it is easy to see
as a consequence that the generic grounds are downward
set-directed, and so the generic \DDG\ holds in $V$.
\end{proof}

To what extent generally can we expect the grounds to be
directed, locally set directed or even fully set-directed?
A model of \ZFC\ having two bedrock models, providing a
negative answer to question~\ref{Question.UniqueBedrock?},
would be very interesting. Aside from the naturalness of
these hypotheses, their merit is that if they hold, the
mantle is well behaved. Before examining their effect on
the mantle, let's step back a little and look at a more
general setup. We say that $\{U_p\st p\in I\}$ is a {\df
parameterized family} of inner models if each $U_p$ is an
inner model and there is a formula $\varphi$ such that $x\in
U_p$ if and only if $\varphi(x,p)$ holds.

\begin{lemma}\label{Lemma.IntersectionsOfInnerModels}
If $\mathcal{W}=\{U_p\st p\in I\}$ is a parameterized
family of inner models, then $\bigcap\mathcal{W}$ is an
inner model if and only if
$V_\alpha\cap(\bigcap\mathcal{W})\in\bigcap\mathcal{W}$ for
every ordinal $\alpha$. Furthermore, this is indeed the
case if\/ $\bigcap\mathcal{W}$ is a definable class in
every $U\in\mathcal{W}$.
\end{lemma}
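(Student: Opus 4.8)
The proof has two separate assertions, and I would treat them in order. For the first (the equivalence), the forward direction is immediate: if $M=\bigcap\mathcal{W}$ is an inner model, then it satisfies \ZF, so it is closed under the operation $x\mapsto V_\alpha^M\cap x$ applied to the universe, whence $V_\alpha\cap M = V_\alpha^M \in M$ for every $\alpha$ (using that the ordinals of $M$ are all the ordinals, so $V_\alpha^M$ makes sense and is the right object). For the converse, I would invoke Fact \ref{Fact.IM}-style reasoning, i.e.\ the criterion from Jech \cite[Theorem 13.9]{Jech:SetTheory3rdEdition}: a transitive class $M$ containing all ordinals is an inner model of \ZF\ provided it is closed under the \Godel\ operations and is almost universal. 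Transitivity and containing all ordinals are clear since each $U_p$ is an inner model and these properties are preserved by intersection. Closure under \Godel\ operations is likewise inherited from the $U_p$, since each \Godel\ operation applied to elements of $M$ lands in every $U_p$ (as each is closed under them) and hence in $M$. The only remaining point is almost universality: given $A\of M$, I need $B\in M$ with $A\of B$. Choose $\alpha$ large enough that $A\of V_\alpha$; then $A\of V_\alpha\cap M$, and by hypothesis $V_\alpha\cap M\in M$, so $B = V_\alpha\cap M$ works.

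\textbf{Main obstacle, and the second assertion.} The delicate step is the ``furthermore'' clause: if $\bigcap\mathcal{W}$ is a definable class inside every $U\in\mathcal{W}$, then the condition $V_\alpha\cap(\bigcap\mathcal{W})\in\bigcap\mathcal{W}$ holds automatically. Fix $\alpha$ and write $M=\bigcap\mathcal{W}$. I want $V_\alpha\cap M\in M$, i.e.\ $V_\alpha\cap M\in U_p$ for every $p\in I$. Fix $p$. Working inside $U_p$, the class $M$ is definable by hypothesis; call its $U_p$-definition $M^{U_p}$. The key claim is that $M^{U_p} = M$, i.e.\ the intersection of all the $U_q$ computed inside $U_p$ coincides with the real intersection. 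This requires that $M\of U_p$ (clear) together with the fact that computing $M$ inside $U_p$ only ``sees'' the parts of the family relevant to forming the intersection—here is where one must be slightly careful about what ``definable in every $U$'' is actually being used for. The cleanest route: since $M\of U_p$ and $M$ is definable in $U_p$, the class $V_\alpha^{U_p}\cap M^{U_p}$ is a set in $U_p$ (it is a definable subclass of the set $V_\alpha^{U_p}\in U_p$, hence a set by Separation in $U_p$). And $V_\alpha^{U_p} = V_\alpha^V = V_\alpha$ since $U_p$ is an inner model computing the cumulative hierarchy correctly. Finally $M^{U_p}\cap V_\alpha = M\cap V_\alpha$: any $x\in V_\alpha$ lies in $M$ iff it lies in every $U_q$, and lies in $M^{U_p}$ iff $U_p$ thinks it lies in every $U_q$—and for $x$ of rank below $\alpha$ these agree because membership in each $U_q$ is absolute in the relevant way given the definability. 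So $V_\alpha\cap M = V_\alpha^{U_p}\cap M^{U_p}\in U_p$.

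\textbf{Writing it.} I would state at the outset that $M=\bigcap\mathcal{W}$ is always transitive, contains all ordinals, and is closed under \Godel\ operations (each inherited from the $U_p$), so that by Jech's criterion the only issue for ``$M$ is an inner model'' is almost universality; then observe almost universality is exactly equivalent to $V_\alpha\cap M\in M$ for all $\alpha$, modulo choosing $\alpha$ with $A\of V_\alpha$ in one direction and noting $V_\alpha\cap M$ is covered by itself in the other. For the furthermore clause I would fix $\alpha$ and $U\in\mathcal{W}$, note $V_\alpha^U = V_\alpha$ (correctness of the cumulative hierarchy in inner models), note the $U$-definition of $M$ agrees with $M$ on sets (since $M\of U$ and the defining formula's evaluation of ``$x\in U_q$ for all $q$'' is correct for $x\in U$), and conclude $V_\alpha\cap M$ is a definable subset of $V_\alpha^U\in U$, hence in $U$ by Separation in $U$; as $U$ was arbitrary, $V_\alpha\cap M\in M$. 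The subtle point I would flag in the write-up is precisely the absoluteness of ``$x\in U_q$'' between $V$ and $U$ for $x$ of bounded rank, which is what makes $M^U\cap V_\alpha = M\cap V_\alpha$; everything else is bookkeeping.
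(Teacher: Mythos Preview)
Your treatment of the equivalence is correct and matches the paper exactly: forward via $V_\alpha^M = V_\alpha \cap M \in M$, reverse via Jech's criterion with almost universality witnessed by $V_\alpha \cap M$ as the covering set.

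The ``furthermore'' clause, however, contains one error and one unnecessary detour. The error: you assert $V_\alpha^{U_p} = V_\alpha^V = V_\alpha$ ``since $U_p$ is an inner model computing the cumulative hierarchy correctly.'' This is false for any proper inner model; rather $V_\alpha^{U_p} = V_\alpha \cap U_p$, which is strictly smaller than $V_\alpha$ whenever $U_p \neq V$. The detour: you introduce $M^{U_p}$ and worry about whether ``the intersection of all the $U_q$ computed inside $U_p$ coincides with the real intersection,'' flagging absoluteness of ``$x \in U_q$'' between $V$ and $U_p$ as the subtle point. But the hypothesis says only that $M = \bigcap\mathcal{W}$ is a definable class in $U_p$; nothing is assumed about \emph{which} formula defines it, and there is no need to re-compute the intersection from inside $U_p$ or to verify any absoluteness.

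The paper's argument bypasses both issues in one line: since $M \subseteq U$, we have
\[
V_\alpha \cap M \;=\; V_\alpha \cap U \cap M \;=\; V_\alpha^U \cap M,
\]
and this is a definable subclass (in $U$) of the set $V_\alpha^U \in U$, hence lies in $U$ by Separation. Your error happens to be harmless for the conclusion, since $M \subseteq U_p$ forces $V_\alpha \cap M \subseteq V_\alpha^{U_p}$ anyway; but the stated reasoning is incorrect and the absoluteness discussion is a red herring. Replace it with the computation above.
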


\begin{proof}
For the forward direction of the claimed equivalence,
assume that $\bigcap\mathcal{W}$ is an inner model, that
is, a transitive class model of \ZF\ containing all
ordinals. Given an ordinal $\alpha$, it follows that
$(V_\alpha)^{\bigcap\mathcal{W}}\in\Intersect\mathcal{W}$.
By absoluteness of rank,
$(V_\alpha)^{\bigcap\mathcal{W}}=V_\alpha\cap(\bigcap\mathcal{W})$,
which proves the forward direction. For the reverse
direction, note that $\bigcap\mathcal{W}$ is transitive,
contains all the ordinals, and is closed under the eight
G\"odel operations, since this is true of every member of
$\mathcal{W}$, as each member of $\mathcal{W}$ is an inner
model. So by fact~\ref{Fact.IM}, in order to prove that
$\bigcap\mathcal{W}$ is an inner model, all that's left to
show is that it is almost universal. So let $A$ be a subset
of $\bigcap\mathcal{W}$ and let $\alpha$ be the rank of
$A$. Then $A\of
V_\alpha\cap(\bigcap\mathcal{W})\in\bigcap\mathcal{W}$ by
assumption, showing almost universality, and hence proving
the equivalence.

Now assume that $\bigcap\mathcal{W}$ is a definable
subclass of every $U\in\mathcal{W}$. For any ordinal
$\alpha$ and any $U\in\mathcal{W}$, we have
$V_\alpha\cap(\bigcap\mathcal{W})=V_\alpha\cap
U\cap(\bigcap\mathcal{W})=V_\alpha^U\cap(\bigcap\mathcal{W})\in
U$, by the separation axiom in $U$. Since $U$ was
arbitrary, it follows that
$V_\alpha\cap(\bigcap\mathcal{W})\in\bigcap\mathcal{W}$, as
desired.
\end{proof}

Let's now return to the mantle and explore the effects our
concepts of directedness have on it.

\goodbreak
\begin{theorem}\ \label{Theorem.IfDirectedMantleHasZF}
\begin{enumerate}
\item If the \DDG\ holds, that is, if the grounds are
    downward directed,
  then the mantle is constant across these ground models.%
\label{item:IfDDThenMantleIsConstant}%
\item If the mantle is constant across the grounds,
    then it is a model of \ZF.%
\label{item:IfMantleIsConstantThenZF}%
\item If the strong \DDG\ holds, that is, if the
    grounds are downward set-directed,
     then the mantle is a model of \ZFC.%
\item Indeed, this latter conclusion can be made if the
    grounds are merely downward directed and locally
    downward set-directed.
\end{enumerate}
\end{theorem}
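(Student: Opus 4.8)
My plan is to prove items (1), (2), and (4) in turn; item (3) is then immediate from (4), since downward set-directedness of the grounds implies both downward directedness (apply it to two-element index sets) and local downward set-directedness. For (1), I would fix a ground $W_r$ of $V$ and show $\Mantle^{W_r}=\Mantle$ by two inclusions. Since a ground of a ground is a ground, every ground of $W_r$ is also a ground of $V$, so $\Mantle$, being the intersection of all grounds of $V$, is contained in each ground of $W_r$ and hence in $\Mantle^{W_r}$. Conversely, let $W_t$ be any ground of $V$; by the \DDG\ there is a ground $W_u\of W_r\cap W_t$, and since $W_u\of W_r\of V$ with $V$ a forcing extension of $W_u$, fact \ref{Fact.IntermediateModelsAreGrounds} shows that $W_r$ is a forcing extension of $W_u$, so $W_u$ is a ground of $W_r$. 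Hence $\Mantle^{W_r}\of W_u\of W_t$, and as $W_t$ was arbitrary, $\Mantle^{W_r}\of\Mantle$.

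For (2), assuming the mantle is constant across the grounds, I would apply lemma \ref{Lemma.IntersectionsOfInnerModels} to the parameterized family $\mathcal W=\{W_r\st r\in V\}$ of inner models furnished by theorem \ref{Theorem.ParameterizedGroundsW_r}, whose intersection is $\Mantle$. By corollary \ref{Corollary.MantleIsDefinable}, $\Mantle^{W_r}$ is a parameter-free definable class in each $W_r$, and by the constancy hypothesis $\Mantle=\Mantle^{W_r}$, so $\Mantle$ is a definable class in every member of $\mathcal W$. The lemma then gives that $\Mantle=\bigcap\mathcal W$ is an inner model, that is, a transitive class model of \ZF\ containing all ordinals.

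For (4), which also yields (3): by (1) the mantle is constant across the grounds, so by (2) we have $\Mantle\models\ZF$, and it remains only to verify the axiom of choice in $\Mantle$. Since $\Mantle\models\ZF$, it suffices to well-order each level $V_\alpha^{\Mantle}=V_\alpha\cap\Mantle$ inside $\Mantle$, because every set in $\Mantle$ is a subset of one of these levels. The crux is the observation that for a fixed ordinal $\beta$ the collection $\{V_\beta\cap W_r\st r\in V\}$ is a \emph{set} --- it is a definable subcollection of $P(V_\beta)$ --- so, using choice in $V$ to pick a set $A$ of parameters realizing all of its members, we get $V_\beta\cap\Mantle=\bigcap_{r\in V}(V_\beta\cap W_r)=\bigcap_{r\in A}(V_\beta\cap W_r)$. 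Applying local downward set-directedness to the sets $A$ and $B=V_\beta$ yields a ground $W_t$ with $W_t\cap V_\beta\of\bigcap_{r\in A}W_r$; intersecting with $V_\beta$ gives $W_t\cap V_\beta\of\bigcap_{r\in A}(V_\beta\cap W_r)=V_\beta\cap\Mantle$, and since also $\Mantle\of W_t$, in fact $W_t\cap V_\beta=V_\beta\cap\Mantle$. Now, given $\alpha$, I would put $Y=V_\alpha^{\Mantle}\in\Mantle$, choose $\beta$ large enough that $Y$ and every well-ordering of $Y$ lie in $V_\beta$ (for instance $\beta=\alpha+\omega$), and take $W_t$ as above; since $Y\in\Mantle\of W_t$ and $W_t\models\ZFC$, there is a well-ordering $R$ of $Y$ with $R\in W_t$, and since $R\in V_\beta$ we get $R\in W_t\cap V_\beta=V_\beta\cap\Mantle\of\Mantle$. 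Because $R\in\Mantle$ and $\Mantle$ is a transitive model of \ZF, $\Mantle$ recognizes that $R$ well-orders $Y$ (linearity and well-foundedness being absolute), so $\Mantle\models\ZFC$. Under the full strong \DDG\ the localization step is unnecessary: one applies set-directedness to $A$ directly to obtain a ground $W_t\of\bigcap_{r\in A}W_r$, and the rest of the argument goes through.

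The manipulations in (1) and (2) are routine given the machinery already developed --- the parameterized grounds of theorem \ref{Theorem.ParameterizedGroundsW_r}, fact \ref{Fact.IntermediateModelsAreGrounds}, and lemma \ref{Lemma.IntersectionsOfInnerModels}. The step I expect to need the most care is the observation in (4) that $\{V_\beta\cap W_r\st r\in V\}$ is a set: this is precisely what converts a directedness hypothesis quantifying over the proper class of all grounds into a single ground $W_t$ agreeing with the mantle below $V_\beta$, and it is from such a $W_t$ that a well-ordering living inside the mantle can be extracted.
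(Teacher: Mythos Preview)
Your proofs of (1) and (2) are correct and essentially identical to the paper's: the same two inclusions for (1), and the same appeal to lemma \ref{Lemma.IntersectionsOfInnerModels} for (2).

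For (3) and (4) your argument is correct but organized differently from the paper's. The paper fixes an arbitrary $y\in\Mantle$ and, for each well-ordering $z$ of $y$ that falls out of the mantle, chooses a ground $W_{r_z}$ witnessing $z\notin W_{r_z}$; since the excluded well-orderings form a set, local set-directedness (with $B$ the set of all relations on $y$) yields a single ground $W$ whose relations on $y$ avoid all the excluded $z$, so any well-ordering of $y$ in $W$ already lies in $\Mantle$. You instead prove the stronger intermediate fact that for every $\beta$ there is a ground $W_t$ with $W_t\cap V_\beta=\Mantle\cap V_\beta$, by first observing that $\{V_\beta\cap W_r:r\in V\}$ is a set and then applying local set-directedness to a set of representatives. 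Your route gives a reusable local agreement statement (the mantle is ``locally realized'' by grounds on each $V_\beta$), from which the well-ordering drops out immediately; the paper's route is more narrowly aimed at choice and avoids the representative-selection step. Both are short and either would serve.
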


\begin{proof}  (1) Suppose that the ground models of $V$ are
downward directed. Fix a ground $W$. Since any ground of
$W$ is also a ground of $V$, it follows that the mantle of
$V$ is contained in the mantle of $W$. Conversely, if $a$
is not in the mantle of $V$, then it is not in some ground
$W'$, and so it is not in $W\intersect W'$. By
directedness, however, there is a ground $\bar{W}\of
W\intersect W'$. Clearly, $\bar{W}$ is a ground of $W$, so
that the mantle of $W$ is contained in $\bar{W}$. But
$a\notin\bar{W}$, so $a$ is not in the mantle of $W$. So
the mantle is constant among the ground models of $V$.

(2) If the mantle is constant across the grounds, then in
particular, it is a definable subclass of every ground.
Since the mantle is the intersection of all the grounds, it
is an inner model by lemma
\ref{Lemma.IntersectionsOfInnerModels}, which we proved
precisely for this purpose.

For (3) and (4), suppose that the ground models are
downward directed and locally downward set-directed. By (1)
and (2), the mantle satisfies \ZF. For \ZFC, consider any
set $y$ in the mantle $\Mantle $. Every ground model $W$,
being a model of \ZFC, has various well orderings of $y$;
what we must show is that there is a well ordering of $y$
in common to all the grounds. For each well ordering $z$ of
$y$ that is not in the mantle, there is a ground model
$W_{r_z}$ with $z\notin W_{r_z}$. By local
set-directedness, the family $\set{W_{r_z}\st z\text{ is a
well order of }y\text{ with }z\notin \Mantle}$ has a ground
model $W$ with $W\intersect B\of W_{r_z}$ for all such
relations $z$, where $B$ is the set in $V$ of all relations
on $y$. That is, any relation $z$ excluded from the mantle
is excluded by reason of not being in a particular ground
model $W_{r_z}$, and consequently it is also excluded from
$W$. Any well ordering of $y$ in $W$, therefore, is in the
mantle, and since $W\satisfies\ZFC$, there are many such
well orderings. So $\Mantle \satisfies\ZFC$, as desired for
(3) and (4).\end{proof}

The argument of theorem~\ref{Theorem.IfDirectedMantleHasZF}
can be viewed as an instance of the following general
phenomenon.

\begin{definition}\label{Definition.LocallyDownwardSetDirectedInGeneral}
\rm A parameterized family $\mathcal{W}=\{N_i\st i\in I\}$
of transitive sets or classes, where $I$ is a class, is
{\df locally downward set-directed}, if for any set $B$ and
any set $J\of I$, there is an $i_0\in I$ such that
$B\intersect N_{i_0}\of B\intersect (\bigcap_{i\in J}N_i)$.
\end{definition}

Clearly, it is equivalent to require merely that
$B\intersect N_{i_0}\of \bigcap_{i\in J} N_i$.

\begin{corollary}\ \label{Corollary.IntersectionGivesZF}
\begin{enumerate}
 \item If $\mathcal{W}$ is a locally downward set-directed
parameterized family of inner models of \ZFC\ and
$V_\alpha\cap(\bigcap\mathcal{W})\in\bigcap\mathcal{W}$ for
every ordinal $\alpha$, then $\Intersect\mathcal{W}$ satisfies
\ZFC.
 \item If\/ $\mathcal{W}=\set{W_\alpha\st\alpha\in\ORD}$ is
any definable sequence of inner models that is descending
in the sense that $\alpha<\beta$ implies $W_\beta\of
W_\alpha$, then $\bigcap\mathcal{W}$ is an inner model, and
if every $W_\alpha$ satisfies \ZFC, then so does
$\bigcap\mathcal{W}$.
\end{enumerate}%
\end{corollary}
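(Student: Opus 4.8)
The plan is to handle the corollary's two assertions in turn, deriving the second from the first. For the first assertion, I would feed the hypothesis straight into Lemma~\ref{Lemma.IntersectionsOfInnerModels}: each member of $\mathcal{W}$ is an inner model of \ZFC\ and, by assumption, $V_\alpha\intersect(\bigcap\mathcal{W})\in\bigcap\mathcal{W}$ for every ordinal $\alpha$, so the lemma at once gives that $M:=\bigcap\mathcal{W}$ is a transitive inner model of \ZF\ containing all the ordinals. What remains is to verify the axiom of choice inside $M$, and for this I would reproduce the argument of Theorem~\ref{Theorem.IfDirectedMantleHasZF}(3)--(4), with local downward set-directedness of $\mathcal{W}$ playing the role that local downward set-directedness of the grounds played there.

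Concretely: given $y\in M$, let $B$ be the set of all relations on $y$, and for each well order $z$ of $y$ with $z\notin M$ fix (using choice in $V$) an index $i_z\in I$ with $z\notin N_{i_z}$; the collection $J$ of these indices is a set by replacement, since the well orders of $y$ form a set. By local downward set-directedness pick $i_0\in I$ with $B\intersect N_{i_0}\of\bigcap_{i\in J}N_i$. As $N_{i_0}\satisfies\ZFC$ and $y\in M\of N_{i_0}$, there is a well order $w$ of $y$ in $N_{i_0}$; then $w\in B\intersect N_{i_0}$ lies in every $N_{i_z}$, so $w\notin M$ would force $w$ to be one of the $z$'s and hence $w\notin N_{i_w}$, a contradiction. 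Thus $w\in M$, so every set of $M$ is well orderable in $M$, and $M\satisfies\ZFC$.

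For the second assertion, suppose $\mathcal{W}=\set{W_\alpha\st\alpha\in\ORD}$ is a definable $\of$-descending sequence of inner models. The point I would stress is that such a sequence is automatically downward set-directed, hence locally downward set-directed: given any set of indices, the supremum of those ordinals indexes a member included in all of them. So the first assertion applies as soon as its hypothesis is checked, that is, as soon as $V_\alpha\intersect(\bigcap\mathcal{W})\in\bigcap\mathcal{W}$ for every $\alpha$. To check this, fix $\alpha$ and consider the sequence $\<V_\alpha\intersect W_\gamma\mid\gamma\in\ORD>$; being a weakly $\of$-decreasing $\ORD$-indexed sequence of subsets of the fixed set $V_\alpha$, it must be eventually constant, say with stable value $S=V_\alpha\intersect(\bigcap\mathcal{W})$ attained from some $\gamma^*$ onward. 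Then $S=V_\alpha\intersect W_{\gamma^*}=(V_\alpha)^{W_{\gamma^*}}\in W_{\gamma^*}$ by Separation in $W_{\gamma^*}$; for $\gamma\geq\gamma^*$ the same reasoning gives $S=(V_\alpha)^{W_\gamma}\in W_\gamma$, while for $\gamma<\gamma^*$ the descending hypothesis gives $W_{\gamma^*}\of W_\gamma$, so $S\in W_{\gamma^*}$ already yields $S\in W_\gamma$. Hence $S\in\bigcap\mathcal{W}$. By Lemma~\ref{Lemma.IntersectionsOfInnerModels} this makes $\bigcap\mathcal{W}$ an inner model, and if in addition every $W_\alpha\satisfies\ZFC$, the first assertion delivers $\bigcap\mathcal{W}\satisfies\ZFC$.

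I expect the only genuine obstacle to be the stabilization step together with the verification that the stable value $S$ sits inside \emph{every} $W_\gamma$: for the late $\gamma$ it is literally the set $(V_\alpha)^{W_\gamma}$, but for the early $\gamma$ one must notice that it suffices for $S$ to be an element of the \emph{smaller} model $W_{\gamma^*}$, since membership then propagates outward along the descending inclusions. Everything else is bookkeeping on top of Lemma~\ref{Lemma.IntersectionsOfInnerModels} and the choice argument already carried out in Theorem~\ref{Theorem.IfDirectedMantleHasZF}.
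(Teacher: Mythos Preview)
Your proposal is correct and follows essentially the same route as the paper: invoke Lemma~\ref{Lemma.IntersectionsOfInnerModels} plus the choice argument of Theorem~\ref{Theorem.IfDirectedMantleHasZF} for the first claim, and for the second claim use the stabilization of the sequence $\langle V_\alpha\cap W_\gamma\mid\gamma\in\ORD\rangle$ together with the early/late case split on $\gamma$ to place the stable value in every $W_\gamma$, then observe that a descending $\ORD$-sequence is downward set-directed. The only cosmetic difference is that the paper justifies the stabilization explicitly via Replacement on the ``falling-out times'' of elements of $V_\alpha$, whereas you appeal to the general fact that an $\ORD$-indexed weakly $\subseteq$-decreasing sequence of subsets of a set must stabilize; these are the same observation.
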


\begin{proof}
The first claim follows by lemma
\ref{Lemma.IntersectionsOfInnerModels} and the argument of
theorem~\ref{Theorem.IfDirectedMantleHasZF}. For the second
claim, suppose that  $\mathcal{W}$ is a descending sequence
of inner models. To see that $\Intersect\mathcal{W}$ is an
inner model, it suffices to show by lemma
\ref{Lemma.IntersectionsOfInnerModels} that
$V_\alpha\cap(\bigcap\mathcal{W})\in\bigcap\mathcal{W}$ for
every ordinal $\alpha$. Fix any ordinal $\alpha$. Since
$V_\alpha$ is a set, we may simply wait for all the
elements of $V_\alpha$ that will eventually fall out of the
$V_\alpha^{W_\beta}$ to have fallen out---simply apply the
replacement axiom to the falling-out times---and thereby
find an ordinal $\beta$ such that
$V_{\alpha}^{W_{\beta'}}=V_\alpha^{W_\beta}$ for all
$\beta'\ge\beta$. Then $V_\alpha^{W_\beta}=V_\alpha\cap
W_\beta=V_\alpha\cap(\bigcap\mathcal{W})$, so all that
needs to be shown is that $V_\alpha^{W_\beta}\in W_\gamma$,
for every $\gamma$. But this is clear for $\gamma<\beta$,
as $V_\alpha^{W_\beta}\in W_\beta\of W_\gamma$ in that
case. And for $\gamma\ge\beta$, we have that
$V_\alpha^{W_\beta}=V_\alpha^{W_\gamma}\in W_\gamma$. So
$\Intersect\mathcal{W}$ is an inner model. To see that this
model satisfies choice if every member of $\mathcal{W}$
does, it suffices to observe that $\mathcal{W}$ is downward
set-directed.\end{proof}

Let us remark that having a descending sequence of
set-sized length does not suffice for the conclusion, even
when the intersection of all of its members is definable in
each of the models. In
\cite{McAloon1974:OnTheSequenceHODn}, for example, a model
of \ZFC\ is produced in which the sequence $\<\HOD^n\st
n<\omega>$ is definable, where $\HOD^{n+1}=\HOD^{\HOD^n}$,
and where the intersection
$\HOD^\omega=\bigcap_{n<\omega}\HOD^n$ does not satisfy the
axiom of choice. Since the model produced there is
constructible from a set, it follows from
\cite{Grigorieff1975:IntermediateSubmodelsAndGenericExtensions}
that the sequence of iterated $\HOD$s is definable in every
iterated $\HOD$, and so $\HOD^\omega$ is a class in every
$\HOD^n$ there. So by lemma
\ref{Lemma.IntersectionsOfInnerModels}, we know that
$\HOD^\omega$ is an inner model in that case, but still not
a model of \ZFC. Note that by corollary
\ref{Corollary.IntersectionGivesZF}, if the sequence
$\<\HOD^\alpha\st\alpha\in\ORD>$ is definable, then the
intersection
$\HOD^{\Ord}=\bigcap_{\alpha<\Ord}\HOD^\alpha$ is a model
of \ZFC, as every $\HOD^{\alpha+1}$ is a model of \ZFC\ and
$\HOD^{\Ord}=\bigcap_{\alpha<\Ord}\HOD^{\alpha+1}$.
Another natural example is given by
\cite{Dehornoy1978:IteratedUltrapowersAndPrikyForcing},
where it is shown that if $\kappa$ is a measurable cardinal
and $\mu$ is a normal measure on $\kappa$, then the
intersection of, for example, the first $\omega^2$ many
iterated ultrapowers of $\V$ by $\mu$ fails to satisfy the
axiom of choice. Clearly, though, this intersection is
definable in each of the earlier iterates, and so by lemma
\ref{Lemma.IntersectionsOfInnerModels} it is an inner
model. There is also an interesting example, given by a
1974 result due to Harrington appearing in~\cite[section
7]{Zadrozny1983:IteratingOrdinalDefinability}, where a
model is constructed in which the intersection
$\bigcap_{n<\omega}\HOD^n$ is not a model of \ZF, and
actually, neither the sequence $\<\HOD^n\st n<\omega>$ nor
$\bigcap_{n<\omega}\HOD^n$ is a class in that model.

It follows by fact~\ref{Fact.IntermediateModelsAreGrounds}
that if the mantle $M$ is a ground of $V$, then there are
only a set number of possible intermediate models $M\of
W\of V$, since each is determined by its filter on a
certain subalgebra of the forcing from $M$ to $V$. Thus,
under the solid bedrock axiom, there are only set many
grounds, in the sense that there is a set $I$ such that
every ground of $V$ is $W_r$ for some $r\in I$. We don't
know if the converse holds.

\begin{question}
Is the solid bedrock axiom equivalent to the assertion that
there are only set many grounds?
\label{Question.SBAequivalentToSetManyGrounds}
\end{question}

A strong counterexample to this would be a model $V$ having
only set many grounds, but no minimal ground. Any such
model would of course also be a counterexample to downward
set-directedness and the generic strong \DDG\ hypothesis.

\begin{theorem}
If the universe is constructible from a set, then question
\ref{Question.SBAequivalentToSetManyGrounds} has a positive
answer. Indeed, if the universe is constructible from a
set, then the following are equivalent:
\begin{enumerate}
\item There are only set many grounds.%
\item The bedrock axiom.%
\item The solid bedrock axiom.%
\end{enumerate}\label{Theorem:IfConstructibleFromASetThenSBAEquivToSetManyGrounds}
\end{theorem}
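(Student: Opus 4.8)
The plan is to establish the implications $(3)\Rightarrow(2)$, $(2)\Rightarrow(3)$, $(3)\Rightarrow(1)$ and $(1)\Rightarrow(3)$, using the analysis in theorem~\ref{Theorem.IfConstructibleFromASetThenSetDirectedGrounds}. The key unifying observation, valid whenever $V=L[a]$, is that the mantle is exactly the intersection of the definable descending class sequence $\<H^\alpha\st\alpha\in\ORD>$ of grounds furnished by that theorem. Indeed, each $H^\alpha$ is a ground of $V$, so $\Mantle\of\Intersect_\alpha H^\alpha$; and since the $H^\alpha$ are dense in the grounds, every ground $W$ of $V$ has $H^\alpha\of W$ for some $\alpha$, whence $\Intersect_\alpha H^\alpha\of W$, and so $\Intersect_\alpha H^\alpha\of\Mantle$. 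Because the $H^\alpha$ are decreasing, it follows that $\Mantle$ is a ground---equivalently, that the solid bedrock axiom holds---if and only if the sequence $\<H^\alpha\st\alpha\in\ORD>$ stabilizes, i.e. there is $\alpha_0$ with $H^\alpha=H^{\alpha_0}$ for all $\alpha\ge\alpha_0$, in which case $H^{\alpha_0}$ is the solid bedrock.

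With this observation in hand, $(3)\Rightarrow(2)$ is immediate, since a smallest ground is in particular a minimal one; and $(3)\Rightarrow(1)$ holds in general, as noted before question~\ref{Question.SBAequivalentToSetManyGrounds}: if $\Mantle$ is a ground then every ground $W$ satisfies $\Mantle\of W\of V$, so by fact~\ref{Fact.IntermediateModelsAreGrounds} each such $W$ is determined by a generic filter on a complete subalgebra of the (set-sized) forcing algebra from $\Mantle$ to $V$, and there are only set many such. For $(2)\Rightarrow(3)$, suppose $W$ is a ground of $V$ minimal under inclusion; by density of the $H^\alpha$ we have $H^\alpha\of W$ for all sufficiently large $\alpha$, and since each such $H^\alpha$ is itself a ground, minimality of $W$ forces $H^\alpha=W$ for all sufficiently large $\alpha$. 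The sequence $\<H^\alpha\st\alpha\in\ORD>$ therefore stabilizes, $\Mantle=W$ is a ground, and the solid bedrock axiom holds.

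Finally, for $(1)\Rightarrow(3)$, suppose there are only set many grounds, say every ground of $V$ is $W_r$ for some $r$ in a fixed set $I$. Then the classes $H^\alpha$, being grounds, take only set many distinct values; since $\<H^\alpha\st\alpha\in\ORD>$ is a definable descending class sequence, an application of the Replacement axiom shows it must be eventually constant, so again $\Mantle=\Intersect_\alpha H^\alpha$ is a ground. (Alternatively, one may invoke the strong \DDG, which holds under $V=L[a]$ by theorem~\ref{Theorem.IfConstructibleFromASetThenSetDirectedGrounds}: applying downward set-directedness to the set-indexed family $\set{W_r\st r\in I}$ of all grounds yields a ground $W_t\of\Intersect_{r\in I}W_r=\Mantle$, which must then equal $\Mantle$.) I expect no serious obstacle here; the one point meriting care is the reduction, under $V=L[a]$, of the solid bedrock axiom to stabilization of $\<H^\alpha\st\alpha\in\ORD>$---which is precisely the mechanism by which each of the directedness hypotheses $(1)$ and $(2)$ is brought to bear---together with correctly invoking the density and descending properties of the $H^\alpha$ from theorem~\ref{Theorem.IfConstructibleFromASetThenSetDirectedGrounds}.
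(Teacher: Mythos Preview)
Your proposal is correct and follows essentially the same approach as the paper. The paper organizes the argument as a cycle $1\Rightarrow 2\Rightarrow 3\Rightarrow 1$, while you prove more implications directly; but the substance is the same---your $(2)\Rightarrow(3)$ matches the paper's, and your alternative argument for $(1)\Rightarrow(3)$ via the strong \DDG\ (taking $\alpha=\sup_{r\in I}\alpha_r$ where $H^{\alpha_r}\of W_r$) is exactly the paper's $(1)\Rightarrow(2)$. Your explicit framing of the solid bedrock axiom as stabilization of the descending sequence $\<H^\alpha\st\alpha\in\ORD>$ is a nice unifying observation that the paper leaves implicit, and your primary $(1)\Rightarrow(3)$ argument (a descending class sequence with set-many values must stabilize) is a pleasant variant, though you should make the Replacement step precise: the set of ordinals $\alpha$ at which $H^{\alpha+1}\subsetneq H^\alpha$ injects into $I$ via $\alpha\mapsto$ (any $r$ with $W_r=H^{\alpha+1}$), since distinct strict-decrease points yield distinct classes $H^{\alpha+1}$.
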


\begin{proof}
Suppose $\V$ is constructible from a set. For every ordinal
$\alpha$, let
$H^\alpha=\HOD^{\V^{\Coll(\omega,\alpha)}}\!\!$, as in the
proof of theorem
\ref{Theorem.IfConstructibleFromASetThenSetDirectedGrounds}.

(1$\implies$2) Assume that $\V$ has only set many grounds
$\set{W_r\st r\in I}$, for some set $I$. By theorem
\ref{Theorem.IfConstructibleFromASetThenSetDirectedGrounds},
for each $r\in I$ there is an ordinal $\alpha_r$ for which
$H^{\alpha_r}\of W_r$. Thus, if $\alpha=\sup_r\alpha_r$, we
have the ground $H^\alpha$ contained in all grounds $W_r$.
In other words, $H^\alpha$ is a smallest ground, verifying
both the bedrock axiom, as well as the solid bedrock axiom.

(2$\implies$3) Suppose that $W$ is a minimal ground. By
theorem
\ref{Theorem.IfConstructibleFromASetThenSetDirectedGrounds},
it follows that $W$ must have the form $H^\alpha$ for some
$\alpha$, and moreover that $W=H^\beta$ for all larger
$\beta$, since the $H^\gamma$ sequence is non-increasing
with respect to inclusion. Thus, $W$ is contained in every
$H^\gamma$, and since these models are dense in the
grounds, it follows that $W$ is contained in every ground.
So $W$ is a solid bedrock, as desired.

(3$\implies$1) This implication was proved in the remarks
before the statement of question
\ref{Question.SBAequivalentToSetManyGrounds}.
\end{proof}

\begin{theorem}
Every model $V$ of \ZFC\ has a class forcing extension
$V[G]$ in which the grounds are downward set-directed, but
there is no bedrock.\label{Theorem.DirectedButNoBedrock}
\end{theorem}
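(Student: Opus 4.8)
The plan is to force with a class-length Easton-support product that codes information into the continuum function at successive regular cardinals in such a way that the ground models of the extension form a descending $\ORD$-sequence with no least element. Concretely, over $V$ I would define a class forcing $\P = \prod_{\alpha\in\ORD}\Q_\alpha$ with Easton support, where each $\Q_\alpha$ adds a subset of some regular cardinal $\kappa_\alpha$ (say by $\Add(\kappa_\alpha,1)$, with the $\kappa_\alpha$ chosen to be a sufficiently sparse increasing sequence of regulars so that the product is tame and preserves \ZFC). The key point is to arrange, using the continuum coding technique of \CCA\ discussed earlier, that from any tail $\prod_{\alpha\ge\beta}\Q_\alpha$ of the generic one can recover, via the \GCH\ pattern, the entire generic below $\beta$ — so that each "tail model" $W_\beta := V[G\restriction[\beta,\ORD)]$ actually equals $V[G]$ cut off in a controlled way, and in fact the $W_\beta$ form a strictly descending sequence of grounds of $V[G]$ whose intersection is $V$ itself.

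Next I would verify the three required features. First, each $W_\beta$ is a ground of $V[G]$: this is immediate since $V[G] = W_\beta[G\restriction\beta]$ is a set-forcing extension of $W_\beta$ (the initial segment $\prod_{\alpha<\beta}\Q_\alpha$ is a set in $W_\beta$), using that a product factors as a two-step iteration. Second, the grounds are downward set-directed: here I would use the coding to show that the $W_\beta$ are \emph{dense} in the grounds of $V[G]$ — every ground $W$ of $V[G]$ contains $W_\beta$ for all sufficiently large $\beta$ — and that the $W_\beta$ are themselves descending, so any set-indexed family of grounds is refined by a single $W_\beta$. This mirrors exactly the density argument in the proof of Theorem \ref{Theorem.IfConstructibleFromASetThenSetDirectedGrounds}, with the role of $H^\alpha$ played by $W_\beta$; by Corollary \ref{Corollary.IntersectionGivesZF} the intersection $\bigcap_\beta W_\beta$ is then an inner model of \ZFC, and directedness follows. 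Third, there is no bedrock: since the $W_\beta$ are strictly descending and dense in the grounds, no single ground can be minimal — given any ground $W$ it contains some $W_\beta$, which properly contains $W_{\beta+1}$, which is again a ground; and by Theorem \ref{Theorem:IfConstructibleFromASetThenSBAEquivToSetManyGrounds}-style reasoning a bedrock would have to be one of the $W_\beta$, contradicting strict descent. Actually the cleanest route is: a bedrock is a minimal ground, but the strictly decreasing chain $W_0\supsetneq W_1\supsetneq\cdots$ of grounds shows no ground is minimal.

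The main obstacle is the first step: arranging the forcing so that the tail models genuinely form a \emph{strictly} descending chain of \emph{distinct} grounds whose intersection is exactly $V$ (and not some proper inner model), while simultaneously keeping the density property. The coding must be robust enough that a tail of the generic determines all earlier coordinates — this requires the continuum-coding bookkeeping to be set up so that $\Q_\beta$ writes down (into the \GCH\ pattern on an interval of cardinals above $\kappa_\beta$, and below $\kappa_{\beta+1}$) the value of the generic at coordinate $\beta$ itself, so that every $W_\gamma$ with $\gamma\le\beta$ can read it but $W_{\beta+1}$ cannot. One must check that this does not accidentally make some $W_\beta$ equal to $V[G]$ or collapse the chain, and that the Easton-support product over the chosen sparse regulars preserves \ZFC\ and cardinals; these are delicate but standard continuum-coding verifications, of the kind underlying the \CCA\ results cited above. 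A secondary subtlety is confirming that these $W_\beta$ exhaust enough of the grounds to get the density claim — one invokes Laver's theorem (Theorem \ref{Theorem.UniformDefinabilityOfGrounds}) together with the approximation and cover properties of the coding forcing to locate an arbitrary ground $W$ above some $W_\beta$.
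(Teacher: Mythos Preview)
Your overall architecture is right and matches the paper: produce a class product over a base model whose tail extensions $W_\beta$ form a strictly descending $\ORD$-chain of grounds that is dense below all grounds of $V[G]$; downward set-directedness and the failure of the bedrock axiom then follow exactly as you indicate.

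There is, however, a genuine gap in your coding mechanism, and in fact your description is internally inconsistent. Early on you say the tail of the generic should recover ``the entire generic below $\beta$'' via the \GCH\ pattern; but if that were so, every tail model $W_\beta$ would equal $V[G]$ and the chain would collapse. Later you reverse this and have $\Q_\beta$ code only its own coordinate into an interval below $\kappa_{\beta+1}$, readable from $W_\gamma$ for $\gamma\le\beta$ but not from $W_{\beta+1}$. This second version gives a strictly descending chain, but it does nothing for the density claim: if $W$ is an arbitrary ground of $V[G]$, this self-coding is exactly the part of the \GCH\ pattern that may be destroyed in $W$, so you cannot use it to locate $W$ above some $W_\beta$. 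Invoking approximation and cover alone does not rescue this, because you have no reason yet to believe $V\subseteq W$.

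The paper's proof supplies the missing idea by separating the two tasks. One first passes to a class-forcing extension $\bar V$ satisfying the continuum coding axiom \CCA; this guarantees, once and for all, that $\bar V\subseteq W$ for \emph{every} ground $W$ of any further extension, since sets of $\bar V$ are coded unboundedly in the \GCH\ pattern and set forcing disturbs this only boundedly. Over $\bar V$ one then takes the plain Easton-support product adding a Cohen subset to each regular $\lambda$ with $2^{<\lambda}=\lambda$, with no further self-coding at all. The density argument is now clean: given a ground $W$ of $\bar V[G]$, one has $\bar V\subseteq W$, so all initial segments of each coordinate of the tail generic $G^\alpha$ lie in $W$; taking $\alpha$ above the size of the forcing from $W$ to $\bar V[G]$ and invoking the chain condition, one concludes that $G^\alpha$ is amenable to $W$, whence $\bar V[G^\alpha]\subseteq W$. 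Your proposal tries to fold both roles---coding the base model and controlling the generic---into a single product, and that conflation is precisely where the argument breaks.
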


\begin{proof}
This is the essence of~\cite[Theorem
24]{Reitz2006:Dissertation}, where Reitz showed that every
model has a class forcing extension having no bedrock.
Beginning with any model $V$, we first move to a class
forcing extension $\Vbar$ exhibiting the continuum coding
axiom (\CCA), which asserts that every set of ordinals is
coded into the \GCH\ pattern of the continuum function $\kappa \mapsto 2^\kappa$, by
iteratively coding sets into this pattern. Then, in
$\Vbar$, let $\P$ be the Easton-support class product of
forcing adding a Cohen subset to every regular cardinal
$\lambda$ for which $2^\ltlambda=\lambda$, and suppose that
$\Vbar[G]$ is the corresponding class forcing extension.
This forcing preserves all cardinals, cofinalities and the
\GCH\ pattern over $\Vbar$. Consequently, the sets of
$\Vbar$ remain coded unboundedly often into the \GCH\
pattern of $\Vbar[G]$. Suppose that $W$ is a ground of $\Vbar[G]$. Since $W$ and $\Vbar[G]$ have the same eventual \GCH\ pattern, it follows that the sets of $\Vbar$ are also all coded into the \GCH\ pattern of $W$, and so $\Vbar\of W$.
At each coordinate $\lambda$ at which we have performed forcing, we may consider the generic filter added on that coordinate as a subset $G(\lambda)\of\lambda$. In this way, we may view the (union of) the entire filter $G$ as a subset of the ordinal plane $G=\prod_\lambda G(\lambda)\of\Ord\times\Ord$. Let $\beta$ be a regular cardinal larger than the size of the forcing from $W$ to $\Vbar[G]$, so that $W\of\Vbar[G]$ satisfies the $\beta$-approximation property by lemma~\ref{Lemma.ClosurePointForcing}. Since $G(\beta)$ is a subset of $\beta$, with all initial segments in $\Vbar$ and hence in $W$, it follows immediately by the approximation property that $G(\beta)\in W$. A similar argument works with many coordinates at once. Namely, if $\beta<\xi$, let $G^{\beta,\xi}$ be the part of the generic filter added at coordinates between $\beta$ and $\xi$, so that we may view $G^{\beta,\xi}\of(\beta,\xi)\times\xi$. The corresponding forcing $\P^{\beta,\xi}$ is $\leqbeta$-closed in $\Vbar$. Suppose that $A\of(\beta,\xi)\times\Ord$ is a piece of the domain with $A\in W$ and of size less than $\beta$ in $W$. Since $A$ is also in $\Vbar[G]$, it follows by the closure of the forcing at coordinate $\beta$ and beyond that $A\in \Vbar[G_\beta]$, the extension with forcing only at coordinates strictly before $\beta$. Since that forcing has size at most $\beta$, it follows that $A$ is covered by a set $B\fo A$ with $B\in \Vbar$ and of size at most $\beta$ in $\Vbar$. In this case, since the forcing $\P^{\beta,\xi}$ is $\leqbeta$-closed in $\Vbar$, it follows that $G^{\beta,\xi}\intersect B\in\Vbar$ and consequently also $G^{\beta,\xi}\intersect B\in W$. By intersecting with $A$, we see that $G^{\beta,\xi}\intersect A\in W$ as well, and so we have established that every $\delta$-approximation to $G^{\beta,\xi}$ is in $W$. By the $\beta$-approximation property of $W\of\Vbar[G]$, therefore, it follows that $G^{\beta,\xi}\in W$. By considering $\xi$ arbitrarily large, this implies $\Vbar[G^\beta]\of W$, where $G^\beta$ is the generic filter at all coordinates $\beta$ and higher, and so $W$ is an intermediate \ZFC\ model between $\Vbar[G^\beta]\of W\of\Vbar[G]$. Since $\Vbar[G]$ arises via set forcing over $\Vbar[G^\beta]$ by forcing with the coordinates up to and including $\beta$, we conclude that $\Vbar[G^\beta]$ is a deeper ground than $W$. Such grounds are therefore dense below the grounds of $\Vbar[G]$ and form a strictly descending sequence of order type $\ORD$. It follows that the grounds are set-directed and have no minimal member, establishing the theorem.\end{proof}

The class forcing extension $\V[G]$ of the previous theorem
has the nice property that for every ordinal $\alpha$, we
have $\V_\alpha^{\V[G]}=\V_\alpha^{\V[G_\alpha]}$ for some
$G_\alpha$ that is set-generic over $\V$. So in a sense,
$\V[G]$ is ``close to $\V$''. Since the assumption that the
universe is constructible from a set considerably
simplifies the geology, the following is an interesting
strengthening of the previous theorem. The class forcing
extension constructed here will not be close to $\V$ in the
previous sense, though.

\begin{theorem}\label{Theorem.ConstructibleFromSetButNonBA}
Every model of set theory has a class forcing extension of
the form $L[r]$, where $r\sub\omega$, in which there is no
bedrock, but the grounds are downward set-directed.
\end{theorem}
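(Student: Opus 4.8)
Since $r\subseteq\omega$ is a set, every model of the form $L[r]$ is constructible from a set, so theorems~\ref{Theorem.IfConstructibleFromASetThenSetDirectedGrounds} and~\ref{Theorem:IfConstructibleFromASetThenSBAEquivToSetManyGrounds} apply to it: the grounds of such a model are automatically downward set-directed (indeed the generic strong \DDG\ holds), and the model has no bedrock precisely when it has proper-class many grounds. So the plan is to build, over an arbitrary $V$, a class-forcing extension $V[G]=L[r]$ with $r\subseteq\omega$, together with a strictly $\subsetneq$-descending \ORD-indexed sequence $\<M_\eta\st\eta\in\ORD>$ of grounds of $L[r]$; that sequence alone forces $L[r]$ to have proper-class many grounds, hence no bedrock, while downward set-directedness of the grounds is then free from theorem~\ref{Theorem.IfConstructibleFromASetThenSetDirectedGrounds}.

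For the construction, I would first force \GCH\ over $V$ and then perform a class iteration/product fusing two ingredients. The first is the bedrock-destroying device from the proof of theorem~\ref{Theorem.DirectedButNoBedrock}: fixing an increasing class sequence $\<\delta_\xi\st\xi\in\ORD>$ of regular cardinals, adjoin via an Easton-support product $\prod_{\xi\in\ORD}\Q_\xi$, where $\Q_\xi$ adds a Cohen set $a_\xi\subseteq\delta_\xi$, a mutually generic class $\<a_\xi\st\xi\in\ORD>$, preserving cardinals, cofinalities and the \GCH. The second is Jensen's coding theorem, run in parallel with this product with the reshaping-and-coding machinery localized stage by stage, so that the composite forcing produces a class-forcing extension of the form $L[r]$ for a real $r$. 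The purpose of running the coding in this uniform way is to arrange that for each ordinal $\eta$ there is a real $r_\eta\in L[r]$ for which $M_\eta:=L[r_\eta]$ satisfies $L[r]=M_\eta[h_\eta]$ for some $M_\eta$-generic $h_\eta$ on the \emph{set}-sized product $\prod_{\xi<\eta}\Q_\xi$, while $a_\eta\in M_\eta$ and $a_\eta\notin M_{\eta+1}$ (the latter because the coding is set up so that $r_{\eta+1}$ does not code $a_\eta$ and $a_\eta$ remains generic over $M_{\eta+1}$, by the mutual genericity of the Cohen sets). Granting this, each $M_\eta$ is by construction a ground of $L[r]$, the sequence $\<M_\eta\st\eta\in\ORD>$ is $\subseteq$-descending, and the descent is strict by the witnesses $a_\eta$; this completes the argument via the reductions above.

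The main obstacle is precisely the uniformity required of the second ingredient: Jensen's coding must be carried out so that it factors at every ordinal stage into a set-sized initial part and a class tail which, reinterpreted over the inner model $M_\eta$, is again a coding forcing producing $L[r_\eta]$, with the two pieces recombining to give $L[r]$. This is a delicate, bookkeeping-heavy adaptation of Jensen's construction: one must localize the reshaping, preserve cardinals and the \GCH\ throughout, verify that $r$ and each derived real $r_\eta$ decode as intended, and ensure that the set-sized recombining forcing $\prod_{\xi<\eta}\Q_\xi$ is mutually generic with the tail coding apparatus so that indeed $M_\eta[h_\eta]=L[r]$. (Plain Jensen coding of $V$ does not suffice on its own, since the resulting $L[r]$ typically satisfies the ground axiom and is thus its own bedrock; the Cohen-set layers are what create the nontrivial grounds $M_\eta$.) No genuinely new forcing idea beyond combining Jensen coding with the mechanism of theorem~\ref{Theorem.DirectedButNoBedrock} appears to be needed.
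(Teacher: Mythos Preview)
Your overall strategy is sound---use that $L[r]$ is constructible from a set, so downward set-directedness is free, and then arrange proper-class many grounds to kill the bedrock axiom via theorem~\ref{Theorem:IfConstructibleFromASetThenSBAEquivToSetManyGrounds}. But you have a key misconception in the parenthetical at the end: plain Jensen coding does \emph{not} typically produce a model of the ground axiom. In fact, the Jensen-coded model already has proper-class many grounds, and this is precisely what the paper exploits.

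The paper's argument is considerably simpler than yours. Jensen's class forcing $\P$, applied over the prepared model $(W,A)$, already factors at every $W$-cardinal $\tau$ as $\P_\tau * \P^{D_\tau}$, where $\P_\tau$ is the class-forcing part producing $L[D_\tau]$ with $D_\tau = D\cap\tau^+$, and $\P^{D_\tau}$ is a \emph{set-sized} iteration of almost-disjoint coding that brings $D_\tau$ down to a real. Thus each $L[D_\tau]$ is a ground of $L[r]$, and for $\tau'>\tau$ the model $L[D_\tau]$ is a strictly larger set-forcing extension of $L[D_{\tau'}]$. This immediately gives a proper class of strictly descending grounds---no additional Cohen-set layers are needed.

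Your two-ingredient construction, fusing the Cohen product from theorem~\ref{Theorem.DirectedButNoBedrock} with Jensen coding ``run in parallel,'' is therefore redundant: the grounds you are trying to manufacture via the $a_\xi$ are already present in the Jensen construction as the $L[D_\tau]$. Moreover, your description of the parallel construction is underspecified; it is not clear how to interleave the Cohen product with the Jensen reshaping and coding while maintaining the factoring you need, and verifying this would require substantial additional work that the simpler argument avoids entirely.
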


\begin{proof}
First of all, the grounds of any model of the form $L[r]$
are downward set-directed by theorem
\ref{Theorem.IfConstructibleFromASetThenSetDirectedGrounds},
so the final claim of the theorem will come for free.

The desired model $L[r]$ is produced by Jensen coding, and
the content of our proof is the observation that this model
has no bedrock. In the Jensen construction (see
\cite{BellerJensenWelch1982:CodingTheUniverse} for a
standard reference), one passes first to a model $(W,A)$,
with $A\of\ORD$ specifically arranged so that
$H^W_\alpha=L_\alpha[A]$ whenever $\alpha$ is a cardinal of
$W$. One next extends $W$ by forcing with Jensen's class
partial order $\P$. Suppose that $G\of\P$ is $W$-generic,
and let $W'=W[G]$, which has the form $L[r]$ for some real
$r$ in $W'$. The forcing $G$ adds a class $D\of\ORD$ such
that $W[G]=L[D]$. If $\tau$ is a cardinal of $W$, then $\P$
splits as $\P_\tau*\P^{D_\tau}$. The forcing $\P_\tau$ is a
class forcing coding $\V$ as a set $D_\tau=D\cap\tau^+$, in
the sense that $W[G\restrict\P_\tau]=L[D_\tau]$, and
$\P^{D_\tau}$ is a set forcing, essentially an iteration of
almost disjoint coding, which brings the coding from
$\tau^\plus$ down to $\omega$ in the sense that it codes
$D_\tau$ as a subset $D_0$ of $\omega$. In particular,
$L[D_\tau]$ is a ground of $W'$. Furthermore, if
$\tau'>\tau$ is a larger cardinal of $W$, then $L[D_\tau]$
is a nontrivial set-forcing extension of $L[D_{\tau'}]$,
and so $W'$ has class-many grounds. Since $W'=L[r]$ is
constructible from a set, it follows from this and theorem
\ref{Theorem:IfConstructibleFromASetThenSBAEquivToSetManyGrounds}
that the bedrock axiom fails there.
\end{proof}

Our most fundamental lack of knowledge about the mantle is
that, without any additional hypotheses, we do not yet know
whether or not it is necessarily a model of \ZFC\ or even
of \ZF.

\begin{question}
Does the mantle necessarily satisfy \ZF? Or \ZFC?
\end{question}

One may of course restrict the concept of grounds to
certain classes of forcing notions. Thus, let us say that
$W$ is a $\sigma$-closed ground if $W$ is a ground such
that the universe is obtainable from $W$ by a
$\sigma$-closed forcing $\P$ (note that $\P$ is
$\sigma$-closed in $W$ if and only if it is so in $\V$). As
a reminder, let us mention that by definition, a ground
must be a model of \ZFC. The $\sigma$-closed mantle is
simply the intersection of all $\sigma$-closed grounds.
More generally, we define the $\Gamma$-Mantle, for any
definable class $\Gamma$ of forcing notions, to be the
intersection of all grounds $W$ for which $V$ is a forcing
extension of $W$ by a forcing notion in $\Gamma^W$. Theorem
\ref{Theorem:SigmaClosedMantleWithoutChoice} shows that the
$\sigma$-closed mantle need not be a model of \ZFC, even if
the universe is constructible from a set. We will make use
of the following folklore result, emphasizing that its
proof makes no use of the axiom of choice.

\begin{lemma}[{\cite[Lemma 2.5]{Solovay1970:AModelInWhichEverySetIsLebesgueMeasurable}}]
Assuming \ZF, if $G\cross H$ is $V$-generic for product
forcing $\P\cross\Q$, then $V[G]\intersect
V[H]=V$.\label{Lemma.V[G]intersectV[H]=V}
\end{lemma}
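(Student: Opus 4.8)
The statement to prove is Lemma~\ref{Lemma.V[G]intersectV[H]=V}: assuming only \ZF, if $G\cross H$ is $V$-generic for the product $\P\cross\Q$, then $V[G]\intersect V[H]=V$. The plan is the classical density argument, carried out with care so that no appeal to the axiom of choice is made. The inclusion $V\of V[G]\intersect V[H]$ is trivial, so the work is entirely in the reverse inclusion. Suppose $x\in V[G]\intersect V[H]$; we must show $x\in V$. It suffices to treat the case where $x$ is a set of ordinals (or even a set of pairs of ordinals coding an arbitrary set), since a general $x$ can be recovered in $V$ from such a code together with the rank function, and the coding apparatus is available in \ZF.

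First I would fix names: since $x\in V[G]$ there is a $\P$-name $\dot x$ with $\dot x_G=x$, and since $x\in V[H]$ there is a $\Q$-name $\dot x'$ with $\dot x'_H=x$. Because $G\cross H$ is generic over $V$, there is a condition $(p_0,q_0)\in G\cross H$ forcing $\dot x=\dot x'$, where here $\dot x$ and $\dot x'$ are reinterpreted as $\P\cross\Q$-names that ignore the second and first coordinate respectively. The key step is then to define, inside $V$, the set
$$y=\set{\alpha \st (p_0,q_0)\forces_{\P\cross\Q}\check\alpha\in\dot x},$$
and to argue $y=x$. That $y\of x$ is immediate since $(p_0,q_0)\in G\cross H$. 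For the reverse, suppose $\alpha\in x$; then some $(p,q)\le(p_0,q_0)$ in $G\cross H$ forces $\check\alpha\in\dot x$. The crucial observation---this is where the product structure does all the work---is that since $\dot x$ depends only on the first coordinate and $\dot x'$ only on the second, the condition $(p,q)$ already decides $\check\alpha\in\dot x$ via $\dot x$ \emph{and} via $\dot x'$ in a way that propagates: by genericity of $H$ one can find $q'\le q_0$ with $(p,q')$ and $(p_0,q')$ compatible in their second coordinate, and then mix to conclude $(p,q_0)\forces\check\alpha\in\dot x$, hence $(p_0,q_0)$ cannot force $\check\alpha\notin\dot x$, and by a symmetric mixing argument $(p_0,q_0)\forces\check\alpha\in\dot x$. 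Thus $\alpha\in y$, giving $x=y\in V$.

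The step I expect to be the main obstacle is the mixing argument of the previous paragraph, and in particular checking that it genuinely avoids choice. The subtlety is that in a \ZFC\ proof one would freely invoke a maximal antichain or a well-ordering to amalgamate conditions; here I must instead exploit the rectangular shape of $\P\cross\Q$ directly, using only that a condition in the product is an ordered pair and that compatibility in the product is coordinatewise. Concretely, the argument reduces to the following choice-free claim: if $(p,q)\forces_{\P\cross\Q}\check\alpha\in\dot x$ where $\dot x$ mentions only the $\P$-coordinate, then already $(p,\one_\Q)\forces\check\alpha\in\dot x$. This is proved by noting that $\dot x_G$ is computed from $G$ alone, so the set of conditions in $\P\cross\Q$ deciding $\check\alpha\in\dot x$ one way is determined by its projection to $\P$, and one checks density of $\set{(p',q')\st p'\text{ decides }\check\alpha\in\dot x}$ without needing to select witnesses. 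Once this claim is in hand, the symmetric version for $\Q$ and $\dot x'$ finishes the proof, and no fragment of choice has been used---precisely the feature we need for the application in Theorem~\ref{Theorem:SigmaClosedMantleWithoutChoice}.
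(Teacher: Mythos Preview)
Your core idea---exploit that $\dot x$ depends only on the $\P$-coordinate and $\dot x'$ only on the $\Q$-coordinate, so that a condition $(p_0,q_0)$ forcing $\dot x=\dot x'$ must already decide membership in $\dot x$---is correct and is essentially the same mechanism as the paper's proof. The paper packages the key step more cleanly: rather than manipulating the forcing relation, it argues semantically that if $p$ failed to decide $\tau$, one could choose two $V[H]$-generic filters $G',G''\ni p$ with $\tau_{G'}\neq\tau_{G''}$, yet both must equal $\sigma_H$ since $G'\cross H$ and $G''\cross H$ are $V$-generic and contain $(p,q)$. This avoids your somewhat tangled ``mixing'' paragraph, though the underlying content is the same.

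There is, however, a genuine gap in your reduction step. You claim it suffices to treat sets of ordinals because ``a general $x$ can be recovered in $V$ from such a code \ldots\ and the coding apparatus is available in \ZF.'' But the usual coding of a set by a set of ordinals requires a well-ordering of its transitive closure, which is exactly what \ZF\ does not supply. The paper instead uses $\in$-induction: assume every element of $x$ is already in $V$, so that $x\of V$, and then run the argument with $\alpha$ ranging over $V$ rather than over ordinals. Your element-by-element argument works verbatim under this weaker hypothesis, so the fix is painless---but as written, your reduction invokes precisely the fragment of choice the lemma is meant to avoid.
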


\begin{proof}
Certainly $V\of V[G]\intersect V[H]$. Conversely, suppose
that $x\in V[G]\intersect V[H]$. We may assume by
$\in$-induction that $x\of V$. By assumption, there is a
$\P$-name $\tau$ and a $\Q$-name $\sigma$ such that
$x=\tau_G=\sigma_H$. So there must be a condition $(p,q)\in
G\cross H$ forcing that $\tau_G=\sigma_H$. We claim that
$p$ already decides the elements of $\tau$. If not, we
could find $V[H]$-generic filters $G'$ and $G''$, both
containing $p$, such that $\tau_{G'}\neq\tau_{G''}$.  But
both must be equal to $\sigma_H$, since $G'\cross H$ and
$G''\cross H$ are both $V$-generic and contain $(p,q)$, a
contradiction. Thus, $p\forces\tau\in\check V$, and so
$x\in V$.\end{proof}

\begin{theorem}
If \ZFC\ is consistent, then so is the theory consisting of
the following axioms:
\begin{enumerate}
\item \ZFC,%
\item $\V=L[A]$, for a set $A\sub\omega_1$,%
\item the $\sigma$-closed mantle is the same as
    $L(\reals)$ and fails to satisfy the axiom of
    choice.
\end{enumerate}%
\label{Theorem:SigmaClosedMantleWithoutChoice}
\end{theorem}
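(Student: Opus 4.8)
The plan is to build the desired model by forcing over a carefully chosen ground model so that the $\sigma$-closed grounds all contain a common inner model, namely $L(\reals)$, while no $\sigma$-closed ground goes below it. First I would start with a model of \ZFC\ satisfying \GCH\ (or whatever mild hypothesis is convenient) and arrange, by a preliminary class forcing, that $V=L[A]$ for some $A\of\omega_1$ which codes a sufficiently rich structure---in particular, I want $L(\reals)$ of the final model to be a model of \DC\ but not of \AC. The natural way to get $L(\reals)\not\models\AC$ while keeping the whole universe a model of \ZFC\ constructible from a set is to arrange that the reals of $V$ arise from a symmetric-style construction: for instance, make $V$ the result of an $\omega_1$-length countable-support (or finite-support with $\sigma$-closed tails) iteration or product of forcings adding reals, indexed in such a way that no single real of $V$ can well-order $\reals^V$ inside $L(\reals)$, yet the whole sequence is coded by a single set $A\of\omega_1$. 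A clean template is Solovay-style: collapse in a way that makes $L(\reals)^V$ look like the Solovay model's $L(\reals)$, but do it over a ground model already of the form $L[\text{set}]$ so that the top model remains $\V=L[A]$.

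The core of the argument is then to compute the $\sigma$-closed mantle of this $V$. The inclusion $L(\reals)\of(\text{$\sigma$-closed mantle})$ should follow because $L(\reals)$ is contained in every inner model of \ZF\ containing all the reals, and any $\sigma$-closed ground $W$ of $V$ must contain all the reals of $V$: a $\sigma$-closed forcing adds no new reals, so $\reals^W=\reals^V$, hence $L(\reals)^W=L(\reals)^V$ and in particular $L(\reals)\of W$. For the reverse inclusion, I need that the intersection of all $\sigma$-closed grounds is no bigger than $L(\reals)$. Here is where I would exploit the construction: the building forcing should be (forcing-equivalent to) a product $\P_\xi\times\Q_\xi$ of two $\sigma$-closed pieces for cofinally many stages, or more precisely, I want to exhibit, for each set $x\in V\setminus L(\reals)$, a $\sigma$-closed ground $W$ with $x\notin W$. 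The tool for this is Lemma \ref{Lemma.V[G]intersectV[H]=V}: if $V$ itself can be written as $V=N[G]=N[H]$ where $G\times H$ is $N$-generic for a product of two $\sigma$-closed forcings, then $V[G]\cap V[H]$-type reasoning (run at the level of $N$) shows the two $\sigma$-closed grounds $N[G]$... wait, those equal $V$; rather I want \emph{many} distinct $\sigma$-closed grounds whose intersection is exactly $L(\reals)$. So the construction should be a long $\sigma$-closed iteration whose tail segments $V[G^{\ge\alpha}]$ are $\sigma$-closed grounds, and whose intersection over all $\alpha$ is forced to be precisely $L(\reals)$, using that each coordinate forcing adds a real and the reals are cofinal in the iteration.

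The key steps in order: (i) fix a ground model $L[a]$ with \GCH; (ii) perform a preliminary coding so the final universe will be $L[A]$, $A\of\omega_1$ --- this can be folded into the main iteration by Jensen-coding-style bookkeeping, or done afterward by noting the main iteration's extension is already constructible from a set of size $\omega_1$ and applying a coding that is itself $\sigma$-closed so as not to disturb the $\sigma$-closed mantle computation; (iii) define the main forcing as an $\omega_1$-indexed (countable support) iteration/product of $\sigma$-closed forcings each of which adjoins a real, symmetrized so that $L(\reals)$ of the extension omits a well-order of its reals --- the standard Solovay/Levy analysis, relativized to work over $L[a]$, gives $L(\reals)\models\text{\ZF}+\neg\AC$; (iv) verify $\V=L[A]$ for the appropriate $A\of\omega_1$; (v) show every $\sigma$-closed ground contains $L(\reals)$ (no new reals argument above); (vi) show the tail models $V[G^{\ge\alpha}]$ are genuine $\sigma$-closed grounds and that $\bigcap_\alpha V[G^{\ge\alpha}]=L(\reals)$, using Lemma \ref{Lemma.V[G]intersectV[H]=V} at each stage to peel off coordinates and the fact that each coordinate's real is not in the later tail; (vii) conclude the $\sigma$-closed mantle equals $L(\reals)$, which fails \AC. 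The main obstacle I anticipate is step (vi) combined with (iii): arranging the iteration so that it is genuinely $\sigma$-closed at every stage (not merely proper), so that its tails are legitimately $\sigma$-closed grounds, \emph{and} simultaneously so that the symmetric collapse structure survives to make $L(\reals)\not\models\AC$ --- the usual Solovay model uses the Levy collapse $\Coll(\omega,{<}\kappa)$, which is not $\sigma$-closed, so I would instead need a $\sigma$-closed analogue (e.g. collapsing with $\Add(\omega_1,1)$-type or $\Coll(\omega_1,{<}\kappa)$-type forcing to kill well-orders of the $\omega_1$-sized sets while the reals, which are added separately by a side iteration, resist being well-ordered in $L(\reals)$), and checking that the intersection of the tails collapses exactly to $L(\reals)$ and nothing smaller will require care about which sets of ordinals are captured. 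Reconciling "$\sigma$-closed" with "produces a failure of choice in $L(\reals)$" is the delicate point, and I expect the actual construction to route the real-adding through an auxiliary product that is absorbed, with Lemma \ref{Lemma.V[G]intersectV[H]=V} doing the heavy lifting to separate the grounds.
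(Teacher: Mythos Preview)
Your plan contains a genuine contradiction that would block the construction. In step (iii) you propose ``an $\omega_1$-indexed iteration/product of $\sigma$-closed forcings each of which adjoins a real,'' but $\sigma$-closed forcing adds no reals---this is precisely the property you yourself invoke in step (v). Consequently no $\sigma$-closed iteration can alter $L(\reals)$ at all, and in particular cannot \emph{create} a failure of \AC\ in $L(\reals)$. Your later suggestion to use $\Add(\omega_1,1)$ or $\Coll(\omega_1,{<}\kappa)$ does not help for the same reason: these are $\sigma$-closed and leave $L(\reals)$ untouched. The tension you flag at the end (``reconciling $\sigma$-closed with produces a failure of choice in $L(\reals)$'') is not merely delicate---it is an outright impossibility if you insist that the forcing producing the bad $L(\reals)$ be $\sigma$-closed.

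The paper resolves this by decoupling the two tasks. First, by a \emph{preliminary} (non-$\sigma$-closed) forcing---for instance adding $\aleph_1$ Cohen reals---one obtains a model $M$ in which $L(\reals)^M\models\ZF+\DC+\neg\AC$. Then, working \emph{over} $L(\reals)^M$, one simply adds two mutually generic Cohen subsets $a,b$ of $\omega_1$. The final model is $N=L(\reals)[a][b]=L[a,b]$, which is of the required form $L[A]$ with $A\of\omega_1$. Since $\DC$ holds in $L(\reals)$, the $\sigma$-closed forcing adding $a$ (resp.\ $b$) adds no new $\omega$-sequences, so $L(\reals)^N=L(\reals)^M$. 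Now $L[a]$ and $L[b]$ are both $\sigma$-closed grounds of $N$, and by Lemma~\ref{Lemma.V[G]intersectV[H]=V} (applied over the \ZF-model $L(\reals)$, which is why the lemma is stated without \AC) their intersection is exactly $L(\reals)$. Your step (v) gives the other inclusion. No long iteration, no Jensen coding, no symmetric models are needed: two mutually generic filters and one invocation of the product lemma suffice.
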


\begin{proof}  Starting with a model of $\ZFC$, we can pass to a
forcing extension $M$ whose $L(\reals)$ fails to satisfy
the axiom of choice. For example, adding $\aleph_1$ Cohen
reals will do this; see~\cite[P.~245, ex.~(E3) and
(E4)]{Kunen:Independence}. The model $L(\reals)^M$ will
satisfy $\ZF+\DC$, though, since this always holds in the
$L(\reals)$ of a model of \ZFC. Now let $a,b$ be mutually
generic Cohen subsets of $\omega_1$ over $L(\reals)^M$.
Since the Cohen forcing at $\omega_1$ adds a well-ordering of $\R$, it follows
that $L(\reals)[a]$ is a model of \ZFC, and in fact
$L(\reals)[a]=L[a]$; and the same is true for
$L(\reals)[b]=L[b]$. Note that since $\DC$ holds in
$L(\reals)$, adding $a$ or $b$ adds no new
$\omega$-sequences, and hence doesn't change
$L(\reals)$.%
\footnote{It is an amusing observation of the first author
that over \ZF, the principle of dependent choices is in
fact equivalent to the statement that $\sigma$-closed
forcing adds no new countable sequences of ordinals.} So we
may unambiguously write $L(\reals)$ for
$L(\reals)^M=L(\reals)^{L(\reals)^M[a]}=L(\reals)^{L(\reals)^M[b]}$.
Consider now the model $N=L(\reals)[a,b]=L[a,b]$. Both
$L[a]$ and $L[b]$ are $\sigma$-closed grounds of $N$, and
so the $\sigma$-closed mantle of $N$ is contained in the
intersection of $L(\reals)[a]$ and $L(\reals)[b]$. But
since $a$ and $b$ are mutually generic, however, this
intersection is precisely $L(\reals)$ by lemma
\ref{Lemma.V[G]intersectV[H]=V}. Conversely, $L(\reals)$ is
contained in every $\sigma$-closed ground of $N$, since
every such ground must contain all $\omega$-sequences of
ordinals which are in $N$, and hence all the reals of $N$.
So the $\sigma$-closed mantle of $N$ is precisely
$L(\reals)^N$, which is the $L(\reals)$ we specifically
arranged at the beginning to violate the axiom of
choice.\end{proof}

The proof actually shows that every model $V$ of \ZFC\ has
a forcing extension $\Vbar$ in which there is a set
$A\of\omega_1$, such that $L[A]\satisfies\ZFC$, but the
$\sigma$-closed mantle of $L[A]$ is $L(\R)^\Vbar$ and fails
to satisfy the axiom of choice. Note also that statement
(2) of the previous theorem is optimal, in the sense that
if $\V=L[r]$, where $r\sub\omega$, then trivially $\V$ is
its own $\sigma$-closed mantle.

What we said about $\sigma$-closed geology remains true
when restricting to the $\sigma$-distributive notions. The
underlying concept in this context is that a ground $W$ is
a $\sigma$-distributive ground if there is a notion of
forcing $\P\in W$ such that $W$ thinks that $\P$ is
$\sigma$-distributive, and $\V$ is a forcing extension of
$W$ by $\P$. Note that $\P$ might fail to be
$\sigma$-distributive in $\V$ in that case.

\begin{question}
Under what circumstances is the mantle also a ground model
of the universe? That is, when does the solid bedrock axiom
hold?\label{Question.MantleIsAGround?}
\end{question}

\begin{question}
When does the universe remain a solid bedrock in all its
forcing
extensions?\label{Question.WhenDoesUniverseRemainStrongBedrock?}
\end{question}

We regard this latter property, the forcing invariance of
being a solid bedrock, as an inner-model-like structural
property, indicating when it holds that the universe is
highly regular or close to highly regular in some sense.
For example, the property holds in $L$, $L[0^\sharp]$,
$L[\mu]$ and many other canonical models, and it is also an
easy consequence of \CCA. Question
\ref{Question.WhenDoesUniverseRemainStrongBedrock?} has an
affirmative answer in exactly the models that are their own
generic mantle, a concept defined in section
\ref{Section.TheGenericMantle}.

Let us conclude this section with a brief combinatorial
analysis of the structure of the grounds of the universe.

\goodbreak\begin{theorem}\
\label{Theorem.LatticePropertiesOfGrounds}
\begin{enumerate}
 \item The collection of grounds between a fixed ground
     $W$ and the universe $V$ is an upper semi-lattice.
 \item If the grounds of $V$ are downward directed,
     then the grounds of $V$ are an upper semi-lattice.
 \item In this case, however, the upper semi-lattice of
     grounds of $V$ need not be a complete upper
     semi-lattice, even if the universe is
     constructible from a set.
 \item The grounds of $V$ need not be a lattice, even
     when the grounds are downward set-directed, and
     even if the universe is constructible from a set.
     There can be two grounds $W_0$ and $W_1$ whose
     intersection contains no largest model of \ZFC.
\end{enumerate}
\end{theorem}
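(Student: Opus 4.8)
The plan is to take the four items in order; item~(2) builds on~(1), while (3) and~(4) are obtained by feeding the models constructed earlier in the paper into the lattice-theoretic framework.

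For~(1) I would fix the ground $W$, write $V=W[G]$ with $G\of\P\in W$, and set $\B=\ro{\P}$. By fact~\ref{Fact.IntermediateModelsAreGrounds}, every ground $U$ with $W\of U\of V$ is of the form $W[G\cap\B_0]$ for a complete subalgebra $\B_0$ of $\B$. Given two such grounds $U_0,U_1$ coming from complete subalgebras $\B_0,\B_1$, I would let $\B_2$ be the complete subalgebra of $\B$ generated by $\B_0\cup\B_1$ and set $U_2=W[G\cap\B_2]$. This is a transitive model of \ZFC\ lying between $W$ and $V$, hence again a ground of $V$ by fact~\ref{Fact.IntermediateModelsAreGrounds}, and it contains $U_0$ and $U_1$. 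The key point in checking it is the \emph{least} such ground is that, since $\B_0$ and $\B_1$ are each closed under meets and complements, the products $b_0\wedge b_1$ with $b_i\in\B_i$ are dense in $\B_2$, so $G\cap\B_2$ is just the filter generated on $\B_2$ by $\{b_0\wedge b_1 : b_0\in G\cap\B_0\text{ and }b_1\in G\cap\B_1\}$; thus any ground $U$ above $U_0$ and $U_1$, already containing $W$, $G\cap\B_0$ and $G\cap\B_1$, also contains $G\cap\B_2$ and hence $U_2$. So the grounds between $W$ and $V$ form an upper semi-lattice with $U_0\vee U_1=U_2$. For~(2), given grounds $U_0,U_1$ of $V$ under the \DDG, I would first fix a common ground $W\of U_0\cap U_1$; then $U_0$ and $U_1$ lie between $W$ and $V$, so (1) delivers their join $N=W[G\cap\B_2]$ as a ground of $V$. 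What remains is to see $N$ is the least upper bound among \emph{all} grounds of $V$, not just among those containing $W$: this holds because every element of $N$ is the valuation by $G\cap\B_2$ of a $\B_2$-name lying in $W$, and this valuation is absolute, so any ground $U$ with $U_0,U_1\of U$ -- which therefore contains $W$, $G\cap\B_0$ and $G\cap\B_1$, hence $G\cap\B_2$ -- contains all of $N$. Thus the grounds of $V$ form an upper semi-lattice.

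Item~(3) I would read off from theorem~\ref{Theorem.ConstructibleFromSetButNonBA}: it gives a model $V=L[r]$ with $r\of\omega$ whose grounds are downward set-directed -- hence form an upper semi-lattice by~(2) -- but which has no bedrock, so in particular no least ground, i.e.\ no bottom element; an upper semi-lattice with no least element is not complete. (Theorem~\ref{Theorem.DirectedButNoBedrock} would serve equally well if one is content with a class-forcing extension of an arbitrary model.)

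The substance lies in~(4), and my plan is to reuse the model of theorem~\ref{Theorem:SigmaClosedMantleWithoutChoice}. Starting over $L$, force to an extension $M$ -- say by adding $\aleph_1$ Cohen reals -- so that $L(\reals)^M$ fails the axiom of choice, then adjoin mutually generic Cohen subsets $a,b\of\omega_1$ over $L(\reals)^M$ and set $V=L(\reals)^M[a,b]=L[a,b]$. As in that theorem, $L[a]$ and $L[b]$ are grounds of $V$ with $L[a]\cap L[b]=L(\reals)^M$ by lemma~\ref{Lemma.V[G]intersectV[H]=V}, and $V=L[\langle a,b\rangle]$ is constructible from a set, so its grounds are downward set-directed by theorem~\ref{Theorem.IfConstructibleFromASetThenSetDirectedGrounds}. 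The crux -- the one genuinely new step -- is to show $L(\reals)^M$ contains no largest inner model of \ZFC: were $N$ such a model, then $L[s]\of N$ for every real $s$, so $N$ would contain every real of $L(\reals)^M$ together with a well-ordering of the reals (using choice inside $N$), and that well-ordering, being a set of reals, would then lie in $L(\reals)^M$ -- contradicting the failure of choice there. Hence $L[a]$ and $L[b]$ have no greatest lower bound among the grounds of $V$, since such a bound would be a model of \ZFC\ that is simultaneously contained in, and largest below, $L[a]\cap L[b]=L(\reals)^M$; so the grounds of $V$ are not a lattice, even though they are downward set-directed and $V$ is constructible from a set. The main obstacle throughout is conceptual rather than computational: for (1)--(2), noticing that the join formed beneath a common ground is already the join in the full family of grounds; and for~(4), recognizing that the choiceless $L(\reals)$ of theorem~\ref{Theorem:SigmaClosedMantleWithoutChoice} carries no largest \ZFC\ submodel.
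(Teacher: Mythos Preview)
Your treatment of (1) and (2) is correct and essentially equivalent to the paper's: where you form the join via the complete subalgebra $\B_2$ generated by $\B_0\cup\B_1$, the paper simply takes $W[G\times H]$ directly, but these are the same model and the minimality argument is identical in spirit.

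For (3) and (4), however, you take a genuinely different route from the paper, and part of your argument has a gap. The paper builds a single concrete model for both: over $L$, let $G$ be generic for the Easton product forcing $2^{\aleph_n}=\aleph_{n+2}$ for all $n<\omega$, then add a Cohen real $c$, and set $V=L[G][c]$. The two grounds are $L[G]$ and $L[G\restrict c]$ (restricting $G$ to coordinates in $c$). If $W$ were a greatest lower bound, then $L[G(n)]\of W$ for each $n\in c$ (since $L[G(n)]$ is a ground below both), forcing the \GCH\ to fail in $W$ at $\aleph_n$ exactly when $n\in c$; but then $c$ is definable in $W$ while $c\notin L[G]\supseteq W$, a contradiction. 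Item (3) is then derived from (4) by the general observation that a downward-directed complete join-semilattice is automatically a lattice.

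Your approach to (4) via the model of theorem~\ref{Theorem:SigmaClosedMantleWithoutChoice} is attractive, and your argument that $L(\reals)^M$ contains no largest \ZFC\ inner model is correct. The gap is in the final step: you assert that a greatest lower bound among the \emph{grounds} would be a largest \ZFC\ model in $L[a]\cap L[b]$, but a priori the greatest ground below both need only dominate other \emph{grounds}, not arbitrary \ZFC\ inner models, so your $L[s]$ argument does not immediately apply. The fix is easy: if $N$ is such a greatest-lower-bound ground, then for each real $s$ the model $N[s]$ is a forcing extension of $N$ (by the proof of fact~\ref{Fact.IntermediateModelsAreGrounds}), hence a ground of $V$ contained in $L[a]\cap L[b]$, so $N[s]\of N$ and thus $s\in N$; now your well-ordering contradiction goes through. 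Alternatively, choose $a,b$ generic over $M$ rather than merely over $L(\reals)^M$, so that $L$ itself becomes a ground of $V$ and every inner \ZFC\ model is a ground by fact~\ref{Fact.IntermediateModelsAreGrounds}. Your argument for (3) via a bottomless model is correct under the standard convention that a complete join-semilattice has all joins including the empty one; the paper's derivation of (3) from (4) avoids this convention by using downward directedness to ensure the relevant join is over a nonempty set.
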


\begin{proof}  (1) By fact~\ref{Fact.IntermediateModelsAreGrounds}, any \ZFC\ model
between $W$ and $V$ is a forcing extension $W[G]$ of $W$,
and a ground of $V$. Suppose that $W[G]$ and $W[H]$ are two
such extensions, with $G\of\P\in W$ and $H\of\Q\in W$ both
$W$-generic and $W[G],W[H]\of V$. Irrespective of mutual
genericity concerns, in $V$ we may form the object $G\cross
H$, a subset of $\P\cross\Q\in W$. Since $\P\cross\Q$ is
well-orderable in $W$, it follows that the model $W[G\cross
H]$, consisting of all sets constructible from elements of
$W$ and $G\cross H$, is the smallest model of \ZFC\
containing $W$ and $G\cross H$. This is a model of \ZFC\
between $W$ and $V$; it is a ground of $V$; and clearly it
contains both $W[G]$ and $W[H]$. Also, any other model of
\ZFC\ containing both $W[G]$ and $W[H]$ will contain $W$
and $G$ and $H$ and therefore also $G\cross H$, and so
$W[G\cross H]$ is the least upper bound of $W[G]$ and
$W[H]$, as desired.

(2) Suppose that the grounds of $V$ are directed. If $W_r$
and $W_s$ are two grounds, then there is some deeper ground
$W_t\of W_r\intersect W_s$. By applying (1) to the grounds
between $W_t$ and $V$, it follows that $W_r$ and $W_s$ have
a least upper bound.

For (3) and (4), we force over $L$ to construct a model
having the desired features for its grounds. Using Easton
forcing, let $\P$ be the forcing over $L$ to force
$2^{\aleph_n}=\aleph_{n+2}$ for all $n<\omega$. More
specifically, $\P$ is the full-support product of the
forcing $\Q_n=\Add(\aleph_n,\aleph_{n+2})$, as defined in
$L$. Suppose that $G\of\P$ is $L$-generic for this forcing.
Next, we add a Cohen real $c$ over $L[G]$. In the model
$V=L[G][c]$, we construct two grounds, so as to witness
(4). The first ground is simply $L[G]$. The second is
$L[G\restrict c]$, where $G\restrict c$ is the filter $G$
restricted to the coordinates $n$ with $n\in c$, viewing
$c$ as a subset of $\omega$. We claim that $L[G]$ and
$L[G\restrict c]$ have no greatest lower bound among the
grounds. Suppose towards contradiction that $W$, a ground
of $V$, is the greatest lower bound of $L[G]$ and
$L[G\restrict c]$. As all cardinals are preserved between
$L$ and $L[G][c]$, it follows that the models $W$, $L[G]$
and $L[G\restrict c]$ have all the same $\aleph_n$'s. Note
that if $n\in c$, then $L[G]$ and $L[G\restrict c]$ both
contain $L[G(n)]$, where $G(n)$ is the part of the generic
filter $G$ on coordinate $n$, to pump up the \GCH\ at
$\aleph_n$. Since $L[G(n)]$ is a ground of both $L[G]$ and
$L[G\restrict c]$ for such $n$, it follows by our
assumption on $W$ being a greatest lower bound that
$L[G(n)]\of W$ for any $n\in c$. This implies that the
\GCH\ fails in $W$ at every $\aleph_n$ for which $n\in c$.
If $n\notin c$, however, then the \GCH\ must hold in $W$ at
$\aleph_n$, since it holds in $L[G\restrict c]$ at these
$\aleph_n$, and cardinals are preserved. Thus, the \GCH\
pattern in $W$ on the $\aleph_n$'s exactly conforms with
the real $c$. But $c$ is not in $W$, because it is not in
$L[G]$, contradicting our assumption that $W$ satisfied
\ZFC. Thus, we have proved that the grounds $L[G]$ and
$L[G\restrict c]$ can have no greatest lower bound,
establishing (4). Statement (3) follows on general lattice
theoretic grounds, since if a downward directed upper
semi-lattice is complete with respect to joins, then for
any pair of nodes, the join of all lower bounds of these
nodes will be a greatest lower bound, and so it would be a
lattice. By (4), it need not be a lattice, and so we
conclude that (3), it need not be complete as an upper
semi-lattice.\end{proof}

In particular, theorem
\ref{Theorem.LatticePropertiesOfGrounds} statement (4)
provides an example of a model $L[G][c]$ having two grounds
$L[G]$ and $L[G\restrict c]$ whose intersection is not a
model of \ZF. Namely, the separation axiom fails in
$L[G]\intersect L[G\restrict c]$, since in this model one
can define the set of natural numbers $n$ for which the
power set of $\aleph_n$ is not bijective with
$\aleph_{n+1}$, but this set corresponds exactly to $c$,
which does not exist in $L[G]\intersect L[G\restrict c]$.

\section{A brief upward glance}

Set-theoretic geology naturally has a downward-oriented
focus, towards deeper grounds and mantles, but let us cast
a brief upward glance in this section by considering
several upwards closure issues. The second author heard the
following observation from Woodin in the early 1990s.

\begin{observation}\label{Observation:NonAmalgamableExtensions}
If $W$ is a countable model of \ZFC\ set theory, then there
are forcing extensions $W[c]$ and $W[d]$, both obtained by
adding a Cohen real, which are non-amalgamable in the sense
that there can be no model of \ZFC\ with the same ordinals
as $W$ containing both $W[c]$ and $W[d]$. Thus, the family
of forcing
extensions of $W$ is not upward directed.%
\end{observation}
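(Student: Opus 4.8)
The plan is to build the two Cohen reals $c$ and $d$ over $W$ so that together they encode enough information to contradict the existence of a common \ZFC-extension with the same ordinals. Since $W$ is countable, there is a bijection $f:\omega\to W$-coding of some fixed set, or more directly: fix in $W$ an $\omega$-enumeration is unavailable inside $W$, so instead I would work externally. Using the countability of $W$, enumerate the dense subsets of Cohen forcing lying in $W$ and diagonalize; the key point is that I have enough freedom in choosing $c$ to steer it. Concretely, first I would note that $\omega_1^W$ is a countable ordinal in the real world, so there is a real $e$ coding a well-order of $\omega$ of type $\omega_1^W$. The strategy is to split this real $e$ into two ``halves'' $c$ and $d$, each individually Cohen-generic over $W$, but such that from $c$ and $d$ together one can reconstruct $e$ by a simple recursive procedure (for instance, let $c$ and $d$ be generic for the finite conditions of $2^{\omega}$, and interleave so that $c(2n)\oplus d(2n)$ recovers the bits of $e$, while the odd coordinates of each are used to absorb the genericity requirements).

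The first step is to carry out this construction rigorously: list the dense sets $\langle D_n\mid n<\omega\rangle$ of $\Add(\omega,1)^W$ in order type $\omega$, and build descending sequences of conditions for $c$ and for $d$ simultaneously, at stage $n$ meeting $D_n$ with both while reserving a block of coordinates on which we write the next bit of $e$. Both $c$ and $d$ are then $W$-generic Cohen reals, so $W[c]$ and $W[d]$ are legitimate forcing extensions of $W$ by Cohen forcing. The second step is the amalgamation obstruction: suppose $U\models\ZFC$ has the same ordinals as $W$ and $W[c],W[d]\subseteq U$. Then $c,d\in U$, so the real $e$, being recursive in $c\oplus d$, is also in $U$. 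But $e$ codes a well-order of $\omega$ of type $\omega_1^W$, so $\omega_1^W$ is a countable ordinal in $U$ — whereas $\omega_1^W=\omega_1^U$ since $U$ has the same ordinals as $W$ and contains $W$ (cardinals can only be collapsed by adding sets, and $\omega_1^W$ remains a cardinal in $W[c]$ and $W[d]$; the contradiction is that $U$ believes $\omega_1^W$ is both the first uncountable cardinal and countable). Hence no such $U$ exists, and since any upper bound for $W[c]$ and $W[d]$ in the forcing-extension order would in particular be a \ZFC-model with the same ordinals containing both, the family of extensions of $W$ is not upward directed.

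The main obstacle is the simultaneous diagonalization in the first step: I must meet all dense sets of $W$ with \emph{both} reals while still having coordinates free to encode $e$, and I must be sure the encoding is genuinely recoverable (i.e.\ that the decoding procedure is arithmetic in $c\oplus d$ and does not secretly require a well-order of $W$). This is handled by a bookkeeping argument — reserve, say, the coordinates in the $n$-th interval $[a_n,a_{n+1})$ of a fixed partition of $\omega$ for coding the $n$-th bit of $e$, and use the complementary coordinates for genericity; because each $D_n$ is dense, we can always extend a finite condition to meet it within the allotted ``genericity'' coordinates without touching the reserved ones. One should also double-check the cardinal-preservation bookkeeping in the second step: the clean way to phrase the contradiction is that $e\in U$ exhibits a surjection $\omega\to\omega_1^W$ in $U$, while $\omega_1^W$ is still a cardinal in $U$ because $U$ is an inner model of $W[c][d]$-like extensions — more simply, $(\omega_1)^U\le(\omega_1)^W$ is impossible since $W\subseteq U$ gives $(\omega_1)^U\ge(\omega_1)^W$ is false too; the robust statement is just that $U$ contains a well-order of $\omega$ of length $(\omega_1)^W\ge(\omega_1)^U$, so $U\models$ ``$(\omega_1)^U$ is countable,'' a contradiction.
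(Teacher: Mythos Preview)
Your high-level strategy matches the paper's: build $c$ and $d$ by simultaneous diagonalization against the countably many dense sets of $W$, while smuggling a ``bad'' real $e$ into the pair so that any common extension $U$ must contain $e$. The paper carries this out with a self-delimiting encoding (runs of $0$'s terminated by a $1$ mark the stage boundaries, so the decoding is arithmetical in $c\oplus d$), and it takes $e$ to be a real coding \emph{all of $W$}.

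There is, however, a genuine gap in your argument: your choice of $e$ is too weak. You take $e$ to code a well-order of $\omega$ of type $\omega_1^W$, and then claim a contradiction from $e\in U$. But from $W\subseteq U$ you only get $\omega_1^W\le\omega_1^U$ (anything countable in $W$ remains countable in $U$), not the reverse inequality you assert at the end. So $e\in U$ merely shows that $\omega_1^W$ is countable in $U$, which is perfectly consistent with $U\models\ZFC$ having the same ordinals as $W$ --- $U$ could simply be a model in which $\omega_1^W$ has been collapsed. No contradiction follows. The fix is exactly what the paper does: let $e$ code the whole of $W$ (or at least a cofinal $\omega$-sequence in $\mathrm{Ord}^W$, or a well-order of $\omega$ of type $\mathrm{Ord}^W$). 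Then $e\in U$ forces every ordinal of $W$, hence every ordinal of $U$, to be countable in $U$, and \emph{that} contradicts $U\models\ZFC$.

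A smaller point: your ``fixed partition'' encoding does not quite work as stated, because Cohen conditions are initial segments and a dense set in $W$ may demand specific values at your reserved coordinates; you cannot simply refuse to touch them. The paper handles this by letting the coding positions be determined dynamically during the construction, with the block boundaries recoverable from the pattern of $0$'s and $1$'s in $c$ and $d$ themselves. Your instinct that this is ``the main obstacle'' is correct, and the paper's marker trick is the standard way to resolve it.
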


\begin{proof}  Since $W$ is countable, let $z$ be a real coding
the entirety of $W$. Enumerate the dense subsets $\<D_n\st
n<\omega>$ of the Cohen forcing $\Add(\omega,1)$ in $W$. We
construct $c$ and $d$ in stages. We begin by letting $c_0$
be any element of $D_0$. Let $d_0$ consist of exactly as
many $0$s as $|c_0|$, followed by a $1$, followed by
$z(0)$, and then extended to an element of $D_0$.
Continuing, $c_{n+1}$ extends $c_n$ by adding $0$s until
the length of $d_n$, and then a $1$, and then extending
into $D_{n+1}$; and $d_{n+1}$ extends $d_n$ by adding $0$s
to the length of $c_{n+1}$, then a $1$, then $z(n)$, then
extending into $D_{n+1}$. Let $c=\union c_n$ and $d=\union
d_n$. Since we met all the dense sets in $W$, we know that
$c$ and $d$ are $W$-generic Cohen reals, and so we may form
the forcing extensions $W[c]$ and $W[d]$. But if $W\of
U\satisfies\ZFC$ and both $c$ and $d$ are in $U$, then in
$U$ we may reconstruct the map $n\mapsto\<c_n,d_n>$, by
giving attention to the blocks of $0$s in $c$ and $d$. From
this map, we may reconstruct $z$ in $U$, which reveals all
the ordinals of $W$ to be countable, a contradiction if $U$
and $W$ have the same ordinals.\end{proof}

Let us remark that many of the results in this section
concern forcing extensions of an arbitrary countable model
of set theory, which of course includes the case of
ill-founded models. Although there is no problem with
forcing extensions of ill-founded models, when properly
carried out, the reader may prefer to focus on the case of
countable transitive models for the results in this
section, and such a perspective will lose very little of
the point of our observations.

The method of observation
\ref{Observation:NonAmalgamableExtensions} is easily
generalized to produce three $W$-generic Cohen reals $c_0$,
$c_1$ and $c_2$, such that any two of them can be
amalgamated, but the three of them cannot. More generally:

\begin{observation}\label{Observation:NonAmalgamableExtensionsGeneralized}
If $W$ is a countable model of \ZFC\ set theory, then for any finite $n$ there are $W$-generic 
Cohen reals $c_0,c_1,\ldots,c_{n-1}$, such that any proper subset of them are mutually $W$-generic, so that 
one may form the generic extension $W[\vec c]$, provided that $\vec c$ omits at least one $c_i$, but there is no 
forcing extension $W[G]$ simultaneously extending all $W[c_i]$ for $i<n$. In particular, the sequence $\<c_0,c_1,\ldots,c_{n-1}>$ cannot be added by forcing over $W$.
\end{observation}

Let us turn now to infinite linearly ordered sequences of
forcing extensions. We show first in theorem
\ref{Theorem.IncreasingSequenceUnbounded} and observation
\ref{Observation.IncreasingSequenceUnbounded} that one
mustn't ask for too much; but nevertheless, in theorem
\ref{Theorem.IncreasingOmegaChainIsBounded} we prove the
surprising positive result, that any increasing sequence of
forcing extensions over a countable model $W$, with forcing
of uniformly bounded size, is bounded above by a single
forcing extension $W[G]$.

\begin{theorem}\label{Theorem.IncreasingSequenceUnbounded}
If $W$ is a countable model of \ZFC, then there is an
increasing sequence of set-forcing extensions of $W$ having
no upper bound in the generic multiverse of $W$.
$$W[G_0]\of W[G_1]\of\cdots\of W[G_n]\of\cdots$$
\end{theorem}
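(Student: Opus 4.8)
The plan is to build the sequence $\langle G_n \st n<\omega\rangle$ by a careful bookkeeping construction, diagonalizing against all possible upper bounds. Since $W$ is countable, enumerate in advance all pairs $(\mathbb{Q}, \dot{H})$ with $\mathbb{Q}\in W$ a forcing notion and $\dot{H}$ a $\mathbb{Q}$-name, so that every potential generic extension $W[H]$ of $W$ appears as a ``target to defeat'' at some stage. The sequence itself will consist of extensions $W[G_n]$ where each $G_n$ is generic for a product of Cohen forcing, say $W[G_n] = W[c_0,\ldots,c_{k_n-1}]$ for finitely many Cohen reals, with $G_{n}\subseteq G_{n+1}$; at the end we will want the full sequence $\langle c_i \st i<\omega\rangle$ to code, in a way visible to any model containing all of them, a real $z$ that codes $W$ itself — so that no $W$-ordinal-preserving model of \ZFC\ can contain all the $W[G_n]$, exactly as in observation \ref{Observation:NonAmalgamableExtensions}. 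The point is that each individual $W[G_n]$ is a perfectly good set-forcing extension of $W$, but the union is not contained in any single one.

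First I would set up the coding: fix a real $z\in W$ (or rather, a real coding $W$ in the sense of the ambient universe) and arrange, using the block-of-zeros technique from observation \ref{Observation:NonAmalgamableExtensions}, that from the join of all the $c_i$ one can reconstruct the map $i\mapsto c_i$ and hence $z$. Concretely, at stage $n$ I decide a finite condition in $\Add(\omega,k_{n+1})$ extending what was decided so far, meeting the $n$-th dense set (relative to some fixed enumeration of the dense subsets of each finite Cohen product in $W$) so that genericity over $W$ is guaranteed in the limit, while also planting the next block of coding information about $z$. This ensures each $c_i$ is $W$-generic, each finite collection is mutually $W$-generic, so each $W[G_n]$ is a legitimate forcing extension, yet the sequence of reals $\langle c_i\rangle$ recovers $z$ in any model containing them all.

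Second, I would observe that this already suffices: if $W[G]$ (for any $G$ set-generic over $W$, or even just any model $U\supseteq W$ with the same ordinals) contained every $W[G_n]$, then it would contain every $c_i$, hence reconstruct $z$, hence see all ordinals of $W$ as countable — contradicting that $U$ and $W$ have the same ordinals. So no upper bound exists in the generic multiverse of $W$ (indeed none even among arbitrary \ZFC-models with $W$'s ordinals).

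The main obstacle is purely bookkeeping: I must interleave two demands at each finite stage — (i) meeting enough dense sets, over each of the growing finite Cohen products, to secure mutual $W$-genericity of $\langle c_i \st i<\omega\rangle$ in the limit, and (ii) laying down the self-describing block structure that lets one later decode $i\mapsto c_i$ and thus $z$. These do not conflict, since after planting a coding block one is always free to extend further into the next dense set, and after meeting a dense set one is free to append the next coding block; but the enumeration of dense sets has to be organized so that, as the number of Cohen coordinates $k_n$ grows, every dense set of every finite product gets attended to cofinally often. A standard pairing-function enumeration handles this. I expect no genuine difficulty beyond making this interleaving precise; the conceptual content is entirely contained in the coding idea of observation \ref{Observation:NonAmalgamableExtensions}, now deployed along an $\omega$-chain rather than between two incomparable extensions.
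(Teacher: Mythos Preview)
Your approach has a genuine gap, and in fact it cannot be repaired: the paper's own Theorem~\ref{Theorem.IncreasingOmegaChainIsBounded} proves that any increasing chain of forcing extensions of $W$ by forcing of \emph{bounded size} in $W$ does have an upper bound. Since your $W[G_n]$ are obtained by adding finitely many Cohen reals, all the forcing notions have size $\aleph_0$ in $W$, and so the theorem guarantees an upper bound $W[H]$ containing every $W[G_n]$. Your proposed sequence therefore does not witness the statement.

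The specific step that fails is the inference ``$U$ contains every $c_i$, hence reconstructs $z$.'' In the two-real coding of Observation~\ref{Observation:NonAmalgamableExtensions}, the pair $(c,d)$ determines the stage-by-stage construction and hence $z$. But in your situation an upper bound $U$ may contain each $c_i$ as an element without containing the \emph{sequence} $\langle c_i\st i<\omega\rangle$ or even the set $\{c_i\st i<\omega\}$; indeed, Observation~\ref{Observation.IncreasingSequenceUnbounded} and Theorem~\ref{Theorem.IncreasingOmegaChainIsBounded} together show exactly that the sequence may fail to exist in any forcing extension, while the individual models are nonetheless amalgamable. Without the sequence, $U$ has no way to pick out the $c_i$ from among its continuum-many reals, and the decoding of $z$ never gets started.

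The paper's proof instead uses forcing of \emph{unbounded} size: choose ordinals $\gamma_n$ cofinal in $\ORD^W$ (possible since $W$ is countable) and let $G_n$ be $W$-generic for $\Coll(\omega,\gamma_n)$, arranging $W[G_m]\of W[G_n]$ for $m<n$. Any model with the ordinals of $W$ containing every $W[G_n]$ would see every $\gamma_n$, and hence every ordinal of $W$, as countable --- impossible. The essential point is that the forcing notions must grow without bound in $W$; this is precisely what your Cohen-real construction lacks.
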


\begin{proof}  Since $W$ is countable, there is an increasing
sequence $\<\gamma_n\st n<\omega>$ of ordinals that is
cofinal in the ordinals of $W$. Let $G_n$ be $W$-generic
for the collapse forcing $\Coll(\omega,\gamma_n)$, as
defined in $W$. (By absorbing the smaller forcing, we may
arrange that $W[G_n]$ contains $G_m$ for $m<n$.) Since
every ordinal of $W$ is eventually collapsed, there can be
no set-forcing extension of $W$, and indeed, no model with
the same ordinals as $W$, that contains every
$W[G_n]$.\end{proof}

But that was cheating, of course, since the sequence of
forcing notions is not even definable in $W$, as the class
$\set{\gamma_n\st n<\omega}$ is not a class of $W$. A more
intriguing question would be whether this phenomenon can
occur with forcing notions that constitute a set in $W$, or
(equivalently, actually) whether it can occur using always
the same poset in $W$. For example, if $W[c_0]\of
W[c_0][c_1]\of W[c_0][c_1][c_2]\of\cdots$ is an increasing
sequence of generic extensions of $W$ by adding Cohen
reals, then does it follow that there is a set-forcing
extension $W[G]$ of $W$ with $W[c_0]\cdots[c_n]\of W[G]$
for every $n$? For this, we begin by showing that one
mustn't ask for too much:

\begin{observation}\label{Observation.IncreasingSequenceUnbounded}
If $W$ is a countable model of \ZFC, then there is a
sequence of forcing extensions $W\of W[c_0]\of
W[c_0][c_1]\of W[c_0][c_1][c_2]\of\cdots$, adding a Cohen
real at each step, such that there is no forcing extension
of $W$ containing the sequence $\<c_n\st n<\omega>$.
\end{observation}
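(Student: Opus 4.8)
The plan is to adapt the block-coding trick of Observation \ref{Observation:NonAmalgamableExtensions} so that it works with a sequence of Cohen reals added one at a time, rather than with two incomparable extensions. As before, since $W$ is countable, fix a real $z$ coding all the ordinals of $W$ (more precisely, coding a well-order of $\omega$ in order type $\ORD^W$, or at least coding enough of $W$ that recovering $z$ in an outer model reveals $\ORD^W$ to be countable). The aim is to build $W$-generic Cohen reals $c_0, c_1, c_2, \ldots$, with each $c_{n+1}$ generic over $W[c_0]\cdots[c_n]$, such that from the whole sequence $\langle c_n \st n<\omega\rangle$ one can reconstruct $z$ in any model containing that sequence; this then rules out any single forcing extension $W[G]$ (indeed any model with the ordinals of $W$) containing all the $c_n$, exactly as in the earlier argument.

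Here is how I would arrange the coding. Work with Cohen forcing $\Add(\omega,1)$ as finite binary strings. The key is that when building $c_{n+1}$ we already have $c_0,\ldots,c_n$ fully in hand, so we may diagonalize against a fixed enumeration $\langle D^{(n+1)}_k \st k<\omega\rangle$ of the dense subsets of $\Add(\omega,1)$ lying in $W[c_0]\cdots[c_n]$, while simultaneously planting a marker that records the bit $z(n)$. Concretely, I would have $c_{n+1}$ begin with a block of $0$s whose length exceeds everything relevant so far (e.g. longer than the stage-$n$ approximations of all earlier $c_i$), then a $1$, then the bit $z(n)$, and then be extended generically through the $D^{(n+1)}_k$'s in the usual back-and-forth fashion. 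The initial long block of $0$s in $c_{n+1}$ serves as an unambiguous ``delimiter'' so that, given the actual reals $c_0,c_1,\ldots$ as elements of an outer model $U$, one can in $U$ identify for each $n$ the location of the marker bit and thereby read off $z(n)$ for all $n$, hence reconstruct $z$.

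Once the construction is complete, each $c_n$ is $W[c_0]\cdots[c_{n-1}]$-generic for Cohen forcing by the meeting-all-dense-sets argument, so the chain $W \subseteq W[c_0] \subseteq W[c_0][c_1] \subseteq \cdots$ is a genuine increasing sequence of Cohen extensions. If there were a model $U \models \ZFC$ with $\ORD^U = \ORD^W$ containing every $c_n$ (in particular any forcing extension $W[G]$ containing the sequence), then $\langle c_n \st n<\omega\rangle \in U$ as well, since $\omega$-sequences of reals are coded by reals and $U$ is closed under the requisite constructions; running the decoding inside $U$ recovers $z$, which exhibits $\ORD^W$ as countable in $U$, contradicting $\ORD^U = \ORD^W$. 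I expect the only genuinely delicate point to be the bookkeeping that makes the decoding unambiguous in $U$ \emph{without} already knowing $W$ or $z$ — i.e., ensuring the delimiter blocks cannot be mimicked accidentally by the generic tails of earlier $c_i$. This is handled by always choosing each delimiter block strictly longer than any block of $0$s that has appeared (or been forced to appear) in the finite approximations so far, so that in the limit reals the ``record-breaking'' runs of $0$s occur at predictable, recoverable positions; this is the same mechanism as in Observation \ref{Observation:NonAmalgamableExtensions}, merely iterated $\omega$ times, and it is routine to verify.
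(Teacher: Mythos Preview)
Your approach is correct but takes a more laborious route than the paper's. You build the $c_n$ by hand, diagonalizing against dense sets and inserting delimiter blocks in the style of Observation~\ref{Observation:NonAmalgamableExtensions}. The paper instead starts with an already $W$-generic sequence $\langle d_n : n<\omega\rangle$ for $\Add(\omega,\omega)$ (such a sequence exists since $W$ is countable), and then simply overwrites one bit of each real: set $c_n(0)=z(n)$ and $c_n(k)=d_n(k)$ for $k>0$. A finite modification of a Cohen real is still Cohen-generic, and moreover $W[c_0]\cdots[c_n]=W[d_0]\cdots[d_n]$, so the chain of Cohen extensions comes for free. The decoding is then trivial: $z(n)=c_n(0)$.

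This shortcut also dissolves the bookkeeping issue you flag as ``the only genuinely delicate point.'' The hypothesis is that the outer model contains the \emph{sequence} $\langle c_n : n<\omega\rangle$, not merely each $c_n$ separately, so the indexing is already given and one reads the designated bit of the $n$th real directly; no delimiter mechanism is needed. (Relatedly, your parenthetical attempt to upgrade the conclusion to any $U$ containing every $c_n$ individually does not go through: containing each term of an $\omega$-sequence does not imply containing the sequence itself.) Your construction is fine for the statement as written, but the paper's one-bit surgery on a pre-packaged mutually generic sequence is cleaner and sidesteps all the decoding concerns you anticipate.
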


\begin{proof}  Let $\<d_n\st n<\omega>$ be any $W$-generic
sequence for the forcing to add $\omega$ many Cohen reals
over $W$. Let $z$ be any real coding the ordinals of $W$.
Let us view these reals as infinite binary sequences.
Define the real $c_n$ to agree with $d_n$ on all digits
except the initial digit, and set $c_n(0)=z(n)$. That is,
we make a single-bit change to each $d_n$, so as to code
one additional bit of $z$. Since we have made only finitely
many changes to each $d_n$, it follows that $c_n$ is an
$W$-generic Cohen real, and also
$W[c_0]\cdots[c_n]=W[d_0]\cdots [d_n]$. Thus, we have $W\of
W[c_0]\of W[c_0][c_1]\of W[c_0][c_1][c_2]\of\cdots$, adding
a generic Cohen real at each step. But there can be no
forcing extension of $W$ containing $\<c_n\st n<\omega>$,
since any such extension would have the real $z$, revealing
all the ordinals of $W$ to be countable.\end{proof}

We can modify the construction to allow $z$ to be
$W$-generic, but collapsing some cardinals of $W$. For
example, for any cardinal $\delta$ of $W$, we could let $z$
be $W$-generic for the collapse of $\delta$. Then, if we
construct the sequence $\<c_n\st n<\omega>$ as above, but
inside $W[z]$, we get a sequence of Cohen real extensions
$W\of W[c_0]\of W[c_0][c_1]\of W[c_0][c_1][c_2]\of\cdots$
such that $W[\<c_n\st n<\omega>]=W[z]$, which collapses
$\delta$.

But of course, the question of whether the models
$W[c_0][c_1]\cdots[c_n]$ have an upper bound is not the
same question as whether one can add the sequence $\<c_n\st
n<\omega>$, since an upper bound may not have this
sequence. And in fact, this is exactly what occurs, and we
have a surprising positive result:

\begin{theorem} Suppose that $W$ is a countable model of \ZFC,
and
$$W[G_0]\of W[G_1]\of\cdots\of W[G_n]\of\cdots$$ is an increasing
sequence of forcing extensions of $W$, with $G_n\of\Q_n\in
W$ being $W$-generic. If the cardinalities of the $\Q_n$'s
in $W$ are bounded in $W$, then there is a set-forcing
extension $W[G]$ with $W[G_n]\of W[G]$ for all
$n<\omega$.\label{Theorem.IncreasingOmegaChainIsBounded}
\end{theorem}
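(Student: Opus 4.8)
The key idea is that the forcing notions $\Q_n$ all live inside some fixed $V_\delta^W$, and since $W$ is countable, we have room to thread a single generic filter through all the $W[G_n]$ by a careful back-and-forth bookkeeping. Fix in $W$ a cardinal $\delta$ bounding all $|\Q_n|$, so that each $\Q_n$ embeds as a complete suborder of $\B=\RO(\Coll(\omega,\delta))^W$, or more conveniently, each $W[G_n]$ is, by fact \ref{Fact.IntermediateModelsAreGrounds}, an intermediate model of a single collapse extension $W[H]\supseteq W[G_n]$ once we know such an $H$ exists above all the $G_n$ simultaneously — which is exactly what we must build. So the real task is to construct a single $W$-generic $H\subseteq\Coll(\omega,\delta)$ such that $G_n\in W[H]$ for every $n$.

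First I would set up the bookkeeping. Since $W$ is countable, enumerate in order type $\omega$ all the dense subsets of $\Coll(\omega,\delta)$ that lie in $W$, and also enumerate the ``tasks'' of coding each $G_n$ into $H$. The plan is to build $H$ as an increasing union of finite conditions $p_0\le p_1\le\cdots$ in $\Coll(\omega,\delta)$, meeting every dense set (to guarantee genericity) while additionally arranging that from $H$ one can decode each $G_n$. For the decoding, the natural device is to reserve, for each $n$, an infinite set of coordinates of the collapse function $H\colon\omega\to\delta$ on which we write down a real coding $G_n$ (note $G_n\subseteq\Q_n$ and $\Q_n\in W$ is coded by an ordinal $<\delta$, so $G_n$ is essentially a subset of that ordinal, i.e. codeable by a function $\omega\to\delta$). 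Interleaving these reservation coordinates with the requirement of hitting dense sets is the standard Cohen-real-style construction already used in Observation \ref{Observation:NonAmalgamableExtensions}, so it goes through; the finiteness of each approximation means the coding commitments never conflict with genericity demands.

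The step I expect to be the main obstacle is verifying that $W[H]$ really does contain each $G_n$ as opposed to merely containing a real that ``looks like'' a code for $G_n$ from outside. For this I would argue inside $W[H]$: the pattern of reserved coordinates for level $n$ is arithmetically (hence $W[H]$-) definable from $H$ together with the parameter $\delta$ and the fixed recursive bookkeeping scheme, which is an element of $W\subseteq W[H]$; reading off those coordinates yields, in $W[H]$, the sequence $\langle d^n_k\st k<\omega\rangle$ of decoded values, and then $G_n=\{q\in\Q_n\st \exists k\; d^n_k \text{ codes } q\}\in W[H]$ by Separation in $W[H]$, using $\Q_n\in W\subseteq W[H]$. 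One must double-check that genericity of $H$ over $W$ is not destroyed by the reservations — but since only finitely many coordinates are pre-committed at any finite stage, every dense set of $W$ is still met, so $H$ is genuinely $W$-generic for $\Coll(\omega,\delta)$; hence $W[H]$ is a legitimate set-forcing extension of $W$. Finally, since each $W[G_n]\subseteq W[H]$ (as $G_n\in W[H]$ and $W[G_n]$ is the least \ZFC\ model containing $W$ and $G_n$), the model $W[G]:=W[H]$ is the desired common upper bound, completing the proof.
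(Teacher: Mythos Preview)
Your argument has a genuine gap at the decoding step, and the tension you wave away cannot be resolved as described. You propose to build a $W$-generic $H\subseteq\Coll(\omega,\delta)$ whose values on reserved coordinates encode the $G_n$, but the two possible readings of ``reserved'' each fail. If the reserved sets $A_n\subseteq\omega$ are fixed in advance by a scheme in $W$, then $H\restrict A_n$ is determined by $G_n$ and genericity is lost: concretely, with any coding whose range on $A_0$ lies inside $\Q_0\subsetneq\delta$, the dense set $\{p:\exists k\in A_0\ p(k)\notin\Q_0\}\in W$ is never met. More structurally, $\Coll(\omega,\delta)$ is the finite-support product $\prod_n\Coll(A_n,\delta)$, so genericity of $H$ forces $H\restrict A_0$ to be generic over $W[H\restrict A_1]$; but $H\restrict A_1$ determines $G_1$, hence $G_0\in W[G_1]\subseteq W[H\restrict A_1]$, and then $H\restrict A_0$, being $W$-definable from $G_0$, already lies in $W[H\restrict A_1]$ and cannot be generic over it. If instead the coding positions are chosen dynamically (extend into the next dense set, then code at the next free coordinate), genericity survives but the positions now depend on the external $\omega$-enumeration of the dense sets of $W$, which is not in $W$; your claim that the bookkeeping scheme is ``an element of $W$'' is then false, and $W[H]$ has no way to locate the coded bits. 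The comparison with observation~\ref{Observation:NonAmalgamableExtensions} is misleading: there \emph{two} reals are built whose mutual alignment marks the block boundaries, whereas a single generic $H$ cannot carry self-identifying markers, since any $W$-definable marker pattern will by genericity recur throughout $H$.

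The paper's proof supplies the missing mechanism and is substantially different. It first adds a collapse $g\subseteq\Coll(\omega,\kappa)$ to reduce to the case where the tower arises from mutually generic Cohen reals $g_n$, so that one is dealing with genuine product forcing rather than an iteration. It then works not with $\Coll(\omega,\delta)$ but with the finite-support product $\P=\prod_{\alpha<\theta}\R_\alpha$ of all posets of size at most $\delta$, enumerated with unbounded repetition for $\theta=2^\delta$. One builds $H\subseteq\P$ generic over $\bigcup_n W[g_0]\cdots[g_n]$, then chooses externally a cofinal sequence $\theta_n<\theta$ with $\R_{\theta_n}=\Q_n$ and surgically replaces the coordinate $H_{\theta_n}$ by $g_n$ to form $H^*$. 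The $\delta^+$-c.c.\ of $\P$ guarantees that any maximal antichain in $W$ has support bounded below some $\theta_m$, so only finitely many surgeries affect it; an automorphism of $\P\times\Q_0\times\cdots\times\Q_m$ swapping those finitely many coordinates with the external factors then shows $H^*$ still meets the antichain, since $H$ was built mutually generic with $g_0\times\cdots\times g_m$. This chain-condition-plus-surgery-plus-automorphism device is exactly the idea your direct-coding approach lacks.
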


\begin{proof}  Let us first make the argument in the special case
that we have
$$W\of W[g_0]\of W[g_0][g_1]\of\cdots\of
W[g_0][g_1]\cdots[g_n]\of\cdots,$$ where each $g_n$ is
generic over the prior model for forcing $\Q_n\in W$. That
is, each extension $W[g_0][g_1]\cdots[g_n]$ is obtained by
product forcing $\Q_0\cross\cdots\cross\Q_n$ over $W$, and
the $g_n$ are mutually $W$-generic. Let $\delta$ be a
regular cardinal with each $\Q_n$ having size at most
$\delta$, built with underlying set a subset of $\delta$.
In $W$, let $\theta=2^\delta$, let
$\<\R_\alpha\st\alpha<\theta>$ enumerate all posets of size
at most $\delta$, with unbounded repetition, and let
$\P=\prod_{\alpha<\theta}\R_\alpha$ be the finite-support
product of these posets. Since each factor is
$\delta^\plus$-c.c., it follows that the product is
$\delta^\plus$-c.c. Since $W$ is countable, we may build a
filter $H\of\P$ that is $W$-generic. In fact, we may find
such a filter $H\of\P$ that meets every dense set in
$\Union_{n<\omega}W[g_0][g_1]\cdots[g_n]$, since this union
is also countable. In particular, $H$ and
$g_0\cross\cdots\cross g_n$ are mutually $W$-generic for
every $n<\omega$. The filter $H$ is determined by the
filters $H_\alpha\of\R_\alpha$ that it adds at each
coordinate.

Next comes the key step. Externally to $W$, we may find an
increasing sequence $\<\theta_n\st n<\omega>$ of ordinals
cofinal in $\theta$, such that $\R_{\theta_n}=\Q_n$. This
is possible because the posets are repeated unboundedly,
and $\theta$ is countable in $V$. Let us modify the filter
$H$ by surgery to produce a new filter $H^*$, by changing
$H$ at the coordinates $\theta_n$ to use $g_n$ rather than
$H_{\theta_n}$. That is, let $H^*_{\theta_n}=g_n$ and
otherwise $H^*_\alpha=H_\alpha$, for
$\alpha\notin\set{\theta_n\st n<\omega}$. It is clear that
$H^*$ is still a filter on $\P$. We claim that $H^*$ is
$W$-generic. To see this, suppose that $A\of\P$ is any
maximal antichain in $W$. By the $\delta^+$-chain condition
and the fact that $\cof(\theta)^W>\delta$, it follows that
the conditions in $A$ have support bounded by some
$\gamma<\theta$. Since the $\theta_n$ are increasing and
cofinal in $\theta$, only finitely many of them lay below
$\gamma$, and we may suppose that there is some largest
$\theta_m$ below $\gamma$. Let $H^{**}$ be the filter
derived from $H$ by performing the surgical modifications
only on the coordinates $\theta_0,\ldots,\theta_m$. Thus,
$H^*$ and $H^{**}$ agree on all coordinates below $\gamma$.
By construction, we had ensured that $H$ and
$g_0\cross\cdots\cross g_m$ are mutually generic over $W$
for the forcing $\P\cross\Q_0\cross\cdots\Q_m$. This poset
has an automorphism swapping the latter copies of $\Q_i$
with their copy at $\theta_i$ in $\P$, and this
automorphism takes the $W$-generic filter $H\cross
g_0\cross\cdots\cross g_m$ exactly to $H^{**}\cross
H_{\theta_0}\cross\cdots \cross H_{\theta_m}$. In
particular, $H^{**}$ is $W$-generic for $\P$, and so
$H^{**}$ meets the maximal antichain $A$. Since $H^*$ and
$H^{**}$ agree at coordinates below $\gamma$, it follows
that $H^*$ also meets $A$. In summary, we have proved that
$H^*$ is $W$-generic for $\P$, and so $W[H^*]$ is a
set-forcing extension of $W$. By design, each $g_n$ appears
at coordinate $\theta_n$ in $H^*$, and so
$W[g_0]\cdots[g_n]\of W[H^*]$ for every $n<\omega$, as
desired.

Finally, we reduce the general case to this special case.
Suppose that $W[G_0]\of W[G_1]\of\cdots\of W[G_n]\of\cdots$
is an increasing sequence of forcing extensions of $W$,
with $G_n\of\Q_n\in W$ being $W$-generic and each $\Q_n$ of
size at most $\kappa$ in $W$. By the standard facts
surrounding finite iterated forcing, we may view each model
as a forcing extension of the previous model
$$W[G_{n+1}]=W[G_n][H_n],$$ where $H_n$ is
$W[G_n]$-generic for the corresponding quotient forcing
$\Q_n/G_n$ in $W[G_n]$. Let $g\of\Coll(\omega,\kappa)$ be
$\Union_n W[G_n]$-generic for the collapse of $\kappa$, so
that it is mutually generic with every $G_n$. Thus, we have
the increasing sequence of extensions $W[g][G_0]\of
W[g][G_1]\of\cdots$, where we have added $g$ to each model.
Since each $\Q_n$ is countable in $W[g]$, it is forcing
equivalent there to the forcing to add a Cohen real.
Furthermore, the quotient forcing $\Q_n/G_n$ is also
forcing equivalent in $W[g][G_n]$ to adding a Cohen real.
Thus, $W[g][G_{n+1}]=W[g][G_n][H_n]=W[g][G_n][h_n]$, for
some $W[g][G_n]$-generic Cohen real $h_n$. Unwrapping this
recursion, we have
$W[g][G_{n+1}]=W[g][G_0][h_1]\cdots[h_n]$, and consequently
$$W[g]\of W[g][G_0]\of W[g][G_0][h_1]\of W[g][G_0][h_1][h_2]\of\cdots,$$
which places us into the first case of the proof, since
this is now product forcing rather than iterated forcing.
\end{proof}

\begin{definition}\rm
A collection $\set{W[G_n]\st n<\omega}$ of forcing
extensions of $W$ is {\df finitely amalgamable} over $W$ if
for every $n<\omega$ there is a forcing extension $W[H]$
with $W[G_m]\of W[H]$ for all $m\leq n$. It is {\df
amalgamable} over $W$ if there is $W[H]$ such that
$W[G_n]\of W[H]$ for all $n<\omega$.
\end{definition}

The next corollary shows that we cannot improve the
non-amalgamability result of observation~\ref{Observation:NonAmalgamableExtensionsGeneralized} to the case of
infinitely many Cohen reals, with all finite subsets
amalgamable.

\begin{corollary}
If $W$ is a countable model of \ZFC\ and $\set{W[G_n]\st
n<\omega}$ is a finitely amalgamable collection of forcing
extensions of $W$, using forcing of bounded size in $W$,
then this collection is fully amalgamable. That is, there
is a forcing extension $W[H]$ with $W[G_n]\of W[H]$ for all
$n<\omega$.
\end{corollary}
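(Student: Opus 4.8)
The plan is to reduce this to theorem~\ref{Theorem.IncreasingOmegaChainIsBounded}. Write each $W[G_n]$ as a forcing extension of $W$ by a poset $\Q_n\in W$ of size at most a fixed infinite cardinal $\kappa$ of $W$ (if $\kappa$ could be taken finite the family would be the constant family $\{W\}$ and there would be nothing to prove). From finite amalgamability, for each $n<\omega$ I would fix a forcing extension $W[H_n]$ with $W[G_m]\of W[H_n]$ for all $m\le n$. The difficulty is that these amalgamations $W[H_n]$ need not nest, and need not be forcing extensions of $W$ by small posets, so they cannot be fed directly into theorem~\ref{Theorem.IncreasingOmegaChainIsBounded}. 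The remedy is to replace each $W[H_n]$ by a canonical intermediate model determined only by the data we care about.

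Concretely, inside $W[H_n]$ I would form the sequence $\vec G_n=\<G_0,\ldots,G_n>$ and let $W[\vec G_n]$ be the smallest model of \ZFC\ containing $W$ and $\vec G_n$; fixing in $W$ bijections of each $\Q_m$ with an ordinal $\le\kappa$, the tuple $\vec G_n$ is codeable as a single subset of $\kappa$. This sequence of models does the job: each $W[G_m]$ with $m\le n$ is contained in $W[\vec G_n]$, since $G_m$ is recovered from $\vec G_n$, and in particular $W[G_n]\of W[\vec G_n]$; the models nest, $W[\vec G_n]\of W[\vec G_{n+1}]$, since $\vec G_n$ is an initial segment of $\vec G_{n+1}$ and hence lies in $W[\vec G_{n+1}]$; and since $W\of W[\vec G_n]\of W[H_n]$ with $W[H_n]$ a forcing extension of $W$, it follows by fact~\ref{Fact.IntermediateModelsAreGrounds} that each $W[\vec G_n]$ is itself a forcing extension of $W$.

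The one point that requires care --- and which I expect to be the main obstacle --- is that these forcing extensions be by posets of size bounded \emph{uniformly} in $n$. This follows from the folklore observation that adjoining to $W$ a subset $A$ of $\kappa$ lying in a set-forcing extension of $W$ produces a forcing extension of $W$ by a poset of size at most $\kappa$: passing as in fact~\ref{Fact.IntermediateModelsAreGrounds} to a complete Boolean algebra $\mathbb{B}\in W$ with $A\in W[H]$ for $H$ generic for $\mathbb{B}$ over $W$, and setting $b_\alpha=\bval{\check\alpha\in\dot A}$, the poset whose conditions are pairs $(x,y)$ of disjoint finite subsets of $\kappa$ with $\bigwedge_{\alpha\in x}b_\alpha\wedge\bigwedge_{\beta\in y}(1-b_\beta)\neq 0$, ordered by coordinatewise reverse inclusion, has size $\le\kappa$, maps densely via $(x,y)\mapsto\bigwedge_{\alpha\in x}b_\alpha\wedge\bigwedge_{\beta\in y}(1-b_\beta)$ into the complete subalgebra of $\mathbb{B}$ generated by the $b_\alpha$ (which is the one relevant to $W[A]$ by the proof of fact~\ref{Fact.IntermediateModelsAreGrounds}), and adds exactly $A$. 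Applying this with $A=\vec G_n$ gives, uniformly in $n$, an increasing $\omega$-chain $W[\vec G_0]\of W[\vec G_1]\of\cdots$ of forcing extensions of $W$ by posets of size $\le\kappa$, so theorem~\ref{Theorem.IncreasingOmegaChainIsBounded} supplies a set-forcing extension $W[H]$ of $W$ with $W[\vec G_n]\of W[H]$ for every $n$; since $W[G_n]\of W[\vec G_n]$, we conclude $W[G_n]\of W[H]$ for all $n<\omega$, as desired.
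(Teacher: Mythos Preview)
Your overall strategy---build the increasing chain $W[\vec G_0]\subseteq W[\vec G_1]\subseteq\cdots$ of minimal amalgamations and invoke theorem~\ref{Theorem.IncreasingOmegaChainIsBounded}---is exactly the paper's: the paper writes these models as $W[G_0][G_1]\cdots[G_n]$ and applies the theorem directly, leaving the bounded-size hypothesis implicit. You correctly identify that hypothesis as the one point requiring care.

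Your verification of it, however, contains a genuine error. The density claim is false. Take $\mathbb{B}$ to be the measure algebra on $2^\omega$ and $b_n=\bval{\dot r(n)=1}$ for the canonical random real $\dot r$: the complete subalgebra generated by the $b_n$ is all of $\mathbb{B}$ (the cylinders $\sigma$-generate the Borel sets and $\mathbb{B}$ is ccc), yet the images of your conditions $(x,y)$ are precisely the basic cylinders, and no cylinder lies below the class of a positive-measure nowhere-dense set. In this example your poset is literally Cohen forcing, and forcing with it does not produce $W[r]$. More fundamentally, the ``folklore observation'' you state is itself false: a real $A\subseteq\omega$ lying in a forcing extension of $W$ can code a surjection $\omega\to\lambda$ for $\lambda$ arbitrarily large in $W$---for instance two $W$-generic Cohen reals, amalgamable inside a collapse extension $W[K]$, can be built by the method of observation~\ref{Observation:NonAmalgamableExtensions} so as to jointly code such a surjection---and then $W[A]$ collapses $\lambda$, so the forcing producing $W[A]$ must have $W$-size at least $\lambda$. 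Thus no bound on the poset for $W[\vec G_n]$ follows from $\kappa$ alone; what would suffice is a uniform bound on the sizes of the amalgamating forcings $W[H_n]$, since fact~\ref{Fact.IntermediateModelsAreGrounds} then traps each $W[\vec G_n]$ below a complete Boolean algebra of fixed size.
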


\begin{proof}  Since the collection is finitely amalgamable, for
each $n<\omega$ there is some $W$-generic $K$ such that
$W[G_m]\of W[K]$ for all $m\leq n$. Thus, we may form the
minimal model $W[G_0][G_1]\cdots[G_n]$ between $W$ and
$W[K]$, and thus $W[G_0][G_1]\cdots [G_n]$ is a forcing
extension of $W$. We are thus in the situation of theorem
\ref{Theorem.IncreasingOmegaChainIsBounded}, with an
increasing chain of forcing extensions. $$W\of W[G_0]\of
W[G_0][G_1]\of\cdots\of W[G_0][G_1]\cdots[G_n]\of\cdots$$
Therefore, by theorem
\ref{Theorem.IncreasingOmegaChainIsBounded}, there is a
model $W[H]$ containing all these extensions, and in
particular, $W[G_n]\of W[H]$, as desired.\end{proof}

\section{The generic
mantle}\label{Section.TheGenericMantle}

Let us turn now to a somewhat enlarged context, where we
have found an intriguing and perhaps more robust version of
the mantle. The concept is inspired by
\cite{Fuchs2008:ClosedMaximalityPrinciples}, where the {\df
generic \HOD} is introduced, a topic we shall explore
further in section~\ref{section:TheGenericHOD}. The idea is
to consider not only the grounds of $V$, but also the
grounds of the forcing extensions of $V$. A {\df generic
ground} of $V$ is a ground of some set-forcing extension of
$V$, that is, a model $W$ having $V[G]=W[H]$ for some
$V$-generic filter $G\of\P\in V$ and $W$-generic filter
$H\of\Q\in W$. This is equivalent to saying that $V$ and
$W$ have a common forcing extension, and this is therefore
a symmetric relation. The {\df generic mantle} of $V$ is
the intersection of all generic grounds of $V$. Since this
includes $V$ itself, as well as every ground of $V$, it
follows that the generic mantle of $V$ will be included in
$V$ and indeed, in the mantle of $V$.

\begin{definition}\rm
The {\df generic mantle}, denoted $\gMantle$, is the class
of all $x$ such that every forcing notion $\P$ forces that
$\check{x}$ belongs to every ground of $V^\P$. That is,
$$ \gMantle=\set{x\st\forall\P\ \one\forces_\P\forall r\,\check{x}\in W_r},$$
where $W_r$ is the ground of the forcing extension defined
by parameter $r$ using the uniform definition of theorem
\ref{Theorem.ParameterizedGroundsW_r} applied in the
forcing extension. Equivalently,
$$ \gMantle=\set{x\st\forall\P\, \one\forces_\P\,\check{x}\in \Mantle},$$
where $\Mantle$ defines the mantle as interpreted in the
forcing extension.
\end{definition}

In particular, the generic mantle is uniformly definable
without parameters. If $\P$ is a poset, we shall write
$\Mantle^\P$ for the class
$\{x\st\one\forces_\P\check{x}\in\Mantle\}$, so that
$\gMantle=\bigcap_\P\Mantle^\P$.

\goodbreak
\begin{observation}\label{Observation.gMequalsIntersectionOfMantlesOfCollapses}%
$\gMantle=\bigcap_\alpha\Mantle^{\Coll(\omega,\alpha)}$.
\end{observation}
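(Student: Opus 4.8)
The plan is to establish the two inclusions separately. The inclusion $\gMantle\of\bigcap_\alpha\Mantle^{\Coll(\omega,\alpha)}$ is immediate from the definition $\gMantle=\bigcap_\P\Mantle^\P$, since each collapse forcing $\Coll(\omega,\alpha)$ is among the posets over which the latter intersection is taken.

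For the reverse inclusion, I would argue by contraposition, showing that if $x\notin\gMantle$ then $x\notin\Mantle^{\Coll(\omega,\alpha)}$ for a suitable ordinal $\alpha$. The first step is to unwind the defining forcing statements: for any poset $\P$ we have $x\in\Mantle^\P$ if and only if $\one\forces_\P\check x\in\Mantle$, and since $\Mantle$ is the uniformly parameter-free definable class of Corollary \ref{Corollary.MantleIsDefinable}, this holds if and only if $x$ lies in every ground of every $\P$-generic extension of $V$. Hence $x\notin\Mantle^\P$ precisely when some $\P$-generic extension $V[G]$ has a ground $W$ with $x\notin W$. Assuming $x\notin\gMantle$, fix such a $\P$, $V[G]$ and $W$.

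The second step is to absorb $\P$ into a collapse. Choose $\alpha$ large enough --- any $\alpha\geq\card(\P)$ will do --- that $\P\cross\Coll(\omega,\alpha)$ is forcing equivalent over $V$ to $\Coll(\omega,\alpha)$, by \cite[Lemma 26.7]{Jech:SetTheory3rdEdition}, and let $g$ be $\Coll(\omega,\alpha)$-generic over $V[G]$. Then, since $g$ is generic over $V[G]$, the model $V[G][g]$ is a $\P\cross\Coll(\omega,\alpha)$-generic extension of $V$, hence by the choice of $\alpha$ a $\Coll(\omega,\alpha)$-generic extension of $V$; write it as $V[h]$. Now $V[G]$ is a ground of $V[G][g]=V[h]$, since $V[h]$ is obtained from $V[G]$ by the forcing $\Coll(\omega,\alpha)$, and $W$ is a ground of $V[G]$; since a ground of a ground is a ground, $W$ is a ground of $V[h]$. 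As $x\notin W$, it follows that $x\notin\Mantle^{V[h]}$, and therefore $\one\not\forces_{\Coll(\omega,\alpha)}\check x\in\Mantle$, i.e.\ $x\notin\Mantle^{\Coll(\omega,\alpha)}$, as desired.

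I do not expect a serious obstacle: the argument is essentially a repackaging of the standard fact that arbitrary set forcing can be absorbed into a sufficiently large collapse. The only points requiring minor care are the translation between the forcing-theoretic description of $\Mantle^\P$ and the statement that $x$ belongs to every ground of every $\P$-generic extension (which relies on the uniform first-order definability of the mantle), and the observation that the ground $W$ of $V[G]$ persists as a ground of the larger extension $V[G][g]$, which is exactly where one invokes that a ground of a ground is a ground.
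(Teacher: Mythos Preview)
Your proof is correct and follows essentially the same approach as the paper: both arguments rest on the absorption of an arbitrary poset $\P$ into a sufficiently large collapse $\Coll(\omega,\alpha)$ via the forcing equivalence $\P\times\Coll(\omega,\alpha)\equiv\Coll(\omega,\alpha)$, together with the fact that a ground of a ground is a ground. The paper's version is marginally more streamlined, arguing directly rather than by contraposition and packaging your second step as the single inclusion $\Mantle^{\P\times\Coll(\omega,\alpha)}\of\Mantle^\P$, but this is a cosmetic difference; you have simply unwound that inclusion explicitly.
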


\begin{proof}  The inclusion from left to right is trivial, as the
intersection on the right ranges only over a subclass of
all forcing notions. For the reverse inclusion, suppose
that $x\in\bigcap_\alpha\Mantle^{\Coll(\omega,\alpha)}$,
and let $\P$ be an arbitrary poset, of some cardinality
$\alpha$. It follows that $\P\times\Coll(\omega,\alpha)$ is
forcing equivalent to $\Coll(\omega,\alpha)$, and so
$x\in\Mantle^{\Coll(\omega,\alpha)}=\Mantle^{\P\times\Coll(\omega,\alpha)}$.
But $\Mantle^{\P\times\Coll(\omega,\alpha)}\of\Mantle^\P$,
as is easily verified, so we are done.\end{proof}

In many models of set theory, the generic mantle and the
mantle coincide, and although we have introduced them as
distinct concepts, we do not actually know them to be
different, for we have no model yet in which we know them
to differ (see question
\ref{Question.MantleDifferentThanGenericMantle?}). We
introduce the generic mantle in part because the evidence
indicates that it may be a more robust notion. For example,
we are able to prove that the generic mantle satisfies the
\ZF\ axioms of set theory without needing the directedness
hypotheses of theorem~\ref{Theorem.IfDirectedMantleHasZF}.

\begin{theorem} The generic mantle of $V$ is a definable
transitive class in $V$, containing all ordinals, invariant
under set forcing, and a model of the \ZF\ axioms of set
theory.\label{Theorem.LimitMantleDefinableInvariant}
\end{theorem}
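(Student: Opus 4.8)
The plan is to verify the four claimed properties of $\gMantle$ in turn, leaning on the definability machinery of theorem \ref{Theorem.ParameterizedGroundsW_r} and the intersection lemma \ref{Lemma.IntersectionsOfInnerModels}. \emph{Definability} is immediate: by theorem \ref{Theorem.ParameterizedGroundsW_r}(3), the relation ``$x\in W_r$'' is first-order without parameters, and quantification over posets $\P$ together with the forcing relation $\one\forces_\P$ is first-order, so the defining formula $\forall\P\,\one\forces_\P\forall r\,\check x\in W_r$ displayed in the definition of $\gMantle$ is a legitimate first-order formula. That $\gMantle$ is transitive and contains all ordinals follows because it is an intersection of grounds of forcing extensions, each of which is transitive and contains $\ORD$; and $\gMantle$ is nonempty, indeed contains $V$'s ordinals, trivially.

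Next I would prove \emph{invariance under set forcing}. Fix a poset $\Q$; I want $\gMantle^V=\gMantle^{V[h]}$ for $V$-generic $h\of\Q$. For the $\supseteq$-free direction it suffices to observe that a generic ground of $V[h]$ is a generic ground of $V$ (since $V[h]$ and $W$ having a common forcing extension gives $V$ and $W$ a common forcing extension), so $\gMantle^V\of\gMantle^{V[h]}$ in one direction is actually the reverse: every generic ground of $V$ is a generic ground of $V[h]$ as well, because $V$ and $V[h]$ have a common forcing extension, namely $V[h]$ itself, and the generic-ground relation is symmetric and "transitive up to common extensions." Concretely: if $W$ is a generic ground of $V$, witnessed by $V[G]=W[H]$, then further forcing to absorb $\Q$ — e.g. collapsing $|\Q|$ — gives a common forcing extension of $V[h]$ and $W$, so $W$ is a generic ground of $V[h]$; hence $\gMantle^{V[h]}\of\gMantle^V$. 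Conversely any generic ground of $V[h]$ is a generic ground of $V$ by the same common-extension argument, giving $\gMantle^V\of\gMantle^{V[h]}$. Thus the two classes are equal. (This is where I'd use observation \ref{Observation.gMequalsIntersectionOfMantlesOfCollapses} as a convenient reformulation if the direct argument gets fiddly: $\gMantle=\bigcap_\alpha\Mantle^{\Coll(\omega,\alpha)}$, and for any $\Q$, a tail of the collapses $\Coll(\omega,\alpha)$ absorbs $\Q$, so the intersection is unchanged by passing to $V[h]$.)

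The substantive step is \emph{$\gMantle\satisfies\ZF$}, and this is where the obstacle lies, since we do \emph{not} have directedness of the generic grounds. The trick, as suggested by the surrounding remarks, is to recast $\gMantle$ via observation \ref{Observation.gMequalsIntersectionOfMantlesOfCollapses} as $\bigcap_\alpha\Mantle^{\Coll(\omega,\alpha)}$, a \emph{descending} $\ORD$-indexed intersection — descending because larger collapses absorb smaller ones, so $\alpha<\beta$ implies $\Mantle^{\Coll(\omega,\beta)}\of\Mantle^{\Coll(\omega,\alpha)}$ (the inclusion $\Mantle^{\P\times\Coll(\omega,\alpha)}\of\Mantle^\P$ noted in the proof of that observation, applied with $\P=\Coll(\omega,\alpha)$ and absorbing). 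Then I would invoke corollary \ref{Corollary.IntersectionGivesZF}: a definable descending $\ORD$-sequence of inner models has an intersection that is an inner model, and if every member satisfies $\ZFC$ then so does the intersection. So it remains only to check that each $\Mantle^{\Coll(\omega,\alpha)}$ is itself an inner model satisfying $\ZFC$ — but that requires the mantle of $V^{\Coll(\omega,\alpha)}$ to satisfy $\ZFC$, which we do not know in general! The resolution: I do not claim each $\Mantle^{\Coll(\omega,\alpha)}$ is a model of $\ZFC$; rather I re-index once more. For each $\alpha$, $\Mantle^{\Coll(\omega,\alpha)}$ is the intersection of the grounds $W_r^{V^{\Coll(\omega,\alpha)}}$, and collecting all of these over all $\alpha$ gives $\gMantle$ as the intersection of a single parameterized family $\mathcal W$ of \emph{inner models of $\ZFC$} (each is a genuine ground of a genuine forcing extension, hence a $\ZFC$ model). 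Now apply lemma \ref{Lemma.IntersectionsOfInnerModels}: $\gMantle=\bigcap\mathcal W$ is an inner model provided $V_\alpha\cap\gMantle\in\gMantle$ for every $\alpha$. For this I use the descending-collapse presentation: given $\alpha$, by Replacement (as in the proof of corollary \ref{Corollary.IntersectionGivesZF}) there is $\beta$ with $V_\alpha\cap\Mantle^{\Coll(\omega,\beta')}$ stabilized for all $\beta'\ge\beta$, so $V_\alpha\cap\gMantle=V_\alpha\cap\Mantle^{\Coll(\omega,\beta)}$, and one checks this set lies in every member of $\mathcal W$ — for members coming from collapses $\le\beta$ because it's an element of that larger-or-equal mantle via Separation there, for members from collapses $>\beta$ because they contain $\Mantle^{\Coll(\omega,\beta)}$ as noted. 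Hence $\gMantle$ is an inner model, i.e.\ satisfies $\ZF$. Finally, for choice: the family of collapse-mantles is \emph{downward directed} (indeed set-directed) because $\Coll(\omega,\sup)$ absorbs everything below, so the argument of theorem \ref{Theorem.IfDirectedMantleHasZF}(3)–(4), or directly the last line of corollary \ref{Corollary.IntersectionGivesZF}, yields $\gMantle\satisfies\AC$ — wait, but choice in $\gMantle$ requires a well-order \emph{common to all} the $\Mantle^{\Coll(\omega,\alpha)}$, and directedness of the collapse-mantles only helps if each is a $\ZFC$ model. Here is the actual point: I should prove only $\ZF$, as the theorem statement asks; re-reading, the theorem claims $\gMantle\satisfies\ZF$, not $\ZFC$. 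So I stop at the inner-model conclusion from lemma \ref{Lemma.IntersectionsOfInnerModels}, and the hardest part is precisely arranging that $\gMantle$ is a definable subclass witnessing the hypothesis of that lemma — which the descending-collapse stabilization argument delivers. I would present the collapse-reformulation first, then the stabilization, then cite lemma \ref{Lemma.IntersectionsOfInnerModels} to conclude.
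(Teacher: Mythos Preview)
Your definability and transitivity observations are fine, and your forcing-invariance argument can be made to work (the paper's version is a bit more direct: if $x\notin\gMantle^V$, witnessed by a ground $W$ of some $V[H]$, then force below the witnessing condition with $H$ over $V[G]$ to make $G\times H$ mutually $V$-generic, so $W$ is also a ground of $V[G][H]$).

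The gap is in your \ZF\ argument. The stabilization idea from corollary \ref{Corollary.IntersectionGivesZF} needs the classes $\Mantle^{\Coll(\omega,\gamma)}$ to be inner models---specifically, one needs $V_\alpha\cap M_\gamma\in M_\gamma$ for the stabilized $\gamma\ge\beta$, and that step in the corollary's proof uses that $V_\alpha^{W_\gamma}$ is a set \emph{in} $W_\gamma$, which requires $W_\gamma\models\ZF$. You do not know this for the $M_\gamma$. Your attempted patch (``via Separation there'') does not say where Separation is being applied, and your claim that grounds coming from collapses $>\beta$ ``contain $\Mantle^{\Coll(\omega,\beta)}$'' has the inclusion backwards: for $\gamma>\beta$ one has $M_\gamma\subseteq M_\beta$, so a ground $W\supseteq M_\gamma$ of $V^{\Coll(\omega,\gamma)}$ need not contain $M_\beta$. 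The re-indexing to the family of all generic grounds does not help either, since those are not subclasses of $V$ and so lemma \ref{Lemma.IntersectionsOfInnerModels} does not apply to them directly.

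The paper's route avoids all of this by \emph{using} the forcing invariance you already established: since $\gMantle^W=\gMantle$ for every ground $W$ (of $V$ or of any forcing extension), the class $\gMantle$ is uniformly definable inside each such $W$, and then Separation in $W$ gives $V_\alpha\cap\gMantle=V_\alpha^W\cap\gMantle^W\in W$. This shows $V_\alpha\cap\gMantle\in\Mantle$; running the same argument inside every $V^{\Coll(\omega,\gamma)}$ (again via invariance) gives $V_\alpha\cap\gMantle\in M_\gamma$ for all $\gamma$, hence $V_\alpha\cap\gMantle\in\gMantle$ by observation \ref{Observation.gMequalsIntersectionOfMantlesOfCollapses}, and almost universality follows. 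So the missing insight is that invariance does the work of making $\gMantle$ definable in every model you care about, which is exactly what makes the Separation step go through---no stabilization is needed.
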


\begin{proof}
The generic mantle is a definable transitive class
containing all ordinals and closed under the \Godel\
operations, because it is the intersection of the generic
grounds, each of which has those closure properties in the
corresponding extension of $V$. So by fact~\ref{Fact.IM},
in order to show that $\gMantle$ is an inner model, it
remains to show merely that it is almost universal, which
we do below.

Let's first prove that $\gMantle$ is invariant under set
forcing, meaning that if $V[G]$ is a forcing extension of
$V$ by set forcing $\P$, then $\gMantle^V =
\gMantle^{V[G]}$. Observe first that every ground model of
a forcing extension of $V[G]$ is also a ground model of a
forcing extension of $V$. Thus, the generic mantle of
$V[G]$ is the intersection of a sub-collection of the
models used to form the generic mantle of $V$, and so
$\gMantle^V \subseteq \gMantle^{V[G]}$. For the reverse
inclusion, suppose that $x\notin \gMantle^V$. Thus, there
is a forcing extension $V[H]$ via some forcing notion $\Q$
for which there is a ground $W\of V[H]$ that does not
contain $x$. There is a condition $q\in H$ forcing that $W$
is like this. By forcing below $q$ with $\Q$ over $V[G]$,
we may assume that $H$ is not merely $V$-generic, but also
$V[G]$-generic, and consequently that $G\times H$ is
$V$-generic for the product forcing. Thus, $W$ is also a
ground of $V[H][G]$, which is the same as $V[G][H]$, and so
we have found a forcing extension of $V[G]$ that has a
ground $W$ that omits $x$. So $x\notin\gMantle^{V[G]}$ and
so $\gMantle^V=\gMantle^{V[G]}$, establishing that the
generic mantle is invariant by set forcing.

To see that the generic mantle is a model of \ZF, we claim
first that $V_\alpha\cap\gMantle\in\gMantle$, for every
$\alpha$. It is clear for any ground $W$ that
$V_\alpha\cap\gMantle=V_\alpha^W\cap\gMantle^W\in W$, by
the forcing invariance of $\gMantle$. Since this is true
for every ground, it follows that
$V_\alpha\cap\gMantle\in\Mantle$. And so we have shown in
\ZFC\ that for every $\alpha$, we have
$V_\alpha\cap\gMantle\in\Mantle$. So this is true in every
forcing extension, which again by the forcing absoluteness
of $\gMantle$ (and of $V_\alpha\cap\gMantle$) means that
$V_\alpha\cap\gMantle\in\Mantle^{\Coll(\omega,\gamma)}$,
for every $\gamma$ and every $\alpha$. So
$V_\alpha\cap\gMantle\in\gMantle$, for every $\alpha$, by
observation
\ref{Observation.gMequalsIntersectionOfMantlesOfCollapses}.
It now follows that if $A$ is a subset of $\gMantle$ of
rank $\alpha$, then $A\of V_\alpha\cap\gMantle\in\gMantle$,
and so $\gMantle$ is almost universal and hence is an inner
model, as desired.\end{proof}

The generic mantle of a model of set theory seems most
naturally considered in a broad context that includes all
of the forcing extensions of that model, their ground
models, the forcing extensions of those models, and so on.
Woodin~\cite{Woodin2004:CHMultiverseOmegaConjecture}
introduced the concept of the {\df generic multiverse} of a
model $U$ of set theory, the smallest collection of models
containing $U$ and closed under forcing extensions and
grounds, and this seems to be an ideal context in which to
undertake our project of set-theoretic geology. The generic
multiverse naturally partitions the larger multiverse of
models of set theory into equivalence (meta)classes,
consisting of models reachable from one another by passing
to forcing extensions and ground models.

Because the generic multiverse concept is clearly
second-order or higher-order, however, there are certain
difficulties of formalization and meta-mathematical issues
that need to be addressed. This is particularly true when
one wants to consider the generic multiverse of the full
set-theoretic universe $V$, rather than merely the generic
multiverse of a toy countable model. The standard
approaches to second-order set theory, after all, such as
\Godel-Bernays set theory or Kelly-Morse set theory, do not
seem to provide a direct account of the generic multiverse
of $V$, whose forcing extensions are of course not directly
available, even as \GBC\ or \KM\ classes.

Nevertheless, it turns out that many of the natural
questions one might want to ask about the generic
multiverse of $V$ can in fact be formalized in first-order
\ZFC\ set theory, and since we are indeed principally
interested in the features of the generic multiverse of the
full set-theoretic universe $V$, we shall prefer to
formalize our concepts this way whenever this is possible,
so that they would be available for that purpose. For
example, even though a naive treatment of the generic
mantle would seem to present difficulties arising from the
fact that we do not have full access inside $V$ to the
forcing extensions of $V$ and to their respective grounds,
we have nevertheless proved above that the generic mantle
is a definable class in $V$, and that it is invariant by
forcing over any model and to any ground. Thus, it is
entirely a first-order project to state that the generic
mantle satisfies a given statement or has such-and-such
relation to other definable classes, and such questions can
therefore be investigated without any meta-mathematical
difficulties.

The standard treatments of forcing over $V$, such as the
method of working under the forcing relation, the method of
Boolean-valued models or the naturalist account of forcing
(see~\cite{HamkinsSeabold:BooleanUltrapowers}), provide a
first-order means of treating truth in the forcing
extensions of $V$. Thus, it is a first-order statement of
set theory to state that a given assertion $\varphi$ holds
in some forcing extension of $V$ (and thus the modal
assertions in the modal logic of forcing as in
\cite{HamkinsLoewe2008:TheModalLogicOfForcing} are all
first-order expressible). Iterating this method, one can
similarly state in an entirely first-order manner whether
$\varphi$ holds in every forcing extension of every ground
of every forcing extension of $V$, or whether a given set
$x$ is an element of all such models, and so on. In this
way, we see that many natural questions about the nature of
the generic multiverse of $V$ are actually first-order
questions about $V$.

Nevertheless, it is sometimes illuminating to consider the
generic multiverse of a model in a context where we can
legitimately grasp the generic multiverse as a whole,
rather than only from the filtered perspective of one of
the models in it. In these circumstances, we temporarily
adopt what we call the {\df toy model} perspective. In this
method, which is analogous to the
countable-transitive-model approach to forcing as opposed
to the various forcing-over-$V$ approaches, one has a
countable model $W$ of \ZFC, and one considers the
collection of all forcing extensions of $W$, as constructed
in the current set-theoretic background universe $V$, as
well as the grounds of these universes and their forcing
extensions and so on, forming ultimately the smallest
collection of models closed under the forcing extensions
and grounds that are available. The particular generic
multiverse of $W$ therefore depends on the set-theoretic
background in which it is constructed; if we were to force
over $V$ to add a Cohen real $c$, for example, then for a
given countable transitive model $W$, the generic
multiverse of $W$ as computed in $V[c]$ will include
$W[c]$, but in $V$ this model is of course not available.
In any case, the generic multiverse of a countable model
$W$ will be a family of continuum many models, essentially
a set of reals under a suitable coding scheme. With the
generic multiverse thus becoming concrete for a given toy
model $W$, one may now imagine living inside $W$, treating
it as the full universe and using that constructed generic
multiverse as the generic multiverse of it.

It is essentially the toy model approach that one uses, for
example, when undertaking forcing only with countable
transitive models. Although this way of doing forcing was
formerly quite common, it has now largely been supplanted
with the various formalizations of forcing in \ZFC, which
enable one to make sense of forcing over an arbitrary model
of \ZFC. Since as we have mentioned we are principally
interested in the questions of set-theoretic geology and
the generic multiverse in the context of the full
set-theoretic universe $V$, we intend to fall back on the
toy approach only as a last resort, when it may not be
clear how a given concept might be formalized. For example,
we adopted the toy model approach in observation
\ref{Observation:NonAmalgamableExtensions}, since it is
unclear how to formalize that assertion as a claim about
the generic multiverse of the full universe $V$. Similarly,
the inner model hypothesis \IMH\ of
\cite{Friedman2006:InternalConsistencyAndIMH} is formalized
via the toy model approach, because it is unclear how to
formalize it as a claim about the full universe $V$.

The models of the generic multiverse can be specified by
the corresponding finite zigzag sequence of forcing
extensions and ground models on the path leading to it from
the original model; let us say that $\<U_0,U_1,\ldots,
U_k>$ is a {\df multiverse path} leading from $U_0$ to
$U_k$ if each $U_{i+1}$ is either a forcing extension or a
ground model of $U_i$. Woodin has argued that any statement
true in a model of the generic multiverse is true already
in a model reachable by a multiverse path of length at
most three from any fixed original model, specifically, in
a forcing extension of a ground model of a forcing
extension of the original model.\footnote{In
\cite{Woodin2004:CHMultiverseOmegaConjecture}, he appears
to make the stronger claim that the models themselves are
reachable in three steps, but in personal conversation with
the second author, he clarified this to the claim we have
just stated here. Meanwhile, under the hypothesis of
theorem~\ref{Theorem.TFAEgenericDDG}, every model in the
generic multiverse of $V$ is reachable in only two steps,
as a ground extension of $V$.} This three-steps-away fact
(or indeed, any finite bound on the number of steps) allows
us to express in a first-order manner the
generic-multiverse ``possibility'' modality
$\possible\varphi$, asserting that $\varphi$ is
generic-multiverse possible, that is, that $\varphi$ is
true in some model of the generic multiverse. This is
because by the three-step fact, $\possible\varphi$ is
equivalent to the assertion that there is a forcing notion
$\P$ forcing that there is a ground having a forcing notion
forcing $\varphi$ over that ground. These forcing
modalities are further explored in
\cite{HamkinsLoewe2008:TheModalLogicOfForcing},
\cite{HamkinsLeibmanLoewe:StructuralConnectionsForcingClassAndItsModalLogic},
\cite{HamkinsLoewe2013:MovingUpAndDownInTheGenericMultiverse}, with the
latter article specifically treating the interaction of the
upward and downward forcing possibility modalities and
offering improvements and further results on this
particular issue.

Woodin introduced the generic multiverse in
\cite{Woodin2004:CHMultiverseOmegaConjecture} in part to
make a philosophical argument against a certain view of
mathematical truth, namely, truth as
true-in-the-generic-multiverse. Although we investigate the
generic multiverse, we do not hold this view of truth.
Rather, we are attempting to understand the fundamental
features of the generic multiverse, a task we place at the
foundation of any deep understanding of forcing, and we
consider the generic multiverse to be the natural and
illuminating background context for our project of set-theoretic geology. Apart from any grand
Platonist-multiverse view of truth, we find that the
ordinary Tarskian view of truth, considered individually
within each model of the multiverse, is sufficient to
provide a global understanding of truth across the
multiverse, for the definability of the forcing relation
gives every model access to the truth concepts of its
forcing extensions, and conversely, the ground-model definability theorem
gives every forcing
extension direct access to its grounds. Our view is that
the generic multiverse is a grand new set-theoretic context
for us to explore: what are the features of the generic
multiverse? What is or can it be like? This is what
set-theoretic geology is about.\footnote{Haim Gaifman has
pointed out (with humor) that the term `set-theoretic
geology' suggests a Platonist view of forcing as a natural
process or force of nature, and that for those who view
forcing as an intentional human activity, the alternative
term should be `set-theoretic archeology.'}

\begin{corollary}
The generic mantle is constant across the generic
multiverse. Indeed, the generic mantle is the intersection
of the generic multiverse.
\label{Corollary.LimitMantleIsIntersectionMultiverse}
\end{corollary}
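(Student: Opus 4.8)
The plan is to obtain this as a more or less immediate corollary of the forcing-invariance of the generic mantle proved in Theorem \ref{Theorem.LimitMantleDefinableInvariant}, after upgrading that invariance from ``invariant under forcing extensions'' to ``invariant under every single step of a multiverse path.'' First I would record that Theorem \ref{Theorem.LimitMantleDefinableInvariant} already gives $\gMantle^V=\gMantle^{V[G]}$ for every set-forcing extension $V[G]$, and that this immediately yields invariance under passing to a ground as well: if $W$ is a ground of $V$, say $V=W[G]$ for some $W$-generic $G\of\P\in W$, then applying the theorem inside $W$ gives $\gMantle^W=\gMantle^{W[G]}=\gMantle^V$. Thus the generic mantle is preserved by each of the two elementary moves --- passing to a forcing extension and passing to a ground --- out of which multiverse paths are assembled.

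Next, given any model $U$ in the generic multiverse of $V$, I would fix a multiverse path $\<U_0,U_1,\ldots,U_k>$ with $U_0=V$ and $U_k=U$, and argue by a straightforward induction on $i$ that $\gMantle^{U_i}=\gMantle^V$: each $U_{i+1}$ is either a forcing extension or a ground of $U_i$, so the previous paragraph applies at each step. In particular $\gMantle^U=\gMantle^V$. Since the notion of lying in a common generic multiverse is an equivalence relation, the generic multiverse of $U$ coincides with that of $V$, and so the common value $\gMantle^V$ is attained at every model of that multiverse; this is exactly the assertion that the generic mantle is constant across the generic multiverse.

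For the second sentence of the corollary I would argue two inclusions. On the one hand, for every $U$ in the generic multiverse we have $\gMantle^V=\gMantle^U\of U$, the last inclusion because $\gMantle^U$ is an inner model of $U$ by Theorem \ref{Theorem.LimitMantleDefinableInvariant}; hence $\gMantle^V$ is contained in the intersection of all models of the generic multiverse. On the other hand, every generic ground of $V$ --- being a ground of a forcing extension of $V$ --- lies in the generic multiverse, so the intersection of the generic multiverse is contained in the intersection of the generic grounds, which is $\gMantle^V$ by the very definition of the generic mantle. Combining the two inclusions gives that $\gMantle^V$ is precisely the intersection of the generic multiverse of $V$.

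The only real subtlety --- and the point at which I would be most careful --- is meta-mathematical rather than mathematical: ``the generic multiverse of $V$'' is not literally a class available inside $V$, so the statement must be read either in the toy-model sense (with a countable $W$ whose generic multiverse is a genuine set of models, as discussed above) or via the three-steps-away fact, which lets one replace quantification over arbitrary multiverse paths by the first-order iteration ``forcing extension of a ground of a forcing extension of $V$.'' Under that reading the induction of the second paragraph collapses to a single application of the two invariances. Either way no new forcing construction is required; all of the content is already contained in Theorem \ref{Theorem.LimitMantleDefinableInvariant}.
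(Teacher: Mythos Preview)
Your proof is correct and follows essentially the same route as the paper: both derive the result directly from the forcing-invariance of $\gMantle$ in Theorem \ref{Theorem.LimitMantleDefinableInvariant}, observing that this gives invariance under both elementary multiverse moves and hence constancy along any multiverse path, and then obtain the intersection statement from the two obvious inclusions. Your version is simply more explicit about the induction along paths and the meta-mathematical reading, while the paper compresses these into a few sentences.
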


\begin{proof}
This is an immediate consequence of theorem
\ref{Theorem.LimitMantleDefinableInvariant}. Specifically,
since the generic mantle is invariant by set forcing, it is
preserved as one moves either from a ground to a forcing
extension or from a forcing extension to a ground. Since
these are the operations that generate the multiverse, all
the models in the multiverse have the same generic mantle.
This means, in particular, that the generic mantle is
contained within the intersection of the multiverse.
Conversely, the generic mantle is defined to be the
intersection of some of the models of the multiverse, and
so the intersection of the multiverse is contained within
the generic mantle. Thus, they are equal.\end{proof}

\begin{corollary}\label{Corollary.gMantleIsLargestForcingInvariantClass}
The generic mantle is the largest forcing-invariant
definable class.\end{corollary}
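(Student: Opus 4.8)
The plan is to establish the two inclusions separately. One direction is immediate: Theorem~\ref{Theorem.LimitMantleDefinableInvariant} already asserts that $\gMantle$ is a definable class invariant under set forcing, so $\gMantle$ is itself a forcing-invariant definable class, and it remains only to see that it contains every such class.

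For that, I would let $C=\set{x\st\psi(x)}$ be an arbitrary definable class that is invariant under set forcing, and show $C\of\gMantle$. Since $\gMantle$ is the intersection of all generic grounds of $V$, it suffices to fix a generic ground $W$, witnessed by $V[G]=W[H]$ with $G\of\P\in V$ and $H\of\Q\in W$, and to check that $C\of W$. Here one simply chains the invariance: $C^V=C^{V[G]}$ because $V[G]$ is a set-forcing extension of $V$; $C^{V[G]}=C^{W[H]}$ because these are the same model; and $C^{W[H]}=C^W$ because $W[H]$ is a set-forcing extension of $W$. Since $C^W$ is by construction a subclass of $W$, we get $C=C^V=C^W\of W$. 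As $W$ ranged over all generic grounds, $C\of\bigcap\set{W\st W\text{ a generic ground of }V}=\gMantle$. (Equivalently, by Corollary~\ref{Corollary.LimitMantleIsIntersectionMultiverse}, $C$ is constant across the generic multiverse of $V$, each member of which contains $C$, so $C$ is contained in the intersection of the multiverse, which is $\gMantle$.) Combined with the first direction, this exhibits $\gMantle$ as a forcing-invariant definable class containing every forcing-invariant definable class, i.e., as the largest one.

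The only point requiring care is the precise meaning of ``forcing-invariant definable class,'' and here I would be explicit rather than computational. The clean reading is the parameter-free one: $\psi$ has no parameters and $\ZFC$ proves that $\set{x\st\psi(x)}$ is invariant by set forcing, so in particular the invariance holds over every generic ground $W$, as used above. If a parameter $p$ is permitted, then for the defining formula even to be interpretable in each generic ground one must insist that $p\in\gMantle$, and one must likewise ask that the invariance hold over those grounds; with that proviso the same argument goes through verbatim. Beyond this bookkeeping there is no obstacle --- all the mathematical content is supplied by Theorem~\ref{Theorem.LimitMantleDefinableInvariant} together with the characterization of $\gMantle$ as the intersection of the generic grounds of $V$.
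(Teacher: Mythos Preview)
Your proof is correct and follows essentially the same approach as the paper's own argument: establish forcing-invariance of $\gMantle$ via Theorem~\ref{Theorem.LimitMantleDefinableInvariant}, then for an arbitrary forcing-invariant definable class $C$, chase the invariance through a forcing extension $V[G]$ and down to any ground $W$ of $V[G]$ to conclude $C\of W$, hence $C\of\gMantle$. Your version is more explicit about the chain $C^V=C^{V[G]}=C^{W[H]}=C^W\of W$, and your closing paragraph on the parameter-free reading of ``forcing-invariant definable class'' is a helpful clarification that the paper leaves implicit.
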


\begin{proof}
We have argued above that the generic mantle is a
forcing-invariant definable class in any model of set
theory. Any other class that is definable and invariant by
forcing over any model of set theory will be preserved to
any forcing extension $V[G]$ and then to any subsequent
ground $W$ of $V[G]$, and therefore be contained within the
generic mantle. So the generic mantle is the largest such
forcing-invariant class.
\end{proof}

To our way of thinking, the previous corollary identifies
the generic mantle as a highly canonical object, the
largest forcing-invariant definable class. Because of this,
we would expect it to become a focus of attention for those
set theorists interested in forcing and models of set
theory.

We would like now to consider the downward-directed grounds
hypothesis as it arises in connection not only with the
grounds of $V$, but with the grounds of the forcing
extensions of $V$. Recall that the {\df generic grounds} of
a model are the grounds of its forcing extensions. We say
that the grounds are {\df dense below the generic grounds,}
if every generic ground contains a ground. This concept is
first-order expressible in set theory as the assertion that
for every poset $\P$ and every $\P$-name $\dot{r}$, every
condition in $\P$ forces ``there is an index
$s\in\check{V}$ such that $(W_s)^{\check{V}}\of W_{\dot
r}$''. A {\df ground extension} of $V$ is a forcing
extension of a ground of $V$, or in other words, a model
having a common ground with $V$; all such models have the
form $W_r[G]$ for some $W_r$-generic filter $G\of\P\in
W_r$, but we may not assume here that $G$ is $V$-generic.

\begin{lemma}
Any generic ground of $V$ that is contained in $V$ is a
ground of $V$. \label{Lemma.GenericGroundsInV}
\end{lemma}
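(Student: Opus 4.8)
The plan is to reduce the statement immediately to fact~\ref{Fact.IntermediateModelsAreGrounds}. So suppose $W$ is a generic ground of $V$ with $W\of V$. Unwinding the definition of generic ground, there is a $V$-generic filter $G\of\P\in V$ and a $W$-generic filter $H\of\Q\in W$ with $V[G]=W[H]$. Combining this with the hypothesis $W\of V$ gives us the chain of transitive models
$$W\of V\of V[G]=W[H],$$
all satisfying \ZFC, and in which $W[H]$ is a set-forcing extension of $W$ via the poset $\Q\in W$.

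Next I would apply fact~\ref{Fact.IntermediateModelsAreGrounds} with $W$ playing the role of the ground model, $W[H]$ the role of its forcing extension, and $V$ the role of the intermediate model $U$. Since $V$ is a transitive model of \ZFC\ with $W\of V\of W[H]$, the fact tells us that $V$ is a forcing extension of $W$; more precisely $V=W[H_0]$ for a $W$-generic filter $H_0$ on a complete subalgebra $\B_0\in W$ of $\RO(\Q)$. By the definition of ground, this says exactly that $W$ is a ground of $V$, which is what we wanted to show.

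The only point requiring a moment's care is that, since we are working over the universe $V$, the object $V[G]=W[H]$ is really a (class) forcing extension, to be understood via the forcing relation rather than as an honest set model; so one should check that the proof of fact~\ref{Fact.IntermediateModelsAreGrounds}—stated there for genuine set models—goes through in this setting. This is routine: the relevant combinatorics (building, inside $W$, a complete subalgebra of $\RO(\Q)$ that captures a prescribed set of ordinals coding $P(\RO(\Q))^V$, together with the forcing equivalence of $\RO(\Q)$ with $\B_0\ast\RO(\Q)/\dot H_0$) all takes place below the forcing relation interpreted in $W$, and $V$ is available inside $W[H]$ as an intermediate inner model in exactly the way the argument requires. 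Alternatively, one may simply note that the lemma is a first-order schema about $V$ and verify it from the toy-model perspective, where $W$, $V$, and $V[G]$ are all genuine sets, and then transfer the conclusion back. I expect this formalization remark to be the only real "obstacle," and even it is merely bookkeeping.
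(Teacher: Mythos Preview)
Your proof is correct and follows exactly the same approach as the paper: apply fact~\ref{Fact.IntermediateModelsAreGrounds} to the chain $W\of V\of V[G]=W[H]$ to conclude that $W$ is a ground of $V$. The paper's proof is a single sentence to this effect; your added remarks on formalization are reasonable but not something the authors chose to dwell on.
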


\begin{proof}
If $W$ is a ground of a forcing extension $V[G]$ and $W\of
V\of V[G]$, then by lemma
\ref{Fact.IntermediateModelsAreGrounds} it follows that $W$
is a ground of $V$.
\end{proof}

What we don't know,  more generally, is the following
analogue of lemma~\ref{Lemma.GenericGroundsInV} for models
in the generic multiverse.

\begin{question}
If $W$ is in the generic multiverse of $V$ and $W\of V$,
must $W$ be a ground of $V$? In other words, for models
within the same generic multiverse, is the inclusion
relation the same as the ``is a ground model of'' relation?
\end{question}

The answer is yes if the grounds are dense in the
multiverse, since then every model in the multiverse
contained in V is trapped between a ground of $V$ and $V$,
and hence is itself a ground by lemma
\ref{Lemma.GenericGroundsInV}.

\begin{theorem} The following are equivalent.
 \begin{enumerate}
  \item The \DDG\ holds. That is, the grounds of $V$
      are downward directed.
  \item The \DDG\ holds in some forcing extension of
      $V$.
  \item The \DDG\ holds in every ground of $V$.
 \end{enumerate}
\label{Theorem.DownwardDirectedIffSomeExtension}
\end{theorem}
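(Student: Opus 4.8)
The plan is to prove the two equivalences $(1)\Leftrightarrow(2)$ and $(1)\Leftrightarrow(3)$ directly, the only substantive ingredients being Lemma~\ref{Lemma.GenericGroundsInV} (a generic ground of $V$ that is contained in $V$ is already a ground of $V$), Fact~\ref{Fact.IntermediateModelsAreGrounds} (a \ZFC-model intermediate between a ground and $V$ is a ground of both ends), and the folklore remark, recorded at the start of the paper, that a ground of a ground is a ground.

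First I would handle the implications away from $(1)$, which are nearly free. Since $V$ is the trivial forcing extension of itself, $(1)$ \emph{is} an instance of $(2)$, so $(1)\Rightarrow(2)$. For $(1)\Rightarrow(3)$, fix a ground $W$ of $V$ and two grounds $U_0,U_1$ of $W$; because $W$ is a ground of $V$, each $U_i$ is a ground of $V$, so $(1)$ yields a ground $U$ of $V$ with $U\of U_0\intersect U_1$. Writing $V=U[h]$ for the $U$-generic filter $h$ witnessing that $U$ is a ground of $V$, and noting $U_0\intersect U_1\of W$, we get $U\of W\of U[h]=V$, so Fact~\ref{Fact.IntermediateModelsAreGrounds} makes $U$ a ground of $W$. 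Hence the grounds of $W$ are downward directed, and $(3)$ holds.

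For the converses, $(3)\Rightarrow(1)$ is immediate, since $V$ is a ground of itself and so $(3)$ applied to $W=V$ gives $(1)$. The real content is in $(2)\Rightarrow(1)$. Assume the \DDG\ holds in a forcing extension $V[G]$, and let $W_0,W_1$ be arbitrary grounds of $V$. Since $V$ is a ground of $V[G]$ and a ground of a ground is a ground, $W_0$ and $W_1$ are grounds of $V[G]$, so directedness in $V[G]$ produces a ground $W$ of $V[G]$ with $W\of W_0\intersect W_1$. Now the point is that $W_0\intersect W_1\of V$, so $W$ is a ground of the forcing extension $V[G]$ that happens to be contained in $V$; Lemma~\ref{Lemma.GenericGroundsInV} then upgrades $W$ to an honest ground of $V$. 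As $W_0,W_1$ were arbitrary, the grounds of $V$ are downward directed.

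The step I expect to demand the most care is $(2)\Rightarrow(1)$, and within it the one thing to check is simply that the witness $W$ found inside $V[G]$ genuinely lies inside $V$ --- which it does, being contained in $W_0\intersect W_1$ while each of $W_0,W_1$ is by hypothesis a subclass of $V$ --- so that Lemma~\ref{Lemma.GenericGroundsInV} is applicable. The rest is routine manipulation of the ``ground of a ground'' relation together with the intermediate-model fact, and the uniform, downward- and upward-absolute definability of the parameterized grounds from Theorem~\ref{Theorem.ParameterizedGroundsW_r} ensures that the relation ``is a ground of'' is being used consistently whether computed in $V$, in $V[G]$, or in a ground of $V$.
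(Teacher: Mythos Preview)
Your proof is correct and uses exactly the same ingredients as the paper (Lemma~\ref{Lemma.GenericGroundsInV}, Fact~\ref{Fact.IntermediateModelsAreGrounds}, and transitivity of the ground relation). The only difference is organizational: the paper proves the cycle $1\Rightarrow 2\Rightarrow 3\Rightarrow 1$, carrying out the substantive work in a single step $2\Rightarrow 3$ (which simultaneously specializes to your $2\Rightarrow 1$ when $U=V$ and to your $1\Rightarrow 3$ when $V[G]=V$), whereas you split this into the two separate equivalences.
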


\begin{proof}
Since $V$ is a forcing extension of itself, the
implications $1{\implies} 2$ and $3{\implies}1$ are
immediate. For $2{\implies}3$, suppose that the grounds of
a forcing extension $V[G]$ are downward directed and that
$W$ and $W'$ are grounds of $U$, which is a ground of $V$.
Thus, $W$ and $W'$ are also grounds of $V[G]$, and so by 2
there is some ground $\bar{W}$ of $V[G]$ with $\bar{W}\of
W\intersect W'$. Since $\bar{W}\of U\of V[G]$, it follows
by lemma~\ref{Lemma.GenericGroundsInV} that $\bar{W}$ is a
ground of $U$, and so $3$ holds.
\end{proof}

Consider now in contrast the {\it generic} \DDG, which
asserts that the \DDG\ holds in all forcing extensions.

\begin{theorem}\label{Theorem.TFAEgenericDDG}
The following are equivalent.
\begin{enumerate}
 \item The generic \DDG\ holds. That is, in every
     forcing extension of\/ $V$, the grounds are
     downward directed.
 \item The grounds of\/ $V$ are downward directed and
     dense below the generic grounds.
 \item The grounds of\/ $V$ are downward directed and
     dense below the grounds of every ground extension.
\end{enumerate}
\end{theorem}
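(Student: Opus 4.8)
The plan is to establish the cycle $(1)\Rightarrow(2)\Rightarrow(3)\Rightarrow(1)$; the implications $(1)\Rightarrow(2)$, $(3)\Rightarrow(2)$ and $(3)\Rightarrow(1)$ are routine, and $(2)\Rightarrow(3)$ is the substantive one. For $(1)\Rightarrow(2)$: $V$ is a trivial forcing extension of itself, so $(1)$ gives the \DDG\ in $V$; and for density, a generic ground $W$ of $V$ is a ground of some $V[G]$, where $(1)$ gives the \DDG, so there is a ground $\bar W\of W\intersect V$ of $V[G]$, which by lemma \ref{Lemma.GenericGroundsInV} is a ground of $V$ contained in $W$. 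For $(3)\Rightarrow(2)$: every forcing extension of $V$ is a ground extension of $V$ (via the trivial ground $V$), so the density clause of $(3)$ specializes to that of $(2)$. For $(3)\Rightarrow(1)$: given $V[G]$, the grounds of $V$ are dense below the grounds of $V[G]$ (again $V[G]$ is a ground extension of $V$), so two grounds $A,B$ of $V[G]$ contain grounds $A_0,B_0$ of $V$, and a ground $\bar W\of A_0\intersect B_0$ of $V$ — obtained from downward directedness of the grounds of $V$ — is a ground of $V[G]$ below $A\intersect B$; hence the \DDG\ holds in every forcing extension, which is $(1)$.

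For $(2)\Rightarrow(3)$, assume $(2)$; then $(1)$ holds, so by theorem \ref{Theorem.DownwardDirectedIffSomeExtension} the \DDG, and hence downward directedness of the grounds, holds in every ground $W_r$ of $V$. It suffices to show the \DDG\ holds in every ground extension $N=W_r[G]$ of $V$: for then, given a ground $U$ of $N$, applying directedness in $N$ to the two grounds $U$ and $W_r$ of $N$ gives a ground $\bar W\of U\intersect W_r$ of $N$; since $\bar W\of W_r\of N$ with $N$ a forcing extension of $\bar W$, fact \ref{Fact.IntermediateModelsAreGrounds} makes $\bar W$ a ground of $W_r$, hence of $V$, and $\bar W\of U$ — which is precisely the density required by $(3)$. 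Moreover, verifying the \DDG\ in $N=W_r[G]$ itself reduces, using the \DDG\ already known in $W_r$, to the single assertion that \emph{any ground $U$ of $W_r[G]$ has a common lower ground with $W_r$ inside $W_r[G]$}: given that, two grounds $U_1,U_2$ of $N$ yield grounds $V_1,V_2$ of $W_r$ with $V_i\of U_i$, and the \DDG\ in $W_r$ produces a ground $V_3\of V_1\intersect V_2$ of $W_r$, which is a ground of $N$ below $U_1\intersect U_2$.

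So everything comes down to inheriting the density clause of $(2)$ by the grounds $W_r$ of $V$: every ground $U$ of a forcing extension $W_r[G]$ of $W_r$ should contain a ground of $W_r$. This is the main obstacle, and it does not follow merely formally the way the bare \DDG\ does in theorem \ref{Theorem.DownwardDirectedIffSomeExtension}, because $W_r[G]$ and $V$ are both forcing extensions of $W_r$ yet need not be amalgamable over $W_r$ (observation \ref{Observation:NonAmalgamableExtensions}), so $(2)$ for $V$ cannot be applied to $U$ directly. The approach I would take is to exploit the flexibility of theorem \ref{Theorem.DownwardDirectedIffSomeExtension} — the \DDG\ in $W_r[G]$ need only be checked after further forcing — by collapsing a sufficiently large cardinal $\theta$ over $W_r[G]$, absorbing both $G$ and the forcing producing $U$ into $\Coll(\omega,\theta)$; the resulting models are then forcing extensions of $W_r$ by the homogeneous poset $\Coll(\omega,\theta)$ and can be matched against the like collapse of $V$, after which downward directedness of the grounds of $V$ and the density clause of $(2)$ for $V$ yield a ground of $V$ inside the collapse of $U$, which one pulls back along the collapse to a ground of $W_r$ inside $U$. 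The step I expect to be hardest is precisely this absorption-and-pullback: showing that collapsing really does restore enough amalgamability, and that the model recovered below $U$ is an honest ground of $W_r$ rather than merely a transitive inner model.
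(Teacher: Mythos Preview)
Your routine implications are fine and match the paper: your $(3)\Rightarrow(1)$ is exactly the paper's $(2)\Rightarrow(1)$, and as you observe in passing, the same argument gives $(2)\Rightarrow(1)$ directly. So you correctly isolate the substantive step as getting from the density clause of $(2)$ to that of $(3)$ --- equivalently, since you already have $(2)\Rightarrow(1)$, as proving $(1)\Rightarrow(3)$.

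The gap is in your proposed attack on this step. The collapsing-and-absorption sketch does not go through as stated, and I do not see how to complete it. The obstacle you yourself flag is real: $V$ and $W_r[G]$ need not be amalgamable, and collapsing over $W_r[G]$ does not repair this, since a collapse generic over $W_r[G]$ has no reason to be generic over $V$, nor does a collapse of $V$ have any reason to contain $W_r[G]$. There is no common extension in which to compare the relevant grounds, and the ``pullback'' you hope for has no model to live in. The analogy with the $L[a]$ argument (where collapsing and taking \HOD\ produces dense grounds) is misleading: that argument depends essentially on $V$ being constructible from a set.

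The paper's key idea is different and avoids amalgamation entirely. To prove $(1)\Rightarrow(3)$: given a ground extension $W_r[G]$ with $G\of\P\in W_r$ and a ground $W = W_{\dot t_G}^{W_r[G]}$, one argues via the forcing relation over $W_r$. Below any condition $p\in\P$, one may \emph{choose} $G$ to be $V$-generic (this is possible since $\P\in W_r\of V$); in that hypothetical $V[G]$, the models $V$, $W_r$, and $W$ are all grounds, so by $(1)$ there is a common ground $W_s$ below all three, and since $W_s\of V$ it is a genuine ground of $V$ (and of $W_r$). The containment $W_s\of W_{\dot t_G}^{W_r[G]}$ is then forced over $W_r$ by some condition, and the set of conditions admitting such a witness $s$ is dense. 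Hence for the \emph{actual} $G$ (not assumed $V$-generic) there is $p\in G$ and $s$ with $W_s^{W_r}\of W$, and $W_s^{W_r}$ is a ground of $W_r$ and hence of $V$. The point is that one never needs to amalgamate $V$ with $W_r[G]$; one only needs the freedom, below any condition, to choose a $V$-generic filter and extract a forced statement.
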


\begin{proof}
$(1\implies 3)$ Suppose that the grounds of every forcing
extension of $V$ are downward directed. In particular,
since $V$ is a forcing extension of itself, the grounds of
$V$ are downward directed. Suppose that $W$ is a ground of
a ground extension $W_r[G]$, where $G\of\P\in W_r$ is
$W_r$-generic. Thus, $W$ has the form $W=W_t^{W_r[G]}$ for
some parameter $t\in W_r[G]$, having a suitable $\P$-name
with $t={\dot t}_G$.

If we temporarily imagine that $G$ is actually $V$-generic
for the forcing $\P$, then we may also form $V[G]$ and
notice that $W_r[G]$ is a ground of $V[G]$. In this case,
$V$, $W$ and $W_r$ are all grounds of $V[G]$, and so by our
assumption there is a ground $W_s^{V[G]}$ contained in each
of them. By lemma~\ref{Lemma.GenericGroundsInV}, it follows
that $W_s^{V[G]}$ is a ground of $V$, which we may
therefore denote simply by $W_s$, as well as a ground of
$W_r$. Thus, in this case where $G$ is $V$-generic, we have
found the desired ground $W_s$ of $V$ contained in $W$.
Since $W_r[G]$ can see that $W_s\of W_{\dot t_G}^{W_r[G]}$,
there must be a condition $p\in \P$ forcing this property
about $s$ over $W_r$. Furthermore, by arguing the same
below any given condition, $W_r$ can see that the set of
such $p$ having such an $s$ is dense in $\P$.

If we now drop the assumption that $G$ is $V$-generic, we
may nevertheless assume by genericity that there is a
condition $p\in G$ with $s$ as above, ensuring that
$W_s^{W_r}\of W$. But $W_s^{W_r}$ is a ground of $W_r$ and
therefore a ground of $V$. So we have found a ground of $V$
in $W$, as desired for 3.

$(3\implies 2)$ This is immediate, since any ground of a
forcing extension of $V$ is a ground of a ground extension,
since $V$ is a ground of itself.

$(2\implies 1)$ Suppose that the grounds of $V$ are
downward directed and dense below the generic grounds, and
suppose that $V[G]$ is a forcing extension of $V$, with
grounds $W_r^{V[G]}$ and $W_s^{V[G]}$. Since the grounds
are dense below the generic grounds, we may find grounds
$W_{r'}$ and $W_{s'}$ with $W_{r'}\of W_r^{V[G]}$ and
$W_{s'}\of W_s^{V[G]}$. Since the grounds are downward
directed, there is a ground $W_t$ with $W_t\of
W_{r'}\intersect W_{s'}$, and this is contained in
$W_r^{V[G]}\intersect W_s^{V[G]}$. Since $W_t$ is also a
ground of $V[G]$, we conclude that the grounds of $V[G]$
are downward directed, as desired.
\end{proof}

\begin{corollary}
If the generic \DDG\ holds, then
 \begin{enumerate}
  \item the mantle is the same as the generic mantle;
  \item the class of ground extensions is closed under
      forcing extensions and grounds;
  \item and consequently, the generic multiverse of $V$
      consists of the ground extensions of $V$.
 \end{enumerate}
\end{corollary}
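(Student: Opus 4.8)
Throughout, the plan is to use that the generic \DDG\ holds, so that by theorem~\ref{Theorem.TFAEgenericDDG} the grounds of $V$ are downward directed, dense below the generic grounds, and dense below the grounds of every ground extension of $V$. For item (1), the inclusion $\gMantle\of\Mantle$ is automatic, since every ground of $V$ is a ground of the trivial forcing extension of $V$. For the reverse inclusion, I would take an arbitrary generic ground $W$ of $V$ --- that is, a ground of some set-forcing extension of $V$ --- and use density to fix a ground $W_s$ of $V$ with $W_s\of W$, so that $\Mantle=\Intersect_r W_r\of W_s\of W$. Since this holds for every generic ground $W$, the mantle is contained in $\gMantle$, the intersection of all generic grounds, and hence $\Mantle=\gMantle$.

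For item (2), closure under forcing extensions needs no hypothesis: if $N=W_r[G]$ is a forcing extension of a ground $W_r$ of $V$ and $N[H]$ is a further set-forcing extension of $N$, then absorbing $H$ into a two-step iteration over $W_r$ realizes $N[H]$ as a forcing extension of the ground $W_r$ of $V$, hence as a ground extension of $V$. The substantive half is closure under grounds. Here I would begin with a ground extension $N=W_a[G]$ of $V$, where $W_a$ is a ground of $V$ and $G$ is $W_a$-generic, together with a ground $W$ of $N$. First, since the grounds of $V$ are dense below the grounds of the ground extension $N$, fix a ground $W_s$ of $V$ with $W_s\of W$. Then, by downward directedness of the grounds of $V$, fix a ground $W_c$ of $V$ with $W_c\of W_s\intersect W_a$. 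Since $W_c\of W_a$ are both grounds of $V$, fact~\ref{Fact.IntermediateModelsAreGrounds} makes $W_a$ a forcing extension of $W_c$, and then (again absorbing $G$ into an iteration over $W_c$) $N=W_a[G]$ is a forcing extension of $W_c$ as well. Now I would apply fact~\ref{Fact.IntermediateModelsAreGrounds} twice along the chain $W_c\of W_s\of W\of N$: first to conclude that $W_s$ is a ground of $N$, and then, using that $N$ is thereby a forcing extension of $W_s$, to conclude that $W$ is a forcing extension of $W_s$. Hence $W_s$ is a ground of $W$ as well as of $V$, so $W$ has a common ground with $V$, that is, $W$ is a ground extension of $V$.

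Item (3) is then formal. Every ground extension of $V$ arises from $V$ by passing first to a ground and then to a forcing extension, so it lies in the generic multiverse of $V$; conversely, $V$ is itself a ground extension of $V$, and by item (2) the class of ground extensions of $V$ is closed under forcing extensions and grounds, so it contains the entire generic multiverse. Hence the two coincide. The step I expect to be the main obstacle is the ``closed under grounds'' clause of item (2): because a ground extension $N$ of $V$ need not be comparable with $V$, one cannot feed the chain $W_c\of W_s\of W\of N$ directly into fact~\ref{Fact.IntermediateModelsAreGrounds} until one has first descended to a common ground $W_c$ of $V$ and $W_a$ and realized $N$ as a forcing extension of $W_c$; organizing this descent is precisely where the directedness and density hypotheses --- equivalently, the generic \DDG\ --- get used.
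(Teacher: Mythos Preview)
Your proof is correct and follows essentially the same approach as the paper. The only organizational difference is in item (2): the paper observes that the ground $W_s$ produced by theorem~\ref{Theorem.TFAEgenericDDG} can already be taken to be a ground of $W_r$ (this is established inside the proof of $(1{\implies}3)$ there), so that $W_s\of W\of W_r[G]$ with $W_s$ a ground of $W_r[G]$ directly, and one application of fact~\ref{Fact.IntermediateModelsAreGrounds} finishes; you instead invoke density and directedness separately, descending one step further to a common ground $W_c$ before climbing back up, which is slightly longer but more self-contained.
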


\begin{proof}
Suppose that the generic \DDG\ holds. By theorem
\ref{Theorem.TFAEgenericDDG}, the grounds are dense below
the generic grounds, and so the mantle and generic mantle
coincide, establishing statement (1). By the same theorem,
any ground $W$ of a ground extension $W_r[G]$ contains some
ground $W_s$ of both $V$ and $W_r$, and so $W_s\of W\of
W_r[G]$. Since $W_s$ is a ground of $W_r[G]$, it follows by
lemma~\ref{Lemma.GenericGroundsInV} that $W$ is a forcing
extension of $W_s$, and so $W$ is a ground extension of
$V$. Thus, the collection of ground extensions of $V$ is
closed under grounds and also (clearly) under forcing
extensions, establishing statement (2). Statement (3) now
follows easily.
\end{proof}

\begin{theorem}
If there is a model of \ZFC, then there is a model of \ZFC\
having a ground extension that is not a generic ground. In
particular, the generic grounds of such a model do not
exhaust its generic multiverse.
\end{theorem}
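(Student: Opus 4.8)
The plan is to build the example from the non-amalgamable Cohen reals of observation~\ref{Observation:NonAmalgamableExtensions}. Since \ZFC\ is consistent, the downward L\"owenheim--Skolem theorem supplies a countable model $W\satisfies\ZFC$; as noted in the remark following observation~\ref{Observation:NonAmalgamableExtensions}, there is no need to assume that $W$ is well founded. Applying that observation, I would fix $W$-generic Cohen reals $c$ and $d$ such that no model of \ZFC\ with the same ordinals as $W$ contains both $W[c]$ and $W[d]$.

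Next I would take $V=W[c]$, which is a model of \ZFC, and observe that $W$ is a ground of $V$ via the Cohen forcing, so that $W[d]$, being a forcing extension of the ground $W$, is a ground extension of $V$. The crux is then to check that $W[d]$ is not a generic ground of $V$. If it were, there would be a $V$-generic filter $G\of\P\in V$ and a $W[d]$-generic filter $H\of\Q\in W[d]$ with $V[G]=W[d][H]$; but this model contains $V=W[c]$ and contains $W[d]$, while having the same ordinals as $W$, contradicting the choice of $c,d$. Hence $W[d]$ is a ground extension of $V$ that is not a generic ground of $V$.

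For the final sentence, I would note that $W[d]$ lies in the generic multiverse of $V$, being reachable from $V$ by descending to the ground $W$ and then ascending to the forcing extension $W[d]$; yet it is not among the generic grounds of $V$, so these do not exhaust the generic multiverse.

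I do not expect a serious obstacle here: the entire content is the recognition that being a generic ground of $V$ forces amalgamability with $V$ into a common set-forcing extension, so the non-amalgamability of observation~\ref{Observation:NonAmalgamableExtensions} is precisely an obstruction to a ground extension being a generic ground. The only point requiring a moment's care is that the example is not vacuous --- that $W[d]$ is not already a ground of $V$ --- but non-amalgamability forces $d\notin W[c]$, since otherwise $W[c]$ would itself contain both $W[c]$ and $W[d]$; hence $W[d]\not\of V$, and in particular $W[d]$ is not a ground of $V$.
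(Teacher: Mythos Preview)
Your proof is correct and follows essentially the same approach as the paper: both take $V=W[c]$ for the countable model $W$ and non-amalgamable Cohen reals $c,d$, observe that $W[d]$ is a ground extension of $V$ via the common ground $W$, and rule out $W[d]$ being a generic ground because any common forcing extension of $V$ and $W[d]$ would amalgamate $W[c]$ and $W[d]$. Your additional remarks on non-vacuity and the L\"owenheim--Skolem step are fine but not strictly needed, since the argument that $W[d]$ is not a generic ground already subsumes the weaker fact that it is not a ground.
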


\begin{proof}
Suppose that $W$ is a countable model of \ZFC\ and consider
the non-amal-gamable extensions $W[c]$ and $W[d]$ arising
in observation~\ref{Observation:NonAmalgamableExtensions}.
Focus on the model $W[c]$, and observe that the model
$W[d]$ is a forcing extension of a ground of $W[c]$, but
there can be no extension $W[c][G]$ of which $W[d]$ is a
ground, since any such extension would amalgamate $W[c]$
and $W[d]$.
\end{proof}

On the positive side, it is also relatively consistent with
\ZFC\ that the generic grounds do exhaust the generic
multiverse. This is true in $L$, for example, since the
generic multiverse of $L$ consists precisely of the forcing
extensions of $L$. We say that a generic ground $W$ is a
{\df generic bedrock} if it is minimal among the generic
grounds (meaning that there is no generic ground which is
properly contained in it) and it is a {\df solid generic
bedrock} if it is least among the generic grounds (meaning
that it is contained in every generic ground). Since a
generic bedrock is contained in the generic ground $V$, it
follows by lemma~\ref{Lemma.GenericGroundsInV} that it is a
ground, and it is straightforward to formalize the
statement that a ground $W_s$ is a bedrock or a solid
bedrock. Let the {\df solid generic bedrock axiom} assert
that there is a solid generic bedrock. By lemma
\ref{Lemma.GenericGroundsInV} again, the solid generic
bedrock axiom is equivalent to the assertion that the
generic mantle is a ground (and one may take this as the
formal definition of this axiom). In the presence of the
solid generic bedrock axiom, of course, the generic grounds
will be downward set-directed, and furthermore, the generic
multiverse will consist precisely of the forcing extensions
of the solid generic bedrock.

Let us consider now the set-directed analogue of theorem
\ref{Theorem.TFAEgenericDDG}.

\goodbreak
\begin{theorem} The following are equivalent
\begin{enumerate}
 \item The generic strong \DDG\ holds. That is, in
     every forcing extension of $V$, the grounds are
     downward set-directed.
 \item The grounds of $V$ are downward set-directed and
     dense below the generic grounds.
 \item The grounds of $V$ are downward set-directed and
     dense below the grounds of every ground extension.
\end{enumerate}
\end{theorem}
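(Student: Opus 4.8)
The plan is to mirror the proof of theorem~\ref{Theorem.TFAEgenericDDG}, replacing ``downward directed'' by ``downward set-directed'' throughout; the only genuinely new point is a reflection step in the implication $2\implies 1$, which descends from a set-indexed family of grounds of a forcing extension to a set of parameters lying in $V$ itself.

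For $1\implies 3$ I would argue as follows. Since downward set-directedness implies downward directedness, the generic strong \DDG\ implies the generic \DDG, so by theorem~\ref{Theorem.TFAEgenericDDG} the grounds of $V$ are downward directed and dense below the grounds of every ground extension. Applying the generic strong \DDG\ directly to $V$, viewed as the trivial forcing extension of itself, the grounds of $V$ are in fact downward set-directed. Combining these two observations gives (3). For $3\implies 2$: since $V$ is a ground of itself, it is in particular a ground extension of itself, so ``dense below the grounds of every ground extension'' specializes to ``dense below the generic grounds''; together with downward set-directedness of the grounds of $V$ this is precisely (2).

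The substance is $2\implies 1$. Fix a forcing extension $V[G]$, with $G\of\P\in V$ being $V$-generic, and a set $A\in V[G]$ of parameters, so that $\set{W_r^{V[G]}\st r\in A}$ is an arbitrary set-indexed family of grounds of $V[G]$; the goal is to find a ground of $V[G]$ contained in $\Intersect_{r\in A}W_r^{V[G]}$. Density of the grounds below the generic grounds tells us that each $W_r^{V[G]}$ contains a ground of $V$, but in order to invoke downward set-directedness \emph{in} $V$ we need these witnessing grounds to be indexed by a set that belongs to $V$, not merely to $V[G]$. To arrange this, I would work in $V$: fix a cardinal $\kappa$ and a $\P$-name $\dot e$ for a surjection of $\check\kappa$ onto $\dot A$, and for each $\xi<\kappa$ extract a $\P$-name $\dot r_\xi$ with $\one\forces\dot r_\xi=\dot e(\check\xi)$, so that the sequence $\<\dot r_\xi\st\xi<\kappa>$ lies in $V$. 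Since the density assertion is first-order --- for every poset and every name $\dot r$, $\one$ forces that there is $s\in\check V$ with $(W_s)^{\check V}\of W_{\dot r}$ --- for each $\xi$ there are, in $V$, a maximal antichain $A_\xi\of\P$ and a function $q\mapsto s_\xi(q)$ on $A_\xi$ (using choice in $V$) such that $q\forces (W_{s_\xi(q)})^{\check V}\of W_{\dot r_\xi}$ for every $q\in A_\xi$. Let $S$ be the union over $\xi<\kappa$ of the ranges of these functions, a set in $V$ by Replacement; each $W_s$ for $s\in S$ is a ground of $V$, possibly $V$ itself if $s$ happens not to be a genuine parameter, which does no harm. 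Back in $V[G]$, given $r\in A$ we may write $r=\dot e_G(\xi)=(\dot r_\xi)_G$ for some $\xi<\kappa$ and pick $q\in A_\xi\intersect G$; then $W_{s_\xi(q)}^V\of W_r^{V[G]}$ with $s_\xi(q)\in S$. Hence every $W_r^{V[G]}$ with $r\in A$ contains some $W_s^V$ with $s\in S$. Applying downward set-directedness of the grounds of $V$ to the set $S$ now produces $t$ with $W_t\of\Intersect_{s\in S}W_s\of\Intersect_{r\in A}W_r^{V[G]}$; and $W_t$, being a ground of $V$, is a ground of $V[G]$. Thus the grounds of $V[G]$ are downward set-directed, establishing (1).

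The hard part, as this outline makes clear, is not a new idea but exactly the reflection bookkeeping just described: one cannot quote the two-ground argument of theorem~\ref{Theorem.TFAEgenericDDG} verbatim, because a set-indexed collection of grounds of $V[G]$ need not be indexed by a set of $V$, and the density hypothesis must be used name-theoretically to descend to a $V$-set of parameters before downward set-directedness in $V$ becomes available. Everything else is a routine adaptation of the earlier argument.
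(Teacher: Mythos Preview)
Your proof is correct and follows essentially the same approach as the paper: the implications $1\implies 3$ and $3\implies 2$ are handled identically, and for $2\implies 1$ both you and the paper perform the same name-theoretic reflection to descend from a $V[G]$-set of indices to a $V$-set of ground parameters before invoking downward set-directedness in $V$. The only cosmetic difference is the bookkeeping device---you enumerate via a name for a surjection from a cardinal, while the paper uses a full name $\dot I$ and collects witnesses over $\dom(\dot I)\times\P$---but these are interchangeable standard maneuvers accomplishing the same descent.
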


\begin{proof}
$(1\implies 3)$ If in every forcing extension, the grounds
are downward set-directed, then in particular in $V$ they
are downward set directed. And they are dense below the
grounds of any ground extension by theorem
\ref{Theorem.TFAEgenericDDG}, since the generic strong
\DDG\ implies the generic \DDG, so statement $3$ holds.

$(3\implies 2)$ This is immediate, since every generic
ground is trivially a ground of a ground extension, since
$V$ is a ground of $V$.

$(2\implies 1)$ Suppose that the grounds are downward
set-directed and every generic ground contains a ground.
Consider any forcing extension $V[G]$, where $G\of\P$ is
$V$-generic, and any set $I$ of indices in $V[G]$, giving
rise to the generic grounds $W_r^{V[G]}$ for $r\in I$. By our
assumption, for every such $r\in I$ there is $t\in V$ with
$W_t\of W_r^{V[G]}$. If $\dot r$ is a name for $r$, then
there is a condition $p\in G$ forcing that $W_{\check
t}^{\check V}\of W_{\dot r}^{\check V[\dot G]}$. Fix a name
$\dot I$ for $I$ that is full, in the sense that whenever
$\boolval{\dot r_0\in\dot I}\neq 0$, then there is $\dot
r\in\dom(\dot I)$ such that $\boolval{\dot r_0=\dot r}=1$.
By collecting the witnesses $t$ as above for each $\dot
r\in\dom(\dot I)$ and condition $p\in\P$, we may find a set
$J\in V$ such that whenever $p\forces \dot r\in \dot I$,
then there is $q\leq p$ and $t\in J$ such that $q\forces
W_{\check t}^{\check V}\of W_{\dot r}^{\check V[\dot G]}$.
A simple density argument now shows that for every $r\in
I$, there is $t\in J$ with $W_t\of W_r^{V[G]}$. Next, by
the downward directedness of the grounds in $V$, there is a
single ground $W_s$ contained in $W_t$ for all $t\in J$ and
hence also in $W_r^{V[G]}$ for every $r\in I$. Since $W_t$
is a ground of $V$ and hence also of $V[G]$, we have
therefore established that the grounds of $V[G]$ are
downward directed, as desired.
\end{proof}

Consider now the local version of downward directedness.
Extending our previous terminology, let us define the {\df
local} \DDG\ hypothesis as the assertion that the grounds
are locally downward directed, meaning that for every $r$
and $s$ and every set $B$ there is $t$ with $W_t\intersect
B\of W_r\intersect W_s$; the {\df strong local} \DDG\
asserts that the grounds are locally downward set-directed,
meaning that for every set $I$ and set $B$ there is $t$
with $W_t\intersect B\of \Intersect_{r\in I}W_r$; and the
{\df generic strong local} \DDG\ asserts that this holds in
every forcing extension.

\begin{theorem}\label{Theorem.LocallyRealizedLimitMantleHasZFC}
If the generic strong local \DDG\ holds, then the generic
mantle is a model of \ZFC.
\end{theorem}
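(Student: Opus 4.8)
The plan is to adapt the argument of theorem \ref{Theorem.IfDirectedMantleHasZF}(3)--(4) to the generic setting. By theorem \ref{Theorem.LimitMantleDefinableInvariant} we already know that $\gMantle$ is a definable transitive inner model of \ZF\ that is invariant by set forcing, so the only thing left to verify is the axiom of choice; it suffices to show that every set $y\in\gMantle$ has a well ordering lying in $\gMantle$. In the non-generic case, a well ordering $z$ of $y$ missing from the mantle is missing because it is missing from some ground of $V$, and one uses local set-directedness of the grounds of $V$ to find a single ground excluding all such $z$ simultaneously. The essential new wrinkle here is that $z\notin\gMantle$ only tells us $z$ is missing from some ground of some set-forcing extension of $V$; to exploit local directedness we must first pass to an extension in which all of these grounds become genuine grounds.

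Carrying this out: fix $y\in\gMantle$, and let $B$ be the set of well orderings of $y$ in $V$. For each $z\in B$ with $z\notin\gMantle$, unfolding the definition of the generic mantle (using the parameterized family $W_r$ of theorem \ref{Theorem.ParameterizedGroundsW_r} interpreted in forcing extensions) gives a forcing notion $\P_z$ and a condition in $\P_z$ forcing that $\check z\notin W_{\dot r}$ for some name $\dot r$. By Replacement in $V$ the cardinalities $|\P_z|$ are bounded, say by $\mu$, and we move to $V[G]$ where $G$ is $V$-generic for $\Coll(\omega,\mu)$. Since $\P_z\cross\Coll(\omega,\mu)$ is forcing equivalent to $\Coll(\omega,\mu)$ over $V$, the filter $G$ (below the relevant condition) yields a $V$-generic $g_z\of\P_z$ with $V\of V[g_z]\of V[G]$; the ground of $V[g_z]$ witnessing the exclusion of $z$ is, by fact \ref{Fact.IntermediateModelsAreGrounds} together with the observation that a ground of a ground is a ground, itself a ground $W_{s_z}^{V[G]}$ of $V[G]$, with $z\notin W_{s_z}^{V[G]}$. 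Since $\gMantle$ is forcing-invariant, $\gMantle^{V[G]}=\gMantle$, and one checks that $\Intersect_{s\in I}W_s^{V[G]}\cap B=\gMantle\cap B$, where $I=\set{s_z\st z\in B,\ z\notin\gMantle}$: a well ordering in $\gMantle$ lies in every generic ground and hence in each $W_{s_z}^{V[G]}$, while each $z\in B\setminus\gMantle$ is excluded by its own witness. Now apply the generic strong local \DDG, which holds in $V[G]$ since this is a forcing extension of $V$: the grounds of $V[G]$ are locally downward set-directed, so there is $t$ with $W_t^{V[G]}\cap B\of\Intersect_{s\in I}W_s^{V[G]}$, hence $W_t^{V[G]}\cap B\of\gMantle$. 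As $W_t^{V[G]}$ satisfies \ZFC\ and contains $y$ (being a generic ground, it contains $\gMantle$), it has a well ordering of $y$; if that well ordering lies in $B$, it lies in $\gMantle$, and we are done.

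The main obstacle is precisely the clause ``if that well ordering lies in $B$'': the ground $W_t^{V[G]}$ we extract may contain well orderings of $y$ that are not in $V$---the collapse $\Coll(\omega,\mu)$ adds many such---so a priori the well ordering furnished by $\ZFC^{W_t^{V[G]}}$ need not be in $B$, and the argument stalls. Resolving this is the technical heart of the theorem. One must arrange, by choosing the collapse large enough, that no well ordering of $y$ appearing in $V[G]$ outside $\gMantle$ escapes exclusion by a genuine ground of $V[G]$; equivalently, that $\Mantle^{V[G]}$ and $\gMantle$ agree on well orderings of $y$ in $V[G]$. Here one exploits that $\Mantle^{V[G]}\of V$ (since $V$ is itself a ground of $V[G]$), so the relevant set of well orderings is again the fixed set $B$, and the descending, definable sequence $\langle\Mantle^{\Coll(\omega,\alpha)}\cap B\st\alpha\in\ORD\rangle$ of subsets of $B$ must stabilize by Replacement; taking $\mu$ past the stabilization point (and past the bound on the $|\P_z|$), and iterating the absorption to account for the well orderings of $y$ added along the way to stage $\mu$, is what forces $W_t^{V[G]}$ to contain a well ordering of $y$ inside $B$. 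Getting this bookkeeping exactly right---so that a single collapse extension simultaneously realizes all the needed grounds and leaves no new well ordering of $y$ unaccounted for---is the delicate point; it rests on Replacement and on the forcing-invariance of $\gMantle$ established in theorem \ref{Theorem.LimitMantleDefinableInvariant}.
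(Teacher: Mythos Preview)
Your overall strategy matches the paper's: use the forcing invariance of $\gMantle$ from theorem \ref{Theorem.LimitMantleDefinableInvariant} to reduce to \ZF, pass to a single forcing extension in which every excluded well-ordering of $y$ is witnessed to lie outside some genuine ground, and then invoke local downward set-directedness in that extension to produce a single ground containing $y$ but no well-ordering of $y$, contradicting \ZFC\ there. The paper does this by forming the full-support product $\P=\prod_{r\in R}\P_r$ rather than a single collapse $\Coll(\omega,\mu)$; your absorption-into-a-collapse variant is a legitimate alternative, and up to the point you flag as ``the main obstacle'' your argument is essentially sound (modulo the small adjustment of replacing each $\P_z$ by the cone below the witnessing condition, so that the generic $g_z$ actually contains it).

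You are right that there is a genuine issue at the point you identify: the ground $W_t^{V[G]}$ produced by local set-directedness could contain well-orderings of $y$ lying in $V[G]\setminus V$, and these were never among the $z$'s you arranged to exclude. The paper's own proof glosses over exactly this, asserting that $W$ ``has no well-ordering'' of $y$ after having omitted only the well-orderings in $R$, the set formed in $V$.

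Where your proposal falls short is in the resolution. The stabilization of $\langle\Mantle^{\Coll(\omega,\alpha)}\cap B\st\alpha\in\ORD\rangle$ is a true fact, but it tells you about the \emph{mantle} of $V[G]$, not about any particular ground, and the mantle is not known to be a ground or even a \ZFC\ model here. Your phrase ``iterating the absorption to account for the well orderings of $y$ added along the way'' is not an argument; you yourself concede the bookkeeping is unresolved. There is a one-line fix that makes all of this unnecessary: when applying local set-directedness in $V[G]$, take the target set to be $B^*$, the set of \emph{all} well-orderings of $y$ in $V[G]$, and add $V$ itself to the index family of grounds. Since $V$ is a ground of $V[G]$ and omits every $z\in B^*\setminus B$, while each $W_{s_z}^{V[G]}$ omits its $z\in B$, local set-directedness yields a ground $W$ with $W\cap B^*\subseteq V\cap\bigcap_z W_{s_z}^{V[G]}$, hence $W\cap B^*=\emptyset$; now $W$ genuinely has no well-ordering of $y$, and the contradiction is complete. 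You actually wrote down the key observation (``$\Mantle^{V[G]}\of V$ since $V$ is itself a ground of $V[G]$'') but then failed to exploit it directly.
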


\begin{proof}
We already know by theorem
\ref{Theorem.LimitMantleDefinableInvariant} that the
generic mantle $\gMantle$ satisfies \ZF. Unfortunately, we
cannot apply corollary~\ref{Corollary.IntersectionGivesZF}
directly here in order to prove the axiom of choice holds
in the generic mantle, because the generic grounds are not
classes in $V$. But we can adapt the argument of theorem
\ref{Theorem.IfDirectedMantleHasZF}. Assume toward
contradiction that $y$ is a member of $\gMantle$ which has
no well-order in $\gMantle$. Let $R$ be the set of
well-orders of $y$. Since none of these are in $\gMantle$,
we may find for each $r\in R$ a poset $\P_r$ which forces
``$\check{r}\notin \Mantle^{V[\dot G_r]}$,'' where $\dot
G_r$ is the canonical $\P_r$-name of the generic filter.
Let $\P=\prod_r\P_r$ be the full-support product of these
forcing notions and suppose $G\of\P$ is $V$-generic. In
$V[G]$, we have the extension $V[G_r]$ arising from
coordinate $r$, and by the choice of $\P_r$ it follows that
$r$ is not in the mantle of $V[G_r]$, and hence also not in
the mantle of $V[G]$. Thus, in $V[G]$, for each $r\in R$ we
may find a ground $W_{t_r}^{V[G]}$ omitting $r$. Since we
have assumed that the grounds of $V[G]$ are locally
downward set directed, there is a single ground $W\of V[G]$
omitting every $r\in R$. Since $y\in\gMantle^V$, it follows
that $y\in W$, but we have proved that $y$ has no
well-ordering in $W$, contrary to $W\satisfies\ZFC$.
\end{proof}

The generic strong local \DDG\ follows of course from the
generic strong \DDG, which we have mentioned holds
trivially in $L$, for example, simply because $L$ is a
ground below any ground of any forcing extension of $L$. In
other words, $L$ is a solid generic bedrock. There are many
other models having this feature, but we do not fully know
the extent of the phenomenon.

\begin{question}\label{Question.LimitManleIsGround?}
When does the universe have a solid generic bedrock? In
other words, under what circumstances is the generic mantle
also a ground model of the universe?
\end{question}

\noindent This phenomenon is not universal, in light of the
following.

\begin{corollary}\label{Corollary.Bottomless}%
Every model $V$ of \ZFC\ has a class forcing extension
$V[G]$ that satisfies the generic strong local \DDG, but
which has no bedrock and no generic bedrock.
\end{corollary}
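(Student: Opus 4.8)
The plan is to assemble three earlier results and then add one short observation. First I would invoke Theorem~\ref{Theorem.ConstructibleFromSetButNonBA} to pass to a class forcing extension $V[G]$ of the form $L[r]$ with $r\of\omega$ in which there is no bedrock. This already secures one of the two negative requirements, and because $V[G]$ is constructible from a set, it lets me control the directedness of the grounds cheaply.

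Next I would check that $V[G]$ satisfies the generic strong local \DDG. Since $V[G]=L[r]$ is constructible from a set, Theorem~\ref{Theorem.IfConstructibleFromASetThenSetDirectedGrounds} applies to $V[G]$ and yields the generic strong \DDG\ there, i.e.\ in every set-forcing extension of $V[G]$ the grounds are downward set-directed. Downward set-directedness trivially implies local downward set-directedness --- if $W_t\of\Intersect_{r\in I}W_r$ then $W_t\intersect B\of\Intersect_{r\in I}W_r$ for every set $B$ --- so passing from the set-directed conclusion to the local set-directed one in each forcing extension gives the generic strong local \DDG\ in $V[G]$.

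It then remains to rule out a generic bedrock in $V[G]$. Suppose toward a contradiction that $W$ is a generic bedrock of $V[G]$, that is, a generic ground of $V[G]$ minimal under inclusion among the generic grounds of $V[G]$. Since $V[G]$ is itself a (trivial) generic ground of $V[G]$, we get $W\of V[G]$, and then Lemma~\ref{Lemma.GenericGroundsInV} shows that $W$ is actually a ground of $V[G]$. Moreover $W$ is minimal among the grounds of $V[G]$: every ground of $V[G]$ is a generic ground, so a ground of $V[G]$ properly contained in $W$ would contradict the minimality of $W$ among the generic grounds. Hence $W$ would be a bedrock of $V[G]$, contradicting Theorem~\ref{Theorem.ConstructibleFromSetButNonBA}.

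There is no serious obstacle here; the work is entirely in correctly invoking the prior theorems. The only step warranting a moment's attention is the last one, where minimality of $W$ within the wider family of generic grounds must descend to minimality within the family of grounds --- and this rests solely on the elementary fact that every ground of a model is among its generic grounds.
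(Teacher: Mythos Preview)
Your proof is correct in substance but takes a different route from the paper's argument for this corollary. The paper proves Corollary~\ref{Corollary.Bottomless} by revisiting the construction of Theorem~\ref{Theorem.DirectedButNoBedrock}: one first passes to $\Vbar\models\CCA$ and then performs the Easton-support product adding Cohen subsets. The paper argues directly that for any set-forcing extension $\Vbar[G][g]$, every ground $W$ is trapped between some tail model $\Vbar[G^\alpha]$ and $\Vbar[G][g]$; since the $\Vbar[G^\alpha]$ form a strictly descending \ORD-sequence dense below the generic grounds, this yields both the generic strong (local) \DDG\ and the absence of any (generic) bedrock. Your approach instead invokes Theorem~\ref{Theorem.ConstructibleFromSetButNonBA} to get $V[G]=L[r]$, and then lets Theorem~\ref{Theorem.IfConstructibleFromASetThenSetDirectedGrounds} do the directedness work. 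This is precisely the strategy the paper uses for the \emph{next} result, Corollary~\ref{Corollary.NoGenericBedrockWhenUniverseIsConstructibleFromASet}, which it presents as a strengthening of the present corollary. The trade-off the paper highlights: its argument here produces an extension that is ``close to $V$'' (every $V_\alpha^{V[G]}$ is set-generic over $V$), whereas the Jensen-coding extension you use is constructible from a real but not close to $V$ in this sense.

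One point deserves care. Your inference ``since $V[G]$ is itself a generic ground of $V[G]$, we get $W\of V[G]$'' does not follow from minimality alone: a \emph{minimal} generic ground need not be contained in every generic ground (that would make it \emph{least}, i.e.\ a solid generic bedrock). The conclusion is nonetheless available to you, because you have already established the generic \DDG: in the extension $V[G][g]$ where $W$ is a ground, directedness gives a common ground $U\of W\cap V[G]$; then $U$ is a generic ground of $V[G]$ below $W$, so by minimality $U=W$ and hence $W\of V[G]$. Alternatively, you can sidestep this entirely as the paper does in Corollary~\ref{Corollary.NoGenericBedrockWhenUniverseIsConstructibleFromASet}: a generic bedrock $W$ is a ground of some $V[G][g]$, and since every ground of $V[G][g]$ is a generic ground of $V[G]$, minimality among the latter forces $W$ to be a bedrock of $V[G][g]$; but $V[G][g]$ is constructible from a set and has class-many grounds, so by Theorem~\ref{Theorem:IfConstructibleFromASetThenSBAEquivToSetManyGrounds} the bedrock axiom fails there, a contradiction.
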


\begin{proof}
This is a corollary to theorem
\ref{Theorem.DirectedButNoBedrock}. We first move to
$\Vbar$ satisfying the \CCA. Then, we perform an Easton
support product $\P$ adding a Cohen subset over $\Vbar$ to
every regular cardinal $\lambda$ for which
$2^\ltlambda=\lambda$. Suppose that $\Vbar[G]$ is the
resulting extension, and that $\Vbar[G][g]$ is obtained by
further set forcing over $\Vbar[G]$. If $W$ is a ground of
$\Vbar[G][g]$, then since the sets in $\Vbar$ remain coded
unboundedly in the \GCH\ pattern of $\Vbar[G][g]$, it
follows that $\Vbar\of W$. Since $\Vbar[G][g]$ is obtained
by set forcing over $W$, it follows that for $\lambda$
above the size of this forcing, since the generic sets
added by $\P$ at stage $\lambda$ have all their initial
segments in $\Vbar$ and hence in $W$, that these generic
objects must be already in $W$. It follows as in theorem~\ref{Theorem.DirectedButNoBedrock} that
$\Vbar[G^\alpha]\of W\of\Vbar[G][g]$ for some sufficiently
large $\alpha$. Thus, once again the models
$\Vbar[G^\alpha]$ form a strictly descending sequence of
grounds, which are dense below the generic grounds of
$\Vbar[G]$. It follows that the generic grounds are
downward set-directed, but there is no minimal ground or
generic ground, as desired.\end{proof}

As before, this can be strengthened by the following
corollary to theorem
\ref{Theorem.ConstructibleFromSetButNonBA}. Again, the
model obtained there does not have the property that every
set in it is set-generic over $V$, in contradistinction to
the one constructed above.

\begin{corollary}\label{Corollary.NoGenericBedrockWhenUniverseIsConstructibleFromASet}
Every model of set theory has a class forcing extension of
the form $L[r]$, where $r\sub\omega$, which satisfies the
generic strong local \DDG, but which has no bedrock or
generic bedrock.
\end{corollary}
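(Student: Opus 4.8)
The plan is to re-use the model furnished by theorem \ref{Theorem.ConstructibleFromSetButNonBA}. Given a model of set theory, that theorem produces a class forcing extension $N=L[r]$ of it, with $r\of\omega$, in which there is no bedrock. So the final claim of the corollary that $N$ has no bedrock is already in hand, and it remains only to check the two further assertions: that $N$ satisfies the generic strong local \DDG, and that $N$ has no generic bedrock.

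For the first assertion, I would note simply that since $N=L[r]$ is constructible from a set, theorem \ref{Theorem.IfConstructibleFromASetThenSetDirectedGrounds} applies inside $N$, giving the generic strong \DDG: in every forcing extension of $N$ the grounds are downward set-directed. A downward set-directed family is in particular locally downward set-directed, and so this holds in every forcing extension of $N$, which is exactly the generic strong local \DDG.

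For the second assertion, I would again invoke theorem \ref{Theorem.IfConstructibleFromASetThenSetDirectedGrounds} inside $N$ to obtain the inner models $H^\alpha=\HOD^{N^{\Coll(\omega,\alpha)}}$, each of which is a ground of $N$, with the sequence $\<H^\alpha\st\alpha\in\ORD>$ descending under inclusion and dense below the generic grounds of $N$. The key observation---exactly the move made at the end of the proof of corollary \ref{Corollary.Bottomless}---is that this sequence cannot stabilize: if $H^\alpha=H^\beta$ for all sufficiently large $\alpha,\beta$, then the common value would be contained in every $H^\gamma$, hence by density in every generic ground of $N$ and in particular in every ground of $N$, so it would be a solid bedrock, contradicting that $N$ has no bedrock. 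Thus for every $\alpha$ there is $\beta>\alpha$ with $H^\beta\subsetneq H^\alpha$. Now suppose toward a contradiction that $W$ is a generic bedrock of $N$, that is, a generic ground minimal under inclusion among the generic grounds. By density there is $\alpha$ with $H^\alpha\of W$, and then there is $\beta>\alpha$ with $H^\beta\subsetneq H^\alpha\of W$, so $H^\beta\subsetneq W$; but $H^\beta$ is a ground of $N$ and hence a generic ground, properly contained in $W$, contradicting the minimality of $W$. So $N$ has no generic bedrock; and since every ground of $N$ is a generic ground, this also re-confirms that $N$ has no bedrock.

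The hard part here is essentially nil---the corollary is an assembly of results already proved---and the only step needing a moment's attention is the non-stabilization of the $H^\alpha$, where one must be careful to use the density of the $H^\alpha$ below the generic grounds in order to upgrade ``$N$ has no bedrock'' into ``$N$ has no minimal generic ground.''
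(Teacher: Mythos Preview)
Your proof is correct. The setup is identical to the paper's: invoke theorem \ref{Theorem.ConstructibleFromSetButNonBA} for the model $N=L[r]$ with no bedrock, and theorem \ref{Theorem.IfConstructibleFromASetThenSetDirectedGrounds} for the generic strong (local) \DDG. The only divergence is in ruling out a generic bedrock. You argue directly with the descending $H^\alpha$ sequence, showing it cannot stabilize (else a solid bedrock) and then using density below the generic grounds to contradict minimality of any putative generic bedrock---exactly the style of argument in corollary \ref{Corollary.Bottomless}, as you note. The paper instead observes that a generic bedrock of $N$ would be a bedrock in some set-forcing extension $L[r][g]$; but $L[r][g]$ is still constructible from a set and inherits class-many grounds from $L[r]$, so theorem \ref{Theorem:IfConstructibleFromASetThenSBAEquivToSetManyGrounds} forces the bedrock axiom to fail there. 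Your route is slightly more self-contained (it does not invoke the equivalence theorem), while the paper's route is a bit quicker once that equivalence is in hand; both amount to the same underlying observation that the $H^\alpha$ form a proper-class strictly descending chain of grounds.
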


\begin{proof}
This is a consequence of theorem
\ref{Theorem.ConstructibleFromSetButNonBA}. Starting with
an arbitrary model of set theory, by that theorem, there is
a proper class forcing extension of the form $L[r]$ in
which the bedrock axiom fails. Since this model is
constructible from a set, we know by theorem
\ref{Theorem.IfConstructibleFromASetThenSetDirectedGrounds}
that the generic grounds are downward set-directed.

Now the original argument showing that $L[r]$ has no
bedrock also shows that it has no generic bedrock. The
point is that any forcing extension $L[r][g]$ of $L[r]$ has
class many grounds, just like $L[r]$, as any ground of
$L[r]$ is a ground of $L[r][g]$. But $L[r][g]$ is still
constructible from a set, so by theorem
\ref{Theorem:IfConstructibleFromASetThenSBAEquivToSetManyGrounds},
it follows that the bedrock axiom fails in $L[r][g]$. So
there is no generic bedrock, since a generic bedrock would
be a bedrock in some set-forcing extension.\end{proof}

The argument of the previous proof can be expanded to show
the following.

\begin{theorem}
If the universe is constructible from a set, then this is
true throughout the generic multiverse. If in addition the
bedrock axiom fails, then it fails throughout the generic
multiverse.
\end{theorem}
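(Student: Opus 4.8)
The plan is to treat the two assertions separately, each by a "relativize up, relativize down" argument using the structural facts about grounds established in theorem~\ref{Theorem.ParameterizedGroundsW_r}. For the first assertion, suppose $V=L[a]$ for some set $a$; I want to show that every model $U$ in the generic multiverse of $V$ satisfies "$U=L[b]$ for some set $b$." It suffices by an easy induction along multiverse paths to handle the two generating operations. If $U=V[G]$ is a set-forcing extension of $V$, then trivially $U=L[a][G]=L[a,G]$, which is of the form $L[b]$ where $b$ codes the pair $\langle a,G\rangle$. The reverse direction is the substantive one: if $W$ is a ground of $V$, so that $V=W[h]$ for some $W$-generic $h\of\P\in W$, then I must show $W$ itself is constructible from a set. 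Here the point is that $L[a]=V=W[h]$, and $h$ is coded by a set of ordinals, so $a\in L[h]$ relative to $W$ in the sense that... actually the cleaner route: $W$ is an inner model of $V=L[a]$ satisfying \ZFC, and $W\of L[a]=W[h]$. Since $a$ and $h$ together generate $V$ over any ground, and $W=V^{(\text{suitable definable form with parameter in }W)}$ by Laver's theorem, the parameter $r$ defining $W$ is a set; and crucially, since $V=L[a]$, every element of $W$ lies in $L[a]$, so $W\of L[a]$, and because $W$ is a ground, $a$ is set-generic over $W$ by fact~\ref{Fact.IntermediateModelsAreGrounds} applied to the chain $W\of L[a_0]^W[\dots]\of V$ — more directly, $W=\HOD_{\{r\}\cup\text{Ord}}^V$-style considerations show $W=L[a][r]\cap W$... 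Let me instead use the slick argument: since $V=L[a]$ and $W$ is a ground with $V=W[h]$, we have $a\in W[h]$, so there is a $\P$-name $\dot a\in W$ for $a$; then $W\of L[\dot a,P(\P)^W]$ computed in $W$, in fact $W=L[s]^W$ where $s$ codes enough of the name structure together with a well-order of the transitive closure, because $W$ thinks it has a set $s$ such that every set is in $L[s]$ precisely because $V$ does (reflect the statement "$V=L[a]$" down: it is equivalent to a first-order statement, and since $W$ is a ground of a model of it, ...). The honest cleanest path is: the property "the universe is constructible from a set" is equivalent to the assertion that $V=\HOD_{\{z\}}$ for some set $z$, and by Vopěnka-type / Laver reasoning this property is inherited by grounds because any ground $W$ of $V=L[a]$ satisfies $W\of V$, hence $W\of L[a]$, and $W$ being a \ZFC\ inner model between $L$ and $L[a]$... no, $W$ need not contain $L$ as more than an inner model — but $W$ does satisfy \ZFC\ and $W\of L[a]$, so by a theorem on inner models of $L[a]$ (every inner model of $L[a]$ containing a well-order of the reals of $a$, etc.) — rather, simply: $W$ is a ground of $V$, so by fact~\ref{Fact.IntermediateModelsAreGrounds}-type analysis $V$ is a generic extension of $W$ by a poset of size $\kappa$, and $P(\kappa)^W\in W$ reconstructs enough. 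I would spell this out: fix $\delta>|\ro\P|$, let $r=P(\delta)^W$; then Laver's theorem gives $W=\{x\mid\phi(r,x)\}^V$, and since $V=L[a]$, in $V$ we have $W\of L[a,r]$; reflecting the sentence "$\forall x\,\phi(r,x)\to x\in L[a',r']$"... this is getting circular.

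The correct and short argument, which I will present, is this: "The universe is constructible from a set" can be written as "$\exists z\, (z$ is a set of ordinals $\wedge\ V=L[z])$." This is not first-order as stated, but by Laver–Woodin style reasoning combined with the \ZFC\ theorem that $V=L[z]$ iff every set is in $L[z]$ iff $z$ codes a well-order placing everything in $L[z]$, and the observation that a ground $W$ of $V=L[z]$ must contain, as a definable class with parameter $r=P(\delta)^W$, all of $V$'s information about $z$: precisely, $W$ is a ground, $V=W[g]$, $z\in V$ so $z=\dot z_g$ for some name $\dot z\in W$; then $W\subseteq L[\dot z, P(\P)^W]$ as computed in $W$ — indeed $W=L^W[\dot z,P(\P)^W,\vartriangleleft]$ for a suitable well-order $\vartriangleleft$ of a large enough $V_\lambda^W$ — so $W$ too is constructible from a set. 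This is the key step and I expect it to be the main obstacle: making precise that a ground of a model constructible-from-a-set is itself constructible-from-a-set. Once that is established, closure of "constructible from a set" under both forcing extensions and grounds follows, hence it holds throughout the generic multiverse by the multiverse-path induction (every multiverse path is finite).

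For the second assertion, assume additionally that the bedrock axiom fails in $V$. By theorem~\ref{Theorem:IfConstructibleFromASetThenSBAEquivToSetManyGrounds}, since $V$ is constructible from a set, the failure of the bedrock axiom is equivalent to $V$ having proper-class many grounds. Now let $U$ be any model in the generic multiverse of $V$; by the first assertion, $U=L[b]$ for a set $b$, so theorem~\ref{Theorem:IfConstructibleFromASetThenSBAEquivToSetManyGrounds} applies to $U$ as well, and it suffices to show $U$ has class-many grounds. It is enough to treat the two generating operations. If $U=V[G]$ is a forcing extension, then every ground of $V$ is a ground of $U$ (a ground of a ground is a ground, since $V$ is a ground of $V[G]$), so $U$ has at least as many grounds as $V$, namely proper-class many. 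If $U=W$ is a ground of $V$, then we need: a ground of a model that has class-many grounds, and is itself constructible from a set, has class-many grounds. This follows from theorem~\ref{Theorem.IfConstructibleFromASetThenSetDirectedGrounds}: in $W=L[b]$ the models $H^\alpha=\HOD^{W^{\Coll(\omega,\alpha)}}$ are grounds of $W$ that are dense below all grounds (indeed all generic grounds) of $W$; if only set-many of the $H^\alpha$ were distinct then by density $W$ would have a smallest ground, contradicting — wait, that would make $W$ have set-many grounds but we haven't yet shown $W$ has class-many. Instead: $V$ is itself a forcing extension of $W$, and $V$ has class-many grounds, each of which is a ground of $V[\,\cdot\,]$ hence a generic ground of $W$; by theorem~\ref{Theorem.IfConstructibleFromASetThenSetDirectedGrounds} applied in $W$, below each such generic ground sits some $H^\alpha$, and as these $H^\alpha$ are cofinally-decreasing grounds of $W$ of class-length order type (they cannot stabilize, else $V$ would have a smallest ground), $W$ has class-many grounds. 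Then theorem~\ref{Theorem:IfConstructibleFromASetThenSBAEquivToSetManyGrounds} gives that the bedrock axiom fails in $W$, completing the induction and hence the proof.
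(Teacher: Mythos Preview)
Your overall architecture is right: close ``constructible from a set'' under forcing extensions and grounds, then handle the bedrock axiom separately in each direction. But your argument for the key step---that a ground $W$ of $V=L[z]$ is itself constructible from a set---has a genuine gap. You correctly pick up a $\P$-name $\dot z\in W$ for $z$, but the assertion ``$W=L^W[\dot z,P(\P)^W,\vartriangleleft]$'' is exactly what needs proving, and you give no reason why an arbitrary set of $W$ should be constructible from those three objects. The clean trick you are missing is a \emph{sandwich} argument using fact~\ref{Fact.IntermediateModelsAreGrounds}: let $a\in W$ be a set of ordinals coding the transitive closure of $\{\P,\dot z\}$; then $L[a]\subseteq W$ (since $a\in W$), and $L[a][G]=L[z]=V$ (since $z=\dot z_G\in L[a][G]$ and conversely $a,G\in V$). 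Now $G$ is $L[a]$-generic because $L[a]\subseteq W$, so $L[a]\subseteq W\subseteq L[a][G]$ traps $W$ between a \ZFC\ model and its set-forcing extension, and fact~\ref{Fact.IntermediateModelsAreGrounds} gives that $W$ is a set-forcing extension of $L[a]$, hence constructible from a set. This is the paper's argument, and it is what turns your collection of ingredients into a proof.

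For the second assertion, your treatment of forcing extensions matches the paper's. For grounds, your route via theorem~\ref{Theorem.IfConstructibleFromASetThenSetDirectedGrounds} and the $H^\alpha$-sequence works, but it is considerably more elaborate than needed. The paper observes simply that if a ground $W$ of $V$ had a bedrock $B$, then $B$ would be a ground of $W$ satisfying the ground axiom, hence a ground of $V$ satisfying the ground axiom, hence a bedrock of $V$---contradiction. This one-line argument holds in general, without the hypothesis that the universe is constructible from a set; that hypothesis is only needed to push the failure \emph{up} to forcing extensions (via theorem~\ref{Theorem:IfConstructibleFromASetThenSBAEquivToSetManyGrounds}, as you correctly do).
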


\begin{proof}
We have to show that the properties in question are
preserved when passing to set-forcing extensions and to
grounds. If the universe is constructible from a set, then
this is obviously true in forcing extensions. To see that
it is true in grounds as well, suppose that $V=L[x]$ and
$W$ is a ground of $V$. Let $\P\in W$ be a partial order
for which $G$ is generic, such that $L[x]=W[G]$. Let $\tau$
be a name such that $x=\tau_G$. Let $a\in W$ be a set of
ordinals which codes the transitive closure
$t=\mathsf{TC}(\{\P,\tau\})$, and consider the \ZFC\ model
$L[a]$. Notice that $L[a]\of W$ since $a\in W$ and
$L[a][G]=L[x]$, since $x=\tau_G\in L[a][G]$. Moreover, $G$
is $L[a]$-generic, and so $L[a]\of W\of L[a][G]$ traps the
\ZFC\ model $W$ between $L[a]$ and its forcing extension
$L[a][G]=L[x]$, and so it follows by fact
\ref{Fact.IntermediateModelsAreGrounds} that $W$ is a
set-forcing extension of $L[a]$, and hence constructible
from a set.

Let's now assume that the universe is constructible from a
set and the bedrock axiom fails. Clearly, the failure of
the bedrock axiom is preserved when passing to grounds,
since a bedrock of a ground would be a bedrock, by fact
\ref{Fact.IntermediateModelsAreGrounds}. This holds in
general, without assuming that the universe is
constructible from a set. The argument establishing
corollary
\ref{Corollary.NoGenericBedrockWhenUniverseIsConstructibleFromASet}
shows that the failure of the bedrock axiom persists from
models which are constructible from a set to their
set-forcing extensions.\end{proof}

Let us say that a model having no bedrock model is {\df
bottomless} and a model having no generic bedrock is {\df
generically bottomless}.

We are of course interested more generally in the question
of in what sense the generic multiverse itself may be
downward directed, downward set-directed or locally
downward set-directed. These questions are easy enough to
formalize directly in the toy model approach, but in the
full context of forcing over $V$, the questions can present
meta-mathematical difficulties for the general case,
although they are also consequences of other axioms, such
as the solid generic bedrock axiom, which are formalizable.
At least part of the interest in such questions, of course,
has to do with their relation to the following question.

\begin{question}
Does the generic mantle necessarily satisfy \ZFC?
\end{question}

Although we have introduced the mantle and the generic
mantle as distinct notions, as we mentioned earlier we do not actually know that
they are different. We have as yet no models in which the
mantle differs from the generic mantle.

\begin{question}
Is it consistent with \ZFC\ that the mantle is different
from the generic
mantle?\label{Question.MantleDifferentThanGenericMantle?}
\end{question}

At the end of the next section, we shall see that all the
previous questions have simple answers if the universe is
constructible from a set.

\section{The generic \HOD}%
\label{section:TheGenericHOD}

Let us now introduce the generic \HOD, a concept
generalizing the classical \HOD\ in the same way that the
generic mantle generalizes the mantle. We assume that the
reader is familiar with the basic theory of $\HOD$, the
class of hereditarily ordinal definable sets (consult
\cite{Jech:SetTheory3rdEdition} for a review).

\begin{definition}\rm
The {\df generic \HOD} of $V$, denoted $\gHOD$, is the
intersection of all the {\HOD}s of all set-forcing
extensions of $V$. That is, $x\in\gHOD$ if and only if
$x\in\HOD^{V[G]}$ for all
set-forcing extensions $V[G]$. So%
$$\gHOD=\{x\st\forall\P\ \one\forces_\P\check{x}\in\HOD\}.$$%
\label{Definition.gHOD}
\end{definition}

The generic \HOD\ has been referred to as the limit \HOD,
or $\lim_\omega\HOD$ when it was introduced in
\cite[p.~298]{Fuchs2008:ClosedMaximalityPrinciples} as a
special case of $\lim_\kappa\HOD$. The following
characterization was used there.

\begin{lemma}
$\gHOD=\{x\st\forall\alpha\,\forces_{\Coll(\omega,\alpha)}\check{x}\in\HOD\}=\bigcap_{\alpha<\Ord}\HOD^{V^{\Coll(\omega,\alpha)}}$.
\label{Lemma.gHODIsIntersectionOfHODsOfCollapses}
\end{lemma}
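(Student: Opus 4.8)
The plan is to argue in close parallel with the proof of observation \ref{Observation.gMequalsIntersectionOfMantlesOfCollapses}, replacing the mantle-specific reasoning there by the homogeneity of the collapse forcing. First I would observe that the two classes on the right-hand side are literally the same: by the homogeneity argument recalled in the proof of theorem \ref{Theorem.IfConstructibleFromASetThenSetDirectedGrounds}, for a fixed ordinal $\alpha$ the assertion $\one\forces_{\Coll(\omega,\alpha)}\check x\in\HOD$ does not depend on the choice of generic filter and defines precisely the class denoted $\HOD^{\V^{\Coll(\omega,\alpha)}}$, so that $\set{x\st\forall\alpha\ \forces_{\Coll(\omega,\alpha)}\check x\in\HOD}=\bigcap_{\alpha<\infty}\HOD^{\V^{\Coll(\omega,\alpha)}}$ simply by the meaning of the notation. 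The inclusion of $\gHOD$ into this intersection is then immediate from definition \ref{Definition.gHOD}, since the collapse posets form a subclass of all forcing notions.

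For the reverse inclusion, I would take $x\in\bigcap_{\alpha<\infty}\HOD^{\V^{\Coll(\omega,\alpha)}}$ and an arbitrary poset $\P$, and show $\one\forces_\P\check x\in\HOD$. So fix a $V$-generic $G\of\P$, and let $\alpha$ be the cardinality of $\P$. As in observation \ref{Observation.gMequalsIntersectionOfMantlesOfCollapses}, $\P\times\Coll(\omega,\alpha)$ is forcing equivalent to $\Coll(\omega,\alpha)$. Choosing $H$ to be $\Coll(\omega,\alpha)$-generic over $V[G]$, the product $G\times H$ is then $V$-generic for $\P\times\Coll(\omega,\alpha)$, so $V[G][H]=V[G\times H]$ is a $\Coll(\omega,\alpha)$-generic extension of $V$; hence by the hypothesis on $x$ we get $x\in\HOD^{V[G][H]}$.

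The one step requiring a little care is the descent from $\HOD^{V[G][H]}$ back to $\HOD^{V[G]}$. Here I would invoke that $\Coll(\omega,\alpha)$ is almost homogeneous and ordinal-definable, and that both properties persist in $V[G]$---almost homogeneity is a first-order absolute property of the poset, and $\Coll(\omega,\alpha)$ is definable from the ordinal $\alpha$. Thus the homogeneity argument recalled in the proof of theorem \ref{Theorem.IfConstructibleFromASetThenSetDirectedGrounds}, now applied over $V[G]$ with the forcing $\Coll(\omega,\alpha)$, yields $\HOD^{V[G][H]}\of\HOD^{V[G]}$, and therefore $x\in\HOD^{V[G]}$. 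Since $G$ was an arbitrary $V$-generic filter on $\P$, this gives $\one\forces_\P\check x\in\HOD$; and since $\P$ was arbitrary, $x\in\gHOD$, as desired.

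I expect the only genuinely load-bearing ingredients to be the absorption equivalence $\P\times\Coll(\omega,\alpha)\equiv\Coll(\omega,\alpha)$ (cited already in observation \ref{Observation.gMequalsIntersectionOfMantlesOfCollapses}) and the verification that the collapse remains almost homogeneous and ordinal-definable after forcing with $\P$; both are standard, so no real obstacle is anticipated. In particular, the argument uses only homogeneity of the collapse and not any directedness or structural hypothesis on the collection of grounds.
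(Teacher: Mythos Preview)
Your proof is correct and follows essentially the same route as the paper's: both use the absorption $\P\times\Coll(\omega,\alpha)\equiv\Coll(\omega,\alpha)$ to place $x$ in $\HOD^{V[G][H]}$, and then invoke the almost homogeneity and ordinal-definability of $\Coll(\omega,\alpha)$ (over $V[G]$) to descend to $\HOD^{V[G]}$. The only differences are cosmetic---you spell out the persistence of homogeneity in $V[G]$ and the reason the two right-hand displays coincide, whereas the paper treats the latter as notational and leaves the former implicit.
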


\begin{proof}  Only the first identity needs a proof. The
inclusion from left to right is trivial. For the opposite
direction, let $x$ be in $\HOD^{V^{\Coll(\omega,\alpha)}}$,
for every $\alpha$. Let $\P$ be an arbitrary poset, and let
$\alpha$ be at least the cardinality of $\P$. Then
$\P\times\Coll(\omega,\alpha)$ embeds densely into
$\Coll(\omega,\alpha)$. Let $G\times H$ be
$\P\times\Coll(\omega,\alpha)$-generic over $V$, and let
$H'$ be $\Coll(\omega,\alpha)$-generic over $V$ such that
$V[G][H]=V[H']$. Then it follows from the homogeneity of
$\Coll(\omega,\alpha)$, and from the fact that
$\Coll(\omega,\alpha)$ is ordinal definable, that
$$x\in\HOD^{V[H']}=\HOD^{V[G][H]}\of\HOD^{V[G]}.$$
So $\P$ forces that $x\in\HOD$, and as $\P$ was arbitrary,
it follows that $x\in\gHOD$, as desired.
\end{proof}

The proof of the previous lemma also shows that if $\alpha\le\beta$, then
$\HOD^{V^{\Coll(\omega,\beta)}}\of\HOD^{V^{\Coll(\omega,\alpha)}}$, as we had noted in theorem \ref{Theorem.IfConstructibleFromASetThenSetDirectedGrounds}.

\begin{theorem}[\cite{Fuchs2008:ClosedMaximalityPrinciples}] In any model of set
theory, $\gHOD$ is a parameter-free uniformly first-order
definable class, containing all ordinals, invariant by set
forcing, and a model of
\ZFC.\label{Theorem.gHODisInvariant}
\end{theorem}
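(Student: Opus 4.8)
The plan is to establish each of the four claimed properties of $\gHOD$ in turn—parameter-free uniform first-order definability, containing all ordinals, invariance by set forcing, and satisfaction of $\ZFC$—proceeding more or less in parallel with the proof of theorem \ref{Theorem.LimitMantleDefinableInvariant} for the generic mantle, since $\gHOD$ bears the same relationship to $\HOD$ that $\gMantle$ bears to $\Mantle$. The definability and ordinal-containment claims are essentially immediate: by lemma \ref{Lemma.gHODIsIntersectionOfHODsOfCollapses} we have $\gHOD=\bigcap_{\alpha<\infty}\HOD^{V^{\Coll(\omega,\alpha)}}$, and the relation ``$x\in\HOD^{V^{\Coll(\omega,\alpha)}}$'', unwound via ``$\one\forces_{\Coll(\omega,\alpha)}\check x\in\HOD$'', is first-order expressible using the definability of the forcing relation together with the definability of $\HOD$ in the extension; intersecting over all $\alpha$ keeps this parameter-free and uniform. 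Since each $\HOD^{V^{\Coll(\omega,\alpha)}}$ is transitive and contains all ordinals (indeed $\ORD\subseteq\HOD$ in any model), so does the intersection.

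\emph{Next I would} prove invariance under set forcing, which is the conceptual heart of the argument and mirrors the corresponding step for $\gMantle$. Suppose $V[G]$ is a forcing extension of $V$ via $\P$. For the inclusion $\gHOD^V\subseteq\gHOD^{V[G]}$: every set-forcing extension $V[G][H]$ of $V[G]$ is also a set-forcing extension of $V$ (by $\P*\dot\Q$), so $\HOD^{V[G][H]}$ is among the models intersected to form $\gHOD^V$; hence $\gHOD^V\subseteq\HOD^{V[G][H]}$ for all such $H$, giving $\gHOD^V\subseteq\gHOD^{V[G]}$. For the reverse inclusion, suppose $x\notin\gHOD^{V[G]}$, so there is some $V[G]$-generic $H\subseteq\Q\in V[G]$ with $x\notin\HOD^{V[G][H]}$; one must massage this into a set-forcing extension of $V$ omitting $x$ from its $\HOD$. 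The clean way is to use lemma \ref{Lemma.gHODIsIntersectionOfHODsOfCollapses}: if $x\notin\gHOD^{V[G]}$ then $x\notin\HOD^{V[G]^{\Coll(\omega,\beta)}}$ for some $\beta$; choosing $\beta$ larger than $|\P|$ and absorbing $\P$ into the collapse via $\P\times\Coll(\omega,\beta)\equiv\Coll(\omega,\beta)$ (Jech, Lemma 26.7), we get $V[G]^{\Coll(\omega,\beta)}=V^{\Coll(\omega,\beta)}$ for appropriate generics, so $x\notin\HOD^{V^{\Coll(\omega,\beta)}}$, whence $x\notin\gHOD^V$.

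\emph{Finally I would} verify that $\gHOD\satisfies\ZFC$. For the $\ZF$ axioms, the strategy is the one used for $\gMantle$: $\gHOD$ is transitive, contains all ordinals, and is closed under the G\"odel operations (being an intersection of models each of which is closed under them, in a suitable extension), so by fact \ref{Fact.IM} it suffices to show $\gHOD$ is almost universal, for which it is enough to show $V_\alpha\cap\gHOD\in\gHOD$ for every ordinal $\alpha$. Here I would run the same ``reflect through a collapse'' argument: for each ground-like model $\HOD^{V^{\Coll(\omega,\beta)}}$ one checks $V_\alpha\cap\gHOD=V_\alpha^{N}\cap\gHOD^{N}$ using forcing-invariance of $\gHOD$, where $N=V^{\Coll(\omega,\beta)}$, and then separation inside $N$ (which satisfies $\ZFC$) puts this set into $N$; since the $\HOD$s of the collapses are downward directed by remark \ref{Remark.DecreasingHODs}, and $V_\alpha\cap\gHOD$ is eventually stable across them, one concludes $V_\alpha\cap\gHOD\in\bigcap_\beta\HOD^{V^{\Coll(\omega,\beta)}}=\gHOD$. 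The key advantage of $\gHOD$ over $\gMantle$ is the \textbf{axiom of choice}, and this is where I expect the one genuine subtlety: unlike the generic grounds, the models $\HOD^{V^{\Coll(\omega,\beta)}}$ form a \emph{definable descending $\ORD$-sequence} (by remark \ref{Remark.DecreasingHODs}), so corollary \ref{Corollary.IntersectionGivesZF} applies directly—its second clause says precisely that the intersection of a definable descending sequence of $\ZFC$ inner models is again a $\ZFC$ inner model. So I would simply invoke corollary \ref{Corollary.IntersectionGivesZF} with $W_\beta=\HOD^{V^{\Coll(\omega,\beta)}}$ to obtain both that $\gHOD$ is an inner model and that $\gHOD\satisfies\ZFC$ in one stroke, with forcing-invariance (proved above) being used only to identify $\gHOD^V$ computed in various extensions. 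The main obstacle is thus not choice at all but rather the bookkeeping in the forcing-invariance argument—specifically being careful that the collapse-absorption genericities line up so that $\HOD$ of the combined extension really is computed over the right ground—and this is exactly the kind of homogeneity-and-absorption reasoning already carried out in the proof of lemma \ref{Lemma.gHODIsIntersectionOfHODsOfCollapses}.
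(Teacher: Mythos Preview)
Your proposal is correct and lands on the same key point as the paper: since the models $\HOD^{V^{\Coll(\omega,\alpha)}}$ form a definable descending $\ORD$-sequence of \ZFC\ inner models, corollary \ref{Corollary.IntersectionGivesZF} gives in one stroke that $\gHOD$ is an inner model satisfying \ZFC. The paper invokes this corollary immediately and does not bother with the separate $V_\alpha\cap\gHOD\in\gHOD$ verification you sketch; your detour through that argument is unnecessary (though not wrong), and you rightly recognize this at the end.

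For forcing invariance, the paper's argument is slightly more streamlined than your two-inclusion version: rather than proving $\gHOD^V\subseteq\gHOD^{V[G]}$ via ``every extension of $V[G]$ is an extension of $V$'' and the reverse via collapse absorption, the paper simply observes that for $\alpha\geq|\P|$ the collapse absorption gives $(\HOD^{V^{\Coll(\omega,\alpha)}})^{V[G]}=\HOD^{V^{\Coll(\omega,\alpha)}}$ outright, so by remark \ref{Remark.DecreasingHODs} the tails of the two intersections coincide and hence so do the intersections themselves. Both arguments work; yours is a bit more hands-on, the paper's a bit more symmetric.
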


\begin{proof}
The standard treatments of $\HOD$ show that the relation
$x\in\HOD$ is parameter-free uniformly first-order
definable in any model of set theory. So definition
\ref{Definition.gHOD} is a definition of $\gHOD$ which has
the desired form. It is easy to see that $\gHOD$ is
transitive and that it contains all ordinals.

To see that it is invariant by forcing, consider any
set-forcing extension $V\of V[G]$ by a poset $\P$. If
$\alpha$ is at least the cardinality of $\P$ and $H$ is
$\Coll(\omega,\alpha)$-generic over $V[G]$, then there is
an $H'$ which is $\Coll(\omega,\alpha)$-generic over $V$
such that $V[H']=V[G][H]$. So
$(\HOD^{V^{\Coll(\omega,\alpha)}})^{V[G]}=\HOD^{V[G][H]}=\HOD^{V[H']}=\HOD^{V^{\Coll(\omega,\alpha)}}$.
So by the remark after lemma~\ref{Lemma.gHODIsIntersectionOfHODsOfCollapses}, it follows that
$$\gHOD^{V[G]}=\bigl(\!\bigcap_{|\P|\le\beta}\HOD^{V^{\Coll(\omega,\beta)}}\bigr)^{V[G]}
=\!\!\bigcap_{|\P|\le\beta}\!\!\HOD^{V^{\Coll(\omega,\beta)}}=\gHOD.$$

It follows immediately from corollary
\ref{Corollary.IntersectionGivesZF} that
$\gHOD\models\ZFC$, as it is the intersection of the
definable decreasing sequence
$\<\HOD^{V^{\Coll(\omega,\alpha)}}\st\alpha<\Ord>$ of
\ZFC-models. \end{proof}

\begin{corollary} The generic \HOD\ is constant across the
generic multiverse and is contained in the generic mantle.
The classes exhibit the following inclusions:
$$\begin{diagram}[height=2em]
\HOD & \\
\text{\rotatebox{90}{$\subseteq$}} \\   
\gHOD & \of & \gMantle & \of & M \\
\end{diagram}$$\label{Corollary:RelationshipBetweenHOD,gHOD,gMandM}
\end{corollary}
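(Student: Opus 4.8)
The plan is to assemble the corollary entirely from results already in hand, with no fresh forcing argument required. The key inputs are Theorem~\ref{Theorem.gHODisInvariant}, giving that $\gHOD$ is a parameter-free definable class invariant by set forcing; Corollary~\ref{Corollary.gMantleIsLargestForcingInvariantClass}, identifying $\gMantle$ as the largest forcing-invariant definable class; and the elementary observation made just after the definition of the generic mantle, that $\gMantle\of\Mantle$ because every ground of $V$ is in particular a generic ground of $V$.

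First I would establish that $\gHOD$ is constant across the generic multiverse. Passage from a model to a set-forcing extension preserves $\gHOD$ directly by Theorem~\ref{Theorem.gHODisInvariant}, and passage from a model $U$ to a ground $W$ of $U$ is handled by the same invariance read in the other direction: since $U$ is a set-forcing extension of $W$, we have $\gHOD^{W}=\gHOD^{U}$. As the generic multiverse is generated from any one of its members by repeatedly passing to forcing extensions and grounds, and each such step preserves $\gHOD$, it follows that $\gHOD$ takes the same value throughout; equivalently, $\gHOD$ is contained in the intersection of the generic multiverse, in exact parallel with Corollary~\ref{Corollary.LimitMantleIsIntersectionMultiverse} for $\gMantle$.

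For the three inclusions in the diagram: $\gHOD\of\HOD$ is immediate, since $V$ is a trivial set-forcing extension of itself, so $\HOD=\HOD^{V}$ is one of the classes whose intersection defines $\gHOD$. The inclusion $\gHOD\of\gMantle$ is then a one-line consequence of Corollary~\ref{Corollary.gMantleIsLargestForcingInvariantClass}: by Theorem~\ref{Theorem.gHODisInvariant}, $\gHOD$ is a definable class invariant under forcing over any model of set theory, and $\gMantle$ is the largest such class. (Alternatively one can argue directly: for any generic ground $W$ of $V$, say $V[G]=W[H]$, the equalities $\gHOD^{V}=\gHOD^{V[G]}=\gHOD^{W[H]}=\gHOD^{W}\of\HOD^{W}\of W$ show $\gHOD^{V}\of W$; since $\gMantle$ is the intersection of all generic grounds, $\gHOD\of\gMantle$.) Finally $\gMantle\of\Mantle$, since the intersection defining $\gMantle$ ranges over a superclass of the grounds of $V$ used to define $\Mantle$.

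I do not expect any genuine obstacle here; the whole content has been front-loaded into Theorem~\ref{Theorem.gHODisInvariant} and Corollary~\ref{Corollary.gMantleIsLargestForcingInvariantClass}. The only point meriting a word of care is the step in the second paragraph upgrading \emph{invariant by set forcing} to \emph{constant across the generic multiverse}: this requires noting that passage to a ground is precisely the inverse of a set-forcing extension and so is already covered by the same invariance.
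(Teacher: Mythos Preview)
Your proposal is correct and essentially matches the paper's proof. The only cosmetic difference is that for $\gHOD\of\gMantle$ the paper invokes Corollary~\ref{Corollary.LimitMantleIsIntersectionMultiverse} (that $\gMantle$ equals the intersection of the generic multiverse) rather than Corollary~\ref{Corollary.gMantleIsLargestForcingInvariantClass}, but these are two faces of the same fact and your alternative direct argument via generic grounds is exactly the paper's reasoning spelled out.
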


\begin{proof}  Because theorem~\ref{Theorem.gHODisInvariant} shows
that $\gHOD$ is invariant by forcing, every model in the
multiverse has the same $\gHOD$. Thus, $\gHOD$ is contained
within the intersection of the multiverse, which is the
generic mantle, and so $\gHOD\of\gMantle$. The other
inclusions $\gHOD\of\HOD$ and $\gMantle\of \Mantle$ are
immediate.\end{proof}

These basic inclusion relations will be separated by the
theorems of the next section, except that the exact nature
of the generic mantle remains somewhat unsettled, since we
have been unable to separate it either from the generic
\HOD\ or from the mantle, although we do separate these
latter two. Before moving on to those results, however, let
us first prove the following theorem, which explains in
part why proper class forcing will loom so large in our
subsequent arguments.

\begin{theorem} If the universe is constructible from a set, $V=L[a]$, then%
$$ \gHOD=\gMantle=\Mantle.$$
\end{theorem}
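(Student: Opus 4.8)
The plan is to establish the chain $\gHOD = \gMantle = \Mantle$ by sandwiching. By corollary \ref{Corollary:RelationshipBetweenHOD,gHOD,gMandM} we already have $\gHOD \of \gMantle \of \Mantle$ in any model of \ZFC, so it suffices to prove the single reverse inclusion $\Mantle \of \gHOD$ under the hypothesis $V = L[a]$. The main tool is the observation, already recorded in the proof of theorem \ref{Theorem.IfConstructibleFromASetThenSetDirectedGrounds}, that when $V = L[a]$ the models $H^\alpha = \HOD^{V^{\Coll(\omega,\alpha)}}$ are not merely inner models of \ZFC\ but are in fact \emph{grounds} of $V$, dense below all the grounds of $V$; this was the Vop\v{e}nka-theorem argument, using that $a$ and the collapse generic are themselves generic over $\HOD^{V[G]}$, trapping $V$ between $\HOD^{V[G]}$ and its forcing extension.

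First I would recall that $\Mantle = \Intersect_r W_r$, the intersection of all grounds of $V$ (corollary \ref{Corollary.MantleIsDefinable}). By the density statement in theorem \ref{Theorem.IfConstructibleFromASetThenSetDirectedGrounds}, every ground $W$ of $V$ contains $H^\alpha$ for all sufficiently large $\alpha$; since conversely each $H^\alpha$ is itself a ground of $V$, the family $\{H^\alpha \st \alpha \in \ORD\}$ is cofinal (downward, under inclusion) in the grounds of $V$. Therefore
$$\Mantle = \Intersect_r W_r = \Intersect_\alpha H^\alpha.$$
Now the right-hand side is, by definition of $H^\alpha$, exactly $\bigcap_{\alpha < \infty} \HOD^{V^{\Coll(\omega,\alpha)}}$, which by lemma \ref{Lemma.gHODIsIntersectionOfHODsOfCollapses} is precisely $\gHOD$. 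Hence $\Mantle = \gHOD$, and squeezing through the inclusions of corollary \ref{Corollary:RelationshipBetweenHOD,gHOD,gMandM} forces $\gMantle$ to equal both as well, completing the proof.

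The step I expect to be the crux is verifying that $\Mantle = \Intersect_\alpha H^\alpha$ cleanly, that is, that the $H^\alpha$ really are downward cofinal among \emph{all} grounds of $V$. The containment $\Intersect_\alpha H^\alpha \supseteq \Mantle$ is automatic since each $H^\alpha$ is a ground; the other containment needs that for any $x \notin \Mantle$, witnessed by $x \notin W$ for some ground $W$, we can find $\alpha$ with $H^\alpha \of W$, so $x \notin H^\alpha$ and thus $x \notin \Intersect_\alpha H^\alpha$. This is exactly the density clause of theorem \ref{Theorem.IfConstructibleFromASetThenSetDirectedGrounds}, so the argument reduces to a careful citation rather than new work; the only care needed is to note that $\Intersect_\alpha H^\alpha$ is genuinely a class (which follows from remark \ref{Remark.DecreasingHODs} and corollary \ref{Corollary.IntersectionGivesZF}, or simply from its identification with the definable class $\gHOD$). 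I would present the whole argument in a couple of lines, leaning on the earlier theorems, since essentially all the substantive content — that $H^\alpha$ is a ground, and that these are dense — has already been done.
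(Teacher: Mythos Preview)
Your proposal is correct and follows essentially the same approach as the paper: use corollary \ref{Corollary:RelationshipBetweenHOD,gHOD,gMandM} for the inclusions $\gHOD\of\gMantle\of\Mantle$, then close the loop via $\Mantle\of\gHOD$ by combining the density of the $H^\alpha$ among the grounds (theorem \ref{Theorem.IfConstructibleFromASetThenSetDirectedGrounds}) with the identification $\gHOD=\bigcap_\alpha H^\alpha$ (lemma \ref{Lemma.gHODIsIntersectionOfHODsOfCollapses}). Your write-up is slightly more explicit than the paper's about why density yields $\Mantle=\bigcap_\alpha H^\alpha$, but the argument is the same.
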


\begin{proof}  We know already that $\gHOD\of\gMantle\of\Mantle$,
by corollary
\ref{Corollary:RelationshipBetweenHOD,gHOD,gMandM}, so the
only thing left to prove is that $\Mantle\of\gHOD$. But
this follows immediately from theorem
\ref{Theorem.IfConstructibleFromASetThenSetDirectedGrounds},
which states among other things that the inner models of
the form $\HOD^{\V^{\Coll(\omega,\alpha)}}$ are dense in
the grounds, together with lemma
\ref{Lemma.gHODIsIntersectionOfHODsOfCollapses}, which says
that the generic \HOD\ is the intersection of these models.
\end{proof}

\section{Controlling the mantle and the generic mantle}

We now prove our main theorems, which control the mantle
and generic mantle of the target models, and also the \HOD\
and generic \HOD.

\begin{theorem} Every model $V$ of \ZFC\ has a class forcing
extension $V[G]$ in which $V$ is the mantle, the generic
mantle, the generic \HOD\ and the \HOD.
\label{Theorem.V=M=gM=gHOD=HOD}%
$$V=\Mantle^{V[G]}=\gMantle^{V[G]}=\gHOD^{V[G]}=\HOD^{V[G]}$$
\end{theorem}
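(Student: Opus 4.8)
The plan is to produce a tame, homogeneous, progressively closed class forcing $\P$ over $V$ that codes a global well-order of $V$ into the \GCH\ pattern of the extension in a robust way. First, if $V$ has no definable global well-order, force one using the standard ${<}\kappa$-closed-for-every-$\kappa$ forcing to add a global well-order; this adds no new sets, so it leaves the grounds, the mantle and \HOD\ unchanged, and we may henceforth assume a global well-order $\Phi$ of $V$ is available (alternatively one works in \GBC). Now let $\P$ first force \GCH\ everywhere, and then, along a sufficiently sparse class $\<\lambda_\gamma\st\gamma\in\ORD>$ of cardinals, force with $\Add(\lambda_\gamma,\lambda_\gamma^{\plus\plus})$ at coordinate $\lambda_\gamma$ when the relevant bit of (a fixed coding of) $\Phi$ is $1$, and trivially otherwise, with Easton support, arranging the bookkeeping so that every set of $V$ is coded into the continuum function on a tail of the $\lambda_\gamma$'s. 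Each nontrivial factor $\Add(\lambda_\gamma,\lambda_\gamma^{\plus\plus})$ is homogeneous and ${<}\lambda_\gamma$-closed, so $\P$ is homogeneous, pretame (hence preserves \ZFC), and its tail $\P^\gamma$ past coordinate $\lambda_\gamma$ is ${<}\lambda_\gamma$-closed.

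Writing $V[G]$ for the extension, I would first check $V\of\HOD^{V[G]}$: the continuum function of $V[G]$ is determined by $\P$ and so is ordinal-definable in $V[G]$, and it codes $\Phi$, which well-orders $V$; hence every element of $V$ is ordinal-definable in $V[G]$. Because the coding is unbounded, a tail of it survives any further set forcing, so $V\of\HOD^{V[G][h]}$ for every set-generic $h$, giving $V\of\gHOD^{V[G]}$; and likewise every ground $W$ of $V[G]$ can read $\Phi$ off the coding on a high enough tail of the $\lambda_\gamma$'s, since set forcing over $W$ disturbs only a bounded part, so $V\of W$, whence $V\of\gMantle^{V[G]}\of\Mantle^{V[G]}$. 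For the reverse inclusion $\HOD^{V[G]}\of V$, use that $\P$ is homogeneous with $\Phi$ as its only class parameter: for a set of ordinals $x$, the assertion ``$y\in x$'' being ordinal-definable in $V[G]$ translates, via homogeneity and the definability of the forcing relation of $\P$, into something ordinal-definable over $V$ from $\Phi$ alone; thus $x$ is ordinal-definable in $V$ from $\Phi$, and since $\Phi$ well-orders the universe this forces $x\in V$. Hence $\HOD^{V[G]}=V$.

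For the mantle I would exploit the progressively closed structure. For each ordinal $\gamma$ the factorization $\P\cong\P_\gamma\times\P^\gamma$, with $\P_\gamma$ a set forcing, shows that $V[G^\gamma]$ is a ground of $V[G]$, where $G^\gamma\of\P^\gamma$ is the tail generic. Since $\P^\gamma$ is ${<}\lambda_\gamma$-closed and the $\lambda_\gamma$ are sparse, $V[G^\gamma]$ adds no new sets of rank below (roughly) $\lambda_\gamma$, and therefore $\bigcap_\gamma V[G^\gamma]=V$. As each $V[G^\gamma]$ is a ground, $\Mantle^{V[G]}\of\bigcap_\gamma V[G^\gamma]=V$, which together with the inclusion already obtained gives $\Mantle^{V[G]}=V$; then, invoking the inclusion diagram of corollary \ref{Corollary:RelationshipBetweenHOD,gHOD,gMandM}, we get $V\of\gHOD^{V[G]}\of\gMantle^{V[G]}\of\Mantle^{V[G]}=V$ and $V\of\gHOD^{V[G]}\of\HOD^{V[G]}=V$, so all four classes collapse to $V$, as desired.

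The main obstacle is the simultaneous compatibility of the three roles demanded of $\P$: it must be homogeneous (so that, together with $\Phi$ being its only class parameter, \HOD\ does not overshoot $V$), it must fix the \GCH\ pattern precisely enough to encode $\Phi$ (so that \HOD\ is not \emph{smaller} than $V$), and it must be a progressively closed pretame iteration (so that $V[G]\satisfies\ZFC$ and the tails $V[G^\gamma]$ witness $\Mantle^{V[G]}\of V$). The apparent tension — a homogeneous forcing that nevertheless produces a rigidly determined \GCH\ pattern — is resolved by the observation that each coding block is \emph{itself} homogeneous, so the determined pattern is a feature of $\P$ rather than of $G$; the Easton-support bookkeeping (closure at each stage, cardinal preservation, preservation of the coded pattern under later forcing, tameness) is the routine-but-delicate remainder.
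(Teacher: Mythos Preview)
Your approach differs from the paper's in an interesting way: you code a fixed class $\Phi$ deterministically into the continuum function, so that the forcing $\P$ itself (not the generic) determines the \GCH\ pattern, and $\P$ is genuinely almost homogeneous. The paper instead uses a lottery sum at each coordinate, letting the generic choose whether \GCH\ holds or fails there; this makes the forcing only \emph{densely} almost homogeneous (one must first drop to a full-support condition that has settled every lottery below the given stage), but it avoids any need for a global well-order or a preliminary \GCH\ step. The paper also uses set (i.e.\ full) support rather than Easton support, precisely so that one can extend to a condition deciding all lotteries below a given stage; your deterministic approach does not need this, since there are no generic choices to leak into \HOD.

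There is, however, a genuine gap. You write that $\P$ should ``first force \GCH\ everywhere, and then'' perform the coding. Read as a sequential two-step class iteration, this breaks your mantle argument: the factorization $\P\cong\P_\gamma\times\P^\gamma$ with $\P_\gamma$ a \emph{set} forcing is no longer available, because every tail past $\lambda_\gamma$ still sits above the entire class-sized \GCH\ step. The tail grounds you actually produce are then $V'[G^\gamma]$, where $V'$ is the \GCH\ extension of $V$, and their intersection is $V'$, not $V$; you obtain only $\Mantle^{V[G]}\of V'$, which overshoots. To repair this you must either interleave the \GCH-forcing with the coding in a single progressively closed product indexed by cardinals (so that the tail past $\lambda_\gamma$ is genuinely ${<}\lambda_\gamma$-closed over $V$ and the initial segment is a set), or---as the paper does---dispense with the preliminary \GCH\ step altogether by designing each coordinate to force \GCH\ or its failure at $\delta_\alpha$ regardless of the ambient continuum function, via the lottery $\Add(\delta_\alpha^\plus,1)\oplus\Add(\delta_\alpha,(2^{\ltdelta_\alpha})^{\plus\plus})$ at carefully spaced $\delta_\alpha$.
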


\begin{proof}  Our strategy will be to perform class forcing $V\of
V[G]$ in such a way that the various forces acting on the
mantles and {\HOD}s in $V[G]$ are perfectly balanced, in
each case giving $V$ as the result. Pushing upward,
expanding these classes up to $V$, we will force in such a
way that every set in $V$ is coded explicitly into the
continuum function of $V[G]$, thereby ensuring that each
such set is in the mantle, the generic mantle, the \HOD\
and the generic \HOD. Pressing downward, holding these
classes down to $V$, we will maintain certain factor and
homogeneity properties on the forcing that ensure that no
additional sets are added to the mantles and {\HOD}s.

For each ordinal $\alpha$, let $\delta_\alpha$ be the
$\alpha^{\rm th}$ cardinal of the form $\lambda^\plus$, where
$\lambda$ is a strong limit cardinal, but not a limit of
strong limit cardinals. More precisely,
$\delta_\alpha=\beth_{\omega\cdot(\alpha+1)}^\plus$, the
successor of $\lambda=\beth_{\omega\cdot(\alpha+1)}$. The
$\delta_\alpha$ will be the cardinals at which we code
information, one bit each time, by forcing either the \GCH\
or its failure at $\delta_\alpha$. Every $\delta_\alpha$ is
a regular uncountable cardinal, and the sequence of
$\delta_\alpha$ is increasing, conveniently spaced apart in
order to avoid interference between the various levels of
coding. We could easily modify the argument to allow the
$\delta_\alpha$ to be spaced more closely together---and if
the \GCH\ holds in $V$, we could actually code at every
successor cardinal---but spacing the cardinals more
distantly as we have seems to produce the most transparent
general argument. Since $\lambda$ is not a limit of strong
limit cardinals, there is a largest strong limit cardinal
$\gamma$ below $\lambda$, namely
$\gamma=\beth_{\omega\cdot\alpha}$, and furthermore,
$\delta_\beta\leq\gamma^\plus$ for all $\beta<\alpha$. Note
also that $\alpha\le\beth_{\omega\cdot\alpha}=\gamma$, and
so $\alpha<\gamma^+<\lambda<\delta_\alpha$. Let $\Q_\alpha$
be the forcing that generically chooses whether to
force the \GCH\ or its negation at $\delta_\alpha$. %
To force the \GCH\ at $\delta_\alpha$, we use the forcing
$\Add(\delta_\alpha^\plus,1)$ to add a Cohen subset to
$\delta_\alpha^\plus$, which is equivalent to the canonical
forcing to collapse $2^{\delta_\alpha}$ to
$\delta_\alpha^\plus$. Forcing the failure of the \GCH\ at
$\delta_\alpha$, on the other hand, requires a little care
in the case that the \GCH\ fails below $\delta_\alpha$ in
$V$, for adding generic subsets to $\delta_\alpha$ may
collapse cardinals above $\delta_\alpha$, and even adding
$(\delta_\alpha^\plusplus)^V$ many subsets to
$\delta_\alpha$ may not in general suffice to ensure that
the \GCH\ fails at $\delta_\alpha$ in the extension. Since
the standard $\Delta$-system argument (see~\cite[Lemma
6.10]{Kunen:Independence}) establishes that
$\Add(\delta_\alpha,\theta)$ is
$(2^{\ltdelta_\alpha})^\plus$-c.c.~for any ordinal
$\theta$, however, it does in general suffice to add
$((2^{\ltdelta_\alpha})^\plusplus)^V$ many subsets to
$\delta_\alpha$. Thus, we take $\Q_\alpha$ as the
side-by-side forcing of these two alternatives, or in the
terminology of~\cite{Hamkins2000:LotteryPreparation}, the
poset $\Q_\alpha$ is the lottery sum $
\Add(\delta_\alpha^\plus,1)\oplus
\Add(\delta_\alpha,(2^{\ltdelta_\alpha})^\plusplus)$.\footnote{More
generally, for any family $\mathcal A$ of forcing notions, the
lottery sum $\oplus{\mathcal A}$ is defined to be
$\set{(\P,p)\st p\in\P\in{\mathcal A}}\union\singleton{\one}$,
ordered with $\one$ above everything and otherwise
$(\P,p)\leq(\Q,q)\leftrightarrow\P=\Q$ and $p\leq_{\P} q$. The
generic filter must in effect choose a single $\P\in{\mathcal
A}$ and force with it. For two posets, we use infix
notation: $\P\oplus\Q=\oplus\singleton{\P,\Q}$.} Conditions
opting for the first poset force the \GCH\ at
$\delta_\alpha$ and those opting for the second force its
failure. Note that $\Q_\alpha$ is $\ltdelta_\alpha$-closed
and has size $(2^{\ltdelta_\alpha})^\plusplus$, which is
strictly less than the next strong limit above
$\delta_\alpha$, which is itself less than
$\delta_{\alpha+1}$.

Let $\P$ be the class forcing product $\P=
\prod_\alpha\Q_\alpha$, with set support. That is,
conditions in $\P$ are set functions $p$, with
$\dom(p)\of\ORD$ and $p(\alpha)\in\Q_\alpha$, ordered by
extension of the domain and strengthening in each
coordinate. (In particular, we do not use Easton support,
which would not work here, because it would create new
unwanted definable subsets of the inaccessible cardinals,
if any.) Suppose that $G\of\P$ is $V$-generic and consider
the model $V[G]$. The class forcing $\P$ factors at every
ordinal $\alpha$ as $\P_\alpha\cross\P^\alpha$, where
$\P_\alpha=\prod_{\alpha'<\alpha}\Q_{\alpha'}$ and
$\P^\alpha=\prod_{\alpha'\geq\alpha}\Q_{\alpha'}$, where
again the products have set support, which for $\P_\alpha$
means full support. We claim that
$|\P_\alpha|<\delta_\alpha$ for every $\alpha$. To see
this, suppose that $\delta_\alpha=\lambda^\plus$, where
$\lambda$ is a strong limit cardinal, but not a limit of
strong limit cardinals. We have argued that
$\delta_\beta\leq\gamma^\plus$ for every $\beta<\alpha$,
where $\gamma$ is the largest strong limit cardinal below
$\lambda$. It follows that
$|\Q_\beta|\leq(2^{\ltgamma^\plus})^\plusplus$, and so
$\P_\alpha$ is the product of $\alpha$ many posets of at
most this size. Since $\alpha\leq\gamma^\plus$, this
implies that $\P_\alpha$ has size at most
$((2^{\ltgamma^\plus})^\plusplus)^{\gamma^\plus}$, and
since $\lambda$ is a strong limit cardinal, this is less
than $\lambda$ and hence less than $\delta_\alpha$, as
desired. Combining this with the fact that the tail forcing
$\P^\alpha$ is $\ltdelta_\alpha$-closed, it follows that
every set in $V[G]$ is added by some large enough initial
factor $\P_\alpha$. And using this, the standard arguments
show that $V[G]$ satisfies \ZFC. (For example, in the
terminology of~\cite{Reitz2006:Dissertation}, this is a
progressively closed product, and these always preserve
\ZFC.)

Let us verify that indeed the various levels of \GCH\
coding in our forcing do not interfere with each other. For
any ordinal $\alpha$, factor $\P$ as
$\P_\alpha\times\Q_\alpha\times\P^{\alpha+1}$. The tail
forcing $\P^{\alpha+1}$ is $\ltdelta_{\alpha+1}$-closed and
therefore does not affect the \GCH\ at $\delta_\alpha$. The
initial factor $\P_\alpha$ has size less than
$\delta_\alpha$, and therefore does not affect the \GCH\ at
$\delta_\alpha$. So the question whether the \GCH\ holds at
$\delta_\alpha$ in $V[G]$ is determined by what $G$ does on
$\Q_\alpha$. In other words, the overall \GCH\ pattern in
$V[G]$ on the cardinals $\delta_\alpha$ is determined in
accordance with the choices that $G$ makes in the
individual lotteries at each coordinate. A similar argument
shows that the forcing $\P$ preserves all strong limit
cardinals and creates no new strong limit cardinals. Thus,
the class $\set{\delta_\alpha\st\alpha\in\ORD}$ remains
definable in $V[G]$.

Let us now make the key observations about $V[G]$. First,
pushing upward, we claim that every set of ordinals in $V$
is coded into the \GCH\ pattern of $V[G]$. Suppose that $x$
is a set of ordinals in $V$ and $p$ is any condition in
$\P$. Choose ordinals $\beta$ and $\xi$ with $x\of\beta$
and $\dom(p)\of\xi$. Since $x$ is a set and $\P$ uses set
support, we may extend $p$ to a stronger condition $q\leq
p$ with $\dom(q)=\dom(p)\union [\xi,\xi+\beta)$, where $q$
opts on the interval $[\xi,\xi+\beta)$ to force the \GCH\
or its negation according to the pattern determined by $x$.
That is, we build $q$ so that for every $\alpha<\beta$, if
$\alpha\in x$, then $q(\xi+\alpha)$ opts to force the \GCH\
at $\delta_{\xi+\alpha}$, and if $\alpha\notin x$, then it
opts for its failure. Since we have argued that there is no
interference between the levels of coding, the condition
$q$ forces that the \GCH\ pattern in $V[G]$ for those
values of $\delta_{\xi+\alpha}$ is exactly the same as the
pattern of $x$ on $\beta$. Thus, it is dense that $x$ is
coded in this way, and so generically every set in $V$ will
be coded into the \GCH\ pattern of $V[G]$, since every set
in $V$ is coded by a set of ordinals in $V$. Since we have
mentioned that the class
$\set{\delta_\alpha\st\alpha\in\ORD}$ is definable in
$V[G]$, we may conclude immediately that every set in $V$
is ordinal definable in $V[G]$. For the generic \HOD, we
consider the set-forcing extensions of $V[G]$. Suppose that
$V[G][h]$ is obtained by further forcing $h\of\Q\in V[G]$.
Since the continuum function of $V[G][h]$ and $V[G]$ agree
above $|\Q|$, it follows that $V[G]$ and $V[G][h]$ have the
same strong limit cardinals and the same \GCH\ patterns
above $|\Q|$. This implies that a tail segment of
$\set{\delta_\alpha\st\alpha\in\ORD}$ remains definable in
$V[G][h]$, and the \GCH\ pattern on this segment is the
same in $V[G][h]$ as in $V[G]$. Since every set of ordinals
$x$ in $V$ was coded into the the \GCH\ pattern of $V[G]$
on the cardinals $\set{\delta_\alpha\st\alpha\in\ORD}$, a
simple padding argument shows that this implies that every
set in $V$ is in fact coded unboundedly often into the
\GCH\ pattern of $V[G]$ on these cardinals. So we conclude
that every set in $V$ remains ordinal definable in the
extension $V[G][h]$. Since the forcing $h$ was arbitrary,
we conclude $V\of\gHOD^{V[G]}$, and consequently also
$V\of\gHOD\of\gMantle\of\Mantle$.

Conversely, we now argue that the mantle $\Mantle $ of
$V[G]$ is contained in $V$. For any ordinal $\alpha$,
factor the forcing at $\alpha$ as
$\P=\P_\alpha\cross\P^\alpha$. The generic filter $G$
similarly factors as $G_\alpha\cross G^\alpha$. Since
$\P_\alpha$ is set forcing in $V$, it follows that the tail
extension $V[G^\alpha]$ is a ground of $V[G]$, and so the
mantle of $V[G]$ is contained within every $V[G^\alpha]$.
Since $\P^\alpha$ is ${<}\delta_\alpha$-closed, it follows
in particular that $V_{\alpha}^{V[G^\alpha]}=V_{\alpha}$,
and so $\bigcap_\alpha V[G^\alpha]=V$. Altogether, we have
established $V\of\gHOD\of\gMantle\of \Mantle\of V$, and so
all these are equal, as we claimed. Finally, let us
consider $\HOD^{V[G]}$. We have argued that
$V\of\HOD^{V[G]}$, and it remains for us to prove the
converse inclusion. For this, in order to control
$\HOD^{V[G]}$, it would be expected to appeal to
homogeneity properties of the forcing $\P$. Unfortunately,
the forcing $\P$ is not weakly homogeneous, because
different conditions can make fundamentally different
choices in the lotteries about how the forcing will
proceed. Nevertheless, we claim that there is sufficient
latent homogeneity in the forcing for an argument to
succeed. In order to show $\HOD^{V[G]}\of V$, it suffices
to show that $\HOD^{V[G]}$ is contained in every tail
extension $V[G^\alpha]$, as the intersection over these is
$V$, as we argued. Consider $V[G]$ as a forcing extension
of the tail extension $V[G^\alpha]$ by the initial forcing
$G_\alpha\of\P_\alpha$. Although $\P_\alpha$ is not weakly
homogenous, it is {\df densely weakly homogeneous}, meaning
that there is a dense set of conditions $q$ such that the
lower cone $\P_\alpha\restrict q$ is weakly homogeneous.
The point is simply that because we have used full support,
rather than Easton support, we may extend any condition in
$\P_\alpha$ to a condition with support $\alpha$.
Furthermore, we may extend to a condition that makes a
definite selection in each of the lotteries before stage
$\alpha$ as to which of the two posets should be used in
that coordinate. Since each of these individual posets is
weakly homogenous, the lower cone $\P_\alpha\restrict q$ is
the full product of weakly homogeneous forcing, and
consequently is itself weakly homogeneous. Thus, by
genericity, there is some $q\in G_\alpha$ such that
$\P_\alpha\restrict q$ is weakly homogenous. It follows
that every ordinal definable set of ordinals added by this
forcing is definable in the ground model $V[G^\alpha]$ from
ordinal parameters and the poset $\P_\alpha\restrict q$,
used as an additional parameter. So we have proved that
$\HOD^{V[G]}$ is included in every tail extension
$V[G^\alpha]$, and thus in $V$. So we have proved all the
desired equalities
$V=\Mantle^{V[G]}=\gMantle^{V[G]}=\gHOD^{V[G]}=\HOD^{V[G]}$.\end{proof}

\begin{theorem} Every model $V$ of \ZFC\ has a class forcing
extension $V[G]$ in which $V$ is the mantle, the generic
mantle and the generic \HOD, but $V[G]$
is the \HOD.\label{Theorem.V=M=gM=gHOD,V[G]=HOD}%
$$V=\Mantle^{V[G]}=\gMantle^{V[G]}=\gHOD^{V[G]},\qquad\text{but}\qquad \HOD^{V[G]}=V[G].$$
\end{theorem}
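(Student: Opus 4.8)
The plan is to refine the class forcing of Theorem~\ref{Theorem.V=M=gM=gHOD=HOD}. Recall that there $\P=\prod_\alpha\Q_\alpha$ (set support), with $\Q_\alpha$ the lottery at the carefully spaced cardinal $\delta_\alpha$ that decides whether to force the \GCH\ or its failure there, does three separable things: it codes every set of $V$ \emph{unboundedly often} into the \GCH\ pattern of $V[G]$ on the $\delta_\alpha$, which pushes all four classes up to contain $V$; it is a genuine product over $V$ with ${<}\delta_\alpha$-closed tails, so that each tail extension $V[G^\alpha]$ is a set-forcing ground of $V[G]$ and $\Intersect_\alpha V[G^\alpha]=V$, which pins $\Mantle^{V[G]}$ down to $V$; and it is densely almost homogeneous, which was the only thing used to pin $\HOD^{V[G]}$ all the way down to $V$. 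For the present theorem I would retain the first two features verbatim but replace the almost homogeneity by enough outright definability to force $\HOD^{V[G]}=V[G]$; the values of $\gMantle$ and $\gHOD$ will then be squeezed to $V$ by the general inclusions $\gHOD\of\gMantle\of\Mantle$.

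To do this I would enlarge each block. At block $\alpha$, after running the lottery $\Q_\alpha$ at $\delta_\alpha$ and obtaining its generic $g_\alpha$, the forcing continues --- using only highly closed set forcing, supported on an interval of cardinals strictly between $\delta_\alpha$ and $\delta_{\alpha+1}$ --- to code $g_\alpha$ into the \GCH\ pattern on that interval, by means of a forcing $\R_\alpha$ that is \emph{self-encoding}, in the sense of the classical arguments that every model of \ZFC\ has a class-forcing extension satisfying $V=\HOD$: its own generic is again ordinal-definable in the extension from the resulting continuum pattern. The essential constraint is that block $\alpha$ codes \emph{only block $\alpha$'s own data}, so that $\P=\prod_\alpha(\Q_\alpha*\dot\R_\alpha)$, with set support, remains a genuine product over $V$ and each factor is ${<}\delta_\alpha$-closed; with the $\delta_\alpha$ and the coding intervals spaced generously, $\P$ is again a progressively closed product (hence preserves \ZFC), the coding within one block alters the \GCH\ at no $\delta_\beta$, and $\set{\delta_\alpha\st\alpha\in\ORD}$ remains definable in $V[G]$ and in all its set-forcing extensions.

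Granting such a $\P$, the verifications would run as follows. First, $\HOD^{V[G]}=V[G]$: every set of ordinals of $V$ is coded into the (ordinal-definable) $\delta$-pattern; each lottery choice is read off the \GCH\ at $\delta_\alpha$; each $g_\alpha$ is read off the \GCH\ pattern on block $\alpha$'s interval; and each $\R_\alpha$-generic is ordinal-definable by the self-encoding design --- so $G$ is definable in $V[G]$ and therefore every set of $V[G]$ is ordinal-definable. Second, $\Mantle^{V[G]}=V$: exactly as in the previous theorem, since $\P$ factors as an honest product $\P_\alpha\cross\P^\alpha$ over $V$ with $\P_\alpha$ a set forcing --- which is precisely what modularity of the blocks buys us --- the tail extension $V[G^\alpha]$ is a set-forcing ground of $V[G]$, and $\Intersect_\alpha V[G^\alpha]=V$ because $\P^\alpha$ is ${<}\delta_\alpha$-closed. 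Third, $V\of\gHOD^{V[G]}$: again as before, the unbounded coding of each set of $V$ into the $\delta$-pattern, together with the definability of the $\delta_\alpha$, survives arbitrary further set forcing. Combining these with $\gHOD\of\gMantle\of\Mantle$ (corollary~\ref{Corollary:RelationshipBetweenHOD,gHOD,gMandM}) gives $V\of\gHOD^{V[G]}\of\gMantle^{V[G]}\of\Mantle^{V[G]}=V$, so all three equal $V$, while $\HOD^{V[G]}=V[G]$.

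The crux --- and the only genuinely new ingredient beyond the previous theorem --- is the block coding: producing a set-sized, highly closed, self-encoding forcing $\R_\alpha$ whose image captures \emph{all} of block $\alpha$'s generic data, not only the lottery generic $g_\alpha$ but also the auxiliary generics that the coding forcing itself introduces, so that the full generic of $\P$ becomes ordinal-definable. Modularity of this coding is not a convenience but a necessity: the naive self-coding forcing, which at each stage records the entire generic built so far, is not a product over $V$, so its tails need not be grounds of $V[G]$; worse, every piece of $G$ would then be recoded cofinally and hence fall into $\gHOD^{V[G]}$, making $\gHOD^{V[G]}=\Mantle^{V[G]}=V[G]$ and collapsing the theorem. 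A secondary, routine matter is to re-run the progressive-closure, cardinality, and strong-limit-preservation bookkeeping of Theorem~\ref{Theorem.V=M=gM=gHOD=HOD} for the enlarged blocks, choosing the spacing of the $\delta_\alpha$ and the widths of the coding intervals so that nothing interferes with the $\delta$-pattern or with the definability of $\set{\delta_\alpha\st\alpha\in\ORD}$.
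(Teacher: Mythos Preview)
Your proposal is correct and matches the paper's approach: modular self-encoding blocks assembled as a set-support product over $V$, so that the tail extensions are grounds intersecting down to $V$, while each block's entire generic is recoverable from the \GCH\ pattern it creates on its own interval. The paper streamlines slightly by dispensing with the lottery $\Q_\alpha$ altogether---the first stage of each self-encoding block simply adds a Cohen subset of $\kappa_\alpha$, which by density already codes every $V$-set unboundedly often (doing the work you assign to the lottery) while simultaneously being the first object in the self-encoding $\omega$-chain---and spaces the blocks between successive $\beth$-fixed points so that the $\omega$-iteration has room to terminate.
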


\begin{proof}  For this theorem, we must balance the various
forces on the classes differently, to keep the mantles and
the generic \HOD\ low, while allowing $\HOD^{V[G]}$ to
expand. Pushing the classes up at least to $V$, our
strategy will be once again to force that every set of
ordinals in $V$ is coded unboundedly into the continuum
function of $V[G]$. In order to push $\HOD^{V[G]}$ fully up
to $V[G]$, however, we will force that every new set of
ordinals in $V[G]$ is coded into the continuum function,
but these new sets will be coded each time only boundedly
often. This makes these sets ordinal definable in $V[G]$,
while allowing the factor argument of theorem
\ref{Theorem.V=M=gM=gHOD=HOD} to hold down the mantle to
$V$ and consequently also the generic mantle and generic
\HOD. The subtle effect is that the new sets become ordinal
definable, but only temporarily so, for further forcing can
erase the bounded coding and make them drop out of \HOD.

The essential component, for any regular cardinal $\kappa$,
is the {\df self-encoding forcing} at $\kappa$, which we
now describe. This is the forcing iteration $\Q$ of length
$\omega$ that begins by adding a Cohen subset of $\kappa$,
and then proceeds in each subsequent stage to code the
generic filter from the prior stage into the \GCH\ pattern
at the next block of cardinals. All coding will take place
in the interval $I=[\kappa,\lambda)$, where
$\lambda=\beth_\lambda$ is the least beth fixed point above
$\kappa$ (and the forcing will preserve all beth fixed
points). The end result in the corresponding extension
$V[G]$ is that the initial Cohen subset of $\kappa$ and the
entire generic filter $G$ is coded into the \GCH\ pattern
at cardinals in $I$. To be more precise, the forcing begins
at stage $0$ with $\Q_0=\Add(\kappa,1)$, adding a Cohen
subset $g_0\of\kappa_0=\kappa$. The stage $1$ forcing
$\dot\Q_1$ will code $g_0$ into the \GCH\ pattern at the
next $\kappa$ many cardinals. For this, we first force if
necessary to ensure that the \GCH\ holds at the next
$\kappa$ many cardinals (this may collapse cardinals, but
there is no need to exceed or even reach the beth fixed
point $\lambda=\beth_\lambda$), and then perform suitable
Easton forcing at these cardinals, so that for the next
$\kappa$ many cardinals $\nu$ above $\kappa$ in the
corresponding extension $V[g_0*g_1]$, we have either
$2^\nu=\nu^\plus$ or $2^\nu=\nu^\plusplus$, according to
whether the corresponding ordinal is in $g_0$. In order to
continue the iteration, we use a canonical pairing function
on ordinals in order to view $g_1$ as a subset of
$\kappa_1$, the supremum of the next $\kappa$ many
surviving cardinals above $\kappa$. In general, the generic
filter for the stage $n$ forcing $\Q_n$ is determined by a
subset $g_n\of\kappa_n$, and the stage $n+1$ forcing
$\dot\Q_{n+1}$ first forces if necessary the \GCH\ to hold
at the next $\kappa_n$ many cardinals, and then uses Easton
forcing to code $g_n$ into the \GCH\ pattern on those
cardinals. There is sufficient room to carry out each stage
of forcing below the next beth fixed point
$\lambda=\beth_\lambda$, and it is not difficult to see
that $\lambda=\sup_n \kappa_n$. The entire iteration $\Q$
consequently has size $\lambda^\omega$. The end result is
that if $G\of\Q$ is $V$-generic, then $G$ is coded
explicitly into the \GCH\ pattern of $V[G]$ on the interval
$I$. Note that $\Q$ is $\ltkappa$-closed and does not
affect the continuum function on cardinals outside the
interval $I$ provided the \GCH\ holds on $I$ in $V$.  In
the event that the \GCH\ fails in $V$ the situation is not
much worse, as $\Q$ does not affect the continuum function
outside the interval $(\gamma,\lambda^\omega)$ where
$2^\gamma \leq \kappa$.

We now assemble these components into the overall class
forcing. For every ordinal $\alpha$, let $\kappa_\alpha$ be
the $\alpha^{\rm th}$ cardinal of the form $(2^\lambda)^\plus$,
where $\lambda=\beth_\lambda$ is a beth fixed point. Let
$\Q_\alpha$ be the self-encoding forcing at
$\kappa_\alpha$, which adds a generic filter that encodes
itself into the \GCH\ pattern on the interval
$I_\alpha=[\kappa_\alpha,\lambda_\alpha)$, where
$\lambda_\alpha$ is the next beth fixed point above
$\kappa_\alpha$. Note that these intervals are disjoint.
Let $\P=\prod_\alpha\Q_\alpha$ be the set-support class
product of these posets. As in theorem
\ref{Theorem.V=M=gM=gHOD=HOD}, we may for any ordinal
$\alpha$ factor this forcing as $\P_\alpha\times\P^\alpha$,
where $\P_\alpha=\prod_{\beta<\alpha}\Q_\beta$ and
$\P^\alpha=\prod_{\beta\geq\alpha}\Q_\beta$, using again
set support in these products, which for $\P_\alpha$ means
full support. The initial factor $\P_\alpha$ is the product
of posets $\Q_\beta$ of size $\lambda_\beta^\omega$, for
$\beta<\alpha$, which therefore has size at most
$\lambda^\alpha\leq 2^\lambda$, where
$\lambda=\sup_{\beta<\alpha}\lambda_\beta$. This is
strictly less than $\kappa_\alpha$. In summary, we have
established the convenient factor properties that
$|\P_\alpha|<\kappa_\alpha$ and $\P^\alpha$ is
$\ltkappa_\alpha$-closed. The usual arguments now show that
if $G\of\P$ is $V$-generic, then $V[G]$ satisfies \ZFC\ and
every set in $V[G]$ is added by some stage $V[G_\alpha]$.
Also, for any ordinal $\alpha$, we may factor the forcing
as $\P_\alpha\times\Q_\alpha\times\P^{\alpha+1}$. Because
the final factor $\P^{\alpha+1}$ is
$\ltkappa_{\alpha+1}$-closed and the initial factor
$\P_\alpha$ has size less than $\kappa_\alpha$, neither of
these affects the \GCH\ pattern on the interval
$I_\alpha=[\kappa_\alpha,\lambda_\alpha)$. Thus, the \GCH\
pattern on $I_\alpha$ in $V[G]$ is determined by what the
generic filter $G$ does on the $\alpha^{\rm th}$ coordinate
$\Q_\alpha$.

We now observe that $V[G]$ exhibits the desired coding
features. If $x$ is any set of ordinals in $V$, then in any
coordinate stage $\alpha$ of forcing above $\sup(x)$, it is
dense for $x$ to appear as an interval in the generic
object added at the very first stage of forcing in
$\Q_\alpha$. The subsequent stages of forcing in
$\Q_\alpha$ will therefore have the effect of coding $x$
into the \GCH\ pattern in $I_\alpha$. Thus, the set $x$ is
coded into the \GCH\ pattern of $V[G]$, and is consequently
ordinal definable there. Since $x$ is coded unboundedly
often in this way, $x$ will remain ordinal definable in any
set-forcing extension of $V[G]$, because any such extension
$V[G][h]$ has the same \GCH\ pattern as $V[G]$ above the
size of the forcing $h$. Thus, $x$ is ordinal definable in
any such $V[G][h]$, and so $V\of\gHOD^{V[G]}$. We know in
general that $\gHOD\of\gMantle\of \Mantle$. In order to
complete the cycle, we now argue $\Mantle \of V$. Observe
that the tail forcing extension $V[G^\alpha]$ is a ground
of $V[G]$, because the initial factor $\P_\alpha$ is set
forcing. Thus, the mantle of $V[G]$ is contained within the
intersection of all $V[G^\alpha]$. But as before, the
intersection of all of the tail extensions $V[G^\alpha]$ is
simply $V$, by the increasing closedness of $\P^\alpha$,
and so the mantle of $V[G]$ is contained in $V$. This
establishes that $V\of\gHOD^{V[G]}\of\gMantle^{V[G]}\of
\Mantle^{V[G]}\of V$, and hence all are equal, as desired.

It remains to compute $\HOD^{V[G]}$. We have observed that
every set in $V[G]$ is added by some initial factor forcing
$\P_\alpha$ for some ordinal $\alpha$, and therefore every
object in $V[G]$ has the form $\tau_{G_\alpha}$, for some
ordinal $\alpha$ and some $\P_\alpha$-name $\tau$ in $V$.
Since we have already established that $V\of\HOD^{V[G]}$,
it follows that the name $\tau$ is ordinal definable in
$V[G]$. Furthermore, for every $\beta<\alpha$, the generic
filter $G(\beta)\of\Q_\beta$ added at stage $\beta$ is
coded into the continuum function on the interval
$I_\beta$, and in $V[G]$ we may definably assemble these
filters into the generic filter $G_\alpha$ on the product
$\P_\alpha=\prod_{\beta<\alpha}\Q_\beta$. Thus, $G_\alpha$
is also ordinal definable in $V[G]$. So $\tau_{G_\alpha}$
is ordinal definable in $V[G]$ and so $\HOD^{V[G]}=V[G]$,
as desired.\end{proof}

In the previous theorem, we have kept the mantles low,
while pushing up the \HOD. Next, in contrast, we keep the
{\HOD}s low, while pushing up the mantle.

\begin{theorem} The constructible universe $L$ has a
class-forcing extension $L[G]$ in which $L$ is the \HOD\
and generic \HOD, but $L[G]$ is the mantle.
\label{Theorem.L=gHOD=HOD,L[G]=M}%
$$L=\HOD^{L[G]}=\gHOD^{L[G]},\qquad\text{but}\qquad\Mantle^{L[G]}=L[G].$$
\end{theorem}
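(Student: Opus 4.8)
The plan is to obtain the extension $L[G]$ by a bespoke class forcing over $L$, balancing two opposing demands: $L[G]$ must be homogeneous enough over $L$ to pin its \HOD\ down to $L$, yet so rigid against \emph{set} forcing that it has no proper ground. One inclusion is free: since ``$x\in L$'' is absolute and $L\of\HOD$ holds in every model of \ZF, applying this inside every set forcing extension of $L[G]$ gives $L\of\gHOD^{L[G]}$. Together with the general inclusions $\gHOD\of\HOD$ and $\Mantle\of L[G]$, it therefore suffices to build $L[G]$ so that (i) $\HOD^{L[G]}\of L$ --- which then forces $L=\gHOD^{L[G]}=\HOD^{L[G]}$ --- and (ii) $L[G]$ satisfies the ground axiom, equivalently $\Mantle^{L[G]}=L[G]$.

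For the forcing I would use a progressively closed class iteration $\P=\langle\P_\alpha,\dot\Q_\alpha\st\alpha\in\ORD\rangle$ over $L$ of the kind in Theorems~\ref{Theorem.V=M=gM=gHOD=HOD} and~\ref{Theorem.V=M=gM=gHOD,V[G]=HOD}, whose stages $\dot\Q_\alpha$ act at a definable, widely spaced sequence of regular cardinals $\delta_\alpha$ chosen so that $|\P_\alpha|<\delta_\alpha$ and the tail $\P^\alpha$ is ${<}\delta_\alpha$-closed; as usual this guarantees $L[G]\satisfies\ZFC$ and that every set of $L[G]$ lies in some $L[G_\alpha]$, where $G_\alpha=G\restrict\alpha$. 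The stages $\dot\Q_\alpha$ are built to be \emph{self-encoding}, in the spirit of the self-encoding forcing in the proof of Theorem~\ref{Theorem.V=M=gM=gHOD,V[G]=HOD}, but with two twists: (a) at stage $\alpha$ the forcing encodes the \emph{entire} prior generic $G_\alpha$ into the generic object it adds, in such a way that $G_\alpha$ is recoverable over $L$ from any tail $G^\alpha$ of $G$, so that $L[G^\alpha]=L[G]$ for every $\alpha$; and (b) as a forcing over $L[G_\alpha]$ each $\dot\Q_\alpha$ retains enough homogeneity --- or, failing literal homogeneity, enough \emph{latent} homogeneity of the ``densely almost homogeneous'' type exploited in Theorem~\ref{Theorem.V=M=gM=gHOD=HOD} --- that forcing with $\P_\alpha$ adds no new ordinal definable sets. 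Granting (b), each $\P_\alpha$ is (densely) almost homogeneous and lightface definable in $L$; since any set ordinal definable in $L[G]$ is, by reflection and the closure of the tail, already ordinal definable in some $L[G_\alpha]$, this yields $\HOD^{L[G]}\of\bigcup_\alpha\HOD^{L[G_\alpha]}\of\HOD^L=L$, which is (i).

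For (ii), suppose $W$ is a ground of $L[G]$, say $L[G]=W[H]$ with $H$ generic for a poset $\Q\in W$, and fix $\alpha$ with $|\Q|<\delta_\alpha$. By absoluteness of $L$ we have $L\of W$, and by the closure and chain condition of the tail of the iteration over $W$ --- running the argument of Theorem~\ref{Theorem.DirectedButNoBedrock} --- the tail generic $G^\alpha$ is amenable to $W$, so $L[G^\alpha]\of W$. But then the self-encoding property (a) lets $W$ reconstruct $G_\alpha$, and hence all of $G$, from $G^\alpha$, so $L[G]=L[G^\alpha]\of W$ and $W=L[G]$. As $W$ was arbitrary, $L[G]$ has no proper ground, establishing $\Mantle^{L[G]}=L[G]$. (In passing, since $\gHOD^{L[G]}=L$, this same model separates $\gHOD$ from the mantle.)

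The crux --- and the reason this is harder than Theorems~\ref{Theorem.V=M=gM=gHOD=HOD} and~\ref{Theorem.V=M=gM=gHOD,V[G]=HOD}, where one of the two opposing forces is simply driven to its extreme --- is making (a) and (b) coexist. A self-encoding that records $G_\alpha$ in an ordinal definable feature of $L[G]$, such as its \GCH\ pattern or its cardinal structure, would drag $\HOD^{L[G]}$ all the way up to $L[G]$, the reverse of what we want; but an encoding that is ``too free'' makes the iteration forcing-equivalent to a mere product over $L$, whose intermediate submodels are then proper grounds, pulling $\Mantle^{L[G]}$ down below $L[G]$. Designing stages $\dot\Q_\alpha$ whose generics rigidly determine the prior history --- so that every ground must recover the whole of $G$ --- while remaining invisible to ordinal definability in $L[G]$ is the technical heart of the construction, and is where I expect the real work to lie.
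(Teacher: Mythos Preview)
Your proposal identifies the two targets correctly---homogeneity for $\HOD^{L[G]}\of L$, and the ground axiom for $\Mantle^{L[G]}=L[G]$---but the mechanism you propose for the ground axiom is both different from the paper's and, as you yourself acknowledge in your final paragraph, not actually carried out. The paper's proof is much shorter: it simply invokes the main result of Hamkins--Reitz--Woodin \cite{HamkinsReitzWoodin2008:TheGroundAxiomAndV=HOD}, which shows that the Easton-support Silver iteration over $L$, adding a Cohen subset to every regular cardinal, yields a model $L[G]$ satisfying the ground axiom. That forcing is almost homogeneous and ordinal-definable, so $\HOD^{L[G]}=L$, and since $L\of\gHOD^{L[G]}\of\HOD^{L[G]}$ always, the generic \HOD\ is $L$ as well. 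No self-encoding is needed.

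The point you are missing is \emph{how} the ground axiom is established in that model: not by arranging that tails of the generic recover the whole of $G$, but by the $\delta$-approximation and $\delta$-cover machinery (Lemmas~\ref{Lemma.ClosurePointForcing} and~\ref{Lemma.HamkinsLaverUniqueness}). Roughly, if $W$ is a ground of $L[G]$ via small forcing $\Q$, one first gets $L\of W$ for free, then picks $\delta$ above $|\Q|$ and argues that the tail of the Silver iteration is amenable to $W$ by approximation; finally, one shows that $W$ and a suitable explicit model both satisfy the $\delta$-approximation and cover properties inside $L[G]$ and agree on $P(\delta)$, so the uniqueness lemma forces $W=L[G]$. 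The full argument is spelled out in the proof of Theorem~\ref{Theorem.V=HOD=gHOD,M=V[G]}, which generalizes the present theorem from $L$ to an arbitrary $V$.

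Your alternative strategy---making each stage encode all prior stages so that $L[G^\alpha]=L[G]$---runs directly into the tension you describe, and I do not see a way to resolve it. If $G_\alpha$ is recoverable from $G^\alpha$ over $L$ in any uniform, parameter-free way, then $G_\alpha$ lands in $\HOD^{L[G]}$ and you have lost (i); if the recovery genuinely requires a non-ordinal-definable parameter, then it is unclear why an arbitrary ground $W$ should possess that parameter, and your argument for (ii) breaks. So the ``technical heart'' you flag is not merely unfinished work but a real obstruction to this route; the approximation-and-cover argument sidesteps it entirely.
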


\begin{proof}  This theorem is a consequence of the main result of~\cite{HamkinsReitzWoodin2008:TheGroundAxiomAndVequalsHOD}. In
that article, Hamkins, Reitz and Woodin showed that if one
performs the Easton-support Silver iteration over $L$,
successively adding a Cohen subset to each regular
cardinal, then the resulting model $L[G]$ satisfies the
ground axiom. In other words, $L[G]$ has no nontrivial
grounds, and therefore is its own mantle. Since the forcing
is weakly homogeneous and ordinal-definable, it creates no
new ordinal-definable sets, and so $\HOD^{L[G]}=L$. Since
$\gHOD\of\HOD$, it follows that $\gHOD^{L[G]}=L$ as
well.\end{proof}

Unfortunately, we have not yet managed to determine the
generic mantle of $L[G]$.

\begin{question}
What is the generic mantle of the Hamkins-Reitz-Woodin
model $L[G]$?
\end{question}

Let us now prove a generalized version of theorem~\ref{Theorem.L=gHOD=HOD,L[G]=M}.

\begin{theorem} Every model $V$ of \ZFC\ has a class forcing
extension $V[G]$ in which $V$ is the $\HOD$ and generic
$\HOD$, but $V[G]$
is the mantle.\label{Theorem.V=HOD=gHOD,M=V[G]}%
$$V=\HOD^{V[G]}=\gHOD^{V[G]},\qquad\text{but}\qquad M^{V[G]}=V[G].$$
\end{theorem}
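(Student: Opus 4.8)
The plan is to generalize the proof of Theorem~\ref{Theorem.L=gHOD=HOD,L[G]=M}, which settled the case $V=L$ via the Hamkins--Reitz--Woodin construction \cite{HamkinsReitzWoodin2008:TheGroundAxiomAndV=HOD}: the Easton-support Silver iteration over $L$ is an ordinal-definable, almost homogeneous, progressively closed (closure-point) class iteration, and from these features one gets $\HOD=\gHOD=L$, by homogeneity and ordinal-definability of the forcing, together with $\Mantle=L[G]$, the ground axiom being established through the $\delta$ cover and approximation properties of Lemmas~\ref{Lemma.ClosurePointForcing} and~\ref{Lemma.HamkinsLaverUniqueness} and Laver's theorem~\ref{Theorem.UniformDefinabilityOfGrounds}. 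Two things go wrong over an arbitrary $V$: pure homogeneity would yield only $\HOD^{V[G]}=\HOD^V$, which need not be all of $V$; and the inclusion ``$V\subseteq W$ for every ground $W$,'' automatic over $L$ because $L$ sits inside every model of \ZFC, is no longer free. I would repair both by interleaving the Silver-type steps with the $\GCH$-coding technique from the proofs of Theorems~\ref{Theorem.V=M=gM=gHOD=HOD} and~\ref{Theorem.V=M=gM=gHOD,V[G]=HOD}.

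Concretely, I would build a class-length iteration over $V$, with supports chosen so that it remains progressively closed with closure points cofinal in the ordinals, in which coordinate $\alpha$ is a lottery among forcing notions, each of which codes a set of ordinals of $V$ into the $\GCH$ pattern on a block of cardinals above stage $\alpha$, arranged exactly as in Theorem~\ref{Theorem.V=M=gM=gHOD=HOD} so that, by density, every set of ordinals of $V$ is coded unboundedly often. The iteration is ordinal-definable over $V$, since the lottery at coordinate $\alpha$ is definable from $\alpha$. The usual progressively-closed-iteration arguments give $V[G]\models\ZFC$, and, arguing as in Theorem~\ref{Theorem.V=M=gM=gHOD=HOD}, since the coding cardinals form a definable class and each set of $V$ is coded into the $\GCH$ pattern there --- unboundedly often, and hence also in every set-forcing extension $V[G][h]$, whose continuum function agrees with that of $V[G]$ above $|h|$ --- every set of $V$ is hereditarily ordinal definable in $V[G]$ and in all its set-forcing extensions. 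Thus $V\subseteq\gHOD^{V[G]}$.

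For the reverse inclusion $\HOD^{V[G]}\subseteq V$: the lotteries destroy outright almost homogeneity, but, following the ``densely almost homogeneous'' argument in the proof of Theorem~\ref{Theorem.V=M=gM=gHOD=HOD}, the conditions making a definite selection in every lottery are dense, and below such a condition $q$ the lower cone is an iteration of almost homogeneous posets and hence almost homogeneous; by genericity some such $q$ lies in $G$. A standard ordinal-definable-name argument then shows that every ordinal-definable set of ordinals of $V[G]$ is definable in $V$ from $q$ together with ordinal parameters, hence lies in $V$. Therefore $\HOD^{V[G]}\subseteq V$, and combining with the previous paragraph, $\HOD^{V[G]}=\gHOD^{V[G]}=V$.

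It remains to see that $\Mantle^{V[G]}=V[G]$, i.e.\ that the ground axiom holds in $V[G]$. Let $W$ be a ground of $V[G]$, say $V[G]=W[h]$ with $h\subseteq\Q\in W$ and $\mu=|\Q|^W$. First, $W$ and $V[G]$ agree on the continuum function above $\mu$, and every set of $V$ is coded unboundedly often into the $\GCH$ pattern on the (on a tail, $W$-definable) coding cardinals, so $W$ can decode each such set; hence $V\subseteq W$. Second, $W\subseteq V[G]=W[h]$ has the $\mu^{+}$ cover and approximation properties by Lemma~\ref{Lemma.ClosurePointForcing} with trivial second factor, while $V\subseteq V[G]$ has the $\delta$ cover and approximation properties for a proper class of $\delta$, again by Lemma~\ref{Lemma.ClosurePointForcing} applied at the closure points. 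With $V\subseteq W\subseteq V[G]$ and these cover and approximation properties in hand, the Hamkins--Reitz--Woodin argument proving Theorem~\ref{Theorem.L=gHOD=HOD,L[G]=M} runs with $V$ in place of $L$ --- it uses only that $V$ is contained in $W$ and the closure-point structure of the iteration --- yielding $W=V[G]$, so $V[G]$ has no proper ground and $\Mantle^{V[G]}=V[G]$. I expect the main obstacle to be reconciling these competing demands within one forcing: the coding must be robust enough to survive into every ground $W$ (forcing $V\subseteq W$) and to make every set of $V$ ordinal definable, yet the iteration must retain the closure-point structure needed for the approximation and cover properties and enough latent (dense) homogeneity that no ordinal-definable set outside $V$ is created. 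The delicate bookkeeping --- ensuring the different coding blocks do not interfere, that the closure degrees at the closure points dominate the sizes of the earlier coding forcings, and that each set of $V$ really is captured unboundedly often above any prescribed cardinal --- is where the work lies, just as in the proofs of Theorems~\ref{Theorem.V=M=gM=gHOD=HOD} and~\ref{Theorem.V=M=gM=gHOD,V[G]=HOD}.
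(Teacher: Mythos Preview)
Your proposal has a genuine gap: it attempts to use a \emph{single} class forcing, whereas the paper's argument requires \emph{two} successive class forcings, and the separation is essential. The paper first performs the set-support coding product of Theorem~\ref{Theorem.V=M=gM=gHOD=HOD} to obtain $V[H]$ with $V=\HOD^{V[H]}=\gHOD^{V[H]}$, and \emph{then} performs an Easton-support Silver iteration $K$ over $V[H]$, adding a Cohen subset to $(2^{\delta_\alpha})^+$ at each stage. Your single coding-lottery forcing is essentially the first step alone, and that forcing has $\Mantle^{V[H]}=V$, not $V[H]$: the tail extensions $V[H^\alpha]$ are grounds of $V[H]$ with intersection $V$, exactly as computed in Theorem~\ref{Theorem.V=M=gM=gHOD=HOD}. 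So your construction would yield $\Mantle^{V[G]}=V$, contradicting the conclusion you want. The Silver iteration $K$ is precisely what eliminates these tail grounds; being almost homogeneous and ordinal definable over $V[H]$, it also gives $\HOD^{V[H*K]}\subseteq\HOD^{V[H]}=V$ directly, with no need for a dense-almost-homogeneity argument at this stage.

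Your ground-axiom sketch also underestimates what is needed. The paper's argument is not simply ``$V\subseteq W$ plus closure points''; it exploits the interaction of the two forcings. One factors $H=H_1\times H_2$ and $K=K_1*K_2$ at a suitable stage $\alpha$, uses the \emph{product} structure and ${<}\delta$-closure of the coding tail to find master conditions in $V$ covering every $\delta$-small piece of $H_2$, and then applies the $\delta$-approximation property of $W\subseteq W[h]$ to conclude $V[H_2]\subseteq W$. From there one traps $W$ as $V[H_2][A]$ for a suitable $A\subseteq\lambda$ coding the relevant names, shows $V[H_2][A][h]=V[H][K_1]$, and invokes Lemma~\ref{Lemma.HamkinsLaverUniqueness} to obtain the contradiction $W[h]=V[H][K_1]\neq V[H][K]$. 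The master-condition step genuinely needs the product, and the final contradiction genuinely needs the separate iteration tail $K_2$; a single forcing of the kind you describe provides neither piece cleanly. Incidentally, your assertion that ``an iteration of almost homogeneous posets is almost homogeneous'' is false in general, which is another reason the paper keeps the almost-homogeneous Silver step separate from the densely-almost-homogeneous coding step.
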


\begin{proof}  For this theorem, we adapt the main argument and
result of
\cite{HamkinsReitzWoodin2008:TheGroundAxiomAndVequalsHOD}. The
forcing takes place in two steps. First, we perform the
forcing of theorem~\ref{Theorem.V=M=gM=gHOD=HOD}, coding
into the \GCH\ pattern on the cardinals
$\delta_\alpha=\beth_{\omega\cdot\alpha+\omega}^\plus$.
This forcing, the reader will recall, is the set-support
product $\prod_\alpha\Q_\alpha$, where $\Q_\alpha$
generically chooses whether to force the \GCH\ at
$\delta_\alpha$ or its failure. If $V[H]$ is the resulting
forcing extension, then the argument of theorem
\ref{Theorem.V=M=gM=gHOD=HOD} shows that
$V=\HOD^{V[H]}=\gHOD^{V[H]}=\Mantle^{V[H]}=\gMantle^{V[H]}$.

The second step is to perform an Easton-support Silver
iteration $\P$ over $V[H]$, a class length forcing
iteration that at stage $\alpha$ adds a Cohen subset to
$(2^{\delta_\alpha})^\plus$. These cardinals lay
conveniently within the interval
$(\delta_\alpha,\delta_{\alpha+1})$ between the successive
coding points of the first step. Suppose that $K\of\P$ is
$V[H]$-generic, and we consider the final extension
$V[H][K]$. Because the Silver iteration does not affect the
\GCH\ coding at the various $\delta_\alpha$ and preserves
all strong limit cardinals and therefore the definability
of the class $\set{\delta_\alpha\st\alpha\in\ORD}$, it
follows that every set in $V$ is coded arbitrarily highly
in the continuum function of $V[H*K]$. This coding survives
into set generic extensions of $V[H*K]$ and their $\HOD$s,
and so $V\of \gHOD^{V[H*K]}$. Conversely, the Silver
iteration $\P$ is weakly homogeneous and ordinal definable,
and so $\HOD^{V[H*K]}\of\HOD^{V[H]}=V$. We conclude
$V=\HOD^{V[H*K]}=\gHOD^{V[H*K]}$.

We claim now $V[H*K]$ has no nontrivial grounds, and is
therefore its own mantle. In other words, $V[H*K]$
satisfies the ground axiom. This part of the argument
follows the main method and result of
\cite{HamkinsReitzWoodin2008:TheGroundAxiomAndVequalsHOD}. Let
us suppose towards a contradiction that $W$ is a nontrivial
ground, so that $V[H*K]=W[h]$ for some nontrivial forcing
$h\of\Q\in W$ over $W$. We may assume that conditions in
$\Q$ are ordinals less than $|\Q|^W$. Since this is set
forcing, the model $W$ has the same continuum function as
$W[h]$ on cardinals above $|\Q|^{W}$. In particular, these
models eventually have the same strong limit cardinals, and
so a final segment of the class
$\set{\delta_\alpha\st\alpha\in\ORD}$ is definable in $W$.
Because the sets of ordinals in $V$ are all coded
unboundedly often into the \GCH\ pattern of $V[H*K]=W[h]$
on these cardinals, it follows that they are also coded in
$W$ and so $V\of W$. Let $\lambda$ be the least strong
limit above $|\Q|^W$, so that $\lambda^\plus=\delta_\alpha$
for some ordinal $\alpha$. Factor both forcing notions at
$\alpha$, resulting in $V[H*K]=V[H_1][H_2][K_1][K_2]$,
where $H_1$ is the part of $H$ at coordinates
$\beta<\alpha$ and $H_2$ is the part of $H$ at coordinates
$\beta\geq\alpha$, and $K_1\of\P_\alpha$ is the part of $K$
below stage $\alpha$ and $K_2\of\P_{\alpha,\infty}$ is the
tail part of the $K$ iteration from stage $\alpha$ onwards.
Thus, we have $V[H_1][H_2][K_1][K_2]=W[h]$. Note that the
forcing adding $H_1$ and $K_1$ has size strictly less than
$\lambda$. Let $\delta$ be a regular cardinal above the
size of these initial factors and also $|\Q|^W$, but less
than $\lambda$. Since the forcing that adds $H_2$ is
$\ltdelta$-closed in $V$, every $\delta$-small subset of
$H_2$ in $V[H][K]$ is covered by a condition in $V$. To see
this, let $x\of H_2$ have size less than $\delta$ in
$V[H][K]$. By the closedness of the last factor, $x\in
V[H][K_1]=V[H_2][H_1][K_1]$. Since $H_1*K_1$ is generic for
forcing of size less than $\delta$ in $V[H_2]$, we can
conclude by the $\delta$-cover property (see lemma
\ref{Lemma.ClosurePointForcing}) that there is $x'\in
V[H_2]$ such that $x\of x'$ and $x'$ has size less than
$\delta$ in $V[H_2]$. Since $H_2$ is a class in $V[H_2]$,
we may pick $x'$ in such a way that $x'\of H_2$. Now it
follows by the ${<}\delta$-closedness of $\P_2$ in $V$ that
$x'\in V$. And by genericity of $H_2$, there is a common
strengthening of all the conditions in $x'$ in $V$. Let's
call it a master condition for $x$. Since $V\of W$, such
conditions are also in $W$. Thus, every $\delta$-small
approximation $H_2\intersect B$ to $H_2$, with $B\in W$ of
size less than $\delta$ in $W$, will be the set of
conditions in $B$ which are weaker than a master condition
for $H_2\intersect B$, and will therefore itself be in $W$.
Since the forcing adding $h$ over $W$ is small relative to
$\delta$ and therefore exhibits the $\delta$-approximation
property by lemma~\ref{Lemma.ClosurePointForcing}, it
follows that $H_2$ is amenable to $W$, and hence that
$V[H_2]\of W$. In $W$, let $A\of\lambda$ code
$(2^\ltlambda)^W$, as well as $\Q$-names $\dot H_1,\dot
K_1\in W$ such that $H_1=(\dot H_1)_h$ and $K_1=(\dot
K_1)_h$; for example, we could simply ensure that
$H_\lambda^W\in L[A]$. Since $A\in W$, it follows that
$V[H_2][A]\of W$. Since $A$ codes all bounded subsets of
$\lambda$ in $W$, we have $\Q\in V[H_2][A]$. The filter $h$
is $V[H_2][A]$-generic for $\Q$, and we may consider the
forcing extension $V[H_2][A][h]$. By the choice of $A$,
this model has the names $\dot H_1$ and $\dot K_1$,
allowing it to build $H_1$ and $K_1$ using $h$, and so
$V[H][K_1]\of V[H_2][A][h]$. Conversely, the objects $A$
and $h$ could not have been added by the tail forcing $K_2$
over $V[H][K_1]$, since this forcing is $\leqlambda$-closed
over $V[H][K_1]$, and so $V[H_2][A][h]\of V[H][K_1]$.
Consequently, $V[H_2][A][h]=V[H][K_1]$. We may therefore
view the model $V[H*K]$ as having arisen by forcing over
$V[H_2][A]$, adding $h*K_2$, since
$V[H_2][A][h*K_2]=V[H_2][A][h][K_2]=V[H][K_1][K_2]=V[H*K]$.
Alternatively, we may view $V[H*K]=W[h]$ as having arisen
by forcing over $W$, adding $h$. Each of these extensions
exhibits the $\delta$-approximation and cover properties.
Furthermore, since $P(\delta)^W=P(\delta)^{V[H_2][A]}$ and
the cardinal successor of $\delta$ is the same in each of
the models at hand, it follows by lemma
\ref{Lemma.ApproximationCoverImpliesW=W'} that $W=V[H_2][A]$.
Thus, $W[h]=V[H_2][A][h]$, which we already established was
$V[H][K_1]$, contrary to our assumption that
$W[h]=V[H][K]$. So there can be no such ground
$W$.\end{proof}

\begin{theorem} Every model $V$ of \ZFC\ has a class forcing
extension $V[G]$ in which $V[G]$ is its own mantle, generic
mantle, generic \HOD\ and
\HOD.\label{Theorem.V[G]=M=gM=gHOD=HOD}%
$$V[G]=\Mantle^{V[G]}=\gMantle^{V[G]}=\gHOD^{V[G]}=\HOD^{V[G]}$$
\end{theorem}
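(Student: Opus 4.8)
The plan is to observe that this theorem, in contrast to the three preceding ones, requires no new forcing construction: it suffices to take $V[G]=\Vbar$ to be a class-forcing extension of $V$ satisfying the continuum coding axiom \CCA, arranged so that every set of ordinals of $\Vbar$ is coded \emph{unboundedly often} into the \GCH\ pattern of $\Vbar$. Such an extension is produced by iteratively coding sets into the \GCH\ pattern, exactly as at the beginning of the proof of theorem \ref{Theorem.DirectedButNoBedrock} (see \cite{Reitz2006:Dissertation}); one simply organizes the book-keeping of the iteration so that each set of ordinals is coded at arbitrarily large coding cardinals rather than merely once. The structural fact to exploit is the inclusion diagram $\gHOD\of\gMantle\of\Mantle$ and $\gHOD\of\HOD$ of corollary \ref{Corollary:RelationshipBetweenHOD,gHOD,gMandM}, together with the trivial inclusions $\Mantle\of\Vbar$ (the mantle is an intersection of grounds of $\Vbar$, each contained in $\Vbar$) and $\HOD\of\Vbar$. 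Because of this diagram it suffices to prove the single inclusion $\Vbar\of\gHOD^\Vbar$: then all five classes are sandwiched between $\Vbar$ and $\gHOD^\Vbar$ and hence coincide with $\Vbar$.

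To establish $\Vbar\of\gHOD^\Vbar$ I would proceed as follows. Since the continuum function is definable and the class of coding cardinals is definable without parameters in $\Vbar$ (and, after deleting a bounded initial segment, remains so in any set-forcing extension of $\Vbar$, by the definability and homogeneity built into the \CCA\ construction), any set of ordinals coded into the \GCH\ pattern is ordinal definable. As every set is coded by a set of ordinals and $\Vbar$ is a model of \ZFC, this already yields $\HOD^\Vbar=\Vbar$, the familiar fact that \CCA\ implies $V=\HOD$. Now let $h$ be $\Vbar$-generic for an arbitrary poset $\Q\in\Vbar$. The continuum functions of $\Vbar$ and $\Vbar[h]$ agree on all cardinals above $|\Q|$, so the coding cardinals above $|\Q|$ and the \GCH\ pattern on them are the same in $\Vbar[h]$ as in $\Vbar$. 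Since each set of ordinals of $\Vbar$ was coded at unboundedly many coding cardinals, it is still coded into the \GCH\ pattern of $\Vbar[h]$, hence ordinal definable there; as above this gives $\Vbar\of\HOD^{\Vbar[h]}$. Since $h$ was arbitrary, $\Vbar\of\gHOD^\Vbar$, as required.

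Assembling the pieces, from $\Vbar\of\gHOD^\Vbar\of\gMantle^\Vbar\of\Mantle^\Vbar\of\Vbar$ and $\Vbar\of\gHOD^\Vbar\of\HOD^\Vbar\of\Vbar$ we obtain $\Vbar=\Mantle^\Vbar=\gMantle^\Vbar=\gHOD^\Vbar=\HOD^\Vbar$, which is the theorem. There is no substantial obstacle here; the one point that genuinely requires attention is the persistence of the coding under arbitrary set forcing, and this is precisely why one insists from the outset that every set of ordinals be coded unboundedly often into the \GCH\ pattern rather than merely once. Thus the result is best regarded as a corollary of the \CCA\ construction together with the general inclusion diagram of corollary \ref{Corollary:RelationshipBetweenHOD,gHOD,gMandM}.
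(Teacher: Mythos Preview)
Your proposal is correct and follows essentially the same approach as the paper: pass to a class-forcing extension satisfying the continuum coding axiom (every set of ordinals coded unboundedly often into the \GCH\ pattern), and then use the persistence of this coding under set forcing together with the inclusion diagram $\gHOD\of\gMantle\of\Mantle\of V$ and $\gHOD\of\HOD\of V$ to squeeze all five classes to $V[G]$. The paper's proof is in fact considerably terser than yours, simply citing the \CCA\ construction from \cite{Reitz2006:Dissertation} and asserting that the equalities are then ``easy to see''; your version supplies exactly the details the paper elides.
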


\begin{proof}  The main result of~\cite{Reitz2006:Dissertation},
using ideas of
\cite{McAloon1971:ConsistencyResultsAboutOrdinalDefinability},
shows that every model $V$ of \ZFC\ has a class forcing
extension $V[G]$ in which every set of ordinals is coded
unboundedly often into the continuum function. (This
assertion was called the continuum coding axiom.) In this
case, it is easy to see that
$V[G]=\Mantle^{V[G]}=\gMantle^{V[G]}=\gHOD^{V[G]}=\HOD^{V[G]}$,
since these sets remains coded in this way in all
set-forcing extensions and grounds of such extensions by
set forcing.\end{proof}

\begin{remark}\rm
We have chosen to use \GCH\ coding in the arguments above.
However, the arguments are easily modified to accommodate
other methods of coding, which would be consistent with
\GCH\ in the target model. For example, coding via
$\Diamond_\kappa^*$, in the style of
\cite{Brooke-Taylor:Thesis,Brooke-Taylor2009:LargeCardinalsAndDefinableWellOrders},
seems perfectly acceptable. The result would be that one
could add $V[G]\satisfies\GCH$ to each of theorems
\ref{Theorem.V=M=gM=gHOD=HOD},
\ref{Theorem.V=M=gM=gHOD,V[G]=HOD},
\ref{Theorem.V=HOD=gHOD,M=V[G]}, and \ref{Theorem.V[G]=M=gM=gHOD=HOD}.
\end{remark}

\section{Inner mantles and the outer core}

The mantle $\Mantle$ of the universe $V$ arises by brushing
away the outermost layers of forcing like so much
accumulated dust and sand to reveal the underlying ancient
structure, the mantle, on which these layers rest. If the
mantle is itself a model of \ZFC, then it makes sense to
penetrate still deeper, computing the mantle of the mantle
and so on, revealing still more ancient layers, and one
naturally desires to iterate the process. We begin easily
enough with $\Mantle^0=V$ and then recursively define the
successive inner mantles by
$\Mantle^{n+1}=\Mantle^{\Mantle^n}$, that is,
$\Mantle^{n+1}$ is the mantle of $\Mantle^n$, provided that
this is a model of \ZFC. Thus, $\Mantle^1$ is the mantle
and $\Mantle^2$ is the mantle of the mantle, and so on. One
would naturally expect to continue this recursion
transfinitely with an intersection
$\Mantle^\omega=\Intersect_{n<\omega}\Mantle^n$ at
$\omega$, but here an interesting and subtle
metamathematical obstacle rises up, preventing a simple
success. The issue is that although we have provided
definitions of the mantle $\Mantle^1$ and of the
mantle-of-the-mantle $\Mantle^2$ and so on, these
definitions become increasingly complex as the procedure is
iterated,  and an observant reader will notice that our
recursive definition does not actually provide a definition
of the $n^{\rm th}$ mantle $\Mantle^n$ that is uniform in $n$.
Instead, it is a recursion that takes place in the
meta-theory rather than the object theory, and so on this
definition we may legitimately refer to the $n^{\rm th}$ mantle
$\Mantle^n$ only for meta-theoretic natural numbers $n$. In
particular, it does not provide a uniform definition of the
inner mantles $\Mantle^n$, and consequently with it we seem
unable to perform the intersection $\Intersect_n\Mantle^n$
in a definable manner.

Exactly the same issue arises when one attempts to iterate
the class $\HOD$, by considering the $\HOD$-of-$\HOD$ and
so on in the iterated $\HOD^n$ models. A 1974 result of
Harrington appearing in
~\cite[section 7]{Zadrozny1983:IteratingOrdinalDefinability},
with related work in~\cite{McAloon1974:OnTheSequenceHODn},
shows that it is relatively consistent with \Godel-Bernays
set theory that $\HOD^n$ exists for each $n<\omega$ but the
intersection $\HOD^\omega=\Intersect_n\HOD^n$ is not a
class. There simply is in general no uniform definition of
the classes $\HOD^n$. We expect an analogous result for the
iterated mantles $\Mantle^n$, and this is a current focus
of study for us.

Nevertheless, some models of set theory have a special
structure that allows them to enjoy a uniform definition of
these classes, and in these models we may continue the
iteration transfinitely. Following the treatment of the
iterated $\HOD^\alpha$ and $\gHOD^\alpha$ of
\cite{HamkinsKirmayerPerlmutter2012:GeneralizationsOfKunenInconsistency},
and working in \Godel-Bernays set theory, we define that a
\GB\ class $\bar M$ is a {\df uniform presentation} of the
inner mantles $\Mantle^\alpha$ for $\alpha<\eta$ if $\bar
M\of \set{(x,\alpha)\st \alpha<\eta}$ and the slices $\bar
M^\alpha=\set{x\st (x,\alpha)\in \bar M}$ for $\alpha<\eta$
are all inner models of \ZFC\ and obey the defining
properties of the iterated mantle construction, namely, the
base case $\bar M^0=V$, the successor case $\bar
M^{\alpha+1}=\Mantle^{\bar M^\alpha}$ and the limit case
$\bar M^\gamma=\Intersect_{\alpha<\gamma}\bar M^\alpha$ for
limit ordinals $\gamma$. By induction, any two such classes
$\bar M$ agree on their common coordinates, and when such a
class $\bar M$ has been provided we may legitimately and
unambiguously refer to the $\alpha^{\rm th}$ mantle
$\Mantle^\alpha$. We accordingly define the phrase ``{\df
the $\eta^{\rm th}$ inner mantle $\Mantle^\eta$ exists}'' to
mean that $\eta$ is an ordinal and there is a uniform
presentation $\bar M$ of the inner mantles $\Mantle^\alpha$
for $\alpha\leq\eta$. It is easy to see that the $n^{\rm th}$
inner mantle $\Mantle^n$ exists for any (meta-theoretic)
natural number $n$, and if $\Mantle^\eta$ exists, so does
$\Mantle^\alpha$ for any $\alpha<\eta$. But as with the
Harrington result in the case of $\HOD^\alpha$, we do not
expect necessarily to be able to proceed through limit
ordinals or even up to $\omega$ uniformly. Note that even
when the $\eta^{\rm th}$ inner mantle $\Mantle^\eta$ exists,
there seems little reason to expect that it is necessarily
a definable class, even when $\eta$ is definable or
comparatively small, such as $\eta=\omega$.

\begin{question}
Is there is a model of \GBC\ in which there is no uniform
presentation of the inner mantles $\Mantle^n$ for
$n\lt\omega$? In particular, is there a model where the
$\omega^{\rm th}$ inner mantle $\Mantle^\omega$ does not exist?
Is there a model where $\Mantle^\omega$ exists as a class
but does not satisfy \ZFC?
\end{question}

In analogy with the situation with the iterated $\HOD$s, we
expect affirmative answers to these questions.

In the case that there is a uniform presentation of the
inner mantles $\Mantle^\alpha$ for all ordinals $\alpha$,
that is, presented uniformly by a single \GB\ class $\bar
M\of V\times\ORD$, then it follows by corollary
\ref{Corollary.IntersectionGivesZF} that
$\Mantle^{\ORD}=\Intersect_\alpha\Mantle^\alpha\satisfies\ZFC$
as well. That is, if one can iteratively and uniformly
compute the inner mantles all the way through $\ORD$, then
the intersection $\Mantle^{\ORD}$ of these inner mantles is a \ZFC\ model,
and we may continue past $\ORD$ by defining
$\Mantle^{\ORD+1}=\Mantle^{\Mantle^{\ORD}}$, and so on.

One way that this might happen, of course, is if the inner
mantle process actually stabilizes before $\ORD$, that is,
if for some $\alpha$ we have $\Mantle^\alpha=\Mantle^\beta$
for all $\beta>\alpha$; in this case, $\Mantle^\alpha$ is a
model of \ZFC, but not a nontrivial forcing extension of
any deeper model (or equivalently, a model of \ZFC\ plus
the ground axiom). If $\alpha$ is minimal with this
property, then we say that the sequence of inner mantles
{\df stabilizes} at $\alpha$ and refer to the model
$\Mantle^\alpha$ as the {\df outer core} of the universe in
which it was computed. This is what remains when all outer
layers of forcing have been successively stripped away.
There seems to be no reason in general why the inner mantle
process should necessarily stabilize at an early stage, or
even after iterating through all the ordinals. For example,
there seems to be no obvious reason why $M_{\ORD}$ cannot itself
be a forcing extension of some other still-deeper model.
Indeed, we make the following strong negative conjecture
about this prospect.

\begin{conjecture}
Every model of \ZFC\ is the $\Mantle^{\ORD}$ of another
model of \ZFC\ in which the sequence of inner mantles does
not stabilize. More generally, every model of \ZFC\ is the
$\Mantle^\alpha$ of another model of \ZFC\ for any desired
$\alpha\leq\ORD$, in which the sequence of inner mantles
does not stabilize before
$\alpha$.\label{Conjecture.InnerMantles}
\end{conjecture}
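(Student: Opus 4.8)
We outline the approach we are taking toward conjecture~\ref{Conjecture.InnerMantles}; throughout we read ``the sequence of inner mantles does not stabilize before $\alpha$'' as asserting that $\Mantle^{\beta+1}\subsetneq\Mantle^\beta$ for every $\beta<\alpha$, with the usual intersection taken at limits. For a finite ordinal $\alpha=n$, the plan is simply to iterate theorem~\ref{Theorem.V=M=gM=gHOD=HOD} $n$ times. Rename the given model $\bar V=V_0$, apply the theorem to obtain a class forcing extension $V_1=V_0[G_1]$ with $\Mantle^{V_1}=V_0$, then apply it inside $V_1$ to obtain $V_2=V_1[G_2]$ with $\Mantle^{V_2}=V_1$, and so on, ending with $V=V_n$. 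Since each stage is nontrivial class forcing, the inclusions $V=\Mantle^0\supsetneq\Mantle^1\supsetneq\cdots\supsetneq\Mantle^n=\bar V$ are strict, and computing the iterated mantle in $V$ gives $\Mantle^{k+1}(V)=\Mantle(\Mantle^k(V))=\Mantle(V_{n-k})=V_{n-k-1}$ for $k<n$ by the choice of forcing at stage $n-k$, so $\Mantle^n(V)=\bar V$. The essential point is that a mantle need not be a ground: here $V_k$ is reached from $V_{k+1}$ only by proper class forcing, so it is not a set-ground of $V_{k+1}$; were it one, every ground of $V_k$ would be a ground of $V_{k+1}$ and a ground-of-a-ground argument would collapse $\Mantle^{k+1}(V)$ onto $\Mantle^k(V)$, killing the sequence at its first step.

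For transfinite $\alpha$, and especially for $\alpha=\ORD$, one cannot run this iteration by recursion on the stages, since $\bar V=\Mantle^\alpha$ has no immediate predecessor in the tower. Instead the plan is to perform a single stratified class forcing $\P$ over $\bar V$, organized into layers $L_\gamma$ for $\gamma<\alpha$ that code information into the \GCH\ pattern (or a $\Diamond^*$ pattern, as in the remark following theorem~\ref{Theorem.V=HOD=gHOD,M=V[G]}) on pairwise disjoint classes of cardinals $C_\gamma$, with $C_\gamma$ sitting far above everything used at stages below $\gamma$, arranged so that the intermediate models $U_\beta=\bar V[\langle L_\gamma:\gamma\geq\beta\rangle]$ constitute a uniform presentation $\bar M$ of the inner mantles, with $U_0=V$, $U_{\beta+1}=\Mantle^{U_\beta}$, and $U_\lambda=\Intersect_{\beta<\lambda}U_\beta$ at limits. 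The limit and $\ORD$ clauses then come essentially for free: each $U_\beta$ is itself a progressively closed product over $\bar V$, hence a model of \ZFC\ (so one need not even invoke corollary~\ref{Corollary.IntersectionGivesZF}), and $\Intersect_\beta U_\beta=\bar V$ because every set of $V$ is added by boundedly many layers. The downward set-directedness of the tail products yields $\Mantle^{U_\beta}\of U_{\beta+1}$, exactly as in the factor argument of theorem~\ref{Theorem.V=M=gM=gHOD=HOD}.

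The hard part is the reverse inclusion $U_{\beta+1}\of\Mantle^{U_\beta}$, together with the demand that it hold uniformly in $\beta$. One needs every set of $U_{\beta+1}$, and in particular the generic for each layer $L_\gamma$ with $\gamma>\beta$, to be coded into the \GCH\ pattern of $U_\beta$ robustly enough to survive passage to an arbitrary set-ground; yet the generic for $L_\beta$ itself must \emph{not} be so coded in $U_\beta$, or else $U_\beta$ would satisfy the ground axiom and the presentation would stabilize at once. Thus each layer $L_\gamma$ must be made to encode the deeper layers $L_\delta$, $\delta>\gamma$, rather than itself, which in a naive product or iteration would force the layers to be processed in \emph{decreasing} order of index --- impossible once $\alpha\geq\omega$, since the reverse order of $\omega$ is not well founded. (This is also why merely stacking the constructions of theorem~\ref{Theorem.DirectedButNoBedrock} does not help: each such stack strips only one layer, collapsing the mantle to the base in a single step.) Resolving this tension is the crux of the conjecture, and its difficulty is of a piece with the metamathematical obstacle noted above: in analogy with Harrington's theorem on the $\HOD^n$, there need be no uniform presentation of the inner mantles in general, so a witnessing model must be engineered so as to \emph{create} one. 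We expect that a sufficiently interleaved coding, in which the deeper layers are recoverable ``simultaneously'' from the \GCH\ pattern rather than by any well-ordered recursion, will suffice, and this is the focus of our continuing work.
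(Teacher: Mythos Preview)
The statement you are addressing is labelled and treated in the paper as an open \emph{conjecture}; the paper offers no proof, only the philosophical discussion following it. So there is no paper proof to compare your proposal against, and your write-up is, appropriately, a research outline rather than a claimed proof.

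Your treatment of the finite case $\alpha=n$ is correct and complete as stated: iterating theorem~\ref{Theorem.V=M=gM=gHOD=HOD} $n$ times produces a tower $V_0\subsetneq V_1\subsetneq\cdots\subsetneq V_n$ with $\Mantle^{V_{k+1}}=V_k$, and since the mantle of $V_k$ is intrinsic to $V_k$, the iterated mantle computation inside $V_n$ recovers the $V_k$ in reverse order with strict inclusions. This settles the conjecture for finite $\alpha$, which the paper does not explicitly record.

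For transfinite $\alpha$ you correctly identify the essential obstacle: a layered product in which layer $\gamma$ must code all deeper layers $\delta>\gamma$ into a robust pattern, while \emph{not} coding itself, cannot be organized as a well-founded iteration in the natural direction. You are honest that this is unresolved, and your diagnosis of why it is hard --- the coding dependencies run opposite to any well-ordering of the layers --- is the right one. This is precisely the gap that keeps the statement a conjecture; your proposal does not close it, nor does it claim to.
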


Let us explain the sense in which this conjecture, along
with the main theorem of this article, can be viewed as
philosophically negative. If one has adopted the
philosophical view that beneath the set-theoretic universe
lies a highly regular structure, some kind of canonical
inner model, which may have become obscured over the eons
by the accumulated layers of subsequent forcing
constructions over that structure, then one would be led to
expect that the mantle, or perhaps the inner mantles or the
outer core, which sweep away all these subsequent layers of
forcing, would exhibit highly regular structural features.
But since our main theorem shows that {\it every} model of
\ZFC\ is the mantle of another model, one cannot prove in
general that the mantle exhibits any extra structural
features at all. And under conjecture
\ref{Conjecture.InnerMantles}, the same can be said of the
inner mantles and the outer core. In particular, if the
conjecture holds, then there are models whose outer core is
realized first at stage $\alpha$ in $\Mantle^\alpha$, for
any desired $\alpha\leq\ORD$, since every model of
$\ZFC+\GA$ would be the $(\Mantle^\alpha)^{\Vbar}$ of a
suitable extension $\Vbar$. Thus, under the conjecture one
should not expect to prove universal regularity features
for the outer core even when it is realized as
$\Mantle^{\ORD}$, beyond what one can prove about arbitrary
models of the ground axiom. And this is not much, since
\cite{HamkinsReitzWoodin2008:TheGroundAxiomAndVequalsHOD} shows
that models of \GA\ need not even satisfy $V=\HOD$.

Note that if the conjecture holds, then the inner mantle
$\Mantle^{\ORD}$ can be itself a forcing extension (since
many models of \ZFC\ are forcing extensions), and the
process of stripping away the outer layers of forcing has
not terminated even with $\ORD$ many iterations. Thus, the
conjecture implies that there can be models of set theory
having no outer core. It is also conceivable that the inner
mantle calculation might break down at some ordinal stage
$\alpha$ for the reason that $\Mantle^\alpha$ no longer
satisfies \ZFC, and in this case, the original model would
have no outer core.

\begin{question}
Under what circumstances does the outer core exist?
\end{question}

We may similarly carry out the entire construction using
generic mantles in place of mantles. Thus, we define that
``the $\eta^{\rm th}$ inner generic mantle $\gMantle^\eta$
exists,'' for $\eta\leq\ORD$, if there is a class $\bar
M\of V\times\eta$, whose slices are inner models of \ZFC\
respecting the iterative definition of the generic mantle,
so that $\bar M^0=V$ at the beginning, $\bar
M^{\alpha+1}=\gMantle^{\bar M^\alpha}$ as successor
ordinals, and $\bar
M^\lambda=\Intersect_{\alpha<\lambda}\bar M^\alpha$ at
limits. If the process terminates by reaching a model of
$\ZFC$ that is equal to its own generic mantle---thereby
satisfying $V=\gMantle^V$, the {\df generic ground
axiom}---then we refer to this model as the {\df generic
outer core}. One may similarly consider the iterated inner
$\HOD$ construction and inner generic $\HOD$s, as in
\cite{HamkinsKirmayerPerlmutter2012:GeneralizationsOfKunenInconsistency}.
We extend conjecture~\ref{Conjecture.InnerMantles}
naturally to the following.

\begin{conjecture}
If\/ $V$ is any model of \ZFC, then for any
$\alpha\leq\ORD$, there is another model $\Vbar$ of \ZFC\
in which $V$ is the $\alpha^{\rm th}$ inner mantle, the
$\alpha^{\rm th}$ generic inner mantle, the $\alpha^{\rm th}$ inner
\HOD\ and the $\alpha^{\rm th}$ inner generic \HOD.
$$V=(\Mantle^\alpha)^{\Vbar}=(\gMantle^\alpha)^\Vbar=(\HOD^\alpha)^\Vbar=(\gHOD^\alpha)^\Vbar$$
\end{conjecture}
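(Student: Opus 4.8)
The plan is to iterate the construction of Theorem~\ref{Theorem.V=M=gM=gHOD=HOD} through $\alpha$ many levels, exploiting the key feature of that theorem: a single class forcing simultaneously pins down all four classes --- mantle, generic mantle, generic \HOD\ and \HOD\ --- to the ground model. Concretely, I would aim to produce a descending chain of inner models $\langle M_\gamma\st\gamma\leq\alpha\rangle$ with $M_0=\Vbar$ the target model, $M_\alpha=V$, with $M_{\gamma+1}=\Mantle^{M_\gamma}=\gMantle^{M_\gamma}=\gHOD^{M_\gamma}=\HOD^{M_\gamma}$ at successor steps and $M_\lambda=\bigcap_{\gamma<\lambda}M_\gamma$ at limit stages, the whole chain presented uniformly in the sense of the discussion preceding Conjecture~\ref{Conjecture.InnerMantles}. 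Granting this, an induction on $\gamma\leq\alpha$ shows $(\Mantle^\gamma)^{\Vbar}=(\gMantle^\gamma)^{\Vbar}=(\HOD^\gamma)^{\Vbar}=(\gHOD^\gamma)^{\Vbar}=M_\gamma$: at a successor the four operators applied to $M_\gamma$ coincide and all equal $M_{\gamma+1}$, and at a limit all four iterated sequences are the same intersection. Taking $\gamma=\alpha$ then gives the conjecture.

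To build the chain, I would realize $\Vbar$ as a single class forcing extension $V[G]$ stratified into $\alpha$ many \emph{levels}. Fix a partition of the coding cardinals (say, successors of $\beth$-fixed-points, as in the proofs above) into $\alpha$ many pairwise disjoint classes $\langle\mathcal{C}_\gamma\st\gamma<\alpha\rangle$, each cofinal in $\ORD$, obtained from a pairing $\ORD\cong\ORD\times\alpha$. Level $\gamma$ runs the \GCH\ coding lotteries at the cardinals of $\mathcal{C}_\gamma$ exactly as in Theorem~\ref{Theorem.V=M=gM=gHOD=HOD} --- set support, densely almost homogeneous, with the progressively-closed factor structure --- with the intent that level $\gamma$ code the sets of the model obtained by forcing with the levels above $\gamma$ over $V$; one then sets $M_\gamma$ to be $V$ together with the generics for all levels $\geq\gamma$, so that $M_0=\Vbar$ and $M_\alpha=V$. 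At each successor step the analysis of Theorem~\ref{Theorem.V=M=gM=gHOD=HOD} should transplant to $M_{\gamma+1}$: the unbounded coding places the sets of $M_{\gamma+1}$ into $\Mantle^{M_\gamma}$, $\gHOD^{M_\gamma}$ and $\HOD^{M_\gamma}$; the tail of the level-$\gamma$ forcing above any ordinal is closed enough that the corresponding intermediate models are set-forcing grounds of $M_\gamma$ intersecting down to $M_{\gamma+1}$, yielding the reverse inclusions for the mantles; dense almost homogeneity controls $\HOD^{M_\gamma}$; and since $\gMantle$ and $\gHOD$ are set-forcing invariant and the coding survives set forcing, the same holds for the generic versions. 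At a limit $\lambda$, once a uniform presentation of $\langle M_\gamma\st\gamma<\lambda\rangle$ is available the $M_\gamma$ are definable and descending, so Lemma~\ref{Lemma.IntersectionsOfInnerModels} and Corollary~\ref{Corollary.IntersectionGivesZF} give that $M_\lambda\models\ZFC$ and that the presentation extends through $\lambda$, so the recursion continues; when $\alpha=\ORD$ the same corollary applies to the full descending $\ORD$-sequence.

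Two points are where I expect the real difficulty --- and, I suspect, why the statement is still conjectural. First, level $\gamma$ is meant to code the sets of the model built from the levels above $\gamma$, but in a single class forcing over $V$ those levels are interleaved with level $\gamma$, so there is no clean ``level $\gamma$ after the higher levels'' iteration. One would instead arrange bookkeeping so that at each level-$\gamma$ coding cardinal $\delta$ it is dense to code whatever sets of that higher model have appeared by stage $\delta$; since every such set is added by a set-sized initial segment of the higher-level forcing and $\mathcal{C}_\gamma$ is cofinal in $\ORD$, each would then be coded cofinally --- but one must still verify that codings at different levels do not interfere, that they are not inadvertently erased, and that the homogeneity and factor properties survive the interleaving. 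Second, and more seriously, the limit identity asserting that $\bigcap_{\gamma<\lambda}M_\gamma$ equals ``$V$ together with the levels $\geq\lambda$'' is not the clean tail intersection of Theorem~\ref{Theorem.V=M=gM=gHOD=HOD}: the forcing separating $M_\gamma$ from $M_\lambda$ is spread cofinally through $\ORD$ --- each level being cofinal is forced on us precisely because each must reach the mantle --- so it is not sufficiently closed at any point, and the intersection need not collapse as desired. Reconciling ``each level must be cofinal'' with ``the inner-mantle sequence must pass cleanly through limits,'' whether by a cleverer geometry of the levels, by coding schemes other than \GCH\ coding, or by weakening the limit requirement to merely that the recursion reach $V$ after $\alpha$ steps, is the heart of the matter; it is this limit behaviour that separates the conjecture from the proved case $\alpha=1$ and from Conjecture~\ref{Conjecture.InnerMantles}.
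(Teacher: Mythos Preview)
The statement you are attempting is labelled a \emph{conjecture} in the paper, not a theorem, and the paper provides no proof of it; it is presented as an open problem extending Conjecture~\ref{Conjecture.InnerMantles}. So there is nothing in the paper to compare your argument against.

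Your proposal is honest about this: you sketch a natural stratified-coding approach and then correctly isolate the two genuine obstructions. Of these, the second is the decisive one. The interleaving-of-codings issue is delicate but not obviously fatal --- one can imagine bookkeeping that keeps the levels disjoint and progressively closed. The limit problem, however, is exactly the reason the conjecture is open: each level-$\gamma$ forcing must be cofinal in $\ORD$ (otherwise it cannot code every set of $M_{\gamma+1}$ into the continuum pattern and hence cannot pin down the mantle), yet for the intersection $\bigcap_{\gamma<\lambda}M_\gamma$ to equal the model built from the levels $\geq\lambda$ you need the forcing separating $M_\gamma$ from $M_\lambda$ to vanish in the limit, which it cannot do if every level is unbounded. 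Your diagnosis that these two requirements are in direct tension, and that resolving it is ``the heart of the matter,'' is accurate and matches the reason the authors state this as a conjecture rather than a theorem. What you have written is a reasonable research plan, not a proof.
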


\section{Large cardinal indestructibility across the
multiverse}

Laver~\cite{Laver78} famously proved that any supercompact
cardinal $\kappa$ can be made indestructible by further
$\ltkappa$-directed closed forcing. In this section, we
consider the possibility of large cardinal
indestructibility not just in the upwards direction, but
throughout the relevant portion of the generic multiverse.

For any class $\Gamma$ of forcing notions, we say that $W$
is a {\df $\Gamma$-ground} of $V$, or equivalently, that
$V$ is a {\df $\Gamma$-extension} of $W$, if $V=W[G]$ for
some $W$-generic $G\of\P\in W$, where $\P$ has property
$\Gamma$ in $W$. The {\df $\Gamma$-generic multiverse} is
obtained by closing the universe under $\Gamma$-extensions
and $\Gamma$-grounds.

\begin{theorem} If $\kappa$ is supercompact, then there is a
class forcing extension $V[G]$ in which $\kappa$ remains
supercompact, becomes indestructible by further
$\ltkappa$-directed closed forcing, and the ground axiom
holds. Thus, $\kappa$ remains supercompact in all
$\ltkappa$-directed closed extensions and (vacuously) in
all $\ltkappa$-directed closed
grounds.\label{Theorem.SupercompactIndestructibleGA}
\end{theorem}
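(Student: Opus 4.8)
The plan is to combine a Laver-style indestructibility preparation for $\kappa$ with the ground-axiom forcing of theorem \ref{Theorem.V=HOD=gHOD,M=V[G]}, arranging that the latter acts entirely above $\kappa$, so that it is $\ltkappa$-directed closed and therefore preserves the indestructible supercompactness produced by the preparation.

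First I would perform the Laver preparation of $\kappa$ (equivalently, the lottery preparation of \cite{Hamkins2000:LotteryPreparation}), a $\kappa$-c.c.\ forcing $\P_0$ of size $\kappa$, passing to $V[G_0]$ in which $\kappa$ is supercompact and indestructible by further $\ltkappa$-directed closed set forcing. Working now in $V[G_0]$, I would run the two-step class forcing of theorem \ref{Theorem.V=HOD=gHOD,M=V[G]} --- the \GCH-coding product of theorem \ref{Theorem.V=M=gM=gHOD=HOD} followed by an Easton-support Silver iteration over its extension --- except that I choose all of the coding cardinals $\delta_\alpha$ and all of the Silver-iteration stages $(2^{\delta_\alpha})^\plus$ to lie above $\kappa$. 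Since $\kappa$ is a beth fixed point and $\Sigma_2$-reflecting in $V[G_0]$, this is exactly the kind of harmless respacing of the coding points allowed by the remarks following theorem \ref{Theorem.V=M=gM=gHOD=HOD}, and it still codes every set of ordinals of $V[G_0]$ --- in particular every bounded subset of $\kappa$ and everything in $V$ --- unboundedly often into the continuum function. Let $V[G]$ be the resulting model.

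Two observations complete the argument. First, because the coding product is a set-support product of lottery sums of ${<}\delta_\alpha$-directed closed posets, and the Silver iteration begins at $(2^{\delta_0})^\plus>\kappa$ with $\Add((2^{\delta_0})^\plus,1)$, the entire class forcing from $V[G_0]$ to $V[G]$ is $\ltkappa$-directed closed; moreover it is a progressively closed product/iteration, so at every sufficiently large $\mu$ it factors as a $\ltkappa$-directed closed set forcing followed by a ${<}\mu$-closed tail, and every set of $V[G]$ appears after some such bounded stage. Running the set part first --- where the indestructibility established in $V[G_0]$ applies --- and noting the tail adds no new subsets of $P_\kappa(\lambda)$ for $\lambda<\mu$, the standard factor-and-reflect argument shows that $\kappa$ is supercompact in $V[G]$ and remains so after any further $\ltkappa$-directed closed set forcing over $V[G]$. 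Second, applying theorem \ref{Theorem.V=HOD=gHOD,M=V[G]} with $V[G_0]$ in the role of its ``$V$'' (the theorem is stated for arbitrary models of \ZFC, we use only its ground-axiom conclusion, and its proof invokes coding points only above the size of the hypothetical ground --- points that remain available among those above $\kappa$), we conclude that the ground axiom holds in $V[G]$. In particular $V[G]$ has no nontrivial set-forcing ground whatsoever, so $\kappa$ is vacuously supercompact in every $\ltkappa$-directed closed ground of $V[G]$, as required.

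The main obstacle is bookkeeping of closure degrees: one must verify carefully that pushing all of the \GCH-coding above $\kappa$ keeps both stages of the theorem \ref{Theorem.V=HOD=gHOD,M=V[G]} forcing genuinely $\ltkappa$-directed closed and progressively closed while still coding all sets, and one must check that Laver indestructibility --- established only for set forcing --- is preserved through this tame $\ltkappa$-directed closed \emph{class} forcing by the factoring sketched above. The ground-axiom verification itself is not the difficulty, being imported directly from theorem \ref{Theorem.V=HOD=gHOD,M=V[G]}.
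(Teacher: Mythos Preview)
Your approach is correct but takes a heavier route than the paper. Both proofs begin identically with the Laver preparation, passing to $V[g]$ where $\kappa$ is indestructibly supercompact. For the second step, however, the paper simply performs the continuum coding axiom forcing of \cite{Reitz2007:TheGroundAxiom,Reitz2006:Dissertation} above $\kappa$: an iteration coding every set unboundedly into the \GCH\ pattern, which is $\ltkappa$-directed closed, factors as set forcing followed by highly closed tails, and yields \CCA\ and hence \GA\ directly. You instead invoke the two-step machinery of theorem \ref{Theorem.V=HOD=gHOD,M=V[G]}---the set-support coding product followed by the Easton-support Silver iteration---pushed above $\kappa$. This does work: the lottery sums and their set-support product are $\ltkappa$-directed closed once all $\delta_\alpha>\kappa$, the Silver iteration likewise, and the ground-axiom argument of that theorem survives the respacing since it only uses coding points above the size of a putative ground. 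But the Silver iteration is there specifically to force $\HOD$ down to the ground model, a conclusion you do not need; the simpler \CCA\ forcing already delivers \GA, and its progressive closure is more transparent. Your route buys nothing extra here, though it would if one also wanted to control $\HOD^{V[G]}$.
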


\begin{proof}  Suppose that $\kappa$ is supercompact. By Laver's
theorem~\cite{Laver78}, we may find a set-forcing extension
$V[g]$ in which $\kappa$ remains supercompact and becomes
indestructible by all $\ltkappa$-directed closed forcing.
By coding the universe into the continuum function above
$\kappa$, we may perform $\ltkappa$-directed closed class
forcing to find a model $V[g][G]$ in which every set is
coded unboundedly often into the \GCH\ pattern of
$V[g][G]$. It follows that $V[g][G]$ satisfies the
continuum coding axiom and therefore also the ground axiom,
as in
\cite{Reitz2007:TheGroundAxiom,Reitz2006:Dissertation}.
Because the \GCH\ coding forcing is $\ltkappa$-directed
closed (and factors easily into set forcing followed by
highly closed forcing), it follows that the
supercompactness of $\kappa$ remains indestructible over
$V[g][G]$ by further $\ltkappa$-directed closed
forcing.\end{proof}

By combining the previous argument with the usual methods
to attain global indestructibility, one may also accomplish
this effect for many supercompact cardinals simultaneously.
We show next that it is not possible to improve this to the
entire $\ltkappa$-directed closed generic multiverse.

\begin{theorem}\label{Theorem.SupercompactNeverIndestructibleInMultiverse}
No supercompact cardinal $\kappa$ is
indestructible throughout the $\ltkappa$-directed closed
generic multiverse. Indeed, for every cardinal $\kappa$,
there is a $\ltkappa$-directed closed generic ground in
which $\kappa$ is not measurable.
\end{theorem}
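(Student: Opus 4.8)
The plan is to reduce the statement to the construction of a single generic ground and then to build it. If $\kappa$ is not measurable in $V$, there is nothing to do, since $V$ is itself a $\ltkappa$-directed closed generic ground of $V$, via the trivial forcing; so assume $\kappa$ is measurable in $V$, which covers the supercompact case. It suffices then to produce a $\ltkappa$-directed closed set-forcing extension $V[G]$ of $V$ together with a ground $W$ of $V[G]$, say $V[G]=W[H]$ for a $W$-generic filter $H\of\Q\in W$ with $\Q$ being $\ltkappa$-directed closed in $W$, such that $\kappa$ is not measurable in $W$. Indeed $V[G]$ is then a common $\ltkappa$-directed closed forcing extension of $V$ and of $W$, so $W$ is a $\ltkappa$-directed closed generic ground of $V$ (and the same remains true if along the way we first pass from $V$ to a harmless $\ltkappa$-directed closed preparation of it, since composition of $\ltkappa$-directed closed set forcings is $\ltkappa$-directed closed). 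I would emphasize that $W$ need not be contained in $V$, and that, since $\Q$ is $\ltkappa$-directed closed and hence $\ltkappa$-closed, the models $W$ and $V[G]$ share all bounded subsets of $\kappa$ and have the same $V_\kappa$, so that $\kappa$ is automatically inaccessible in $W$; the real content of the construction is to arrange that $\kappa$ carries no $\kappa$-complete ultrafilter in $W$, the forcing $\Q$ manufacturing one over $W$.

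The strategy now splits into two cases. If some $\ltkappa$-directed closed set forcing over $V$ renders $\kappa$ non-measurable in its extension, then that extension, being trivially a ground of itself, already is the desired $W$, and we are done. Otherwise---and this is the situation when $\kappa$ is supercompact, and, after the indestructibility preparation of \cite{Laver78} or \cite{Hamkins2000:LotteryPreparation}, the situation we may arrange for any measurable $\kappa$---the measurability of $\kappa$ survives every $\ltkappa$-directed closed forcing extension of $V$, so $W$ cannot be realized as a forcing extension of $V$ at all, and must be produced instead as a ground of some common $\ltkappa$-directed closed extension $V[G]$ of $V$ incomparable with $V$. Building such a sibling ground is a resurrection argument, carried out with the coding machinery of sections 5 and 6: over $V$ one performs a $\ltkappa$-directed closed set forcing $\P$, arranged so that $V[G]=V^{\P}$ factors as $W[H]$ for an inner model $W$ in which $P(\kappa)$ is kept thin enough---or the cardinal arithmetic at $\kappa$ distorted enough relative to the strength of $\kappa$ available in $W$ alone---that $\kappa$ is not measurable there, while the large $\ltkappa$-directed closed forcing $\Q$ carrying $W$ up to $V[G]$ both enriches $P(\kappa)$ and lifts a suitable elementary embedding, thereby reinstating a measure on $\kappa$; by fact \ref{Fact.IntermediateModelsAreGrounds} this exhibits $W$ as a ground of $V[G]$.

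The main obstacle is exactly this simultaneous control: one must design $V[G]$ so as to contain, off to the side of $V$, a ground $W$ in which $\kappa$ fails to be measurable, while the passage from $W$ to $V[G]$ is a bona fide $\ltkappa$-directed closed forcing that resurrects the measurability. I expect this to be the technical heart of the proof, requiring one to assemble $W$ and $\Q$ together---much as in the proofs of theorems \ref{Theorem.V=HOD=gHOD,M=V[G]} and \ref{Theorem.L=gHOD=HOD,L[G]=M}---and then to verify the $\delta$-approximation and $\delta$-cover properties (lemmas \ref{Lemma.ClosurePointForcing} and \ref{Lemma.HamkinsLaverUniqueness}) in order to pin $W$ down uniquely, along with the routine factoring, closure, and chain-condition bookkeeping. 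Once $W$ has been produced with $\kappa$ not measurable in it, the second assertion of the theorem follows, and the first is then immediate: a supercompact cardinal $\kappa$ is in particular measurable, hence is not measurable---so certainly not supercompact---in the generic ground $W$, and therefore is not indestructible throughout the $\ltkappa$-directed closed generic multiverse.
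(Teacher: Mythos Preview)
Your framing is correct: one wants a common $\ltkappa$-directed closed extension $V[G]$ of $V$ and of some $W$ in which $\kappa$ is not measurable, with the passage $W\to V[G]$ also $\ltkappa$-directed closed. But at the crucial point---actually producing $W$---you stop and gesture at the coding machinery of sections 5--6 and the approximation/cover uniqueness lemmas. That machinery is designed to pin down grounds and control \HOD\ and the mantle; it does not by itself manufacture a ground in which a specified cardinal fails to be measurable, and nothing in your outline explains how to arrange ``$P(\kappa)$ kept thin enough'' while simultaneously making the quotient $\ltkappa$-directed closed. As written, the proposal is a plan with the main step left blank.

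The paper's argument is much more direct and uses none of that apparatus. Over $V$, force with the natural $\ltkappa$-closed (but \emph{not} $\ltkappa$-directed closed) poset $\P$ adding a slim $\kappa$-Kurepa tree $T$; an easy reflection shows no measurable cardinal carries a slim Kurepa tree, so $\kappa$ is not measurable in $V[T]$. Now observe that $\P*\dot\Coll(\kappa,2^\kappa)$ is $\ltkappa$-closed, has size $2^\kappa$, and collapses $2^\kappa$ to $\kappa$, hence is forcing-equivalent to $\Coll(\kappa,2^\kappa)$ by the usual uniqueness characterization. Thus a $V[T]$-generic $G\of\Coll(\kappa,2^\kappa)$ gives $V[T][G]=V[H]$ for some $V$-generic $H\of\Coll(\kappa,2^\kappa)$. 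Both legs $V\to V[H]$ and $V[T]\to V[H]$ are via $\Coll(\kappa,2^\kappa)$, which is $\ltkappa$-directed closed, so $W=V[T]$ is the desired $\ltkappa$-directed closed generic ground. The key idea you are missing is this absorption trick: one reaches $W$ by a forcing that is only $\ltkappa$-closed, but composing with the collapse swallows that forcing into a single $\ltkappa$-directed closed step.
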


\begin{proof} Fix any cardinal $\kappa$. Let $\P$ be the forcing
to add a slim $\kappa$-Kurepa tree, a tree $T$ of height
$\kappa$, whose $\alpha^{\rm th}$ level (for infinite $\alpha$)
has size at most $\Card{\alpha}$, but which has $\kappa^\plus$
many $\kappa$-branches. The natural way to do this has the
feature that $\P$ is $\ltkappa$-closed, but not
$\ltkappa$-directed closed. An easy reflection argument
shows that no measurable cardinal can have a slim
$\kappa$-Kurepa tree, and so $\kappa$ is not measurable in
$V[T]$. But the forcing $\P$ can be absorbed into the
forcing $\Coll(\kappa,2^\kappa)$, since the combined
forcing $\P*\dot\Coll(\kappa,2^\kappa)$ is
$\ltkappa$-closed, has size $2^\kappa$ and collapses
$2^\kappa$ to $\kappa$, and $\Coll(\kappa,2^\kappa)$ is the
unique forcing notion (up to isomorphism of the complete
Boolean algebras) with this property. Thus, if
$G\of\Coll(\kappa,2^\kappa)$ is $V[T]$-generic for this
collapse, we may rearrange the combined generic filter
$T*G$ as a single $V$-generic filter
$H\of\Coll(\kappa,2^\kappa)$ such that $V[T][G]=V[H]$.
Since the collapse forcing is $\ltkappa$-directed closed,
it follows that $V[T]$ is a $\ltkappa$-directed closed
generic ground of $V$, in which $\kappa$ is not measurable,
as desired.
\end{proof}

The situation for indestructible weak compactness is
somewhat more attractive.

\begin{theorem}
If $\kappa$ is supercompact, then there is a set-forcing
extension $V[G]$ in which $\kappa$ is weakly compact and
this is indestructible throughout the $\ltkappa$-closed
generic multiverse of $V[G]$.
\end{theorem}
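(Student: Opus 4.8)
The plan is to build the target model $V[G]$ by a two-step forcing over $V$ and then analyze the whole $\ltkappa$-closed generic multiverse of $V[G]$ by a short zigzag induction. \emph{Step 1 (preparation).} First I would pass to a forcing extension in which $\kappa$ is weakly compact \emph{and this is indestructible by $\ltkappa$-closed forcing}. Since $\kappa$ is supercompact, hence weakly compact, Hamkins' lottery preparation of $\kappa$ does exactly this: after the lottery preparation $\kappa$ remains weakly compact and retains its weak compactness in every further $\ltkappa$-closed forcing extension (see \cite{Hamkins2000:LotteryPreparation}). I emphasize that we must aim only for weak compactness here, not measurability, because by the previous theorem's slim $\kappa$-Kurepa tree obstruction no measurable cardinal can be $\ltkappa$-closed-indestructible, whereas weak compactness carries no such obstruction.

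\emph{Step 2 (coding).} Over the prepared model I would then perform a $\ltkappa$-directed closed class forcing to reach $V[G]$ satisfying the continuum coding axiom $\CCA$ — coding every set of ordinals unboundedly often into the $\GCH$ pattern at cardinals above $\kappa$, as in \cite{Reitz2006:Dissertation}; since we only ever code above $\kappa$, the coding forcing can be taken $\ltkappa$-directed closed. Because the coding is in particular $\ltkappa$-closed, and because indestructibility of weak compactness by $\ltkappa$-closed forcing is itself preserved by $\ltkappa$-closed forcing (iterations of $\ltkappa$-closed forcing are again $\ltkappa$-closed, so any later $\ltkappa$-closed poset composes with the coding into a $\ltkappa$-closed extension of the prepared model), $\kappa$ is still weakly compact in $V[G]$ with its weak compactness indestructible by $\ltkappa$-closed forcing. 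The upward direction of the theorem is then immediate: every $\ltkappa$-closed forcing extension of $V[G]$ still has $\kappa$ weakly compact.

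\emph{Downward direction.} Here I would exploit that $V[G]\satisfies\CCA$ implies $V[G]$ is a solid bedrock of all its set-forcing extensions, i.e.\ $V[G]$ is contained in every ground of every set-forcing extension of $V[G]$ (Reitz's observation, recalled in the remarks around the definition of $\CCA$). Now induct along a multiverse path $V[G]=M_0,\dots,M_n=U$, each step a $\ltkappa$-closed extension or $\ltkappa$-closed ground, maintaining: $V[G]\of M_i$, $M_i$ is a set-forcing extension of $V[G]$, and $M_i$ adds no new ${<}\kappa$-sequences of ordinals over $V[G]$. An extension step is clear. For a ground step $M_{i+1}\of M_i$: since $M_i$ is a set-forcing extension of $V[G]$, $M_{i+1}$ is a ground of a set-forcing extension of $V[G]$, so $V[G]\of M_{i+1}$ by $\CCA$, and then $V[G]\of M_{i+1}\of M_i$ with $M_i$ a set-forcing extension of $V[G]$, whence $M_{i+1}$ is a set-forcing extension of $V[G]$ by fact \ref{Fact.IntermediateModelsAreGrounds}, adding no new ${<}\kappa$-sequences because $M_i$ adds none. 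Thus the forcing from $V[G]$ to $U$ is ${<}\kappa$-distributive and, via fact \ref{Fact.IntermediateModelsAreGrounds}, is given by a complete subalgebra of a $\ltkappa$-closed forcing — which remains ${<}\kappa$-distributive — so the indestructibility secured in Step 1 applies and $\kappa$ is weakly compact in $U$.

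\emph{Main obstacle.} I expect the hard part to be Step 1, namely pinning down a preparation after which weak compactness is genuinely indestructible by \emph{all} $\ltkappa$-closed (better, $\ltkappa$-distributive) forcing, of arbitrary size — this is precisely where the lottery preparation, rather than Laver's preparation (which only yields $\ltkappa$-\emph{directed} closed indestructibility), is essential. The secondary delicate point is the multiverse bookkeeping in the downward direction: verifying that every model in the $\ltkappa$-closed generic multiverse of $V[G]$ really is reached from $V[G]$ by a forcing inside the class against which Step 1 guarantees indestructibility, which is exactly what the $\CCA$-solidity of $V[G]$ and the distributivity of complete subalgebras of $\ltkappa$-closed forcing are used for.
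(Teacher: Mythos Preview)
Your proposal has a genuine structural problem and an unresolved gap, and it misses the key idea that makes the paper's argument work.

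First, the theorem asks for a \emph{set}-forcing extension $V[G]$. Your Step~2 performs class forcing to achieve $\CCA$, so the model you produce does not witness the statement as written. The paper uses only the Laver preparation, which is set forcing, and nothing more.

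Second, even granting the class forcing, your downward argument does not close. You correctly observe that every model $U$ in the $\ltkappa$-closed generic multiverse of $V[G]$ would then be a set-forcing extension of $V[G]$ adding no new ${<}\kappa$-sequences, hence obtained by ${<}\kappa$-\emph{distributive} forcing. But your Step~1 only secures indestructibility of weak compactness under $\ltkappa$-\emph{closed} forcing; a complete subalgebra of a $\ltkappa$-closed Boolean algebra is in general merely $\ltkappa$-distributive, not $\ltkappa$-closed. You flag this yourself as the main obstacle, but you do not resolve it, and it is not clear that the lottery preparation yields indestructibility under all $\ltkappa$-distributive forcing of arbitrary size.

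The paper's approach sidesteps both issues. After the Laver preparation (so $\kappa$ is supercompact and indestructible by $\ltkappa$-directed closed forcing), it invokes a result of Hamkins and Johnstone that in this situation the \emph{weak compactness} of $\kappa$ is already indestructible by all $\ltkappa$-closed forcing. The heart of the proof is then a self-contained sublemma: the property ``$\kappa$ is weakly compact and this is indestructible by $\ltkappa$-closed forcing'' is itself preserved to every $\ltkappa$-closed ground. The downward step uses a pseudo-generic argument: given a $\kappa$-tree $T$ in the ground $W$, a cofinal branch exists in $\Vbar=W[g]$ with a $\P$-name $\dot b$; since $\P$ is $\ltkappa$-closed in $W$, one builds in $W$ a descending $\kappa$-sequence of conditions deciding longer and longer initial segments of $\dot b$, yielding a branch in $W$. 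A similar two-step argument shows the indestructibility transfers down as well. No $\CCA$, no bookkeeping of intermediate models, and no appeal to distributive indestructibility is needed.
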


\begin{proof} By the Laver preparation~\cite{Laver78}, there is a
forcing extension $V[G]$ in which the supercompactness of
$\kappa$ is indestructible by all $\ltkappa$-directed
closed forcing. By~\cite[Theorem
22]{HamkinsJohnstone2010:IndestructibleStrongUnfoldability},
this implies that the weak compactness of $\kappa$ is
indestructible over $V[G]$ by all $\ltkappa$-closed
forcing, that is, generalizing $\ltkappa$-directed closed
to $\ltkappa$-closed forcing.

\begin{sublemma}
If $\kappa$ is weakly compact and this is indestructible by
$\ltkappa$-closed forcing, then $\kappa$ retains this
property throughout the $\ltkappa$-closed generic
multiverse.
\end{sublemma}

\begin{proof}
Suppose that $\kappa$ is weakly compact in a model $\Vbar$
and indestructible over $\Vbar$ by all $\ltkappa$-closed
forcing. Clearly, this property remains true in all
$\ltkappa$-closed extensions, since the two-step iteration
of $\ltkappa$-closed forcing is $\ltkappa$-closed. We also
argue conversely, however, that the property remains true
in all $\ltkappa$-closed grounds. Suppose that $W$ is a
$\ltkappa$-closed ground of $\Vbar$, so that $\Vbar=W[g]$
is $W$-generic for some $\ltkappa$-closed forcing
$g\of\P\in W$. First, we know that $\kappa$ is inaccessible
in $W$ because inaccessibility is downward absolute.
Second, in order to establish the tree property for
$\kappa$ in $W$, suppose that $T$ is a $\kappa$-tree in
$W$. By the tree property of $\kappa$ in $\Vbar$, there is
a branch $b$ through $T$ in $\Vbar=W[g]$. Thus, $W$ has a
$\P$-name $\dot b$ for this branch. In $W$, since the
forcing $\P$ is $\ltkappa$-closed, we may build a
pseudo-generic descending $\kappa$-sequence of conditions
in $\P$ that decide more and more of this name $\dot b$.
Thus, in $W$ we can build a branch through $T$, thereby
establishing this instance of the tree property for
$\kappa$ in $W$. It follows that $\kappa$ is weakly compact
in $W$. Consider now any $\ltkappa$-closed forcing $\Q\in
W$. Since $\P$ is $\ltkappa$-closed in $W$, it follows that
$\Q$ remains $\ltkappa$-closed in $\Vbar=W[g]$. Thus, for
any $W[g]$-generic filter $h\of\Q$, it follows by the
indestructibility of $\kappa$ over $\Vbar$ that $\kappa$ is
weakly compact in $\Vbar[h]=W[g][h]$. Since $\P$ also
remains $\ltkappa$-closed in $W[h]$, it follows by the
pseudo-forcing downward absoluteness argument that $\kappa$
is weakly compact in $W[h]$. Thus, there can be no
condition in $\Q$ forcing over $W$ that $\kappa$ is not
weakly compact in the extension of $W$ by $\Q$. Thus, we
have proved that the weak compactness of $\kappa$ is
indestructible over $W$ by $\ltkappa$-closed forcing. Thus,
the indestructibility property we desire is preserved as
one moves through the $\ltkappa$-closed generic
multiverse.\end{proof}

The theorem follows directly.\end{proof}

In the case of small forcing, the well-known \Levy-Solovay
theorem of~\cite{LevySolovay67} establishes that every
measurable cardinal $\kappa$ is preserved by the move to
any $\kappa$-small forcing extension as well as to any
$\kappa$-small ground (that is, in each case, by forcing of
size less than $\kappa$). It follows that any measurable
cardinal $\kappa$ is measurable throughout the
corresponding small-generic multiverse. Similarly, most of
the other classical large cardinals are preserved to small
forcing extensions and grounds, and therefore retain their
large cardinal property throughout the small-generic
multiverse. It is natural to consider also the extent
to which other set-theoretic axioms, such as the proper
forcing axiom, are indestructible throughout
significant portions of the generic multiverse. For example, \PFA\ is known to be indestructible to ${\lt}\aleph_2$-directed closed forcing extensions. Is it also necessarily preserved to all ${\lt}\aleph_2$-directed closed ground models? Of course, this situation is at least relatively consistent, since we may make it a vacuous claim, as in theorem~\ref{Theorem.SupercompactIndestructibleGA}, by forcing the \CCA\ over any model of \PFA\ via ${\lt}\aleph_2$-directed closed forcing, resulting in a model of $\ZFC+\PFA+\GA$, which has no nontrivial grounds at all. Meanwhile, \PFA\ is not generally preserved to proper grounds, since the usual forcing from a model with a supercompact cardinal is proper and could start in a model without \PFA. We wonder whether there is a robust fragment of the generic multiverse in which \PFA\ or other forcing axioms might be necessarily invariant.

\bibliographystyle{alpha}
\bibliography{MathBiblio,HamkinsBiblio}
\end{document}